\newcommand\DMO[2]{\DeclareMathOperator{#1}{#2}}
\newcounter{enumitemp}
\newenvironment{enumeratecontinue}{
  \setcounter{enumitemp}{\value{enumi}}
  \begin{enumerate}
  \setcounter{enumi}{\value{enumitemp}}
}
{
  \end{enumerate}
}
\numberwithin{equation}{section}
\newcommand\pref[1]{(\ref{#1})}
\newcommand\marginparLee[1]{\marginpar{\tiny #1}}
\newcommand\ds\displaystyle
\newcommand\vp{{\vphantom\prime}}
\theoremstyle{plain}
\newtheorem*{theorem*}{Theorem}
\newtheorem{theorem}{Theorem}[section]
\newtheorem{proposition}[theorem]{Proposition}
\newtheorem{lemma}[theorem]{Lemma}
\newtheorem{corollary}[theorem]{Corollary}
\theoremstyle{definition}
\newtheorem{definition}[theorem]{Definition}
\theoremstyle{definition}
\newtheorem{exercise}{Exercise}[subsection]
\DeclareMathOperator{\Out}{Out}
\DeclareMathOperator{\Aut}{Aut}
\DeclareMathOperator{\Inn}{Inn}
\DeclareMathOperator{\rank}{rank}
\DeclareMathOperator{\Length}{Len}
\DeclareMathOperator{\Stab}{Stab}
\DeclareMathOperator\image{image}
\DeclareMathOperator\closure{cl}
\DeclareMathOperator\Vertices{\mathcal{V}}
\DeclareMathOperator\Edges{\mathcal{E}}
\DeclareMathOperator\Spec{Spec}
\DeclareMathOperator\sys{sys}
\newcommand\reals{{\mathbf R}}
\newcommand\euc{{\mathbf E}}
\newcommand\hyp{{\mathbf H}}
\newcommand\N{{\mathbf N}}
\newcommand\Z{{\mathbf Z}}
\newcommand\complex{{\mathbf C}}
\newcommand\inject{\hookrightarrow}
\newcommand\infinity{\infty}
\newcommand{\bdy}{\partial}
\newcommand{\from}{\colon}
\newcommand\composed{\circ}
\newcommand\suchthat{\bigm|}
\newcommand\inv{{-1}}
\newcommand\union{\cup}
\newcommand\abs[1]{\left| #1 \right|}
\newcommand\intersect{\cap}
\newcommand\restrict{\bigm|}
\newcommand\subgroup{<}
\newcommand\cross{\times}
\newcommand\C{\mathcal C}
\newcommand\D{\mathcal D}
\newcommand\E{\mathcal E}
\newcommand\K{{\mathcal K}}
\renewcommand\L{\mathcal L}
\newcommand\<\langle
\renewcommand\>\rangle
\newcommand\wh\widehat
\newcommand\disjunion\sqcup
\DeclareMathOperator\interior{int}
\DeclareMathOperator\ab{ab}
\newcommand\act\curvearrowright
\newcommand\X{\mathcal{X}}
\newcommand\CV\X
\newcommand\BookOne{\cite{BFH:TitsOne}}
\newcommand\TitsOne\BookOne
\newcommand\ti {\tilde}
\DeclareMathOperator\Ends{Ends}
\DMO\Core{Core}
\DMO\ACore{{\hat{\mathcal{C}}}}
\DMO\truss{truss}
\newcommand\wt\widetilde
\newcommand\Ad{\text{Ad}}
\newcommand\collapsesto\succ
\newcommand\collapse\collapsesto
\newcommand\collapses\collapsesto
\newcommand\expandsto\prec
\newcommand\expand\expandsto
\newcommand\expands\expandsto
\newcommand\tensor\otimes
\DeclareMathOperator\GL{\mathrm{GL}}
\DeclareMathOperator\SL{\mathrm{SL}}
\DeclareMathOperator\MCG{\text{MCG}}
\DeclareMathOperator\HMCG{HMCG}
\DeclareMathOperator\HEnd{\text{HEnd}}
\newcommand\Homeo{Homeo}
\newcommand\EG{\mathcal EG}  
\newcommand\EH{\mathcal EH}
\newcommand\LG{\mathcal LG}
\newcommand\turn\widehat
\newcommand\turngraph\Lambda
\newcommand\Poincare{Poincar\`e}
\newcommand\Teichmuller{Teichm\"uller}
\newcommand\indexemph[1]{\index{#1}\emph{#1}}
\title{The topology, geometry, and dynamics of free groups \\ \quad \\ 
\small{(containing most of):} \\ \quad \\
\large{Part I: Outer space, fold paths, and the Nielsen/Whitehead problems}}
\author{Lee Mosher}
\begin{document}

\maketitle

\setcounter{tocdepth}{3}
\tableofcontents

\vfill\break

\section*{VERSION NOTES}

\bigskip

Chapters, sections, and subsections of this work will be released in serial version. Here is a brief account of what's here and what to expect ``soon''.

\begin{itemize}
\item Part I Chapter 1: complete
\item Part I Chapter 2: partially complete
\begin{itemize}
\item Sections 2.1--2.5: complete.
\item Section 2.6:  The first subsection is present in draft form but needs polish. Later subsections exist in rough drafts, but are represented in this version only as stubs. \emph{Top priority} for subsequent versions.
\item Section 2.7: exists in rough draft. In this version it is mostly a stub, represented solely as a statement of one theorem. \emph{Second priority} for subsequent versions.
\end{itemize}
\item Part II Chapter 3: exists in draft form, some sections being somewhat complete. In this version, a few sections are represented by stubs with titles, to whet the readers appetite for later versions. But, even those section divisions and their titles are subject to change.\emph{Third priority} for subsequent versions.
\item Part II Chapter 4, and Part III Chapter 5, both exist in very rough draft forms. In this version, each is a stub represented solely by chapter titles.
%
\end{itemize}

\vfill\break

\section*{Introduction.}

\hfill{\emph{Moses supposes his toeses are roses.}}

\hfill{\emph{But Moses supposes erroneously.}}

\smallskip

\hfill  --- Adolph Green and Betty Comden

\hfill from their musical \emph{Singin' in the Rain}

\bigskip

Geometric group theory studies a group $\Gamma$ using topology and geometry. For instance, a topological model of $\Gamma$ might be a space $X$ whose fundamental group $\pi_1 X$ is isomorphic to $\Gamma$. A geometric model might be obtained by choosing some geometric structure on $X$, and then lifting that structure to the universal cover $\wt X$ to obtain a deck action $\Gamma \act \wt X$ whose elements are isometries of the geometry on~$\wt X$. 

When a group $\Gamma$ has an interesting outer automorphism group $\Out(\Gamma)$, the topology, geometry, and dynamics of $\Out(\Gamma)$ can be enriched by considering how to ``vary'' or ``deform'' topological and geometric models of $\Gamma$. If one can package these deformations into a ``deformation space'' on which $\Out(\Gamma)$ itself acts properly and cocompactly with an invariant geometry, then the tools of geometric group theory can then be applied to this deformation space in order to study the group $\Out(\Gamma)$ itself. Also, one can learn a lot about $\Gamma$ and $\Out(\Gamma)$ by studying the dynamical behavior of individual elements of $\Out(\Gamma)$ acting on the deformation space. 

Before explaining our main theme about using deformation spaces to understand outer automorphism groups of free groups, we first describe some more classical examples of deformation spaces with which the reader may be familiar.

\paragraph{Deformation spaces from classical geometry.} Look at a torus $T^2$. Its fundamental group is $\Z^2$. There are many Euclidean metrics on $T^2$, and for each of them, the universal cover $\wt T^2$ is isometric to the Euclidean plane $\euc^2$ equipped with an isometric deck transformation action $\Z^2 \act \euc^2$. This action has a parallelogram as its fundamental domain, whose sides are displacement vectors for the action be the standard basis elements of~$\Z^2$. By deforming the metric on $T^2$, or equivalently by deforming the deck action $\Z^2 \act \euc^2$, or equivalently by deforming the shape of the fundamental parallelogram, one produces a nice deformation space on which the group $\Out(\Z^2) \approx \GL_2(\Z)$ acts. To be precise, after normalizing the action $\Z^2 \act \euc^2$, conjugating it appropriately by a rotation and similarity, the fundamental parallelogram in $\euc^2 \approx \complex$ has vertices $0,1,z,z+1$ for a unique $z \in \complex$ having positive imaginary part. The index 2 subgroup $\Out_+(\Z^2) \approx \SL_2(\Z)$ acts on the upper plane of $\complex$ by fractional linear transformations, preserving the \Poincare\ metric. 

Thus we obtain one of the marvelous facts of mathematics: the deformation space of euclidean structures on a torus is the hyperbolic plane $\hyp^2$, equipped with the fractional linear action of $SL_2(\Z)$.

As will be explained in Chapter~3, the growth behavior of an individual nontrivial element $M \in \SL_2(\Z) \approx \Out(Z^2)$ is closely tied with the geometry of the action of $M$ on $\hyp^2$: $\abs{tr(M)} < 2$ if and only $M^k(x)$ is periodic for each $x \in \Z^2$, if and only if $M$ has a fixed point in $\hyp^2$; $\abs{tr(M)} > 2$ if and only if $M^k(x)$ grows exponentially for each nonzero $x \in \Z^2$ if and only if $M$ is a loxodromic isometry of $\hyp^2$, translating along an invariant geodesic; and $\abs{tr(M)} = 2$ if and only if $M^k(x)$ has linear growth for some elements $x \in \Z^2$ if and only if $M$ is a parabolic isometry of $\hyp^2$, fixing a unique point on the circle at infinity.

The example of $T^2$ generalizes to the $n$-dimensional torus $T^n$, whose fundamental group is $\Z^n$. By lifting Euclidean structures on $T^n$ one obtains isometric actions $\Z^n \act \euc^n$. By normalizing the action, one can produce a nice deformation space. In the language of Lie groups, this deformation space is the symmetric space often denoted $SL(n,\reals)/SO(n,\reals)$, which can be thought of concretely as the space of normalized quadratic forms on $\reals^n$. The group $\Out(\Z^n) \approx GL_n(\Z)$ acts naturally on this symmetric space. The dynamical behavior of an individual element $M \in \GL_n(Z)$ under this action is closely related to the real Jordan form of $M$.

The example of $T^2$ also generalizes to $2$-dimensional closed, oriented surfaces $S_g$ of genus $g \ge 2$, with fundamental group $\pi_1(S_g)$. There are many hyperbolic structures on $S_g$, and the lift of each such structure to the universal cover $\wt S_g$ is isometric to the hyperbolic plane $\hyp^2$, leading to an isometric deck action $\pi_1(S_g) \act \hyp^2$. There is a canonical isomorphism $\Out(\pi_1(S_g)) \approx \MCG(S_g)$, the mapping class group of the surface $S_g$. The corresponding deformation space of hyperbolic structures is known as the \Teichmuller\ space of $S_g$, on which the group $\Out(\pi_1(S_g)) \approx \MCG(S_g)$ acts. The dynamics of an individual element of this action, after initial investigations by Nielsen in the 1940's, came to full fruition  with the studies of Thurston in the 1970's.

\paragraph{The outer space of $F_n$ and its action by $\Out(F_n)$.} The finite rank free groups $F_n$ and their outer automorphism groups $\Out(F_n)$ have a rich theory that has been studied by developing analogies with the theory of surfaces and their mapping class groups, and of lattices in classical Lie groups such as $GL_n \Z$. These analogies occur at the ``top level'', however, and often do not extend down to the lower, technical levels. Instead of employing the tools of classical geometry --- analysis, complex structures, hyperbolic geometry, Riemannian metrics, quadratic forms etc.\ etc. --- the topology, geometry, and dynamics of free groups are studied using\ldots\ldots\, \emph{graphs}. This lack of classical structure can make the study of free groups seem, oddly enough, rather technical---one must ask a lot from graphs, in order to prove the deepest theorems about $F_n$ and $\Out(F_n)$.

In the examples of $\Z^n$ and $\pi_1(S_g)$ mentioned above, it was unnecessary to vary the topological model to get a good deformation space. Those topological models $T^n$ and $S_g$, and their appropriate geometries, have a ``canonical'' nature to them. For example, all of the Euclidean structures on $T^n$ have universal cover isometric to $\euc^n$, all of the hyperbolic structures on $S_g$ have universal cover isometric to $\hyp^2$. Varying these structures, equivalently varying the actions on the universal cover, suffices for producing a good deformation space. One way to capture this canonical nature is using theorems saying that each element of the outer automorphism group of $\Z^n$ or of $\pi_1(S_g)$ is represented by a homeomorphism of $T^n$ or of $S_g$, respectively.

To study the rank~$n$ free group $F_n$, one might choose the rank~$n$ rose $R_n$ as a topological model for $F_n$. But if Moses supposes that roses are the only topological models needed for the study of $F_n$, then Moses supposes erroneously. The rose does not suffice for modelling $F_n$, in the same way that the topological models described above suffice for $\Z^n$ and $\pi_1(S_g)$. The group $F_n$ is isomorphic to the fundamental group of any connected graph of Euler characteristic $1-n$. There are multiple choices for such graphs, even with reasonable restrictions such as finiteness of the graph and no valence~$1$ vertices. The three such ``core graphs'' in rank~$2$ are depicted in Figure~\ref{FigureRankTwoCoreGraphs}. No matter what core graph one chooses as a model for~$F_n$, the homeomorphisms of that graph represent only a finite subgroup of the infinite group $\Out(F_n)$. In order to study $F_n$ and $\Out(F_n)$, one needs to vary not only the geometries of core graphs --- by varying the lengths of its edges --- one needs also to vary the choice of the graph itself. 

The idea of using graphs to prove theorems about free groups goes back to work of Dehn and Reidemeister who gave a topological proof of the Nielsen--Schreier theorem \cite{Nielsen:Noncommutative} saying that every subgroup of a free group is free; for the history of this topological proof see Section 2.1.1 of Stillwell's book \cite{Stillwell:TopologyAndGroupTheory}. It is arguable that a solid intuition for the relation between graphs and free groups was behind the earliest advances of Nielsen and Whitehead in the 1920s and '30s. Starting in the late 1980's and early 1990's, particularly with the work of Culler and Vogtmann on outer space \cite{CullerVogtmann:moduli} and the work of Bestvina and Handel on relative train tracks \cite{BestvinaHandel:tt}, the concept of core graphs plainly emerged as the \emph{correct} basis for the study of the topology, geometry, and dynamics of free groups. 

The object of this work is to try to unveil the topological/geometric intuition behind the theory of free groups. The method we follow is to focus on a series of problems in the study of free groups, and use the solutions of those problems to motivate topological/geometric tools. We do not aim to write down proofs which minimize the number of alphanumeric characters. We instead strive to write down proofs which maximize the development of broadly applicable geometric tools.

In Part I we study problems solved by Nielsen and Whitehead in the 1930's, but we approach these problems from a modern topological/geometric viewpoint and we formulate their solutions so as to motivate marked graphs and fold paths and, eventually, the outer space of $F_n$. 

In Part II we study growth properties of outer automorphisms of $F_n$, and we use that study to motivate relative train track maps as topological/dynamical representatives of outer automorphisms.

In Part III we study dynamical properties of the topological representatives that were introduced in Part II. We focus in particular on studying how to count fixed points and periodic points of relative train track maps. We shall use that study to motivate various asymptotic properties of relative train track maps and of the outer automorphisms that they represent.

\paragraph{Exercises.} Exercises are chosen with many goals. Sometimes they are simply examples, other times the first hint of concepts which will be developed in full at a later stage. They may be true exercises for the student to build their mathematical muscles, or they may be bits of proof which the author is either too lazy or exhausted to write. They range from easy to unsolved, and from tedious to fascinating. 

We often use exercises to immediately explore and understand a new concept, and occasionally there will be a subsection consisting almost exclusively of exercises with new definitions interspersed; the first occurrence of this is Section~\ref{SectionBasedHMCG}.

\paragraph{Acknowledgements.} This work grew out of a semester course taught twice at Rutgers--Newark, and minicourse versions taught at the University of Chicago and at a conference at Princeton University.

\paragraph{Disclaimers.} Please feel free to inform me via e-mail of any typographical errors, mis-citations, mathematical errors, or other similar errors, which I will try to swiftly correct.
\begin{itemize}
\item My e-mail: \texttt{mosher@newark.rutgers.edu}
\end{itemize}

\part{Outer space, fold paths, and the Nielsen/Whitehead problems}

\section*{Introduction to Part I.}

A rank $n$ free group $F_n$ has a rich automorphism group $\Aut(F_n)$ and a similarly rich outer automorphism group $\Out(F_n)$, as exhibited already by Nielsen and Whitehead in their works \cite{Nielsen:Noncommutative,Whitehead:CertainSets,Whitehead:EquivalentSets}. In those papers they pursued and solved various algebraic and computational questions about free groups. Consider, for example free basis $s_1,\ldots,s_n$ and an $n$-tuple of elements $w_1,\ldots,w_n \in F_n$. How do you tell whether there exists an automorphism $\Phi \in \Aut(F_n)$ such that $\Phi(s_i)=w_i$ for all $i=1,\ldots,n$? How do you tell whether $\Phi$ exists so that $\Phi(s_i)$ is conjugate to $w_i$ for $i=1,\ldots,n$? Whitehead gave a marvelous 
algorithmic solution to these problems in his paper \cite{Whitehead:CertainSets}.  

Over the course of part I we shall slowly explore these problems of Nielsen and Whitehead, using these explorations to develop modern tools used today for studying $\Aut(F_n)$ and $\Out(F_n)$, and building up to a modern geometric version of Whitehead's solutions.

In Chapter 1, using just the formulations of the Nielsen/Whitehead problems as motivation and justification, we shall develop the following tools:

\smallskip $\bullet$ Geometric structures for free groups known as \emph{marked graphs} with \emph{length structures}.

\smallskip $\bullet$  \emph{Outer space}, a deformation space of marked graphs and length structures.

\smallskip\noindent
The outer space of $F_n$, denoted $\X_n$, was introduced by Culler and Vogtmann in the paper \cite{CullerVogtmann:moduli}, where they proved $\X_n$ is contractible, which they immediately applied to deduce important results about the group $\Out(F_n)$. Here in Chapter 1, even without contractiblity of $\X_n$, we will get some immediate applications of marked graphs, such as: the classification of finite subgroups of $\Out(F_n)$, due independently to Culler \cite{Culler:FiniteSubgroups}, Zimmerman \cite{Zimmerman:FiniteSubgroups}, and Khramtsov~\cite{Khramtsov:FiniteSubgroups}; and the existence of a torsion free finite index subgroup of $\Out(F_n)$ due to Baumslag and Taylor \cite{BaumslagTaylor:Center}.

In Chapter 2, we will continue our explorations of Whitehead's solutions, using them to formulate further tools, including:

\smallskip$\bullet$ \emph{Fold paths} in outer space

\smallskip\noindent
Fold sequences were introduced by Stallings \cite{Stallings:folding}, as a geometric method for getting modernized solutions of old problems about free groups. When fold sequences qre applied in the context of the outer space $\X_n$, they may be reconfigured as \emph{fold paths} in outer space, and used them to navigate $\X_n$ in a manner not unlike how geodesics and other nice paths are used in Riemannian geometry. For example, Skora used fold paths to give a new proof of contractibility of $\X_n$ \cite{Skora:deformations}. We will show how the exploration of Whitehead's problems leads naturally to the concept of fold paths, and to a modernized description of Whitehead's algorithms which are used to solve those problems. We will also give a modern derivation of Nielsen's generators, and the closely related Whitehead generators, for $\Aut(F_n)$ and $\Out(F_n)$.

\chapter{Marked graphs and outer space: \\ Topological and geometric structures for free groups}
\label{ChapterMarkedGraphs}

\section{Free groups and free bases} 
\label{SectionFreeGroupsAndBases}

\centerline{\emph{What is the sound of one hand clapping?}}

\smallskip

\hfill  --- Zen koan

\bigskip

Consider a group $F$, a subset $S \subset F$, and a set of ``formal inverses'' $\overline S = \{\bar s \suchthat s \in S\}$. The details of the definition of $\overline S$ are unimportant, all that matters are that $S,\overline S$ are disjoint and that they are equipped with a specific bijection $s \leftrightarrow \bar s$; but one could, for example, define $\overline S = S \times \{0\}$ and $\bar s = (s,0)$. A \emph{word over $S \union \overline S$} means simply a sequence $w = (s_1,\ldots,s_K)$ where $s_k \in S \disjunion \overline S$ for each $k=1,\ldots,K$, and the integer $K$ is called the \emph{length} of the word. The \emph{product} or \emph{evaluation} of the word $w$ is simply the product defined using the group operation on $F$, and is written as a concatenation $s_1 \cdots s_K$; the associative law in $F$ guarantees that parentheses may be ignored in this concatenation. In combinatorial group theory one almost universally abuses notation by confusing the word $(s_1,\ldots,s_K)$ with its concatenation $s_1 \cdots s_K$; hopefully the reader will get used to this. Any equation of the form $g = s_1 \cdots s_K$ can be simplified by canceling consecutive letter pairs of the form $s s^\inv$ or $s^\inv s$. A word is \emph{reduced} if no cancellation is possible, meaning that no such consecutive letter pairs exist. Since cancellation shortens the word, repeated cancellation must always stop at a reduced word. Even the word of length $K=0$ is covered by these definitions, as long as the mathematical koan ``What is the product of the empty word?'' is answered by ``The identity element''.

We say that $S \subset F$ is a \emph{free basis} for $F$ if every element $g \in F$ is equal to the product of a \emph{unique} reduced word over $S \disjunion \overline S$. A group $F$ is said to be a \emph{free group} if it has a free basis $S$. If this is so then any two free bases of $F$ have the same cardinality, a number called the \emph{rank} of $F$, equal to the rank over $\Z$ of the abelianization of~$F$ (see Exercise~\ref{ExerciseFreeToAbelianBasis} below). In general, equality in $F$ is determined simply as equality of reduced words. Any finite word over $S \union \overline S$ can therefore be evaluated in the group $F$ by repeated cancellation until reaching a reduced word. For example the product of two reduced words is evaluated by concatenation and repeated cancellation.

One might wonder: Do free groups exist? The discussion above leads directly to a constructive definition. Given a set $S$, the \emph{free group with free basis $S$}, denoted $F(S)$~or~$F\<S\>$ or~$\<S\>$, is defined as follows. Again let $\overline S = \{\overline s \suchthat s \in S\}$ denote a set of formal inverses of the elements of $S$. Define the elements of $F\<S\>$ to be all reduced words over $S \disjunion \overline S$. Define a binary operation on $F\<S\>$ called ``concatenate and cancel'', as follows. Given two reduced words $(s_1, \ldots, s_K)$ and $(t_1, \ldots, t_L)$, first form the concatenation $(s_1, \ldots, s_K, t_1, \ldots, t_L)$. Then cancel repeatedly until one obtains a reduced word. The resulting reduced word is well-defined because at first the only possible cancelling pair is $(s_K, t_1)$: if they do not cancel then the concatenation is already reduced; if they do cancel, the only possible cancelling pair of the resultant word $(s_1, \ldots, s_{K-1}, t_2, \ldots, t_L)$ is $(s_{K-1},t_2)$; now continue by induction. The identity element of $F\<S\>$ is the empty word, and the inverse of a reduced word $s_1 \cdot \ldots \cdot s_K$ is $s_K^\inv \cdot \ldots \cdot s_1^\inv$, where we use the rule $(s^\inv)^\inv = s$. But there is one slightly tricky question about this definition:
\begin{itemize}
\item Is the ``concatenate and cancel'' operation on $F\<S\>$ associative?
\end{itemize}
For the moment we suspend resolution of this question. The reader may wish to work out an algebraic proof of associativity, perhaps by inducting on length and using a case analysis. In Section~\ref{SectionGraphsAndPaths} we will give a topological proof of associativity, arising from our geometric intuition of graphs and trees. Understanding this topological proof leads usefully to an understanding of other topological connections between algebra and free groups which will arise later.

\subparagraph{Notation for homomorphisms defined on free groups.} Exercise~\ref{ExerciseUniversal} invites the reader to prove the universality property for free groups, which tells us that a group homomorphism $F\<a,b,\ldots,c\> \mapsto \Gamma$ with domain a free group is well-defined \emph{and} uniquely defined by simply listing the images of the free basis elements in the following format: 
$$\begin{matrix} F \\ a \\ b \\ \vdots \\ c \end{matrix} \, \mapsto \, 
\begin{matrix} \Gamma \\ \alpha \\ \beta \\ \vdots \\ \gamma \end{matrix}
$$
Having done that we are then free to add into the mix the images of some other elements of $F$, for example
$$\begin{matrix} b a^\inv c \\ \vdots \\ a^{42} b
\end{matrix}
\, \mapsto \,
\begin{matrix} \beta \alpha^\inv \gamma \\ \vdots \\ \alpha^{42} \beta
\end{matrix}
$$

\paragraph{Exercises for Section \ref{SectionFreeGroupsAndBases}}

Here are a few exercises that have quick geometric proofs, when properly visualized.
\begin{exercise} 
\label{ExerciseFreeIsTorsionFree}
Prove that a free group is torsion free.
\end{exercise}

\begin{exercise} 
\label{ExerciseInversesNotConjugate}
Prove that the only element of a free group that is conjugate to its own inverse is the identity (we will revisit this in Exercise~\ref{ExerciseInversesNotConjugateRevisited}).
\end{exercise}

A reduced word $h  = (s_1,\ldots,s_K) \in F\<S\>$ is \emph{cyclically reduced} if $s_1 \ne \bar s_K$. Assuming $h$ is cyclically reduced, we say that $h$ is \emph{root free} if there does not exist a reduced word $v \in F\<S\>$ and an integer $p \ge 2$ such that $h = v^p$ (without cancellation).

\begin{exercise}\label{ExerciseMaximalCyclic}
Prove that every nontrivial element of a free group is contained in a unique maximal infinite cyclic subgroup. Prove furthermore that a reduced word $w \in F\<S\>$ generates a maximal cyclic subgroup if and only if there exist reduced words $g,h \in F\<S\>$ such that $w=ghg^\inv$ (without cancellation), and $h$ is cyclically reduced and root free.
\end{exercise}

The next exercise gives a universality property of free groups. It is something that one applies all the time without even thinking about it; the exercise that follows it is an example of such application.

\begin{exercise}
\label{ExerciseUniversal}
Prove that a free group $F$ with free basis $S$ satisfies the following universality property: for any group $\Gamma$, any function $S \mapsto \Gamma$ extends uniquely to a homomorphism $F \mapsto \Gamma$. 
\end{exercise}

\begin{exercise}
\label{ExerciseOrderedFreeBasis}
Consider a finite rank free group $F_n = F(s_1,\ldots,s_n)$. Consider an $n$-tuple of reduced words $(w_1,\ldots,w_n)$, not necessarily distinct. Prove that the homomorphism defined by $s_i \mapsto w_i$ is an automorphism of $F_n$ if and only if the $w_i$ are indeed pairwise distinct, and the set $W=\{w_1,\ldots,w_n\}$ is a free basis of $F_n$. 
\end{exercise}

\begin{exercise}
\label{ExerciseFreeBasesAndAuts}
Following up Exercise~\ref{ExerciseOrderedFreeBasis}, define an $k$-tuple of reduced words $(w_1,\ldots,w_k)$ to be an \emph{ordered free basis} of $F_n$ if the $w$'s are pairwise distinct and the subset $W=\{w_1,\ldots,w_k\}$ is a free basis of $F_n$. Prove that $\Aut(F_n)$ acts freely and transitively on the set of ordered free bases of cardinality~$n$. Hence, the map from $\Aut(F_n)$ to $n$-tuples of reduced words, given by $\Phi \mapsto (\Phi(s_1),\ldots,\Phi(s_n))$, is a bijection between $\Aut(F_n)$ and the set of ordered free bases of cardinality~$n$.
\end{exercise}

Exercise~\ref{ExerciseFreeBasesAndAuts} (along with other considerations) motivates the following question: if $F$ is a free group, do all of its free bases have the same cardinality? The following exercise answers this question affirmatively, allowing $\rank(F)$ to be well-defined as the cardinality of any free basis, which equals the dimension over~$\Z$ of the abelianization of~$F$.

\begin{exercise}
\label{ExerciseFreeToAbelianBasis}
Given a free group $F$, consider the abelianization homomorphism $F \mapsto \ab(F) = F / [F,F]$, where $[F,F]$ denotes the commutator subgroup of $F$. Prove that a free basis of $F$ maps one-to-one onto a basis of $\ab(F)$ in the sense of abelian groups. 
\end{exercise}

Here is a hint/outline for Exercise~\ref{ExerciseFreeToAbelianBasis}. Given a free basis $S$, let $\oplus_S \Z$ denote a direct sum of one copy of $\Z$ for each element of $S$. Define a map $m \from F \mapsto \oplus_S \Z$ in which for any word $w$ in $S$ and for any generator $s \in S$, the $s$-coordinate of $f(w)$ is equal to the sum of the exponents of the occurrences of $s$ in $w$. For example, in $F_2=F\<s_1,s_2\>$ we have $m(s_1)=\<1,0\>$, $m(s_2)=\<0,1\>$, $m(s_1^2 s_2 s_1^\inv s_2^\inv) = (1,0)$, $m(s_1s_2s_1s_2) = (2,2)$, etc. Prove that $m$ is surjective and that its kernel is $[F,F]$.

\section{Graphs} 
\label{SectionGraphs}

\hfill{\emph{[Sam:] ``But where have you been to?''}}

\hfill{\emph{``Sneaking,'' said Gollum...}}

\smallskip

\hfill  --- J.\ R.\ R.\ Tolkien, \emph{The Two Towers}

\subsection{Graphs, trees, and their paths.} 
\label{SectionGraphsAndPaths}
We recall some definitions and set some notation for paths. Consider a continuous path $\gamma \from [0,1] \to X$ in a space $X$. The \emph{path homotopy class} of $\gamma$ is denoted $[\gamma]$; when the endpoints of $\gamma$ are both at $p \in X$ then $[\gamma]$ is an element of $\pi_1(X,p)$. The \emph{reversal} of $\gamma$ is the continuous path $\bar\gamma \from [0,1] \to X$ defined by $\bar\gamma(t)=\gamma(1-t)$; when $\gamma$ has endpoints at $p \in X$ then $\bar\gamma$ represents $[\gamma]^\inv$. A \emph{reparameterization} of $\gamma$ means a continuous path of the form $\gamma\composed r$ where $r \from [0,1] \to [0,1]$ is an orientation preserving homeomorphism. Reparameterization is an equivalence relation on continuous paths which in general is much stronger than homotopy. Usually we regard two continuous paths as identical when one is a reparameterization of the other; for a short while we will remind the reader of this by speaking about a property of paths being true ``up to reparameterization'', but soon will will drop this reminder. 

A \indexemph{graph} $G$ is a 1-dimensional CW complex, its $0$-cells called \index{vertex}\emph{vertices}, its $1$-cells called \index{edge}\emph{edges}. We let $\Vertices(G) \subset G$ denote the set of vertices. Formally each edge $e$ has an \emph{interior} which is a component of $G - \Vertices(G)$, and $e$ itself is the closure of its interior; we let $\Edges(G)$ denote the set of edges. An edge $e$ may be \emph{parameterized} by a continuous function $\gamma \from [0,1] \to G$ so that $\gamma(0)$, $\gamma(1)$ are vertices and $\gamma$ maps $(0,1)$ homeomorphically onto the interior of~$e$; in the language of CW complexes, such a parameterization $\gamma$ is called a ``characteristic function'' of~$e$. Two parameterizations of $e$ are equivalent up to reparameterization if and only if they induce the same orientation on $e$. When an orientation of $e$ is specified, we use the notation $\bar e$ to denote the edge with its opposite orientation. And when an orientation preserving parameterization $\gamma$ is chosen for an oriented edge $e$, then $\gamma(0)$ is designated as the \emph{initial vertex} of $e$ and $\gamma(1)$ is the \emph{terminal vertex}; these designations are independent of the choice of $\gamma$. The \emph{boundary} or \emph{endpoint set} of $e$ is $\bdy e = \{\gamma(0),\gamma(1)\}$, and this set may consists of a single point $\gamma(0)=\gamma(1)$ or two distinct points. We say that $e$ is a \emph{loop edge} if $\gamma(0)=\gamma(1)$, otherwise $\gamma$ is a \emph{non-loop edge}.

To say that a graph $G$ is finite means that its vertex and edge sets $\Vertices(G)$, $\Edges(G)$ are finite, in which case we have the following result from algebraic topology:
\begin{theorem}[The Euler Characteristic Theorem]
$$\abs{\Vertices(G)} - \abs{\Edges(G)}  = \rank(H_0(G)) - \rank(H_1(G))
$$
and this number is defined to be the \emph{Euler characteristic} $\chi(G)$. If $G$ is connected we therefore have
$$\rank(H_1(G)) = \abs{\Edges(G)} - \abs{\Vertices(G)} + 1
$$
\qed
\end{theorem}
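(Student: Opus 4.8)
The plan is to read both sides of the first displayed equation off of the cellular chain complex of the $1$-dimensional CW complex $G$. Since $G$ has no cells above dimension $1$, this chain complex has the shape $0 \to C_1 \xrightarrow{\bdy} C_0 \to 0$, where $C_0$ is the free abelian group on the vertex set $\Vertices(G)$ and $C_1$ is the free abelian group on $\Edges(G)$ (with each edge assigned an orientation); finiteness of $G$ gives $\rank(C_0) = \abs{\Vertices(G)}$ and $\rank(C_1) = \abs{\Edges(G)}$, both finite. The boundary map sends an oriented edge to its terminal vertex minus its initial vertex, and cellular homology in this range is $H_1(G) = \ker(\bdy)$, $H_0(G) = C_0 / \image(\bdy)$; the standard comparison theorem identifies these with the singular homology groups appearing in the statement.

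First I would record the elementary lemma that rank is additive along short exact sequences of finitely generated abelian groups --- equivalently, apply $-\otimes\Q$ and invoke the linear-algebra rank--nullity theorem. Applying this to $0 \to \ker(\bdy) \to C_1 \to \image(\bdy) \to 0$ and to $0 \to \image(\bdy) \to C_0 \to C_0/\image(\bdy) \to 0$ gives $\rank(C_1) = \rank(\ker\bdy) + \rank(\image\bdy)$ and $\rank(C_0) = \rank(\image\bdy) + \rank(C_0/\image\bdy)$. Subtracting cancels $\rank(\image\bdy)$ and yields $\rank(C_0) - \rank(C_1) = \rank(H_0(G)) - \rank(H_1(G))$; substituting the two cell counts is exactly the asserted identity, and $\chi(G)$ is then well-defined as this common integer.

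For the connected case I would just observe that a nonempty connected space has $H_0 \cong \Z$, so $\rank(H_0(G)) = 1$, and rearranging the first formula gives $\rank(H_1(G)) = \abs{\Edges(G)} - \abs{\Vertices(G)} + 1$. A second, more hands-on route, closer to the graph-theoretic spirit of the chapter, is: choose a spanning tree $T \subset G$, note that collapsing $T$ is a homotopy equivalence of $G$ onto a wedge of circles, one circle for each edge of $G$ lying outside $T$; since a finite tree has exactly one fewer edge than vertex, the number of such circles is $\abs{\Edges(G)} - \abs{\Vertices(G)} + 1$, so $H_1(G)$ is free of that rank while $H_0(G) \cong \Z$. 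For disconnected $G$ one runs this component by component, using a spanning forest.

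I do not expect a real obstacle, since the content is imported wholesale from algebraic topology; the main points needing care are (i) the comparison of cellular with singular homology, so that the $H_i(G)$ in the statement really are the cellular groups computed above, and (ii) tracking finite generation so that every rank in sight is finite and rank-additivity applies. If the spanning-tree route is taken instead, the one genuinely non-formal input is that collapsing a contractible subcomplex of a CW complex is a homotopy equivalence.
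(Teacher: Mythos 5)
Your argument is correct; note that the paper itself offers no proof at all --- the statement is quoted as a standard fact from algebraic topology and closed with a \qed --- so there is nothing to compare against. Your cellular chain-complex computation (rank additivity along the two short exact sequences built from $\bdy \from C_1 \to C_0$) is exactly the standard proof being implicitly invoked, and your spanning-tree alternative for the connected case is also sound and, if anything, closer in spirit to the chapter's later use of maximal trees to exhibit free bases of $\pi_1(G)$; the only care needed there is the point you already flag, that collapsing a contractible subcomplex is a homotopy equivalence.
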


Define an \indexemph{edge path} in a graph $G$ from a vertex $v$ to a vertex $w$ to be a path $\gamma$ which (up to reparameterization) is a concatenation of oriented edges $\gamma = e_1 * e_2 * \cdots *e_K$, so that the initial vertex of $e_1$ is $v$, the terminal vertex of $e_{i-1}$ equals the initial vertex of $e_{i}$ for $i=2,\ldots,K$, and the terminal vertex of $e_K$ is $w$. We allow the possibility that $K=0$ and that $\gamma$ degenerates to the trivial path based at some vertex. An edge path is said to be \emph{reduced}, or \emph{tight}, or to have \emph{no cancellation}, if $e_{i-1} \ne \bar e_{i}$ for each $i=2,\ldots,K$; for a nondegenerate edge path this happens if and only if it is locally injective. 

More generally, a path in a graph is \emph{tight} if it is either constant or locally injective. Graphs have the following strong geometric property: every continuous path \hbox{$\gamma \from [a,b] \to G$} with endpoints at vertices may be uniquely \emph{tightened}, meaning that it may be path homotoped to a tight path, and the result is unique up to  reparameterization (by default, reparameterizations must preserve orientation). This induces a bijection between the set of tight edge paths (modulo reparameterization) and the set of path homotopy classes of continuous paths. Furthermore, if the endpoints $\gamma(a),\gamma(b)$ are vertices of $G$ then the tightened path which is path homotopic to $\gamma$ is either constant or is a nondegenerate tight edge path in~$G$. As a special case, for each vertex $v \in G$, we obtain a bijection between closed, tight edge paths based at~$v$ and nonidentity elements of $\pi_1(G,v)$; the identity element is, of course, represented by the constant path. The concept of connectivity of a graph has two equivalent formulations: the ordinary concept of a connected topological space; and a graph theoretic formulation expressed solely in terms of the incidence relation amongst vertices and edges (see Exercise~\ref{ExerciseGraphConnectivity}). 

These properties of graphs and tight edge paths are analogous to properties of a complete Riemannian manifold $M$ of nonpositive sectional curvature: any path homotopy class in $M$ may be uniquely tightened, representing it by a unique geodesic; in particular for each $p \in M$ each element of $\pi_1(M,p)$ is represented by a unique geodesic. 

\paragraph{Terminology conventions:}  \quad
\begin{itemize}
\item When working in a graph, the bare terminology ``\emph{path}'' will usually mean a \emph{tight edge path}. When we want more other kinds of path objects we shall usually append qualifiers, for example ``continuous paths''.
\item Given a graph $G$ and a subgraph $H \subset G$ we define two ``difference operations'':
\begin{itemize}
\item The \emph{set complement} or \emph{set difference} is $G-H = \{x \in G \suchthat x \notin H\}$. This is never a subgraph unless $H$ is a union of components of $G$, which in a context where $G$ is connected means $H=\emptyset$ or $G$.
\item The \indexemph{graph complement} or \emph{graph difference} is $G \setminus H = \overline{G-H}$, which as a set is the closure of $G-H$, and which is a subgraph of $G$ whose vertex set $\Vertices(G \setminus H) = (G \setminus H) \intersect \Vertices(G)$ consists of those vertices of $G$ that are not in the interior of $H$, and whose edge set $\Edges(G \setminus H)$ consists of those edges of $G$ not contained in $H$.
\end{itemize}
\end{itemize}

\paragraph{The fundamental group of a graph is free.} In topology we learn that for any connected graph $G$ and any vertex $v$ the fundamental group $\pi_1(G,v)$ is a free group. To quickly review the proof, start with the fact that $G$ has a maximal tree $T$---for a finite graph, the existence of $T$ follows by induction; for a general graph, use the Hausdorff maximal principal. Each maximal tree contains $\Vertices(G)$. If $T \subset G$ is a maximal tree, and if each edge of $G \setminus T$ is assigned an orientation, then there is a free basis of $\pi_1(G,v)$ in one-to-one correspondence with the edges of $G \setminus T$: the free basis element corresponding to an oriented edge $e$ with initial vertex $u_-$ and terminal vertex $u_+$ is the path homotopy class $[\delta_- \, e \, \bar \delta_+]$ where $\delta_-$ and $\delta_+$ are the unique paths in $T$ from $v$ to $u_-$ and $u_+$, respectively. Verifying that this subset is a free basis is an application of the Van Kampen theorem. 

\medskip

In the remaining portions of this section we will study trees, and we will give a topological proof (independent of Van Kampen's theorem) that the fundamental group of a graph is free, based on properties of trees and on covering space theory.

\paragraph{The definition of a trees.} 
A \emph{tree} is a connected graph $T$ which satisfies any of a long list of equivalent properties: 
\begin{enumerate}
\item\label{TreeContractible}
$T$ is contractible; 
\item $T$ is simply connected;
\item $H_1(T;\Z)=0$;
\item $T$ has no subgraph homeomorphic to a circle;
\item\label{TreeNoCircleSubspace}
$T$ has no subspace homeomorphic to a circle;
\item\label{TreeEdgePath} For any vertices $x,y \in T$ there is a unique oriented tight edge path with initial vertex $x$ and terminal vertex $y$;
\item\label{TreeArcUniqueness}
For any points $x \ne y \in T$ there is a unique subset of $T$ homeomorphic to $[0,1]$ having endpoints $x,y$;
\item\label{TreeListContr}
Every connected subgraph of $T$ is contractible;
\item\label{TreeListSimplConn}
Every connected subgraph of $T$ is simply connected;
\item versions of \pref{TreeListContr}--\pref{TreeListSimplConn} quantifying over connected \emph{finite} subgraphs of $T$;
\item\label{TreeEuler}
Every connected finite subgraph of $T$ has Euler characteristic $1$.\end{enumerate}
Perhaps item~\pref{TreeArcUniqueness} is the truly characteristic, defining property of a tree, particularly because of its strong, purely topological nature. In Exercise~\ref{ExerciseTreeEquivalence} below we invite the reader to work out a proof of equivalence of most of the above, but in Section~\ref{SectionTreesAreContractible} we shall prove the key implication  \pref{TreeArcUniqueness}$\implies$\pref{TreeContractible} which says ``trees are contractible''.

To actually construct a tree when you need one, induction is useful. Start with a vertex. Attach a bunch of edges each having one endpoint at the vertex. Repeat inductively, at each stage attaching a bunch of edges each having one endpoint at a vertex of the previous stage. Finally, take the union of all the stages of the induction, and you'll get a tree. See Exercises~\ref{ExerciseTreeInduction} and~\ref{ExerciseTreeUnion} for careful statements of the inductive step and the union step.

\subparagraph{Applications of trees to free groups.} In Section~\ref{SectionFreeGroupsAndBases}, we left open the question of whether the binary operation ``concatenate and cancel'' on the set $F\<S\>$ is associative. We now describe a geometric method, based on trees, that proves associativity, produces the Cayley graph for $F\<S\>$, and has other useful features. The central idea is that the entire set of reduced words can be visualized as a tree.

We start with some general definitions, which the reader may recognize in relation to Cayley graphs. Define an \emph{$S$-labelling} of a graph $G$ to be an assignment, to each edge $E \subset T$, of two pieces of data, such that certain properties hold. The data assigned to $E$ are: a label consisting of an element of $S$; and an orientation of $E$. The defining properties are that for each vertex $V \in G$ and each $s \in S$ there exists at most one edge labelled by $s$ having initial vertex $V$, and there exists at most one edge labelled by $s$ having terminal vertex~$V$. An $S$-labelled graph is said to be \emph{complete} if, in the defining properties, the two occurrences of the phrase ``at most one'' can be replaced by ``exactly one''. 

In any $S$-labelled graph $G$, each edge path $\gamma = E_1 \ldots E_L$ is labelled by a word in $S \disjunion \overline S$ denoted $W(\gamma) = w_1 \ldots w_L$, where $w_i = s^{\pm 1}$ if and only if $E_i$ is labelled by $s$, and the exponent is $+1$ (resp.\ $-1$) if and only if $\gamma$ passes over $E_i$ in the direction that agrees (resp.\ disagrees) with the orientation on $E_i$. If the $S$-labelling on $G$ is complete then for any vertex $V$ the word labelling function $W(\cdot)$ restricts to a bijection, denoted $W_V(\cdot)$, between edge paths having initial vertex $V$ and words in $S \disjunion \overline S$.

There exists a \emph{complete $S$-labelled tree}, denoted $T\<S\>$, 
which may be constructed by an inductive process following Exercises~\ref{ExerciseTreeInduction} and~\ref{ExerciseTreeUnion}: we construct a nested sequence of $S$-labelled trees $T_0 \subset T_1 \subset T_2 \subset \cdots$ and then take the union $T\<S\> = T_0 \union T_1 \union T_2 \union \cdots$. First take $T_0$ to be a single vertex. In the induction step, $T_{i+1}$ is obtained from $T_i$ by attaching new edges as required in Exercise~\ref{ExerciseTreeInduction} and described as follows: for each vertex $V \in \Vertices(T_i)$, each $s \in S$, and each $\epsilon \in \{-,+\}$, one checks to see whether $T_i$ already has an edge labelled $s$ with $\bdy_\epsilon E = V$; if not, attach a new edge of $T_{i+1}$ having those properties. Applying Exercise~\ref{ExerciseTreeUnion}, the union $T\<S\> = T_0 \union T_1 \union T_2 \union \cdots$ is a tree, and by induction one sees that $T\<S\>$ is a complete $S$-labelled tree. 

See Exercise~\ref{ExerciseSLabelledTreeUnique} for a uniqueness result regarding complete $S$-labelled trees.

Here are some key facts relating $T\<S\>$ and $F\<S\>$. Given a finite edge path $\gamma$ in $T\<S\>$ let its initial and terminal vertices be denoted $i(\gamma)$, $\tau(\gamma)$. For any complete $S$-labelled tree $T\<S\>$ and any $V \in \Vertices(T\<S\>)$ we have bijections
$$\xymatrix{
\{\text{edge paths $\gamma$ in $T\<S\>$ with $V=i(\gamma)$}\} \ar[r]^{W_V} & \{\text{words over $S \cup \overline S$}\}  \\
\{\text{tight edge paths $\gamma$ in $T\<S\>$ with $V = i(\gamma)$}\} \ar[r]^{W_V}  \ar[u]^{\subset} \ar[d]_{\tau_V} & \{\text{reduced words over $S \cup \overline S$}\}=F\<S\> \ar[u]_{\subset} 
\\
\Vertices(T\<S\>) \ar[ur]_{R_V}
}$$
The $W_V$ in the first line of the above diagram is the word labeling bijection explained earlier for any complete $S$-labelled graph. Furthermore, it is evident $\gamma$ is a tight edge path if and only if $W_V(\gamma)$ is a reduced word, and so the restricted $W_V$ in the second line of the diagram is also a bijection. Also, the fact that the terminal vertex function $\tau$ becomes the bijection $\tau_V$ when it is restricted to the set of tight edge paths with initial vertex $V$ is an immediate consequence of item~\pref{TreeEdgePath} in the list of equivalent conditions defining a tree. We also define the bijection $R_V = W_V \circ \tau_V^\inv$, making the triangle commute.

Another fact following immediately from the definitions is that concatenation of paths in $T\<S\>$ and words in $F\<S\>$ correspond precisely under $W_V$. In more detail, consider $V \in \Vertices(T\<S\>)$ and words $w,v$ over $S \disjunion \overline S$. Let $\gamma_w = W_V^\inv(w)$ denote the edge path in $T\<S\>$ corresponding to $w$ with initial vertex $V$. Let $\gamma_v = (W_{\tau(\gamma_w)})^\inv(v)$ denote the edge path corresponding to $v$ with $\tau(\gamma_w)=i(\gamma_v)$. Then $W_V(\gamma_w \gamma_v) = wv$.

Our first application is the following algebraic fact:
\begin{description}
\item[Cancellation uniqueness lemma:] For any word $w$ over $S \disjunion \overline S$ there exists a unique reduced word $r(w)$ such that if $w$ is inductively reduced by eliminating cancelling pairs until there are no such pairs, then the result is $r(w)$ --- \emph{independent of the order in which one eliminates cancelling pairs}. 
\end{description}
To prove this, consider $\gamma = W_V^\inv(w)$, the edge path in $T\<S\>$ with initial vertex $V$ corresponding to $w$. The correspondence $W_V$ evidently preserves cancellation, in the sense that eliminating from $w$ a cancelling pair $s s^\inv$ or $s^\inv s$ corresponds, under $W_V$, to removing from $\gamma$ a backtracking subpath of the form $E E^\inv$ or $E^\inv E$. Note that the terminal endpoint $\tau(\gamma)$ is unchanged by this removal. This backtrack elimination process, when carried out inductively on $\gamma$, must therefore end with the unique tight edge path $\tau_V^\inv(\tau(\gamma))$ having initial vertex $V$ and terminal vertex $\tau(\gamma)$. It follows that the cancellation process carried out on $w$ must end with the reduced word $r(w) = R_V^\inv(\tau(\gamma))$. 

Note that the above fact is an \emph{a posteriori} consequence of $F\<S\>$ being a group under the operation ``concatenate and cancel'', but the proof of the latter is not complete until we establish the next fact:

\begin{description}
\item[Associativity lemma for $F\<S\>$:] The binary operation ``concatenate and cancel'' on the set $F\<S\>$ is associative --- and hence is a group operation.
\end{description}
To prove this, consider any reduced words $u,v,w \in F\<S\>$, consider the concatenated word $uvw$, and let $\gamma = W_V^\inv(uvw)$ be the path in $T\<S\>$ with initial vertex $V$ that corresponds to $uvw$. The results of ``concatenate and cancel'', implemented as either of the two associations $(uv)w$ or $u(vw)$, both result in the reduced word $r(uvw)$.

Next we have:

\begin{description}
\item[Cayley lemma for $F\<S\>$:] The $S$-labelled tree $T\<S\>$ is a Cayley graph for the group $F\<S\>$ with respect to the generating set $S$. 
\end{description}
To see why, the first requirement for a Cayley graph is that $T\<S\>$ be a complete $S$-labelled graph, which it is by construction. The next requirement is a bijection between the vertex set and the group, which provided by the map $R_V \from \Vertices(T\<S\>) \to F\<S\>$. The final requirement is that the bijection $R_V$ satisfies the following: for any edge $E$ labelled by a generator $s \in S$, if its initial vertex $i(E)$ corresponds to $w \in F\<S\>$ then its terminal vertex $\tau(E)$ corresponds to $ws$. This follows from the correspondence shown earlier between concatenation of paths in $T\<S\>$ and concatenation of words in $F\<S\>$. 

The following application is simply a restatement in our current context of a general theorem about Cayley graphs. 
\begin{itemize}
\item The group $\Aut_S(T\<S\>)$ of simplicial automorphisms of $T\<S\>$ that preserves the $S$-labelling acts freely and transitively on the vertex set of $T\<S\>$ (see Exercise~\ref{ExerciseSLabelledTreeUnique}). The group $\Aut_S(T\<S\>)$ is isomorphic to $F\<S\>$. A formula $\alpha \from F\<S\> \to \Aut_S(T\<S\>)$ for this isomorphism, depending on a choice of $V \in \Vertices(T\<S\>)$, is as follows: the automorphism $\alpha_w \from T\<S\> \to T\<S\>$ corresponding to $w \in F\<S\>$ is given on vertices by
$$\alpha_w(V') = R_V^\inv(w w') \quad\text{for $V' \in \Vertices(T\<S\>)$ where $w' = R_V(V')$}
$$
\end{itemize}

\marginparLee{Put in this ``Exercises for \ldots'' in any subsection with have exercises. Put all exercises at the end of the subsection. Add pointers wherever I currently have an exercise that has to be moved to the end, to fix any continuity issues.}
\subsection*{Exercises for Section \ref{SectionGraphsAndPaths}}

\begin{exercise}\label{ExerciseGraphConnectivity}
Prove that a graph is connected in the topological sense if and only if for any vertices $v \ne w$ there exists an edge path from $v$ to $w$.
\end{exercise}

\begin{exercise}
\label{ExerciseEulerPoincare}
In any finite graph $G$, prove the graph theoretic analogue of the Euler-Poincare index formula for vector fields: 
$$\chi(G) = \sum_{v \in \Vertices(G)} \text{Index}(v) \qquad\text{where}\qquad \text{Index}(v) = 1 -  \frac{\text{Valence}(v)}{2}
$$
\end{exercise}

\begin{exercise}
\label{ExerciseTreeEquivalence}
Prove equivalence of items~\pref{TreeContractible}--\pref{TreeEuler} in the definition of a tree. Add your favorites to the list and prove their equivalence to the others.
\end{exercise}

\begin{exercise}
\label{ExerciseTreeInduction}
Let $T$ be a tree. Let $\{E_i\}_{i \in I}$ be a pairwise disjoint collection of spaces homeomorphic to $[0,1]$, and for each $i$ let $p_i \in E_i$ be one of the endpoints. Let $f \from \{p_i\}_{i \in I} \to \Vertices(T)$ be a function. Let $T'$ be the quotient space obtained from the disjoint union of the $T$'s and the $E_i$'s by identifying each $p_i$ with $f(p_i)$. Prove that $T'$ is a tree, in which $T$ and each $E_i$ are naturally embedded as subcomplexes.
\end{exercise}

\begin{exercise}
\label{ExerciseTreeUnion}
Let $T$ be a graph, and suppose that there exist nested subgraphs 
$$T_1 \subset T_2 \subset T_3 \subset \cdots \subset T
$$
each of which is a tree, and suppose that $T = \union_{i=1}^\infinity T_i$. Prove that $T$ is a tree.
\end{exercise}

\begin{exercise}\label{ExerciseSLabelledTreeUnique}
Prove that complete $S$-labelled trees are unique in the following sense: for any two complete $S$-labelled trees $T,T'$ and for any vertices $v \in T$, $v' \in T'$ there exists a unique $S$-label preserving graph isomorphism $f \from T \to T'$ satisfying $f(v)=v'$.
\end{exercise}

As we saw earlier, associativity for $F\<S\>$ follows from cancellation uniqueness in the tree $T\<S\>$. You can back-engineer this proof to give a purely algebraic proof of the associative law $(uv)w=u(vw)$ for $F\<S\>$, using pictures in $T\<S\>$ to formulate a finite case analysis. No one would suspect that your proof has topological origins!

\begin{exercise}
Construct a sneaky algebraic proof of associativity of $F\<S\>$ by using the topological proof of associativity as a guide for explicitly writing out all of the various cases needed for an algebraic proof.
\end{exercise}

One might ponder that since every tree is contractible, every tree is homotopy equivalent to a point. So any two trees are homotopy equivalent to each other. Not only that, but any continuous function between two trees is a homotopy equivalence. Not only that, but\ldots

\begin{exercise}
\label{ExerciseTreeHE}
Let $S,T$ be trees, and let $f \from S \to T$ and $g \from T \to S$ be any continuous functions. 
\begin{enumerate}
\item Prove that $f$ and $g$ are homotopy inverses. 
\item Prove, more generally, that for any vertex subsets $A \subset \Vertices(S)$ and $B \subset \Vertices(B)$, if $f$ restricts to a bijection between $A$ and $B$, then then the maps of pairs $f \from (S,A) \to (T,B)$ and $g \from (T,B) \to (S,A)$ are homotopy inverses in the category of topological pairs.
\end{enumerate}
\end{exercise}

\begin{exercise}
\label{ExerciseTreeCount}
Given an integer $t \ge 1$, define a \emph{$t$-labelled tree} to be a pair $(T,\ell)$ consisting of a finite tree $T$ together with a bijection $\ell$ from the set $\{1,\ldots,t\}$ to the set of valence~$1$ vertices of $T$. An \emph{isomorphism} between two $t$-labelled trees $(T,\ell)$, $(T',\ell')$ is a homeomorphism $f \from T \to T'$ such that $\ell'(f(i))=f'(i)$ for $i=1,\ldots,t$. Let $N(t)$ be the number of isomorphism classes of $t$-labelled trees. Given another integer $k \ge 0$ let $N(t;k)$ be the number of isomorphism classes of $t$-labelled trees having $k$ vertices of valence~$\ge 3$. Prove the following:
\begin{enumerate}
\item For all $t \ge 1$, $N(t)$ is finite. In addition, for all $k \ge 0$, $N(t;k)$ is finite.
\item $N(1)=0$; $N(2)=N(3)=1$; and $N(4)=4$. \\ Also, $N(2,0)=1$; $N(3,0)=N(4,0)=0$; $N(3,1)=N(4,1)=1$; and $N(4,2)=3$.
\item If $t \ge 3$ then: $N(t,0)=0$; and $N(t,1)=1$; and $N(t,k)=0$ if $k \ge t-1$; and therefore 
$$N(t) = \sum_{k=1}^{t-2} N(t;k)
$$
\item If $t \ge 4$ and $k \ge 2$ then 
$$N(t;k) = k \cdot N(t-1;k) \,\, + \,\, (t+k-3) \cdot N(t-1;k-1)
$$
\end{enumerate}
\end{exercise}

\subsection{Trees are contractible}
\label{SectionTreesAreContractible}
In this section we prove the implication \pref{TreeArcUniqueness}$\implies$\pref{TreeContractible} of Section~\ref{SectionGraphsAndPaths}. If you wish to take \pref{TreeArcUniqueness} as the definition for a graph to be a tree, then this implication simply says ``trees are contractible''.

Let $T$ be a graph such that for any two points $x \ne y$ there is a unique subset of $T$ homeomorphic to $[0,1]$ having endpoints $x,y$, which we shall denote $[x,y]$ and will call the \emph{arc} with endpoints $x,y$. We'll make use of the easy implication \pref{TreeArcUniqueness}$\implies$\pref{TreeNoCircleSubspace}, and so $T$ has no subspace homeomorphic to the circle. In particular no edge of $T$ is a loop edge. 

Pick a vertex $P \in T$ called the \emph{root}. To prove contractibility of $T$, we shall construct a deformation retraction from $T$ to $P$, namely a continuous function $h \from T \times [0,1] \to T$ such that $h(x,0)=P$ and $h(x,1)=x$ for all $x \in T$, and $h(P,t)=P$ for all $t \in [0,1]$. The idea is to mimic the standard proof that star convex subsets of $\mathbb{R}^n$ are contractible: each point moves $x \in T$ along the unique path connecting it to the base point. Most of the work is just setting up the notation to do this, and checking continuity of various functions using the CW complex topology on $T$ (as described in Section~\ref{SectionGraphsAndPaths}).

By induction define a sequence of subgraphs
$$T_0 \subset T_1 \subset \cdots \subset T
$$
where $T_0 = \{P\}$ and for each $i$ the subgraph $T_i$ is the union of $T_{i-1}$ with all edges $e$ such that $e \intersect T_{i-1} = \bdy e \intersect T_{i-1} \ne \emptyset$; this set of edges $e$ is denoted $\E_i$, and so we can formally write 
$$T_i = T_{i-1} \union \biggl( \,\bigcup_{e \in \E_i} e\biggr)
$$
One shows by induction that each $T_i$ satisfies \pref{TreeArcUniqueness}. From \pref{TreeArcUniqueness} it follows that each $e \in \E_i$ has distinct endpoints, exactly one of which is contained in $T_{i-1}$; we denote $\bdy e = \{\bdy_- e,\bdy_+ e\}$ where $\bdy e \intersect T_{i-1} = \{\bdy_- e\}$. Evidently we have $T = \union_{i=0}^\infty T_i$. For each $e$ we choose a parameterization $\gamma_e \from [0,1] \to \overline e$ such that $\gamma(0)=\bdy_- e$ and $\gamma(1)=\bdy_+ e$ (if the chosen parameterization of $e$ has that property then fine, otherwise reverse the order of the parameter).

For each $x \ne P$ in $T$, consider the arc $[P,x]$. By stitching together the parameterizations of the edges that occur along $[P,x]$ we obtain a parameterization of $[P,x]$, as follows. There exists a unique sequence of edges $e_1,\ldots,e_n$ and a unique nontrivial initial subsegment $\eta \subset e_n$ such that such that $e_i \in \E_i$ for $i=1,\ldots,n$, such that $\bdy_+e_{i-1} = \bdy_- e_i $ for $i=2,\ldots,n$, and such that $\bdy_- e_n = \bdy_- \eta$ and $\bdy_+ \eta = x$. Thus we can write $[P,x]$ as a concatenation
$$[P,x] = e_1 \cdots e_{n-1} \eta
$$
Define the \emph{radius} of $x$ to be the number 
$$\rho(x) = n-1+\gamma^\inv_{e_n}(x)
$$
and so $n-1 < \rho(x) \le n$, and $\gamma_{e_n}$ restricts to an orientation preserving homeomorphism from the interval $[0,\rho(x)-n+1]$ to $\eta$. We obtain a unique parameterization
$$\gamma_x \from [0,\rho(x)] \to [P,x]
$$
having the property that for each $1 \le i \le n$ and each $t \in [0,1]$ we have
$$\gamma_x(t) = \begin{cases}
\gamma_{e_i}(t-i-1) & \quad\text{if $1 \le i \le n-1$ and $i-1 \le t \le i$} \\
\gamma_{e_n}(t-n-1)   &\quad\text{if $i=n$ and $n-1 \le t \le n-1+\rho(x)$}
\end{cases}
$$

The radius function $\rho$ extends to the root $P$ by setting $\rho(P)=0$.

Define the subspace $V \subset T \times [0,\infty)$ by
$$V = \{(x,s) \suchthat x \in T, \, 0 \le s \le \rho(x)\}
$$
Define the function $H \from V \to T$ by the formula
$$H(x,s)=\gamma_x(s), \quad 
$$
and scale $H$ to define the function $h \from T \cross [0,1] \to T$ by the formula
$$h(x,t) =  \gamma_x(t \cdot \rho(x)) = H(x,t \cdot \rho(x))
$$
Clearly $h(x,0)=P$ and $h(x,1)=x$ for all $x$, and so once continuity of $h$ is established it follows that $h$ is a homotopy between the constant map $T \mapsto \{P\}$ and the identity map on $T$. To prove continuity of $h$ it suffices to prove continuity of $H(x,s)$ and of $\rho(x)$. 

Continuity of $\rho$ is proved locally at each $x \in T$ as follows. When $x$ is not a vertex, and so $x$ is an interior point of $e_n$, the formula $\rho(y) = n-1+\gamma_{e_n}^\inv(y)$ is valid for interior points of $e_n$; this formula is continuous on the interior of $e_n$ because $\gamma_{e_n} \from [0,1] \to e_n$ is a homeomorphism, and since the interior of $e_n$ is an open neighborhood of $x$ in $T$, continuity of $\rho$ at $x$ follows. Suppose now that $x$ is a vertex. Assuming for the moment that $x \ne P$, we have $x = \bdy_+ e_n$. Let $\{e_{x,j}\}_{j \in J} \subset \E_{n+1}$ be an indexing of those edges of $\E_{n+1}$ whose initial endpoint equals $x$. The union of $x$ with the interiors of $e_n$ and the interiors of the edges $e_{x,j}$ is an open neighborhood $U_x$ of $x$ in $T$. The half-open intervals obtained by taking the union of $x$ with each of those edge interiors are closed subsets $\hat e_n$, $\hat e_{x,j}$ of $U_x$ that cover $U_x$, and so by the pasting lemma it suffices to show that $\rho$ is continuous on each of $\hat e_n$, $\hat e_{x,k}$. Continuity of $\rho$ on $\hat e_n$ comes from the formula $\rho(y) = n-1+\gamma_{e_n}^\inv(y)$, and continuity on $e_{x,k}$ comes from the formula $\rho(y) = n + \gamma_{e_{x,k}}^\inv(y)$. 

Continuity of $h$ at $(x,s) \in V$ is proved as follows. In the case $0 \le s < \rho(x)$, if $y$ is sufficiently close to $x$ then the following are true: $s < \rho(y)$; $y$ and $x$ are contained in some common edge in which case $\gamma_x$ and $\gamma_y$ agree on their common domain $[0,\min\{\rho(x),\rho(y)\}]$; and $0 \le s < \min\{\rho(x),\rho(y)\}$. If in addition $(y,t) \in V$ and $t$ is sufficiently close to $s$ then $0 \le t < \min\{\rho(x),\rho(y)\}$ and so we have the continuous formula
$$h(y,t) = \gamma_y(t,\rho(y)) = \gamma_x(t \cdot \rho(y))
$$
In the case where $s=\rho(x)$ and $x$ is not a vertex, it follows that $x$ is an interior point of~$e_n$; letting $z$ be the terminal point of $e_n$, if $y$ is sufficiently close to $x$ then $y$ is also an interior point of~$e_n$ then again $\gamma_z,\gamma_y$ agree on their common domain $[0,\min\{\rho(z),\rho(y)\}] = [0,\rho(y)]$ and we obtain $h(y,t) = \gamma_z(t,\rho(y))$. The remaining case where $s=\rho(x)$ and $x$ is a vertex is a bit more complicated. In this case $h(x,s)=x$. Let $\{e_j\}_{j \in J}$ be an indexing of all the edges with initial vertex $x$; if $x = P$ then that's all of the edges incident to $x$; whereas if $x \ne P$ then there is one more edge incident to $x$ which $e_n$ having terminal vertex $x$. The point $x$ has a neighborhood basis consisting of sets of the form $U = (z_n,x] \union \bigl(\bigcup_{j \in J} [x,z_j) \bigr) $ where $z_j$ is an arbitrary point in the interior of $e_j$ and, if $x \ne P$, $z_n$ is an arbitrary point in the interior of $e_n$ (if $x=P$ then we ignore $z_n$ and $(z_n,x]$ in what follows). Consider the open subset of $T \cross [0,\infty)$ consisting of all $(y,s) \in V$ satisfying the following constraints: $y \in U$, $\rho(z_n) < s$, and if $y \in [x,z_j)$ then $s < (\rho(x) + \rho(z_j))/2$. Let $W$ be the intersection of this open subset with $V$, which is defined by imposing the additional constraint that $s \le \rho(y)$. It remains to note that if $(y,s) \in W$ then $h(y,s) \in \gamma_y(\rho(z_n),(\rho(x)+\rho(z_j))/2) \subset U$.

\subsection{Roses.} \label{SectionRoses}
A \emph{rose} is a graph with one vertex. Given a set $S$, the \emph{rose on $S$}, denoted $R\<S\>$, is the graph in which each edge is assigned an orientation and a label from the set $S$, such that we have a bijection $s \leftrightarrow e_s$ between $S$ and the oriented edges. A clean way to formalize this is to let $R\<S\>$ be the quotient of $S \times [0,1]$, using the discrete topology on $S$, and taking the quotient by identifyin the set of endpoints $S \times \{0,1\}$ to a single vertex $v$. 

As with any connected graph, the fundamental group $\pi_1(R\<S\>,v)$ is a free group. This is often proved by application of Van-Kampen's Theorem. This can also be proved using the results on the complete $S$-labelled tree $T\<S\>$ constructed in Section~\ref{SectionGraphsAndPaths}, as follows. There is a unique simplicial map $p \from T\<S\> \to R\<S\>$ that preserves the $S$-labelling: each vertex of $T\<S\>$ maps to the unique vertex $v \in R\<S\>$, and each oriented edge of $T\<S\>$ labelled by $s \in S$ maps to the unique oriented edge of $R\<S\>$ labelled by $s$. It follows immediately from the definition of a complete $S$-labelling that $p$ is a covering map. Also, $p$ is a universal covering map because $T\<S\>$ is simply connected. It follows from covering space theory that $\pi_1(R\<S\>)$ acts on $T\<S\>$ as the group of deck transformations of~$p$. Clearly the deck transformations of $p$ are precisely the automorphisms of $T\<S\>$ that preserve the $S$-labelling. As we showed in Section~\ref{SectionGraphsAndPaths}, this group is isomorphic to $F\<S\>$. 

The isomorphism $F\<S\> \leftrightarrow \pi_1(R\<S\>,v)$ described in the previous paragraph can be made explicit: a reduced word $w$ corresponds to the element of $\pi_1(R\<S\>,v)$ represented by the path $\gamma=E_1\ldots E_L$ if and only if \hbox{$W(\gamma)=w$,} where $W$ is the word labeling function defined earlier for $S$-labelled graphs. In particular, $\pi_1(R\<S\>,v)$ is a free group with free basis corresponding to the oriented edges of the $S$-labelling of $R\<S\>$. 

\begin{exercise} Verify the isomorphism of the previous paragraph in two ways: by examining the form of the isomorphism given by Van Kampen's Theorem; and by using the universal covering map $T\<S\> \mapsto R\<S\>$ defined on the complete $S$-labelled tree $T\<S\>$. 
\end{exercise}

\subsection{Notational conventions for the free group $F_n$.} 
\label{SectionFreeGroupNotation}
From now on we use $F_n = F\<S_n\>$ as a shorthand for a rank~$n$ free group equipped with a given free basis $S_n = \{s_1,\ldots,s_n\}$. As a base topological model for $F_n$ we use the \emph{base rose of rank~$n$} denoted $R_n = R\<S_n\>$, which has one vertex $v$ and $n$-oriented edges $e_1,\ldots,e_n$. We fix once and for all the isomorphism $F_n \approx \pi_1(R_n,v)$ given by $s_i \leftrightarrow [e_i]$, and we use this isomorphism to identify the groups $F_n = \pi_1(R_n,v)$.

\section{The Nielsen/Whitehead problems: free bases.}

\hfill{\emph{If you want to win her hand, let the maiden understand}}

\hfill{\emph{That she's not the only pebble on the beach.}}

\smallskip

\hfill  --- an inappropriate song by Harry Braisted and Stanley Carter

\bigskip

In their papers \cite{Nielsen:Noncommutative}, \cite{Whitehead:CertainSets}, and  \cite{Whitehead:EquivalentSets}, J.\ Nielsen and J.\ H.\ C.\ Whitehead considered several problems about the rank~$n$ free group $F_n$, regarding subgroups and free bases, giving solutions to these problems with an algorithmic flavor.

Our immediate goal, to be pursued over the remainder of Chapter~\ref{ChapterMarkedGraphs}, is to present specific problems of Nielsen and Whitehead, to translate these problems into the topological language of homotopy equivalences between graphs, and then to use these translations to motivate some of the important topological concepts in the modern-day study of $\Out(F_n)$: marked graphs and outer space, and applications thereof.  Having laid all of this groundwork, in Chapter~\ref{ChapterFoldPaths} we will then turn to a modern solution of these problems.

\subsection{Statements of the central problems.} 
\label{SectionCentralProblems}
Using the notation from Section~\ref{SectionFreeGroupNotation}, consider the free group $F_n = \<s_1,\ldots,s_n\>$. Our first batch of problems were considered and solved by Nielsen in \cite{Nielsen:Noncommutative} with refined solutions by Whitehead in \cite{Whitehead:CertainSets}.
\begin{description}
\item[Free basis problem:] Given a set of $n$ reduced words $\{w_1,\ldots,w_n\} \subset F_n$, how do you tell if it generates $F_n$? More specifically, how do you tell if it is a free basis for $F_n$?
\item[Automorphism version (reduced $n$-tuple version):] Given an ordered $n$-tuple of reduced words $W = (w_1,\ldots,w_n)$, how do you tell if there is an automorphism $\Phi \in \Aut(F_n)$ such that $\Phi(s_i)=w_i$ for $i=1,\ldots,n$?
\end{description}
As seen in Exercise~\ref{ExerciseFreeBasesAndAuts} the previous two problems may be regarded as equivalent. Furthermore, by Exercise~\ref{ExerciseUniversal} they are each equivalent to:
\begin{description}
\item[Automorphism problem (endomorphism version):] Given an endomorphism $F_n \mapsto F_n$ defined by a map $s_i \mapsto w_i$, how do you tell if it is an automorphism? 
\end{description}
This problem can be broken into two problems, each interesting on its own:
\begin{description}
\item[Injective/surjective problems (endomorphism versions):] Given an endomorphism $F_n \mapsto F_n$ defined by a map $s_i \mapsto w_i$,
\begin{description}
\item[Injectivity:] How do you tell if the map is a monomorphism (injective)?
\item[Surjectivity:] How do you tell if the map is an epimorphism (surjective)?
\end{description}
\end{description}

\bigskip

In their respective papers \cite{Nielsen:Noncommutative} and \cite{Whitehead:EquivalentSets}, Nielsen and Whitehead had already considered the ``free basis problem'' in a more general context: given two lists of elements $\{v_1,\ldots,v_I\}$ and $\{w_1,\ldots,w_I\}$, how do you tell if there exists $\Phi \in \Aut(F_n)$ such that $\Phi(v_i)=w_i)$ for all $i=1,\ldots,I$? The ``free basis problem'' is the specialization of this general problem to the case $w_i=s_i$. Although we shall not consider this general problem in this work, we shall consider a somewhat broader specialization which was already proposed by Whitehead in his paper \cite{Whitehead:CertainSets}:
\begin{description}
\item[Partial free basis problem:] Given a set of pairwise distinct reduced words $V = \{v_1,\ldots,v_k\} \in F_n$, how do you tell if $V$ forms a partial free basis? 
\end{description}
To say that $V$ is a \emph{partial free basis} means simply that $V$ is a subset of a free basis.  Whenever we list the elements the elements of a partial free basis in the form $V = \{v_1,\ldots,v_k\}$ we will assume that there are no repetitions in this list, that is, $v_i \ne v_j$ for $i \ne j$; with that assumption, we will abuse notation by just listing the elements in order, something like ``$v_1,\ldots,v_k$ is a partial free basis''. As a special case of the partial free basis problem, defining a \emph{free basis element} to be a single reduced word which is an element of some free basis, we can ask
\begin{description}
\item[Free basis element problem:] How do you tell if a given reduced word $w$ is a free basis element?
\end{description}

Nielsen and Whitehead, in their various papers cited above, gave complete algorithmic solution to the above questions. The most well known of these is ``Whitehead's Algorithm'' which solves the ``Partial free basis problem''. 

In his papers \cite{Whitehead:CertainSets,Whitehead:EquivalentSets}, Whitehead had already considered conjugacy class versions of the Nielsen/Whitehead problems. We shall formulate those conjugacy versions in Section~\ref{SectionWhiteheadConjugacy}, and they will play a prominent role for us in Chapter 2. But for now, the versions already stated give us enough grist to work out important concepts of $\Aut(F_n)$ and $\Out(F_n)$.

Eventually, in Chapter~\ref{ChapterFoldPaths}, we will give algorithmic answers to all of the above problems and their conjugacy versions stated in Section~\ref{SectionWhiteheadConjugacy}. While our answers will follow in the steps of Whitehead, they are designed to illuminate the modern viewpoint of the topology and geometry of free groups and their automorphism and outer automorphism groups. In particular, our answers are are couched in the language of marked graphs and Stallings fold sequences. Another, briefer account of this modern viewpoint can be found in \cite{Stallings:Whitehead}.

\subsection{Negative tests, using abelianization.} 

Exercise~\ref{ExerciseFreeToAbelianBasis}, regarding the abelianization of $F_n$, can be used as the basis of some simple negative tests for the Nielsen/Whitehead problems.

In the free group $F_n = F\<s_1,\ldots,s_n\>$, consider the abelianization map 
$$F_n \mapsto \ab(F_n) = F_n / [F_n,F_n]
$$
We saw in Exercise~\ref{ExerciseFreeToAbelianBasis}, that the abelianization $\ab(F_n)$ is isomorphic to $\Z^n$, and we saw in the outline that followed how to compute the image of a word as a vector in $\Z^n$. This abelianization computation is the basis of a negative test for Whitehead's problems, using that one can decide whether a subset of $\Z^n$ is a basis (or a partial basis, or a basis element) using elementary linear algebra and number theory. 

For example, a nonzero vector $(m_1,\ldots,m_n) \in \Z^n$ is a basis element if and only if $gcf(m_1,\ldots,m_n)=1$. It follows that neither $abab$ nor $a^2b^2$ is a free basis element of $F\<a,b\>$, because each has abelianized image $(2,2) \in \Z^2$ which is not a basis element of $\Z^2$. 

However, $a^2 b a b$ is not ruled out as a free basis element of $F\<a,b\>$, because its image $(3,2)$ \emph{is} a basis element of $\Z^2$ (see Exercise~\ref{ExerciseATwoBAB}). Since abelianization gives only a negative test, we cannot yet determine whether $a^2 b a b$ is a free basis element.

\subsection{Positive tests, using Nielsen transformations.} 
\label{SectionPositiveTests}
Exercise~\ref{ExerciseFreeBasesAndAuts} can be used as a positive test, as follows. Given a set of reduced words $\{w_1,\ldots,w_n\}$ in $F_n = F\<s_1,\ldots,s_n\>$, if one can construct an automorphism $\Phi \in \Aut(F_n)$ taking $s_i$ to $w_i$ for each $i$, then the set $\{w_1,\ldots,w_n\}$ is a free basis of $F_n$. For this purpose it is useful to have a few automorphisms to start with, the \emph{elementary automorphisms} of $F_n$ that were described by Nielsen in \cite{Nielsen:IsomorphismGroup} and were proved by him to generate the group $\Aut(F_n)$ (see Section~\ref{SectionWhiteheadNielsenGenerators}):
\begin{description}
\item[Transvections:] There are four transvections for every ordered pair $i \ne j \in \{1,\ldots,n\}$, namely
$$s_i \mapsto s_i s_j, \quad s_i \mapsto s_i \bar s_j, \quad s_i \mapsto s_j s_i, \quad s_i \mapsto \bar s_j s_i
$$
Note that the first pair of these are inverses to each other, as is the last pair (that's how we know they are all automorphisms).
\end{description}
Each of the next two kinds of elementary automorphism are clearly self-inverse:
\begin{description}
\item[Transpositions:] There is one transposition for every unordered pair $i \ne j \in \{1,\ldots,n\}$, namely 
$$s_i \leftrightarrow s_j
$$ 
\item[Letter inversion:] There is one letter inversion for every $i \in \{1,\ldots,n\}$, namely 
$$s_i \mapsto \bar s_i
$$ 
\end{description}
In these formulas we specify only the image of one free basis element $s_i$; by implicit assumption, every other free basis element is fixed. For example, in the free group $F\<a,b,c\>$ the transvection $a \mapsto ab$ is defined more fully as
$$\begin{matrix} a \\ b \\ c \end{matrix} \, \mapsto \, \begin{matrix} ab \\ b \\ c \end{matrix}
$$
It follows that $\{ab,b,c\}$ is a free basis of $F\<a,b,c\>$, being the image of $\{a,b,c\}$ under that transvection. Once we have delved into the topology and geometry of $F_n$, in Section~\ref{SectionWhiteheadNielsenGenerators} we will prove Nielsen's Theorem that $\Aut(F_n)$ is generated by the elementary automorphisms: the tranvections, transpositions, and letter inversions. For now we will not need this fact, all we do is to put the elementary automorphisms to work to construct interesting free bases, and to set up examples for testing the Nielsen/Whitehead problems.

Since $\Aut(F_n)$ is a group, we may successively apply any sequence of elementary automorphisms to a free basis element to get another free basis element, and we can get some quite complicated free basis elements by this manner. For example,
$$a \xrightarrow{a \mapsto ab} ab \xrightarrow{b \mapsto ba} aba \xrightarrow{a \mapsto ab} abbab=ab^2ab \xrightarrow{b \mapsto b^\inv} a b^{-2}ab^\inv
$$
and so $ab^{-2}ab^\inv$ is a free basis element.

\paragraph{Exercises for Section \ref{SectionPositiveTests}}

\begin{exercise} \cite[Section 1]{Nielsen:IsomorphismGroup}
\label{ExerciseSignedPermSubgroup}
Prove that the transpositions and letter inversions together generate a finite subgroup of $\Aut(F_n)$ having cardinality $n! \, 2^n$. Prove that this group is isomorphic to the group of $n \times n$ invertible matrices whose entries are all from the set $\{-1,0,+1\}$ and such that there is one nonzero entry in each row and in each column. This group is known as the ``signed permutation group on $n$ symbols''.
\end{exercise}

\begin{exercise}\label{ExerciseATwoBAB}
Prove that $a^2 b a b$ is a free basis element of $F\<a,b\>$. 
\end{exercise}

\begin{exercise} \label{ExerciseNonFreeBasis}
The pair $a^2bab$, $b^3a^3b^{-2} a^\inv$ in $F\<a,b\>$ maps to the basis $(3,2)$, $(2,1)$ in~$\Z^2$. Is that pair a free basis of $F\<a,b\>$?
\end{exercise}

\subsection{Topological versions of the central problems, using roses.}
\label{SectionBasedHMCG}
We have stated the Nielsen/Whitehead problems in the language of ``free bases'', we have translated them into the language of automorphisms, and we have made some progress towards solving those problems, presenting some positive and negative tests. However, it's pretty evident that there is a large gap between these tests, leaving us very far from a complete solution to those problems.

To explore avenues for making further progress, we shall translate those problems into a topological language involving graphs, with an emphasis on roses. As this section progresses, we will alternate between definition and discussion of new concepts, and exercises on those concepts.

\medskip

For any topological space $X$ and any base point $p \in X$, define the \emph{pointed homotopy endomorphism semigroup} $\HEnd(X,p)$ as follows. As a set, $\HEnd(X,p)$ consists of continuous self-maps $(X,p) \mapsto (X,p)$ modulo the equivalence relation of homotopy rel~$p$ (meaning homotopy that keeps $p$ stationary). The operation of composition descends to a well-defined associative binary operation that makes $\HEnd(X,p)$ into a semigroup with identity element represented by the identity map of $X$. 

Now define the \emph{pointed homotopy mapping class group} $\HMCG(X,p)$ to be the subgroup of $\HEnd(X,p)$ represented by all \emph{pointed homotopy equivalences} $f \from (X,p) \to (X,p)$, meaning that $f$ has a \emph{pointed homotopy inverse} $\bar f \from (X,p) \to (X,p)$ satisfying the property that $f \composed \bar f$ and $\bar f \composed f$ are both homotopic to the identity rel~$p$. 

\begin{exercise} 
Check that the binary operation on $\HMCG(X,p)$ that is induced by composition satisfies the group axioms.
\end{exercise}

\begin{exercise}
\label{ExerciseHEAutHomomorphism}
Prove that there is a well-defined homomorphism 
$$\HMCG(X,p) \mapsto \Aut(\pi_1(X,p))
$$
defined by the formula $[f] \mapsto f_*$ where $[f]$ is the homotopy class rel~$p$ of a pointed homotopy equivalence $f \from (X,p) \to (X,p)$, and $f_* \from \pi_1(X,p) \to \pi_1(X,p)$ is the induced homomorphism of $f$.
\end{exercise}

In Exercise~\ref{ExerciseHEAutHomomorphism}, in general one cannot say that the homomorphism $\HMCG(X,p) \mapsto \Aut(\pi_1(X,p))$ is an isomorphism. Here is a simple example.
\begin{exercise}\label{ExerciseHEAutCounterexample} For the 2-sphere $X=S^2$, 
\begin{enumerate}
\item Show that $\HMCG(S^2,p)$ is a nontrivial group and hence is not isomorphic to $\Aut(\pi_1(S^2,p))$.
\item Compute $\HMCG(S^2,p)$.
\end{enumerate}
\end{exercise}

The reason for the existence of counterexamples as in Exercise~\ref{ExerciseHEAutCounterexample} is that the fundamental group is not the only homotopy invariant on the beach, however inappropriate that may seem. However, for an Eilenberg-Maclane space of type $K(G,1)$ --- meaning a connected CW complex with fundamental group $G$ and with contractible universal cover --- the fundamental group $G$ \emph{is} the only homotopy invariant. This gives a hint to the proof of the following exercises:

\begin{exercise}
\label{ExerciseHEAutIsomorphismF_n}
Prove that if $G$ is a connected graph and $p \in G$ then the homomorphism $\HMCG(G,p) \mapsto \Aut(\pi_1(G,p))$ is an isomorphism. As a special case  obtain an isomorphism
$$(*) \qquad \HMCG(R_n,v) \mapsto \Aut(\pi_1(R_n,v)) = \Aut(F_n)
$$
in which the equation was specified by the notational conventions of Section~\ref{SectionFreeGroupNotation}.
\end{exercise}

\smallskip

You can either do Exercise~\ref{ExerciseHEAutIsomorphismF_n} using what you know about graphs, or by proving a more general version: 

\begin{exercise}
\label{ExerciseEilenbergMaclane}
Prove that if $X$ is a connected CW complex and an Eilenberg--Maclane space of type $K(G,1)$, and if $p \in X$ is a $0$-cell, then the homomorphism $\HMCG(X,p) \mapsto \Aut(\pi_1(X,p)) \approx \Aut(G)$ is an isomorphism.
\end{exercise}

Exercise~\ref{ExerciseEilenbergMaclane} lets us translate the study of $\Aut(G)$ for \emph{any} group $G$ into a study of homotopy mapping class groups of pointed $K(G,1)$ spaces, which is particularly useful for groups that have simple and well understood $K(G,1)$ spaces, in particular for free groups.

\begin{exercise}
\label{ExerciseAutToHMCG}
Prove that the inverse $\Aut(F_n) \mapsto \HMCG(R_n,v)$ of the isomorphism $(*)$ given in Exercise~\ref{ExerciseHEAutIsomorphismF_n} has the following effect: it takes each $\Phi \in \Aut(F_n)$ to the pointed homotopy class of the map $f_\Phi \from (R_n,v) \to (R_n,v)$ defined by $f_\Phi(e_i)=\gamma_i$ $(i=1,\ldots,n)$, where $\gamma_i$ is the tight edge path obtained from the reduced word $w_i = \Phi(s_i)$ by replacing each occurrence of the generator $s_j \in S_n$ by the corresponding edge $e_j$ and each occurence of the inverse generator $s_j^\inv \in \overline S_n$ by the reversed edge $\bar e_j$. 
\end{exercise}

\bigskip

By applying Exercise~\ref{ExerciseHEAutIsomorphismF_n}, we can translate Nielsen's and Whitehead's problems from Section~\ref{SectionCentralProblems} into topological language. For example, the \emph{Free Basis Problem}, and the two versions of the \emph{Automorphism Problem}, are translated as follows:
\begin{description}
\item[Automorphism problem (rose version):] \quad \\ Given a self-map $f \from (R_n,v) \mapsto (R_n,v)$, taking each $e_i$ to some tight edge path $w_i$, how do you tell if $f$ is a homotopy equivalence? 
\end{description}

\begin{description}
\item[Injective/surjective problem (rose version):] Given $f$ as above, how do you tell if $f$ is a $\pi_1$-injection? or a $\pi_1$-surjection?
\end{description}
Our eventual approach to the solutions of Whitehead's problems will be to solve these topological versions.

\begin{exercise} Consider an endomorphism $\Phi$ of a free group $F\<s_1,\ldots,s_n\>$ defined by $\Phi(s_i) = w_i$ for each $i=1,\ldots,n$. Describe an algorithm which takes each $\Phi$ as input and decides whether $\Phi$ is an inner automorphism (recall that an inner automorphism of a group $\Gamma$ is an automorphism of the form $i_h(g)=hgh^\inv$, defined for each $h \in \Gamma$).
\end{exercise}

\begin{exercise} Consider a finite connected graph $\Gamma$ and a self-map $h \from \Gamma \mapsto \Gamma$, defined to take vertices to vertices and to take each edge $e$ to either a vertex or an edge path. Describe an algorithm which takes $\Gamma$ and $h$ as input and decides whether $h$ is homotopic to the identity map.
\end{exercise}

\section{Marked graphs}

\centerline{\emph{A \emph{four}-footed lion's not much of a beast.}}

\centerline{\emph{The one in my zoo will have \emph{ten} feet at least.}}

\smallskip

\hfill  --- Dr. Seuss, \emph{If I Ran the Zoo}

\subsection{Core graphs and their ranks.} 
\label{SectionCoreGraphs}
We have introduced the base rose $R_n$ as a topological model for the rank~$n$ free group~$F_n$. But we shall need other finite graphs as topological models for $F_n$ as well, namely the collection of ``rank-$n$ core graphs''.

Given a finite connected graph $G$, its \emph{rank} is the non-negative integer given by the following equations:
\begin{align*}
\rank(G) &= \text{the rank of the free group $\pi_1(G,v)$ for any vertex $v \in G$} \\
              &= \text{the rank of the free abelian group $H_1(G;\Z)$} \\
              &= 1 - \chi(G) \\ &= 1 - \abs{\Vertices(G)} + \abs{\Edges(G)}\\
              &= \abs{\Edges(G \setminus T)} \,\, \text{where $T \subset G$ is any maximal tree}
\end{align*}
In these formulas, $\abs{A}$ denotes the cardinality of a set~$A$. Given $n \ge 0$, a \emph{rank~$n$ graph~$G$} is a finite connected graph satisfying $\rank(G)=n$. When $n$ is fixed, all rank~$n$ graphs are homotopy equivalent to each other: every rank~$0$ graph is a tree, hence contractible; and every rank~$n$ graph with $n \ge 1$ is homotopy equivalent to a rank~$n$ rose by collapsing a maximal tree to a point.

For any $n$, one can construct rank~$n$ graphs with infinitely many homeomorphism types. Start with one rank~$n$ graph $G$. Then choose $T$ to be one of the infinitely many homeomorphism types of finite trees --- it could have four valence 1 vertices, or 10, or any number. Finally, identify a vertex in $G$ with a vertex in $T$. 

To somewhat tame the zoo of rank~$n$ graphs, we define a \emph{core graph}\index{core graph} to be a finite graph having no vertex of valence~$1$. The rank~$n$ rose is a core graph, and there are only finitely many rank~$n$ core graphs up to homeomorphism; a proof is outlined in the exercises. The circle is the only rank~$1$ core graph up to homeomorphism. There are three rank~$2$ core graphs up to homeomorphism, depicted in Figure~\ref{FigureRankTwoCoreGraphs}. 
\begin{figure}
\centerline{
\input{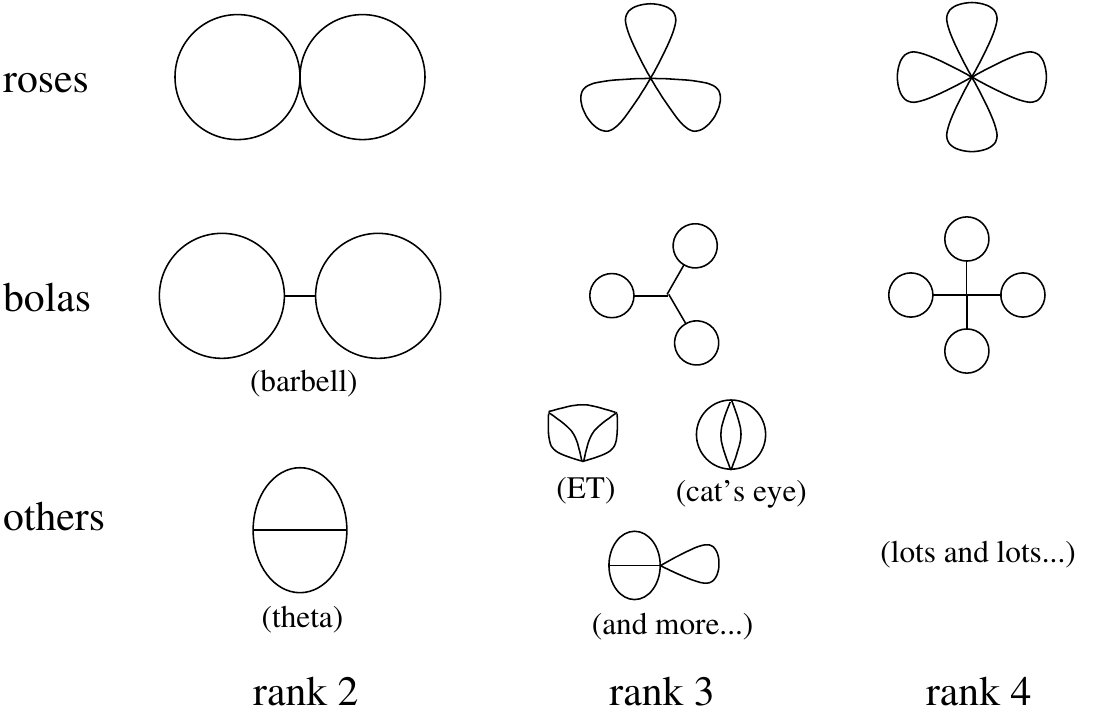_t} 
}
\caption{A zoo of core graphs.}
\label{FigureRankTwoCoreGraphs}
\end{figure}

\paragraph{Exercises for Section~\ref{SectionCoreGraphs}} 

\begin{exercise}\label{ExerciseCoreCharacter}
Prove that if $G$ is a connected graph of finite rank~$n \ge 1$ then $G$ has a unique rank~$n$ core subgraph, which is denoted $\Core(G)$. Prove furthermore that $\Core(G)$ is the unique core subgraph which is a deformation retract of~$G$.
\end{exercise}

\begin{exercise}
Prove that a finite connected graph is a core graph if and only if it can be written as the union of subgraphs each of which is homeomorphic to a circle or a barbell (see Figure~\ref{FigureRankTwoCoreGraphs} for the barbell).
\end{exercise}

In any graph define a \indexemph{natural vertex} to be a vertex of valence~$\ne 2$. Note that a graph has no natural vertices if and only if it is a 1-manifold; of course, all connected graphs with no natural vertices are homeomorphic to the circle or the line. 

\begin{exercise}
Prove that if $G$ is a core graph of rank~$\ge 2$ then the natural vertices are the $0$-cells of a unique CW structure on $G$. We call this the \indexemph{natural graph structure} on~$G$, and its edges are called the \indexemph{natural edges} of $G$. 
\end{exercise}

For the next exercise, in a core graph $G$ a natural edge $e \subset G$ is said to be \indexemph{separating} if the graph complement $G \setminus e$ is disconnected, otherwise $e$ is \indexemph{nonseparating}. A separating edge of $G$ is also called a \emph{bridge} of $G$. A core graph is \emph{bridgeless} if it has no bridges. A maximal bridgeless core subgraph of $G$ is called an \emph{island} of $G$.

\begin{exercise}\label{ExerciseTreeOfCoreGraphs}
Prove that every finite core graph $G$ has a unique \emph{bridge decomposition} of the form
$$G = (e_1 \union\cdots\union e_K)  \union (H_1 \union\cdots\union H_L)
$$
where $\{e_1,\ldots,e_K\}$ is a set of all bridges of $G$, $\{H_1,\ldots,H_L\}$ is the set of all islands (uniqueness, of course, means unique up to reindexing those two sets), and each island is a core graph. Prove also that the quotient graph of $G$, obtained by collapsing the islands $H_1,\ldots,H_L$ to vertices $v_1,\ldots,v_L$, is a tree.
\end{exercise}

\begin{exercise}
Prove that for each $n \ge 2$, the numbers of natural vertices and natural edges of a rank~$n$ core graph are bounded above by constants depending only on~$n$. Find the optimal values for these constants.
\end{exercise}

\begin{exercise}\label{ExerciseFinManyCoreGraphs}
Prove that for each integer $n \ge 1$ there are only finitely many rank~$n$ core graphs up to homeomorphism. Find an explicit upper bound to this number (the exact value of this number is difficult to compute).
\end{exercise}

Next we turn to two exercises concerned with the concept of a ``relative core graph'' (one could formulate versions of each of Exercises~\ref{ExerciseCoreCharacter}--\ref{ExerciseFinManyCoreGraphs} for this concept). Given a finite graph $G$ and a finite subset of vertices $Q \subset G$, we say that $G$ is a \emph{core graph relative to $Q$} if every vertex of valence $\le 1$ in $G$ is an element of $Q$.

\begin{exercise}\label{ExerciseRelativeCoreGraph}
Let $G$ be a connected graph and $Q \subset G$ a finite set of vertices; if $\rank(G)=0$ assume that $Q \ne \emptyset$. Prove that $G$ has a unique finite subgraph of rank $n$ which is a core graph relative to $Q$, denoted $\Core(G;Q)$. Prove furthermore that $\Core(G;Q)$ is the unique deformation retract of $G$ which is a core graph relative to $Q$.
\end{exercise}

\begin{exercise} Given integers $n \ge 0$ and $k \ge 0$, not both equal to zero, prove that there exist up to homeomorphism only finitely many pairs $(G,Q)$ such that $G$ is a rank $n$ core graph relative to $Q$ and $\abs{Q} = k$.
\end{exercise}

\subsection{Based marked graphs.} 
\label{SectionBasedMarkedGraphs}
Given a connected graph of finite rank $n \ge 2$, its fundamental group with respect to any base point is isomorphic to the rank~$n$ free group $F_n = \<s_1,\ldots,s_n\>$. But there are many choices for the base point of the graph, and even once that point is fixed there are many possible choices of the isomorphism. Fixing those choices gives that graph the structure of a ``based marked graph'', although the precise definition is couched in topological terms.

Consider a rank~$n$ core graph $G$ and a base vertex $p$; we refer to the pair $(G,p)$ as a \emph{based core graph} of rank~$n$.
Define a \emph{based marking of $(G,p)$} to be
\begin{enumerate}
\item\label{ItemBasedMarkingHE}
a homotopy equivalence of pairs $\rho \from (R_n,v) \to (G,p)$.
\end{enumerate}
From an algebraic point of view we define a \emph{based algebraic marking} of $(G,p)$ to be
\begin{enumeratecontinue}
\item\label{ItemBasedMarkingIso}
an isomorphism $h \from F_n \mapsto \pi_1(G,p)$. 
\end{enumeratecontinue}
When $(G,p)$ is equipped with a based marking then we say that it is a \emph{based marked graph} of rank~$n$. Thus, a based marked graph can formally be regarded as a triple $(G,p,\rho)$; when $\rho$ is implicit we may abuse terminology by referring to $(G,p)$ alone as a based marked graph. We could also say that $(G,p)$ is an ``based marked algebraic graph'' when it is equipped with an algebraic marking; however, in light of the next exercise, such additional terminology is superfluous.

The proof that the fundamental group $\pi_1(G,p)$ is free, which was reviewed earlier in Section~\ref{SectionGraphs}, gives a method for constructing a particular class of based markings of $(G,p)$. First choose a maximal subtree $T \subset G$, and recall that every vertex, including $p$, is in $T$. For the moment we do \emph{not} assume that $G$ is a core graph, \emph{nor} that the cell structure on $G$ is the natural one, but we \emph{do} require that $T$ is a subtree with respect to whatever graph structure is given on $G$. Next, choose an orientation of each of the $n$ edges of the difference graph $G \setminus T$, and choose an ordering of each of those $n$ edges, hence we may write $G \setminus T =  \eta_1 \union\cdots\union \eta_n$. Let $\delta^-_i$, $\delta^+_i$ be the unique paths in $T$ from $p$ to the initial and terminal vertices of $\eta_i$, respectively. The based marking $(R_n,v) \mapsto (G,p)$ that corresponds to these choices is defined by the formula 
$$e_i \mapsto \delta^-_i \, \eta_i \bar\delta^+_i, \quad 1 \le i \le n
$$
The corresponding algebraic based marking is given by the formula $$s_i \mapsto [\delta^-_{i} \, \eta_i \, \bar \delta^+_{i}], \quad 1 \le i \le n
$$
The based markings of $(G,p)$ which arise from this construction will be called the \emph{visible based markings}, and we will similarly refer to the \emph{visible algebraic based markings} of $(G,p)$.

\paragraph{Exercises for Section \ref{SectionBasedMarkedGraphs}}

\begin{exercise}\label{ExerciseAlgMarking}
Given a based core graph $(G,p)$ of rank $n$, prove that the fundamental group functor induces a bijection $[\rho] \mapsto h_\rho$ between the set of based homotopy classes of based markings $\rho \from (R_n,v) \to (G,p)$ and the set of based algebraic markings $h_\rho \from F_n = \pi_1(R_n,v) \to \pi_1(G,p)$.  
%
\end{exercise}

\begin{exercise} 
\label{ExerciseRankTwoMixerBasicMarkings}
In a rank~$2$ theta graph $G$ with its natural cell structure, and fixing a natural vertex $p \in G$, what is the total number of visible based algebraic markings, relative to all choices of maximal tree?
\end{exercise}

\begin{exercise}
\label{ExerciseBasicMarkingCount} Let $G$ be a finite connected graph of rank~$n$ with base point~$p$. Fixing the choice of maximal tree $T$, how many different visible algebraic based markings does $(G,p)$ have relative to $T$? 
\end{exercise}

\begin{exercise} 
\label{ExerciseDiffTreesSameBasicMarking}
Does there exist a finite connected graph $G$ of rank~$n \ge 2$ with base vertex $p \in G$, an algebraic based marking $h \from F_n \to \pi_1(G,p)$, and two distinct maximal trees $T_1 \ne T_2 \subset G$, such that $h$ is induced by some visible based marking relative to $T_1$ \emph{and} another relative to $T_2$? 
\end{exercise}

\begin{exercise}
Does Exercise~\ref{ExerciseDiffTreesSameBasicMarking} have a different outcome if $G$ is equipped with its natural cell structure?
\end{exercise}

\subsection{Based marked graphs and $\Aut(F_n)$.}
\label{SectionTopInterpAut}
By using based marked graphs we obtain some further topological interpretations of the automorphism group $\Aut(F_n)$. 

Given a based algebraic marking $h_0 \from F_n \mapsto \pi_1(G,p)$, by precomposing $h_0$ with arbitrary automorphism of $F_n$ we get a formula for a bijection which associates to the automorphism $\Phi \in \Aut(F_n)$ the based algebraic marking $h_0 \composed \Phi \from F_n \to \pi_1(G,p)$ (see Exercise~\ref{ExerciseAutBijectBAM}).  Putting this together with Exercises~\ref{ExerciseFreeBasesAndAuts} and~\ref{ExerciseAlgMarking} we now have a chain of bijections
\begin{align*}
\{\text{Ordered free bases of $F_n$}\} \leftrightarrow \Aut(F_n) & \leftrightarrow \{\text{based algebraic markings of $(G,p)$\}}  \\
& \leftrightarrow \{\text{based markings of $(G,p)$\}}
\end{align*}

For each rank $n$ based marked graph $\rho \from (R_n,v) \to (G,p)$ there is a chain of isomorphisms
$$\Aut(F_n) \approx \HMCG(R_n,v) \xrightarrow{\Ad_\rho} \HMCG(G,p) \approx \Aut(\pi_1(G,p))
$$
which we may use to \emph{canonically identify} $\Aut(F_n)$ with $\HMCG(G,p)$ and with $\Aut(\pi_1(G,p))$ as long as the based marking $\rho$ is specified. The first and third of this string of automorphisms come from exercises in Section~\ref{SectionBasedHMCG}, and the second is verified in Exercise~\ref{ExerciseTopAutFn} below.

\paragraph{Exercises for Section \ref{SectionTopInterpAut}}

\begin{exercise}\label{ExerciseAutBijectBAM}
Prove that for a based algebraic marking $h_0 \from F_n \mapsto \pi_1(G,p)$ does indeed induce a bijection $\Phi \mapsto h_0 \composed \Phi$ betwen $\Aut(F_n)$ and based algebraic markings of $(G,p)$.
\end{exercise}

\begin{exercise}
\label{ExerciseTopAutFn}
For any pair of rank $n$ core graphs $G,H$ with base points $p \in G$, $q \in H$, and for any homotopy equivalence of pairs $h \from (H,q) \to (G,p)$, the \emph{adjoint map}
$$\Ad_h \from \HMCG(H,q) \to \HMCG(G,p)
$$
is defined as follows: choosing a homotopy inverse $\bar h \from (G,p) \to (H,q)$ relative to the base points, each $[f] \in \HMCG(H,q)$ is taken to $\Ad_h(f) = [h f \bar h] \in \HMCG(G,p)$. Prove that $\Ad_h$ is a group isomorphism, well-defined independent of the choice of $\bar h$. Prove that $\Ad$ defines a functor from the category of based core graphs and homotopy equivalences rel base point to the category of groups.
\end{exercise}

\subsection{The Nielsen-Whitehead problems: Based marked graphs.} Consider a rank~$n$ graph $G$ with base point~$q$, and let us assume that each vertex not equal to~$q$ has valence~$\ge 2$. Whitehead's problem on free bases can be restated in the language of based marked graphs as follows:
\begin{itemize}
\item Given a $k$-tuple of tight edge paths $w_1,\ldots,w_k$ in $G$ based at~$q$, how do you tell if $w_1,\ldots,w_k$ is a partial basis for $\pi_1(G,p)$? Equivalently, how do you tell if there is a based marking $\rho \from (R_n,v) \to (G,p)$ such that for each $i=1,\ldots,k$ we have $\rho(e_i)=w_i$?
\end{itemize}

The Nielsen-Whitehead problems on $\pi_1$-automorphisms, $\pi_1$-surjections, and $\pi_1$-injections can also be given based marked graph versions. Consider any graph $(G,q)$ with the restriction that each vertex $\ne q$ has valence~$\ge 2$. Consider also a map $f \from (R_n,v) \mapsto (G,q)$; we may assume that $f$ is a tight map, meaning that each of the restrictions of $f$ to an edge of $R_n$ is either constant or a tight edge path in $G$. Listing those restrictions gives a complete description of $f$. Given that description:
\begin{enumerate}
\item How do you tell if $f$ induces a $\pi_1$-injection?
\item How do you tell if $f$ induces a $\pi_1$-surjection?
\item How do you tell if $f$ induces a $\pi_1$-isomorphism, i.e.\ whether $f$ is a homotopy equivalence, i.e.\ whether $f$ is a based marking?
\end{enumerate}

\subsection{Marked graphs.}
\label{SectionMarkedGraphDef} 
We have seen that a good topological setting for the Nielsen-Whitehead problems on free bases are based marked graphs and their fundamental groups. 

Our next step regarding these problems, carried out in Section~\ref{SectionWhiteheadConjugacy}, is to formulate conjugacy class versions of those problems and to give a topological interpretation thereof. It turns out that the topological translations of the conjugacy class versions are easier to solve that the topological translations of the original free basis versions, and so we shall solve the conjugacy versions first. The correct topological setting for these new conjugacy class versions will be marked graphs, which are similar to based marked graphs except that we discard the base point. Doing this has the effect of somehow simplifying the topological problems. 

\medskip

Dropping the base point of a based marked graph causes certain problems, for instance you can't define the fundamental group without a base point. To deal with these problems and to formulate a good definition of marked graphs, we choose the base point only once, in only one graph, namely in the base rose $R_n$, whose valence~$2n$ vertex $v$ is the chosen base point. We get a standard isomorphism $F_n \leftrightarrow \pi_1(R_n,v)$ identifying each free basis element $s_i \in F_n$ with the corresponding path homotopy class $[e_i] \in \pi_1(R_n,v)$. 

\subparagraph{Definition.} Given a rank~$n$ core graph $G$, a \indexemph{marking} of $G$ is a homotopy equivalence $\rho \from R_n \to G$. The pair $(G,\rho)$ is called a \indexemph{marked graph}, or an \emph{$F_n$-marked graph} for emphasis. 

\medskip

Note that we do not pick a base point of $G$, we do not refer to the fundamental group of~$G$, and we do not ``mark''~$G$ by choosing an isomorphism to its fundamental group. We instead mark $G$ topologically.

\subparagraph{Caution:} You will often see abuses of notation for the concept of a marked graph. In fact, such abuses abound in this very document. The most formal expression ``$(G,\rho)$ is a marked graph'' will sometimes be written as ``$\rho \from R_n \to G$ is a marked graph''. We~very often simply write ``$G$ is a marked graph'', in which case one must assume that the marking map $\rho \from R_n \to G$ is given implicitly. For an example of this, see Exercise~\ref{ExerciseInducedMarking}. However, also implicit in the terminology of a marked graph are all the notational conventions listed in Section~\ref{SectionFreeGroupNotation} regarding the free group $F_n = F\<s_1,\ldots,s_n\>$ and its base rose $R_n$. 

\paragraph{Exercises for Section \ref{SectionMarkedGraphDef}}

The question arises: how are ``based'' markings related to the unbased ones? Of course, every based marking $f \from (R_n,v) \to (G,q)$ determines a marking $f \from R_n \to G$ by forgetting the base point. But what information is preserved and what information is lost when the base point is forgotten? The next two exercises explore this question; the first is a special case of the second.

\begin{exercise} Consider a rank~$n$ core graph $G$ with base vertex $q$ and two based markings $f,f' \from (R_n,v) \to (G,q)$. Prove that the (unbased) markings $f,f' \from R_n \to G$ are homotopic if and only if the isomorphism 
$$(f')^\inv_* \composed f_* \from F_n = \pi_1(R_n,v) \to \pi_1(R_n,v) = F_n
$$
is an inner automorphism.
\end{exercise}

\begin{exercise} Consider a rank~$n$ core graph $G$, two vertices $q,q' \in G$, and two based markings $f \from (R_n,v) \to (G,q)$, $f' \from (R_n,v) \to (G,q')$. Choose any path $\delta$ in $G$ from $q$ to $q'$ and let $i_\delta \from \pi_1(G,q) \to \pi_1(G,q')$ be the ``change of base point'' isomorphism defined by $i_\delta[\gamma]=[\bar\delta \gamma \delta]$ for each closed path $\gamma$ based at $q$. Prove that the unbased markings $f, f' \from R_n \to G$ are homotopic if and only if the isomorphism $(f')^\inv_* \composed i_\delta \composed f_* \from F_n = \pi_1(R_n,v) \to \pi_1(R_n,v) = F_n$ is an inner automorphism.
\end{exercise}

\subsection{Marked graphs and $\Out(F_n)$.} 
\label{SectionMarkedGraphsOutFn}
Using marked graphs we can give a topological interpretation of the group $\Out(F_n)$ itself, similar to the interpretation of $\Aut(F_n)$ in terms of pointed marked graphs given in Section~\ref{SectionTopInterpAut}. It is tempting to work in the language of a general group $G$ and its $K(G,1)$ spaces as was done in a few places earlier, such as Exercise~\ref{ExerciseEilenbergMaclane}. Instead from this point we will focus solely on the special group $G=F_n$ and rank~$n$ core graphs, formulating our results only in that special language; the interested reader should be able to reformulate the results in general language. 

There are several steps in this topological interpretation, which one can compare to earlier steps in the topological interpretation of $\Aut(F_n)$: define the homotopy mapping class group $\HMCG(G)$ of a core graph $G$ (compare the pointed version of Section~\ref{SectionBasedHMCG}) and the isomorphism between $\HMCG(G)$ and $\Out(\pi_1(G))$ (compare the ``$\Aut$'' version of Exercise~\ref{ExerciseHEAutIsomorphismF_n}); define the adjoint isomorphisms between homotopy mapping class groups induced by homotopy equivalences of core graphs (compare Exercise~\ref{ExerciseTopAutFn}); and finally put the pieces together to define the canonical isomorphism between $\Out(F_n)$ and $\HMCG(G) \approx \Out(\pi_1(G))$ that is induced by any marking of $G$ (compare the ``$\Aut$'' version at the end of Section~\ref{SectionTopInterpAut}). As with the ``$\Aut$'' versions, we shall leave a lot to the reader in exercises.

%

Given a core graph $G$, its \emph{homotopy mapping class group} $\HMCG(G)$ is the group of self-homotopy equivalences modulo homotopy, with the operation induced by composition. We let $[g] \in \HMCG(G)$ denote the homotopy class of a self-homotopy equivalence $g \from G \to G$. Given a homotopy equivalence of core graphs $f \from G \to H$, its \emph{adjoint map} is the group isomorphism
$$\Ad_f \from \HMCG(G) \to \HMCG(H)
$$
defined as follows: choosing $\bar f \from H \to G$ to be a homotopy inverse for $f$, for each self-homotopy equivalence $g \from G \to G$ representing $[g] \in \HMCG(G)$ define
$$\Ad_f[g] = [f \circ g \circ \bar f] \in \HMCG(H)
$$

\paragraph{Exercises for Section~\ref{SectionMarkedGraphsOutFn}}

\begin{exercise}\label{ExerciseHMCGFunctorNBP}
Prove the following properties of $\HMCG$ and $\Ad$:
\begin{enumerate}
\item The element $\Ad_f(g) \in \HMCG(H)$ is well-defined independent of the choices of $f$ and $g$ within their homotopy classes and the choice of homotopy inverse $\bar f$ of $f$.
\item $\Ad_f$ is an isomorphism of groups.
\item The object and morphism assignments
\begin{align*}
G &\mapsto \HMCG(G) \\
(f \from G \to H) &\mapsto (\Ad_f \from \HMCG(G) \to \HMCG(H))
\end{align*}
define a functor from the groupoid of core graphs and homotopy equivalences to the groupoid of groups and isomorphisms.
\end{enumerate}
\end{exercise}

Given a base point $p \in G$, define a homomorphism $\HMCG(G) \mapsto \Out(\pi_1(G,p))$ which associates to each $f \in \HMCG(G)$ some $\phi \in\Out(\pi_1(G,p))$ as follows. Choose a homotopy equivalence $F \from G \to G$ representing~$f$, choose a path $\delta$ in $G$ from $p$ to $F(p)$, and using these choices define $\Phi \in \Aut(\pi_1(G,p))$ as follows: for each continuous closed path $\gamma$ in $G$ based at $p$ representing $[\gamma] \in \pi_1(G,p)$, let $\Phi[\gamma] = [\delta * (f \composed \gamma) * \bar \delta] \in \pi_1(G,p)$. Finally, let $\phi$ be the outer automorphism class of~$\Phi$.

\begin{exercise}\label{ExerciseHMCGIsomorphicOut} (c.f.\ Exercise~\ref{ExerciseHEAutIsomorphismF_n})
Prove that the above formula $f \mapsto \phi$ gives a well-defined isomorphism $\HMCG(G) \mapsto \Out(\pi_1(G,p))$, independent of the choice of~$\delta$. In the special case that $G = R_n$, deduce that the above formula gives a well-defined isomorphism
$$(**) \qquad \HMCG(R_n) \mapsto \Out(F_n)
$$
\end{exercise}

\begin{exercise} Combine Exercises~\ref{ExerciseHMCGFunctorNBP} and~\ref{ExerciseHMCGIsomorphicOut} to obtain a chain of canonical isomorphisms for any marked graph $\rho \from R_n \to G$, and any base point $p \in G$: 
$$\Out(F_n) \approx \Out(\pi_1(R_n,v)) \approx \HMCG(R_n) \xrightarrow{\Ad_\rho} \HMCG(G) \approx \Out(\pi_1(G,p))
$$
\end{exercise}

\medskip

For the following exercise, recall our standard notations from Section~\ref{SectionFreeGroupNotation}: $S_n = \{s_1,\ldots,s_n\}$ for the standard free basis of $F_n$; and $e_1,\ldots,e_n$ for the corresponding oriented edges of the base rose $R_n$. The following exercise can be done, for example, by combining Exercises~\ref{ExerciseHEAutIsomorphismF_n}, \ref{ExerciseAutToHMCG} and~\ref{ExerciseHMCGIsomorphicOut}.

\begin{exercise}\label{ExerciseOutIsomorphicHMCG}
Prove that the inverse isomorphism $\Out(F_n) \mapsto \HMCG(R_n)$ of $(**)$ from Exercise~\ref{ExerciseHMCGIsomorphicOut} has the following description. Given $\phi \in \Out(F_n)$, choose a representative $\Phi \in \Aut(F_n)$, and let $f_\phi \from R_n \to R_n$ be the map described in Exercise~\ref{ExerciseAutToHMCG}: for each $i=1,\ldots,n$ the tight edge path $f_\phi(e_i)$ is obtained from the reduced word $\Phi(s_i)$ by replacing each occurence of $s_j$ with $e_j$ and each occurence of $s_j^\inv$ with $\bar e_j$. Prove that $f_\phi$ is a homotopy equivalence well-defined up to homotopy independent of the choice of~$\Phi$, and that the image of $\phi$ under the isomorphism $\Out(F_n) \mapsto \HMCG(R_n)$ is the free homotopy class of the homotopy equivalence $f_\phi \from R_n \to R_n$.
\end{exercise}

\subsection{Equivalence of marked graphs} 
\label{SectionMarkedGraphEquiv}
Consider two rank~$n$ marked graphs $\rho \from R_n \to G$ and $\rho' \from R_n \to G'$. Using the markings we obtain a natural homotopy class of maps between $G \mapsto G'$ represented by the map  $\rho' \composed \bar \rho \from G \to G'$ where $\bar \rho \from G \to R_n$ is any homotopy inverse to $\rho$. This map $\rho' \composed \bar\rho$, and anything homotopic to it, is said to \emph{preserve marking}. Equivalently, a homotopy equivalence $f \from G \to G'$ \emph{preserves marking}\index{preserves marking} if the maps $f \composed \rho, \rho' \from R_n \to G'$ are homotopic, which we express by saying that the following diagram is \emph{homotopy commutative}:
$$\xymatrix{
       & R_n \ar[dl]_{\rho} \ar[dr]^{\rho'} \\
G \ar[rr]_{f} & & G'
}$$
For example, letting the base rose $R_n$ be marked by the identity map, for any marked graph $\rho \from R_n \to G$ the map $\rho$ itself preserves marking between $R_n$ and $G$.

Two marked graphs $\rho \from R_n \to G$, $\rho' \from R_n \to G'$ are \emph{equivalent} if there exists a homeomorphism $h \from G \to G'$ that preserves marking.

\paragraph{Exercises for Section \ref{SectionMarkedGraphEquiv}}

\begin{exercise} Prove that the equivalence relation on marked graphs defined above is, indeed, an equivalence relation.
\end{exercise}

\begin{exercise} Working on the rank~$2$ theta graph, and following up Exercise~\ref{ExerciseRankTwoMixerBasicMarkings}, how many different equivalence classes of marked graphs are represented by the visible markings that are constructed in that exercise?
\end{exercise}

\begin{exercise}\label{ExerciseInducedMarking}
Prove that for any two rank $n$ marked graphs $G$, $G'$ and any homotopy equivalence $f \from G \mapsto G'$, given a marking on one of $G,G'$ there is a unique (up to homotopy) marking on the other one such that $f$ preserves marking.
\end{exercise}

\smallskip

\textbf{Further Caution:} The language of Exercise~\ref{ExerciseInducedMarking} is rife with terminology abuses. The reader may wish to review the initial ``caution'' expressed in Section~\ref{SectionMarkedGraphDef}. 

\medskip

Exercise~\ref{ExerciseInducedMarking} is often silently applied when working with diagrams of homotopy equivalences. This can be seen in Exercise~\ref{ExerciseDiagramMarkingOne} to follow, which can be thought of as a souped up version of Exercise~\ref{ExerciseInducedMarking}. For a more explicit use of Exercise~\ref{ExerciseInducedMarking} in the construction of outer space, see Section~\ref{SectionIdealSimplicial}.

\begin{exercise}\label{ExerciseDiagramMarkingOne}
Suppose you are given a homotopy commutative diagram $\D$ of rank~$n$ core graphs and homotopy equivalences. In more detail: 
\begin{enumerate}
\item $\D$ is a connected graph; 
\item Associated to each vertex $v \in \D$ is a rank~$n$ core graph $G_v$; 
\item Associated to each oriented edge $e \subset \D$ with initial vertex $v$ and terminal vertex $w$ is a homotopy equivalence $f_e \from G_v \to G_w$; 
\item\label{ItemHomotopyCommutativeD}
For any closed edge path $e_1 e_2 \ldots e_K$ in $\D$ that starts and ends at a vertex $v$, the composition $f_K \circ \cdots\circ f_1 \from G_v \to G_v$ is homotopic to the identity. 
\begin{itemize}
\item As a special case of~\pref{ItemHomotopyCommutativeD}, for each oriented edge $e$ with initial vertex $v$ and terminal vertex $w$, and letting $\bar e$ be the oppositely oriented edge, the maps $f_e \from G_v \to G_w$ and $f_{\bar e} \from G_w \to G_v$ are homotopy inverses of each other.
\end{itemize}
\end{enumerate}
Suppose also that you are given a vertex $v_0 \in \D$ and a marking $\rho_{v_0} \from R_n \to G_{v_0}$. Prove that there exists a unique (up to homotopy) assignment of markings $\rho'_v \from R_n \to G_v$, one for each vertex $v \in \D$, such that $\rho'_{v_0}$ is homotopic to $\rho_{v_0}$, and such that for each edge $e \subset \D$ the map $f_e$ preserves $\rho'$-markings.
\end{exercise}

\begin{exercise}
\label{ExerciseDiagramMarkingTwo}
Suppose you are given $\D$ as in Exercise~\ref{ExerciseDiagramMarkingOne}. Does the conclusion of Exercise~\ref{ExerciseDiagramMarkingOne} hold if $v_0$ is replaced by an arbitrary nonempty subset of vertices $V \subset \D$? If not, what additional conditions guarantee that the conclusion holds? 

To be precise, the desired conclusion is worded as follows: Given an assignment of markings $\rho_v \from R_n \to G_v$, one for each $v \in V$, there exists a unique (up to homotopy) assignment of markings $\rho'_v \from R_n \to G_v$, one for each vertex $v \in \D$, such that $\rho'_v$ is homotopic to $\rho_v$ for each $v \in V$, and such that for each edge $e \subset \D$ the map $f_e$ preserves $\rho'$-markings.
\end{exercise}

\subsection{Applications: Conjugacy classes in free groups and circuits in marked graphs} 
\label{SectionConjugacyClasses}

\paragraph{Conjugacy classes.} 
In a group $\Gamma$, the conjugacy class of $g \in \Gamma$ is denoted $[g] = \{hgh^\inv \suchthat h \in \Gamma\}$. The identity element of $\Gamma$ is sole member of the \emph{trivial} conjugacy class. The set of all nontrivial conjugacy classes in $\Gamma$ is denoted $\C(\Gamma)$.

In a free group $F\<S\>$, given a word $s_1 \ldots s_K$ in $S \union \overline S$, a \emph{cyclic permutation} of this word is any word of the form $s_{j+1} \ldots s_K s_1 \ldots s_j$ for some $j=1,\ldots,K$, together with the given word itself. The word $s_1 \ldots s_K$ is \emph{cyclically reduced} if all of its cyclic permutations are reduced; equivalently $s_1 \ldots s_K$ is itself reduced and $s_1 \ne s_K^\inv$.

\begin{exercise}
\label{ExerciseConjCyclicWords}
Prove that the nontrivial conjugacy classes in $F\<S\>$ correspond bijectively to the cyclically reduced words in $S \union \overline S$, up to cyclic permutation: each nontrivial conjugacy class in $F\<S\>$ has a representative which is cyclically reduced; and this representative is unique up to cyclic permutation.
\end{exercise}

\begin{exercise}
Solve the conjugacy problem in $F\<S\>$: describe an algorithm which, given any two words $W,W'$ in $S \union \overline S$, decides whether or not the elements of $F\<S\>$ represented by $W,W'$ are conjugate. Estimate the running time of your algorithm, as a function of $\max\{\Length(W),\Length(W')\}$.
\end{exercise}

\paragraph{Circuits.} Although Exercise~\ref{ExerciseConjCyclicWords} is formulated (and can be solved) in the algebraic language of words, the bijection that is described in that exercise has a topological formulation:
\begin{itemize}
\item Conjugacy classes in $F_n$ correspond bijectively to immersed circles in the base rose $R_n$ up to orientation preserving change of parameter. 
\end{itemize}
We now generalize this formulation to any graph. For this we first define circuits in graphs, then we recall the topological meaning of conjugacy classes in the fundamental group.

In any graph $G$, define a \indexemph{circuit} to be a locally injective continuous map $\gamma \from S^1 \mapsto G$. Two circuits $\gamma,\gamma'$ are regarded as equivalent when the differ by precomposition with some orientation preserving homeomorphism $h \from S^1 \to S^1$, meaning $\gamma' = \gamma\composed h$. Every circuit may be represented as a ``cyclically reduced edge path'' in~$G$, and this representation induces a bijection between circuits up to equivalence and cyclically reduced edge paths up to cyclic permutation. We rarely remark on this bijection and these equivalence relations, thinking of equivalent circuits or their representing cyclically reduced edge paths as being ``the same''. 

Consider any path connected topological space $X$ and any base point $p \in X$. Associated to any closed path $\gamma \from [0,1] \to X$ based at $p$ there is a circle map $\sigma_\gamma \from S^1 \to X$ defined by the formula $\sigma_\gamma(e^{2 \pi i t}) = \gamma(t)$. Recall from basic algebraic topology that for any two closed paths $\gamma,\gamma' \from [0,1] \to X$ based at $p$, the corresponding fundamental group elements $[\gamma], [\gamma'] \in \pi_1(X,p)$ are conjugate in the group $\pi_1(X,p)$ if and only if the two circle maps $\sigma_\gamma$, $\sigma_{\gamma'}$ are homotopic; furthermore, this correspondence induces a bijection between conjugacy classes of $\pi_1(X,p)$ and homotopy classes of maps $S^1 \mapsto X$.

\begin{proposition}
\label{PropCircuitRep}
For any connected graph $G$, each homotopically nontrivial continuous map $\gamma \from S^1 \to G$ is homotopic to a unique circuit in~$G$. It follows that for each $p \in G$ each nontrivial conjugacy class in $\pi_1(G,p)$ is represented by a unique circuit.
\end{proposition}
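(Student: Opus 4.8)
The plan is to reduce the proposition to a combinatorial statement about edge paths, using two bijections recalled just above it: circuits up to equivalence $\leftrightarrow$ cyclically reduced edge paths up to cyclic permutation, and (since $G$ is path connected) homotopy classes of maps $S^1 \to G$ $\leftrightarrow$ conjugacy classes of $\pi_1(G,p)$. Granting these, it suffices to prove two things: \emph{(Existence)} every homotopically nontrivial circle map $\gamma \from S^1 \to G$ is freely homotopic to a circuit, and \emph{(Uniqueness)} any two freely homotopic circuits are equivalent. The second assertion of the proposition (``it follows that\ldots'') is then immediate: under the conjugacy-class/free-homotopy correspondence, a nontrivial conjugacy class in $\pi_1(G,p)$ is exactly a homotopically nontrivial free homotopy class of circle maps, which by Existence and Uniqueness contains exactly one circuit.

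\emph{Existence.} Pick $x_0 \in S^1$; via the correspondence we may replace $\gamma$ by $\sigma_\delta$ for a loop $\delta$ based at a vertex $v \in G$ with $[\delta] \ne 1$ in $\pi_1(G,v)$. By the tightening property of graphs (Section~\ref{SectionGraphsAndPaths}), $\delta$ is path homotopic rel $v$ to a nondegenerate tight edge path $E_1 \cdots E_K$, and $\sigma_\delta$ is unchanged up to free homotopy. If $E_1 \ne \bar E_K$ this tight edge path is cyclically reduced, hence is a circuit, and we are done. Otherwise $E_1 = \bar E_K$, so $E_1 \cdots E_K = E_1 \, (E_2 \cdots E_{K-1}) \, \bar E_1$ exhibits $[\delta]$ as conjugate to the class of the strictly shorter tight edge path $E_2 \cdots E_{K-1}$ (still nondegenerate, since its class is still nontrivial), and passing to a conjugate does not change the free homotopy class of the circle map; now induct on $K$.

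\emph{Uniqueness.} Let $c, c'$ be freely homotopic circuits, represented by cyclically reduced closed tight edge paths $c_*$ based at a vertex $v$ on $c$ and $c'_*$ based at a vertex $v'$ on $c'$. A free homotopy between the loops $\sigma_{c_*}, \sigma_{c'_*}$, combined with a change-of-basepoint isomorphism, yields an edge path $u$ from $v$ to $v'$ such that $[c'_*]$ is conjugate in $\pi_1(G,v')$ to $[\,\bar u \, c_* \, u\,]$; replacing $u$ by its tightening $\hat u$ and absorbing the conjugator, $c'_*$ must be the (unique) tightening of $\bar{\hat u}\,c_*\,\hat u$. The remaining point is the rigidity fact: \emph{if $c_*$ is cyclically reduced and $\hat u$ is tight, then the tightening of $\bar{\hat u}\,c_*\,\hat u$ is a cyclic permutation of $c_*$.} This is proved by the same cancellation bookkeeping as in Exercise~\ref{ExerciseConjCyclicWords}, transported from reduced words in a free basis to tight edge paths in $G$ (with ``reduced'' replaced by ``tight''): after cancelling $\hat u$ against $c_*$, cyclic reducedness of the answer $c'_*$ forces that $\hat u$ cancelled against a full end-segment of $c_*$ and the result is a cyclic permutation of $c_*$, unless $\hat u$ ate all of $c_*$, in which case one recurses with a strictly shorter $\hat u$. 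Hence $c'_*$ is a cyclic permutation of $c_*$, so $c$ and $c'$ are equivalent circuits.

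\emph{Main obstacle.} The only substantive step is the rigidity fact in Uniqueness; it is conceptually routine but involves a somewhat fussy case check, exactly parallel to the proof of Exercise~\ref{ExerciseConjCyclicWords}. A cleaner geometric alternative is to pass to the universal cover $\ti G$, which is a tree since it is a simply connected graph (Exercise~\ref{ExerciseTreeEquivalence}): a nontrivial deck transformation $g$ acts without fixed points on $\ti G$ and therefore has a unique invariant bi-infinite geodesic, along which it translates; any circuit representing the conjugacy class of $g$ lifts to a $g$-invariant bi-infinite tight edge path, which must be that geodesic, so the circuit is determined by the conjugacy class. I would give the combinatorial argument in the text, since it uses only the already-established tightening property, and relegate the tree picture to a remark.
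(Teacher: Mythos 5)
Your proposal is correct in substance, but it follows a genuinely different route from the paper. The paper's proof is a covering-space argument: it takes the connected cover $\wt G \to G$ corresponding to the infinite cyclic subgroup $\<[\gamma]\> \subset \pi_1(G,p)$, observes that a connected graph with infinite cyclic fundamental group is a circle with trees attached, and gets existence by projecting the core circle of $\wt G$ and uniqueness by lifting any representing circuit back to $\wt G$, where it must coincide with that core circle. Your main argument instead stays combinatorial: tighten and cyclically reduce to get existence, and prove uniqueness by the cancellation bookkeeping for conjugating a cyclically reduced edge path, i.e.\ the tight-edge-path analogue of Exercise~\ref{ExerciseConjCyclicWords}. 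What your route buys is that it uses nothing beyond the tightening property of graphs already established in Section~\ref{SectionGraphsAndPaths}; what the paper's route buys is brevity and no case analysis, since both existence and uniqueness drop out of the structure of the cyclic cover in one stroke. (Your remark about the universal cover and the invariant axis of a deck transformation is the closest in spirit to the paper, which simply works one cover down.) One caveat on your write-up: the italicized ``rigidity fact'' is false as literally stated --- in a rose with edges $a,b$, taking $c_*=a$ and $\hat u = b$ gives the tight conjugate $\bar b a b$, which is not a cyclic permutation of $a$ --- because you omitted the essential hypothesis that the tightened conjugate is itself cyclically reduced. Your application and your proof sketch do use that hypothesis (you invoke ``cyclic reducedness of the answer $c'_*$''), so the argument goes through, but the statement should be corrected to: if $c_*$ is cyclically reduced and the tightening of $\bar{\hat u}\,c_*\,\hat u$ is cyclically reduced, then that tightening is a cyclic permutation of $c_*$.
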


\begin{proof} The map $\gamma$ induces an injection $\gamma_*$ of fundamental groups, giving an infinite cyclic subgroup of $\pi(G,p)$ with generator $[\gamma]$. Consider the connected covering space $\wt G \to G$ associated that subgroup. The graph $\wt G$, like all connected graphs with infinite cyclic fundamental group, is a circle with trees attached to its vertices. The image of $\ti\sigma$ down in $G$, with appropriate orientation, is a circuit representing the conjugacy class of $[\gamma]$. Any such circuit $\sigma$ lifts to a circuit in $\wt G$ representing the same generator as $\ti\sigma$ and so must equal $\ti\sigma$.
\end{proof}

\begin{exercise}
\label{ExerciseFreeBasisConjSep}
Prove that for any free basis $g_1,\ldots,g_n$ of $F_n$, if $i \ne j$ then $g_i$ is conjugate to neither $g_j$ nor $g^\inv_j$. 
\end{exercise}

\begin{exercise}\label{ExerciseInversesNotConjugateRevisited}
Redo Exercise~\ref{ExerciseInversesNotConjugate} in the following geometric fashion. Let $v$ be the base vertex of the base rose $R_n$. Consider a nontrivial $g \in F$ whose conjugacy class is represented by a circuit $\sigma \from S^1 \to R_n$, and its inverse $g^\inv$ which is represented by the circuit $\bar\sigma \from S^1 \xrightarrow{(a+bi) \mapsto (a-bi)} S^1 \xrightarrow{\sigma} R_n$. Using $\sigma$ and $\bar\sigma$, define labelled, oriented cell decompositions $\C, \bar\C$ of $S^1$: the vertices of $\Vertices(\C) = \sigma^\inv(v)$ and the edges of $\C$ are oriented and labelled by their image under $\sigma$; and similarly for $\bar\C$. Prove that there does not exist an orientation reversing homeomorphism $\rho \from S^1 \to S^1$ such that $\rho$ is a cellular isomorphism from $\C$ to $\bar\C$ preserving edge orientations and labels. Using this prove that $g,g^\inv$ are not conjugate.
\end{exercise}

\paragraph{Primitive conjugacy classes.} In a graph $G$, given a circuit $\rho \from S^1 \to G$ and an integer $k \ge 1$ we let $\rho^k$ be the circuit defined by the composition 
$$\rho^k \from S^1 \xrightarrow{z \mapsto z^k} S^1 \xrightarrow{\rho} G
$$
Equivalently, if $\rho$ is broken at some vertex to give the edge path $\gamma = e_1\cdots e_m$, then $\rho^k$ can be broken at a vertex to give the edge path
$$\gamma^k = \underbrace{(e_1\cdots e_m)(e_1\cdots e_m) \cdots (e_1\cdots e_m)}_{\text{$k$ times}}
$$
We say that $\sigma$ is a \emph{primitive} or \emph{root-free} circuit if there does not exist a circuit $\rho$ and $k \ge 2$ such that $\sigma = \rho^k$.

These adjectives also apply to individual elements of $F_n$, or elements of any group~$\Gamma$: an element $g \in \Gamma$ is \emph{primitive} or \emph{root-free} if there does not exist $g' \in \Gamma$ and $k \ge 2$ such that $g = (g')^k$.

\begin{exercise}
\label{ExercisePrimitiveCircuit}
Prove that for each marked graph $G$ and any non-identity element $g \in G$, the circuit in $G$ representing the conjugacy class of $g$ is primitive if and only if $g$ is primitive.
\end{exercise}

\begin{exercise}
\label{ExerciseBasisElementPrimitive}
Prove that every free basis element of $F_n$ is primitive.
\end{exercise}

\begin{exercise}
Redo Exercise~\ref{ExerciseFreeIsTorsionFree} in a geometric fashion.
\end{exercise}

\section{Finite subgroups of $\Out(F_n)$: Applying marked graphs}
\label{SectionFiniteSubgroups}

\hfill{\emph{Q: What did the pig say when the farmer caught him by the tail?}}

\hfill{\emph{A: ``This is the end of me!''}}

\smallskip

\hfill  --- An old chestnut, recorded in Bennet Cerf's ``Book of Riddles''

\bigskip

In this section we study the automorphism group of a finite core graph $G$, a finite group denoted $\Aut(G)$. We will prove Lemma~\ref{LemmaIvanov} which says the kernel of the action of $\Aut(G)$ on $H_1(G;\Z/3)$ is trivial. As a corollary we obtain Theorem~\ref{ThmAutGInjHMCG} which says that if $G$ is a \emph{marked} graph then $\Aut(G)$ embeds naturally into $\Out(F_n)$. This provides a wealth of finite subgroups of $\Out(F_n)$, and in Theorem~\ref{TheoremCZK} we will prove that every finite subgroup $H \subgroup \Out(F_n)$ is realized in this manner, a~result first proved independently by Culler \cite{Culler:FiniteSubgroups}, by Khramtsov \cite{Khramtsov:FiniteSubgroups}, and by Zimmerman \cite{Zimmerman:FiniteSubgroups}. As a corollary it will follow that $\Out(F_n)$ has a finite index torsion free subgroup, which was first proved as a quick corollary of theorem Baumslag and Taylor saying that the kernel of the action of $\Out(F_n)$ on $H_1(F_n;\Z)$ is torsion free (\cite{BaumslagTaylor:Center}).

It is interesting to compare this theory for $\Out(F_n)$ with the corresponding results for the mapping class group $\MCG(S)$ of a finite type surface~$S$. 
Nielsen conjectured in \cite{Nielsen:Abbildgungsklassen} that every finite subgroup $G \subgroup \MCG(S)$ is realized by a finite group of homeomorphisms of $S$, and he proved this conjecture when $G$ is cyclic. Serre proved in \cite{Serre:FiniteSubgroupsMCG} that every finite group of homeomorphisms of $S$ acts faithfully on $H_1(S;\Z/3)$, and combined with Nielsen's result it follows that $\MCG(S)$ has a finite index torsion free subgroup. As for the Nielsen realization conjecture itself, after a long further history (see \cite{Zieschang:FiniteGroups} for a full account), eventually Kerckhoff gave a complete proof \cite{Kerckhoff:NielsenRealization}.

\subsection{Automorphism groups of finite graphs}
\label{SectionAutFinGraph}
Starting very generally with any graph $G$, we will define automorphisms of $G$ in two different ways: a graph theoretic definition which produces a group denoted $\Aut(G)$; and a topological definition which produces the \emph{mapping class group} $\MCG(G)$. In the case where $G$ is a core graph of rank $\ge 2$ equipped with its natural cell decomposition, these two groups are naturally isomorphic (see Exercise~\ref{ExerciseGraphAutMCG}), and  one of the main theorems of this section says that the resulting group injects into the homotopy mapping class group $\HMCG(G)$ (see Lemma~\ref{LemmaIvanov}).

First we give the graph theoretic definition of automorphisms. Let $V(G)$ denote the vertex set of G. Let $E_\pm(G)$ denote the set of oriented edges of $G$. Given $e \in E_\pm(G)$, let $\bar e \in E_\pm(G)$ denote the same edge $e$ with the opposite orientation; also let $\bdy_-(e)$ and $\bdy_+(e)$ denote the initial and terminal endpoints of $e$. An automorphism of $G$ is a bijection
$$f \from V(G) \disjunion E_\pm(G) \to V(G) \disjunion E_\pm(G)
$$
which respects the structures of vertices, edges, orientation reversal, initial endpoints, and terminal endpoints; to be precise:
\begin{itemize}
\item $f$ takes $V(G)$ to $V(G)$
\item $f$ takes $E_\pm(G)$ to $E_\pm(G)$
\item For each $e \in E_\pm(G)$, letting $e'=f(e) \in E_\pm(G)$, we have:
\begin{itemize}
\item $f(\bar e) = \bar e'$
\item $f(\bdy_- e) = \bdy_- e'$
\item $f(\bdy_+ e) = \bdy_+ e'$
\end{itemize}
\end{itemize}

For the purely topological automorphism group of $G$, we simply use the mapping class group of $G$:
$$\MCG(G) = \Homeo(G) / \Homeo_0(G)
$$
where $\Homeo(X)$ is the group of homeomorphisms and $\Homeo_0(G)$ is the normal subgroup of homeomorphisms isotopic to the identity.

There is a natural homomorphism $\Aut(G) \mapsto \MCG(G)$ which for each $f \in \Aut(G)$ produces the isotopy class of a homeomorphism $F \from G \to G$ defined as follows. First, for each $V \in V(G)$ one defines $F(V)=f(V)$. Next, for each unoriented edge $e$ one chooses an orientation of $e$ thus determining $E \in E_\pm (G)$, and one chooses $F \restrict e \from E \to f(E)$ to be a homeomorphism that fixes the endpoints and preserves orientation. The resulting map $F$ is well-defined up to isotopy, independent of the choice of orientation of each $e$ and independent of the choice of homeomorphism $F \restrict e$.

There is also a natural homomorphism $\MCG(G) \to \HMCG(G)$, which maps the isotopy class of a homeomorphism of $G$ to the homotopy class of that homeomorphism. By composition we obtain a natural homomorphism $\Aut(G) \to \HMCG(G)$.

As an example, consider the rank~$n$ rose $R_n$, equipped with its natural cell decomposition having a single vertex of valence~$2n$. Its automorphism group $\Aut(R_n)$ is isomorphic to the signed permutation group which was first considered back in Exercise~\ref{ExerciseSignedPermSubgroup}. Here we may regard the signed permutation group as the group all permutations of the symbols $\{e_1, \bar e_1, e_2, \bar e_2, \ldots, e_n, \bar e_n\}$ which respect the partition into two element subsets $\{\{e_1,\bar e_1\},\{e_2,\bar e_2\},\ldots,\{e_n,\bar e_n\}\}$. This is a group of order $2^n \cdot n!$, and by Ivanov's Lemma stated below it is isomorphic to a subgroup of $\Out(F_n)$, giving a super-exponential lower bound to the maximum order of a finite subgroup of $\Out(F_n)$. This stands in sharp contrast to the fact that every finite subgroup of the mapping class group of a closed oriented surface of genus~$g$ has linearly bounded order $\le 84(g-1)$ \cite{FarbMargalit:primer}.

\paragraph{Exercises for Section \ref{SectionAutFinGraph}}


\begin{exercise} 
\label{ExerciseGraphAutMCG}
Let $G$ be a connected, finite graph of rank $\ge 2$. 
\begin{enumerate}
\item Prove that if $G$ has its natural cell decomposition then the natural homomorphism $\Aut(G) \to \MCG(G)$ is an isomorphism. 
\item\label{ItemAutInjects} Prove more generally (without assuming $G$ has the natural cell decomposition) that the natural homomorphism $\Aut(G) \to \MCG(G)$ is an injection.
\end{enumerate}
\end{exercise}

\begin{exercise}
Under what conditions on a connected graph $G$ can the natural homomorphism $\Aut(G) \to \MCG(G)$ fail to be an injection?
\end{exercise}

\subsection{Automorphisms act faithfully on homology}
\label{SectionAutsActFaithfully}

We start with Ivanov's Lemma, from his book \cite{Ivanov:subgroups}. This lemma is a graph theoretic analogue of Serre's Theorem mentioned early in Section \ref{SectionFiniteSubgroups} (and which Ivanov used to give a proof of Serre's Theorem). 

Given a graph $G$ consider its first homology $H_1(G;\Z/3)$ with coefficients in the group of integers modulo~$3$. For any homotopy equivalence \hbox{$f \from G \to G$,} using functorial properties of homology we obtain a natural induced isomorphism $f_* \from H_1(G;\Z/3) \to H_1(G;\Z/3)$, and the function $f \mapsto f_*$ induces a well-defined natural group homomorphism
$$\HMCG(G) \to \Aut(H_1(G;\Z/3)) \approx \Aut((\Z/3)^n)
$$


\begin{lemma}[Ivanov \cite{Ivanov:subgroups}]
\label{LemmaIvanov}
If $G$ is a finite core graph of rank~$n \ge 2$, then the following composed homomorphism is injective:
$$\Aut(G) \mapsto \HMCG(G) \mapsto \Aut((\Z/3)^n)
$$
\end{lemma}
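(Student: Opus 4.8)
The plan is to show that a nontrivial graph automorphism $f \in \Aut(G)$ cannot act trivially on $H_1(G;\Z/3)$. Suppose $f \ne \mathrm{id}$; since $\Aut(G) \to \MCG(G)$ is injective (Exercise~\ref{ExerciseGraphAutMCG}), the homeomorphism $F$ representing $f$ is not isotopic to the identity, so $f$ moves some vertex or some edge. I would split the argument according to how $f$ acts on the natural graph structure. The key principle is that if $f$ permutes a set of distinct oriented edges nontrivially, then certain homology classes built from those edges are permuted nontrivially, and working mod~$3$ (rather than mod~$2$) prevents a nontrivial permutation from accidentally fixing the relevant sums. The choice of $\Z/3$ over $\Z/2$ is precisely to kill the possibility that $f$ swaps an edge $e$ with $\bar e$ (an ``edge inversion''), since in $H_1(\,\cdot\,;\Z/2)$ the classes of $e$ and $\bar e$ agree, whereas mod~$3$ they differ by a sign.

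The main case analysis: pick a maximal tree $T \subset G$ so that $H_1(G;\Z/3)$ has a basis indexed by the edges $\eta_1,\dots,\eta_n$ of $G \setminus T$, where the basis class $[\eta_i]$ is the cycle formed by $\eta_i$ together with the tree arc connecting its endpoints. First I would handle the case where $f$ inverts some edge $e$ (i.e.\ $f(e) = \bar e$): then $f$ fixes the midpoint of $e$ and one checks that the induced map on homology sends some basis class to its negative (or to a class with a $-1$ coefficient appearing), which is nontrivial mod~$3$. Next, the case where $f$ fixes all edges setwise but is still nontrivial: then $f$ must permute vertices within edges, which forces some edge to be inverted, reducing to the previous case — or $f$ is actually the identity, contradiction. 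The remaining case is where $f$ genuinely permutes the edges of $G$ (not just within $\bar{\phantom{e}}$-orbits) by a nontrivial permutation. Here I would use that $f$ permutes the natural edges, hence induces a nontrivial permutation of some orbit; expressing the action on the chain complex $C_1(G;\Z/3) \to C_0(G;\Z/3)$, a nontrivial permutation of basis chains descends to a nontrivial map on the kernel $H_1$, because the boundary map is defined over $\Z$ and a permutation matrix of order $\ge 2$ is not the identity mod~$3$, and one checks it cannot become trivial after passing to the cycle subspace by exhibiting an explicit cycle it moves (e.g.\ a circuit supported on a single $f$-orbit of edges, which exists since $G$ is a core graph, so every edge lies on a circuit).

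The cleanest route may be to argue directly on chains. Since $G$ is a core graph, there is no vertex of valence~$1$, so the boundary map $\bdy \from C_1(G;\Z/3) \to C_0(G;\Z/3)$ has the property that $H_1(G;\Z/3) = \ker \bdy$ is ``large'' and, more importantly, every oriented edge $e$ appears with nonzero coefficient in some element of $\ker\bdy$ (one can take the class of a circuit through $e$, which exists by the core condition and Exercise~\ref{ExerciseCoreCharacter}-type reasoning). Now if $f$ acts trivially on $\ker\bdy$ but nontrivially on $C_1$, pick an edge $e$ with $f_\#(e) \ne e$ as oriented chains (possible since $f \ne \mathrm{id}$ on $E_\pm(G)$, using injectivity into $\MCG$), and a cycle $z = e + (\text{rest})$; then $f_\#(z) = z$ forces $f_\#$ to fix the $C_1$-coefficient vector of $z$, and by varying $z$ over a spanning set of cycles one deduces $f_\#$ is the identity on all of $C_1$, hence on $E_\pm(G)$, hence $f = \mathrm{id}$ — contradiction. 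The step I expect to be the real obstacle is the bookkeeping needed to guarantee that ``$f$ nontrivial on oriented edges'' genuinely propagates to ``$f$ nontrivial on cycles mod~$3$'': one must rule out the scenario where $f$ permutes edges in a way that is invisible on $H_1$, and this is exactly where the core-graph hypothesis and the odd modulus do the work. I would formalize this by choosing, for the offending edge $e$, a cycle $z$ in which $e$ and $f(e)$ (and $\overline{f(e)}$) appear with coefficients that cannot be simultaneously matched under the permutation $f_\#$ modulo~$3$.
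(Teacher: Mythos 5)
Your overall strategy (contrapositive: a nontrivial $f\in\Aut(G)$ must move some class in $H_1(G;\Z/3)$, with $\Z/3$ chosen to detect orientation reversal) is the right starting point and matches the spirit of the paper's argument, but the proposal has a genuine gap at exactly the point you flag as ``bookkeeping,'' and it is not bookkeeping. First, your key claim that every oriented edge appears with nonzero coefficient in some $1$-cycle is false for separating edges: a bridge of $G$ is crossed by any closed edge path equally often in both directions, so every element of $Z_1(G;\Z/3)$ has coefficient $0$ on it (core graphs can have bridges, e.g.\ the barbell). Hence no cycle can ``see'' whether $f$ moves a bridge, and your chain-level argument (``varying $z$ over a spanning set of cycles one deduces $f_\#$ is the identity on all of $C_1$'') cannot get started, since cycles do not span $C_1$ in the presence of bridges. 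The paper must treat this case by a completely non-homological argument: it first shows each island (maximal bridgeless subgraph) is preserved, and then that the partition of islands determined by each bridge forces every bridge to be fixed with orientation (Steps 2--3 of the paper's proof).

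Second, even for nonseparating edges the implication ``$f_*$ fixes every cycle class $\Rightarrow$ $f$ fixes every edge'' is where the real work lies, and your proposed fix does not address it: if $f$ fixes an embedded circle $c$ setwise and rotates it by a nontrivial cyclic permutation of its edges, then $f$ fixes the class $[c]$ (and indeed every class supported on that $f$-orbit), so ``exhibiting a cycle supported on a single $f$-orbit that is moved'' is impossible in precisely the dangerous scenario. The mod-$3$ coefficients only rule out $f$ reversing the orientation of $c$ (since $[c]\ne -[c]$), not rotating it. The paper excludes rotations by auxiliary arguments: either the circle meets a bridge endpoint, which is already known to be fixed, so a fixed vertex forces the rotation to be trivial (Step 1(b), Case 1 of Step 4); or, when the island is not a circle, one builds from an arc $\gamma$ outside $c$ and an arc $\delta\subset c$ a second embedded circle $c'=\gamma\cup\delta$ which a nontrivial rotation of $c$ would move, contradicting Step 1 (Case 2 of Step 4). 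Some such construction of extra invariant circles (or fixed points) is unavoidable, and without it — and without a separate treatment of bridges — the proposal does not prove the lemma. Your subsidiary case analysis about edge inversions is also shakier than stated (inverting a non-loop edge need not negate any basis class), but that part could be absorbed into a corrected main argument.
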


Before proving this lemma, we first discuss its statement and a few of its applications.

\medskip

It will be clear from the proof of Lemma~\ref{LemmaIvanov} that the same result holds when $\Z/3$ is replaced by $\Z/n$ for $n \ge 3$, or indeed by any abelian group $A$ for which there exists a nontrivial element that does not have order~$2$. The advantage of $\Z/3$ is that it gives the best upper bounds on the cardinality of $\Aut(G)$: the order of the group $\Aut((\Z/3)^n) = \GL_n(\Z/3)$ is bounded above $3^{n^2}$ which is smaller than upper bounds that come from using other abelian groups $A$ as above.

\medskip

Lemma~\ref{LemmaIvanov} has several useful applications. We will consider later its applications in conjunction with Theorem~\ref{TheoremCZK}, but here are some immediate applications to start with:

\begin{corollary}\label{ThmAutGInjHMCG}
For each core graph $G$ of finite rank $n \ge 2$, the natural homomorphism $\Aut(G) \mapsto \HMCG(G)$ is injective. \qed
\end{corollary}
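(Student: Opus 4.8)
The plan is to derive the Corollary directly from Lemma~\ref{LemmaIvanov}, with no further work. That lemma asserts that the composed homomorphism
$$\Aut(G)\;\xrightarrow{\;\iota\;}\;\HMCG(G)\;\longrightarrow\;\Aut\bigl((\Z/3)^n\bigr)$$
is injective, where $\iota$ is precisely the natural homomorphism appearing in the Corollary. For any composable group homomorphisms $A\xrightarrow{\iota}B\xrightarrow{\pi}C$, if $\pi\composed\iota$ is injective then $\iota$ is injective: if $\iota(f)=\iota(f')$ then $\pi(\iota(f))=\pi(\iota(f'))$, hence $f=f'$. Applying this with $A=\Aut(G)$, $B=\HMCG(G)$, $C=\Aut((\Z/3)^n)$ gives injectivity of $\iota=\bigl(\Aut(G)\to\HMCG(G)\bigr)$, which is exactly the assertion of the Corollary. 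So, granting Lemma~\ref{LemmaIvanov}, the proof is a single line.

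Because the Corollary is a formal triviality once the Lemma is in hand, the real obstacle lies in Lemma~\ref{LemmaIvanov} itself: one must show that a \emph{nontrivial} $f\in\Aut(G)$ acts nontrivially on $H_1(G;\Z/3)$. The approach I would take there is to first analyze the permutation that $f$ induces on the natural vertices and natural edges of $G$, reducing to the case where $f$ preserves the underlying combinatorial ``shape'' and only reverses orientations of loop edges or permutes families of parallel edges; then, for such an $f$, to exhibit an explicit $1$-cycle whose class in $H_1(G;\Z/3)$ is genuinely moved. The hypotheses $n\ge 2$ and ``core graph'' (no valence-$1$ vertices) ensure there are enough independent cycles to detect every such $f$, and the choice of coefficients $\Z/3$ rather than $\Z/2$ is exactly what is needed, since reversing a loop edge negates its homology class and negation is invisible over $\Z/2$ but not over $\Z/3$. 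That case analysis — in particular the ``shape-preserving, orientation-reversing'' case — is the only genuinely delicate step; for the Corollary as stated, however, one simply invokes Lemma~\ref{LemmaIvanov}.
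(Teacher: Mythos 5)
Your proposal is correct and is exactly the paper's argument: the Corollary is stated with a \qed precisely because it follows formally from Lemma~\ref{LemmaIvanov}, since injectivity of the composition $\Aut(G)\to\HMCG(G)\to\Aut((\Z/3)^n)$ forces injectivity of the first map. Your added remarks about how one would prove Lemma~\ref{LemmaIvanov} itself are beside the point for this statement (and the paper's actual proof of that lemma proceeds by preserving cycles, bridges, and islands rather than the reduction you sketch), but they do not affect the correctness of your one-line deduction.
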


As a consequence of Lemma~\ref{LemmaIvanov} we get lots of finite subgroups of $\Out(F_n)$, one for each marked graph $G$. To prove this, consider the composition $\Aut(G) \mapsto \HMCG(G) \approx \Out(F_n)$, where the latter isomorphism is induced by the marking on $G$ (see the end of Section~\ref{SectionMarkedGraphsOutFn}). By further composition, we obtain a homomorphism
\begin{align*}
\Aut(G) &\to \Out(F_n) \to \Aut(ab(F_n)) \\ & \to \Aut(ab(F_n)) \tensor \Z/3 = \Aut(H_1(F_n;\Z/3)) \approx \Aut(H_1(G;\Z/3))
\end{align*}
and a diagram chase shows that this composition is the same map as the induced homomorphism $\Aut(G) \mapsto \Aut(H_1(G;\Z/3))$. The latter homomorphism is injective, by Lemma~\ref{LemmaIvanov}, and therefore the homomorphism $\Aut(G) \mapsto \Out(F_n)$ is injective. We record this as:

\begin{corollary} 
\label{CorollaryAutGInjection} \qquad
For each rank~$n$ marked graph $G$ the natural homomorphism \hbox{$\Aut(G) \to \Out(F_n)$} is injective. \qed
\end{corollary}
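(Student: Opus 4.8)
The plan is to exploit the factorization of the natural map through the homotopy mapping class group. By definition (see the discussion preceding the statement, together with the end of Section~\ref{SectionMarkedGraphsOutFn}), the natural homomorphism $\Aut(G) \to \Out(F_n)$ is the composite
$$\Aut(G) \longrightarrow \HMCG(G) \xrightarrow{\ \approx\ } \Out(F_n),$$
where the first arrow sends a graph automorphism to the homotopy class of the homeomorphism it induces (via $\Aut(G) \to \MCG(G) \to \HMCG(G)$), and the second arrow is the marking-induced isomorphism. My strategy is to verify that the second arrow is genuinely an isomorphism, and then to reduce injectivity of the composite to injectivity of the first arrow.

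First I would pin down that the second arrow is an isomorphism of groups, not merely a homomorphism. Fixing the marking $\rho \from R_n \to G$ with homotopy inverse $\bar\rho \from G \to R_n$, this arrow is the composite
$$\HMCG(G) \xrightarrow{\ \Ad_{\bar\rho}\ } \HMCG(R_n) \approx \Out(F_n),$$
where $\Ad_{\bar\rho}$ is an isomorphism by the functoriality of $\Ad$ established in Exercise~\ref{ExerciseHMCGFunctorNBP}, and $\HMCG(R_n) \approx \Out(F_n)$ is the isomorphism $(**)$ of Exercise~\ref{ExerciseHMCGIsomorphicOut}. Hence the second arrow is a composite of isomorphisms, so it is itself an isomorphism.

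The reduction is then immediate: a composite of a homomorphism with an isomorphism is injective if and only if the homomorphism itself is injective. Thus injectivity of $\Aut(G) \to \Out(F_n)$ is equivalent to injectivity of the first arrow $\Aut(G) \to \HMCG(G)$, which is exactly Corollary~\ref{ThmAutGInjHMCG}. That corollary in turn holds because Lemma~\ref{LemmaIvanov} supplies an injection $\Aut(G) \to \HMCG(G) \to \Aut((\Z/3)^n)$, and any map whose composite with a further map is injective must itself be injective.

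There is no serious obstacle here; all the weight sits in Lemma~\ref{LemmaIvanov}, which I am free to assume. The only points requiring care are bookkeeping: confirming that the ``natural homomorphism'' named in the statement is precisely the marking-induced composite displayed above, and confirming that the marking-induced map $\HMCG(G) \to \Out(F_n)$ is a two-sided isomorphism rather than merely a surjection or injection. Once those identifications are in hand the result follows formally. An equivalent route, which avoids invoking Corollary~\ref{ThmAutGInjHMCG} as a separate step, is to post-compose all the way to homology: the composite $\Aut(G) \to \Out(F_n) \to \Aut(H_1(F_n;\Z/3)) \approx \Aut(H_1(G;\Z/3))$ agrees, after a diagram chase, with the induced map $\Aut(G) \to \Aut(H_1(G;\Z/3))$, which is injective by Lemma~\ref{LemmaIvanov}; injectivity of the first factor $\Aut(G) \to \Out(F_n)$ then follows since the total composite is injective.
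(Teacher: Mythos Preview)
Your proposal is correct and essentially matches the paper's argument. The paper's own proof is precisely your ``equivalent route'': it composes $\Aut(G) \to \Out(F_n) \to \Aut(H_1(F_n;\Z/3)) \approx \Aut(H_1(G;\Z/3))$, observes via a diagram chase that this composite equals the induced action on homology, and then invokes Lemma~\ref{LemmaIvanov} to conclude injectivity of the first factor. Your primary route through Corollary~\ref{ThmAutGInjHMCG} is a harmless repackaging of the same content.
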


\medskip

\begin{proof} [Proof of Lemma~\ref{LemmaIvanov}] Applying Exercise~\ref{ExerciseGraphAutMCG} reduces the proof to the case that $G$ is equipped with its natural cell structure, in which case $\Aut(G) \approx \MCG(G)$, and so from here onwards we work in that case.

After choosing an orientation on each edge of $G$, we obtain the cellular homology groups of $G$ with coefficients in~$\Z/3$. Consider the quotient map from the group of $1$-cycles to the first homology group
$$Z_1(G;\Z/3) \mapsto H_1(G;\Z/3)
$$
Since there are no 2-cells, this quotient map is an isomorphism, using which we identify the abelian groups $Z_1(G;\Z/3) \approx H_1(G;\Z/3)$, and in particular for any graph automorphism $f \from G \to G$, the induced abelian group automorphism  
$$f_* \from Z_1(G;\Z/3) \to Z_1(G;\Z/3) 
$$ 
is equal to the identity if and only if $f$ induces the identity on $H_1(G;\Z/3)$. Our task is therefore to assume that $f_*$ fixes each 1-cycle with $\Z/3$ coefficients, and to prove for each natural edge $e$ that $f(e)=e$ preserving orientation. It easily follows that $f$ fixes each natural vertex and is isotopic to the identity relative to the vertex set.

\begin{description}
\item[Step 1: Circles are preserved:] For each embedded circle $c \subset G$,
\begin{itemize}
\item[(a)] $f(c)=c$ and $f$ cyclically permutes the edges of $c$, preserving orientation of $c$.
\item[(b)] If $f$ fixes a vertex of $c$ then $f$ fixes each edge of $c$ preserving orientation.
\end{itemize}
\end{description}
To prove this, choose an orientation on $c$ which we shall call the ``positive'' orientation, the opposite being called the ``negative'' orientation. Denoting $\Z/3 = \{-1,0,+1\}$, associated to $c$ there is a 1-cycle $[c] \in Z_1(G;\Z/3)$ as follows: given a natural edge $e$ of $G$, the 1-cycle $[c]$ assigns a nonzero coefficient to $e$ if and only if $e \not \subset c$, and if so then $[c]$ assigns coefficient $+1$ or $-1$ depending on whether or not the given orientation on $e$ agrees with the restriction of the positive orientation on~$c$. Corresponding to the negative orientation on $c$ is the additive inverse 1-cycle $-[c]$. Since $f_*$ is the identity we have $[f(c)]=f_*[c]=[c]$. It follows that $f(c)=c$ as a subgraph, because $f(c)$ is an oriented circle which assigns the same coefficients to edges that $c$ assigns; furthermore, since $-1 \ne +1$ in $\Z/3$ it follows that $[c] \ne -[c]$, and so $f$ preserves orientation on $c$. It also follows, by induction going around the edges of $c$, that $f$ cyclically permutes the edges; this proves item~(a), and item~(b) immediately follows.

For subsequent steps, recall the concepts of bridges, islands, and the bridge decomposition of a core graph, from Exercise~\ref{ExerciseTreeOfCoreGraphs} and the preceding material. Denote the bridge decomposition of $G$ as
$$G = (e_1 \union\cdots\union e_K) \union (H_1 \union \cdots \union H_L)
$$ 

\begin{description}
\item[Step 2: Bridgeless subgraphs are preserved:] For each bridgeless subgraph $H \subset G$ we have $f(H)=H$.
\end{description}
For the proof, choose an edge $e \subset H$. Since $e$ is not a bridge of $H$, there exists an embedded circle $c \subset H$ such that $e \subset c$. Therefore $f(e) \subset f(c)=c \subset H$.

\medskip

Applying Step 2 we obtain $f(H_l)=H_l$ for each $l=1,\ldots,L$, and we also have:
\begin{description}
\item[Step 3: Bridges are preserved:] $f(e_k)=e_k$ preserving orientation, for each $k=1,\ldots,K$.
\end{description}
To prove this, each bridge $e_k$ partitions the set of islands $\{H_1,\ldots,H_L\}$ into two subsets corresponding to the two components of the graph complement $G \setminus e_k$. Furthermore, if $e_k \ne e_{k'}$ are distinct bridges then the two parts of the $e_k$ island partition are each distinct from the two parts of the $e_{k'}$ island partition. As $k$ varies from $1$ to $K$, the collection of island partitions of the bridges $e_k$ therefore determines $2K$ distinct subsets of $\{H_1,\ldots,H_L\}$. Since $f$ preserves each individual island $H_l$, it also preserves each of these $2K$ subsets. It follows $f(e_k)=e_k$ preserving orientation for each $k$.

\medskip

After Step~3, what remains to complete the proof of Lemma~\ref{LemmaIvanov} is:

\begin{description}
\item[Step 4: Nonseparating edges are preserved:] For each nonseparating edge $e \subset G$ we have $f(e)=e$ preserving orientation.
\end{description}
For the proof, let $H_l$ be the island of $G$ for which $e \subset H_l$ ($l=1,\ldots,L$). We break the proof into cases depending on whether $H_l$ is a circle, and if not there will be further subcases.


\textbf{Case 1: $H_l$ is a circle.} Since $\rank(G) \ge 2$ whereas $\rank(H_l)=1$, it follows that the bridge decomposition is nontrivial, and in particular there exists a bridge $e_k$ and an endpoint $v$ of $e_k$ such that $v \in H_l$. By Step 3 it follows that $f(v)=v$, and by Step 1(b) it follows that $f(e)=e$ preserving orientation. 

\textbf{Case 2: $H_l$ is not a circle.} Choose a circle subgraph $c \subset H_l$ such that $e \subset c$. Applying Step 1 we have $f(c)=c$ preserving orientation, and so $e' = f(e) \subset c$. If $e'=e$ then we are done, so suppose that $e' \ne e$. Applying Step 1(b) the map $f$ acts on $c$ by a nontrivial cyclic permutation of the edges and, similarly, a nontrivial cyclic permutation of the vertices.

Consider the graph complement $H_l \setminus c$, which is nonempty because $H_l$ is not a circle. Choose any component $K$ of $H_l \setminus c$. The intersection $K \intersect H_l$ is nonempty set of vertices. There are two subcases depending on the cardinality of $K \intersect c$, and in each case we derive a contradiction.

If $K \intersect c$ is a single vertex $v$, then $f(K) \intersect c = f(K) \intersect f(c) = f(K \intersect c) = f(v) \ne v$, and therefore $K \ne f(K)$, hence $K \intersect f(K) = \emptyset$. Also, since $H_l$ is a core graph, it follows that $K$ is not a tree. The graph $K$ contains a circle subgraph $c'$, but $f(c') \subset f(K)$ hence $c' \ne f(c')$, contradicting Step~1.

If on the other hand $K \intersect c$ contains at least two vertices, let $\delta \subset c$ be an arc whose two endpoints are in $K \intersect c$ and whose interior is disjoint from $K \intersect c$. Let $\gamma \subset K$ be an arc with endpoints $v,w$. Consider the circle subgraph $c' = \gamma \union \delta$, and note that $c' \intersect c$ is the union of the arc $\delta$ and a finite set of vertices (possible empty). It follows that $f(c') = f(\gamma) \union f(\delta)$ and that $f(c') \intersect c$ is the union of the arc $f(\delta)$ and a finite set of vertices. Since $f$ acts on $c$ as a nontrivial cyclic permutation, it follows that $f(\delta) \ne \delta$, implying that $f(c') \ne c'$, contradicting Step~1.
\end{proof}

\subsection{Realizing finite subgroups of $\Out(F_n)$}
\label{SectionRealizationTheorem}

The following ``realization theorem'' says that every finite subgroup of $\Out(F_n)$ is realized in the manner described in the Corollary \ref{CorollaryAutGInjection}.

\begin{theorem}[Culler \cite{Culler:FiniteSubgroups}; Khramtsov \cite{Khramtsov:FiniteSubgroups}; Zimmerman \cite{Zimmerman:FiniteSubgroups}]
\label{TheoremCZK}
For every finite subgroup $H \subgroup \Out(F_n)$ there exists a marked graph $G$ such that $H \subgroup \Aut(G)$ (using the natural embedding $\Aut(G) \subgroup \Out(F_n)$ from Corollary~\ref{CorollaryAutGInjection}).
\end{theorem}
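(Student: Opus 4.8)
The plan is to prove that a finite group $H$ acting on $F_n$ by outer automorphisms can be realized as a group of symmetries of some marked graph. The classical and most robust approach here is a \emph{fixed-point / minimality} argument, and since outer space $\X_n$ has not yet been introduced at this point in the text, I would instead phrase the argument directly in terms of marked graphs and homotopy equivalences, or equivalently (by taking universal covers) in terms of simplicial actions of $F_n$ on trees.

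\textbf{Setup via group extensions.} First I would lift the problem: the finite subgroup $H \subgroup \Out(F_n)$ determines (by pulling back the extension $1 \to \Inn(F_n) \to \Aut(F_n) \to \Out(F_n) \to 1$) an extension $1 \to F_n \to \Gamma \to H \to 1$, where $\Gamma$ is the preimage of $H$ in $\Aut(F_n)$ (using $\Inn(F_n) \approx F_n$ since $F_n$ is centerless by Exercise~\ref{ExerciseInversesNotConjugate}). Realizing $H$ as automorphisms of a marked graph $G$ is then equivalent to finding an action of $\Gamma$ on a tree $\widetilde G$ (the universal cover of $G$, with deck action of $F_n$) that is free and simplicial on $F_n$, such that the quotient $F_n \backslash \widetilde G = G$ is a finite core graph, and such that the induced $H = \Gamma / F_n$-action on $G$ lands in $\Aut(G)$ and induces the prescribed outer automorphisms under the isomorphism $\HMCG(G) \approx \Out(F_n)$ of Section~\ref{SectionMarkedGraphsOutFn}. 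So the task reduces to: \emph{$\Gamma$ acts on some tree $T$ with $F_n$ acting freely cocompactly.}

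\textbf{Building the action and extracting a minimal subtree.} To build such a $T$: start from \emph{any} marked graph $G_0$ of rank $n$ (say the rose $R_n$), with universal cover $T_0$ carrying the free cocompact deck action of $F_n$. For each $\phi \in H$ choose an automorphism $\Phi$ representing it; the homotopy equivalence $f_\Phi \from R_n \to R_n$ of Exercise~\ref{ExerciseAutToHMCG} lifts to an $F_n$-equivariant quasi-isometry of $T_0$ (equivariant for the twisted $F_n$-action via $\Phi$). The standard device is then to average: because $H$ is \emph{finite}, one can form a graph / tree on which all of $\Gamma$ acts simultaneously. Concretely, take the graph $G' = \bigsqcup_{\phi \in H} G_0$ with maps between the copies realizing the $H$-relations up to homotopy, apply the rigidification of diagrams of homotopy equivalences (Exercise~\ref{ExerciseDiagramMarkingOne}/\ref{ExerciseDiagramMarkingTwo}) to get compatible markings, and then pass to a graph carrying a genuine $H$-action — equivalently, form the tree $T' = $ (a minimal invariant subtree of) the appropriate pullback/fiber product of the $|H|$ lifted actions. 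Since $F_n$ acts freely and cocompactly on each factor and $H$ permutes the factors, $\Gamma$ acts on $T'$ with $F_n$ still acting freely; taking the unique minimal $\Gamma$-invariant subtree $T \subseteq T'$ (which exists because $F_n \le \Gamma$ already acts cocompactly, so $T$ is $F_n$-cocompact) gives the desired tree. The quotient $G = F_n \backslash T$ is then a finite graph of rank $n$; replacing $G$ by its core $\Core(G)$ (Exercise~\ref{ExerciseCoreCharacter}), which is $H$-invariant and carries the induced marking, we get the required marked core graph with $H \subgroup \Aut(G)$.

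\textbf{Identifying the obstacle.} The genuinely delicate point is the \textbf{passage from ``acts up to homotopy'' to ``acts honestly''}: a priori $H$ only acts on the homotopy category of marked graphs, and one must produce an actual finite group of simplicial automorphisms, not merely homotopy equivalences satisfying the group law up to homotopy. The two ways to do this — (i) a direct obstruction-theory / equivariant-CW argument showing that for finite $H$ the homotopy action lifts (using that the relevant obstruction groups vanish, which ultimately rests on contractibility-type properties of the space of choices), or (ii) taking the fiber product of the $|H|$ lifted tree actions and cutting down to a minimal subtree — each require real care. Approach (ii) is cleaner and is the one I would push through: the fiber product $\prod_{\phi} T_0$ (suitably defined, as a tree, using that a pullback of trees along equivariant maps contains a canonical invariant subtree) automatically carries an honest $\Gamma$-action permuting coordinates, and finiteness of $H$ is exactly what makes the product finite and the $F_n$-action still cocompact on the minimal subtree. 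Verifying that the minimal subtree is nonempty, that $F_n$ acts freely on it (no finite stabilizers appear, since the $F_n$-action on each factor was already free — here one uses that $F_n$ is torsion-free, Exercise~\ref{ExerciseFreeIsTorsionFree}), and that the induced $H$-action on $F_n \backslash T$ recovers the original outer automorphisms, is then a sequence of diagram chases that I would carry out but not belabor here.
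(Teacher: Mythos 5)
Your setup is the same as the paper's: pull back the extension $1 \to F_n \to \Aut(F_n) \to \Out(F_n) \to 1$ over $H$ to get $1 \to F_n \to \wh H \to H \to 1$, and observe that the theorem is equivalent to producing a simplicial action of $\wh H$ on a tree on which $F_n$ acts freely and cocompactly, whose quotient is then the desired marked core graph with its $H$-action. Up to that point you are on track, and your closing remarks (minimality, freeness from torsion-freeness of $F_n$, core passage, recovering the original outer action) match the paper's endgame.

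The gap is in the middle step, and it is not a technicality: your ``fiber product of the $|H|$ lifted actions'' does not exist as described, and constructing any $\wh H$-tree at all is exactly the content of the theorem. First, the lifts of the maps $f_\Phi$ to $T_0$ are only equivariant for twisted $F_n$-actions and satisfy the group relations of $H$ only up to homotopy (up to bounded distance on $T_0$); so there is no honest action of $\wh H$ on $\prod_{\phi} T_0$, or on any pullback of these maps, until you have already solved the rigidification problem you name as the obstacle. Second, even granting such an action, a product (or topological fiber product) of trees is not a tree, so ``the unique minimal $\wh H$-invariant subtree'' does not parse, and there is no canonical invariant tree inside it: an invariant ``diagonal-like'' subtree would amount to an isometric conjugacy between the twisted actions, which is essentially the statement being proved. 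Finite-group averaging has no analogue here (there is no center-of-mass construction for graphs), which is why none of the known proofs proceed this way. The paper closes the gap with the theory of ends: $F_n$ has finite index in $\wh H$, so $\wh H$ is finitely presented with infinitely many ends, and Stallings' theorem together with Dunwoody's accessibility (Theorem~\ref{TheoremHopfStallingsDunwoody}) yields a cocompact $\wh H$-action on a tree with finite edge stabilizers and vertex stabilizers that are finitely generated with at most one end; since $F_n$ is torsion-free and its nontrivial finitely generated subgroups have at least two ends, the restricted $F_n$-action is free, and the rest of your outline then goes through. If you want to salvage your approach you would need to replace the fiber-product device with an argument of this strength (Stallings--Dunwoody, or Karrass--Pietrowski--Solitar for virtually free groups), not with a diagram chase.
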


Before turning to the proof, we list some further corollaries regarding torsion elements, finite subgroups, and finite index subgroups of $\Out(F_n)$.

\begin{corollary}
There are only finitely many conjugacy classes of finite subgroups of $\Out(F_n)$, and there are only finitely many conjugacy classes of finite order elements of $\Out(F_n)$.
\end{corollary}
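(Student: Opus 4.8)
The plan is to combine the realization theorem (Theorem~\ref{TheoremCZK}) with the finiteness of the set of rank~$n$ core graphs up to homeomorphism (Exercise~\ref{ExerciseFinManyCoreGraphs}). First I would fix, for each of the finitely many homeomorphism types $G_1,\ldots,G_r$ of rank~$n$ core graphs, a single marking $\rho_j \from R_n \to G_j$; via the embedding $\Aut(G_j) \to \HMCG(G_j) \xrightarrow{\Ad_{\rho_j}^{\inv}} \HMCG(R_n) \approx \Out(F_n)$ of Corollary~\ref{CorollaryAutGInjection}, each subgroup $K \subgroup \Aut(G_j)$ has a well-defined image $\iota_j(K) \subgroup \Out(F_n)$. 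Since each $\Aut(G_j)$ is finite it has only finitely many subgroups, so the collection $\{\iota_j(K)\}$, over all $j$ and all $K$, is a finite list $\mathcal{L}$ of finite subgroups of $\Out(F_n)$. The goal is then to show that every finite subgroup of $\Out(F_n)$ is conjugate to one on this list.

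Given an arbitrary finite subgroup $H \subgroup \Out(F_n)$, Theorem~\ref{TheoremCZK} supplies a marked graph $(G,\rho)$ with $H \subgroup \Aut(G)$ (using the $\rho$-induced embedding). Choosing a homeomorphism $\phi \from G \to G_j$ for the appropriate $j$, and letting $H' \subgroup \Aut(G_j)$ be the image of $H$ under the induced isomorphism $\Aut(G) \to \Aut(G_j)$, functoriality of $\Ad$ (Exercise~\ref{ExerciseHMCGFunctorNBP}) together with $\Ad_{\phi\composed\rho} = \Ad_\phi \composed \Ad_\rho$ shows that the image of $H$ in $\Out(F_n)$ computed via the marking $\rho$ coincides with the image of $H'$ computed via the marking $\phi \composed \rho$ on $G_j$. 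It therefore remains to compare the two markings $\phi\composed\rho$ and $\rho_j$ on the single graph $G_j$.

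The key observation --- and the one place where a little care is needed --- is that changing the marking on a fixed core graph changes the induced embedding of its automorphism group only by an inner automorphism of $\Out(F_n)$. Concretely, if $\rho,\rho'$ are two markings of a core graph $G$, let $\bar\rho,\bar\rho'$ be homotopy inverses and set $c = [\bar\rho'\composed\rho] \in \HMCG(R_n) \approx \Out(F_n)$, with $c^{\inv} = [\bar\rho\composed\rho']$. For any self-homotopy equivalence $g$ of $G$ one has $\bar\rho' g \rho' \simeq (\bar\rho'\rho)(\bar\rho g \rho)(\bar\rho\rho')$ since $\rho\bar\rho \simeq \text{id}_G$, and hence $\Ad_{\rho'}^{\inv}([g]) = c\,\Ad_{\rho}^{\inv}([g])\,c^{\inv}$; so the $\rho'$-image and the $\rho$-image of any subgroup of $\Aut(G)$ are conjugate in $\Out(F_n)$. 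Applying this with $G = G_j$, $\rho = \rho_j$, $\rho' = \phi\composed\rho$, we conclude that $H$ is conjugate in $\Out(F_n)$ to $\iota_j(H') \in \mathcal{L}$. This proves there are only finitely many conjugacy classes of finite subgroups.

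Finally, for finite-order elements: any $\phi \in \Out(F_n)$ of finite order generates a finite cyclic subgroup $\<\phi\>$, which by the previous paragraph is conjugate to some subgroup on the finite list $\mathcal{L}$; after conjugating, $\phi$ becomes an element of one of the finitely many finite groups in $\mathcal{L}$. Thus $\phi$ is conjugate to an element of the finite set $\bigcup_{H \in \mathcal{L}} H$, which meets every conjugacy class of finite-order elements. I expect no serious obstacle: the argument is essentially bookkeeping on top of Theorem~\ref{TheoremCZK}, with the marking-versus-conjugacy computation in the third paragraph being the only step that is not purely formal.
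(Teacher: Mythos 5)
Your proposal is correct and follows essentially the same route as the paper: finiteness of homeomorphism types of rank~$n$ core graphs, Theorem~\ref{TheoremCZK} to place a finite subgroup inside some $\Aut(G)$, and the observation that transporting along a homeomorphism (equivalently, changing the marking on a fixed graph) conjugates the embedded copy of $\Aut(G)$ inside $\Out(F_n)$ — your third paragraph just spells out the computation the paper asserts in one line. The finite-order-element case via cyclic subgroups matches the intended argument as well.
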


\begin{proof}
There are only finitely many homeomorphism types of marked graphs (see Exercise~\ref{ExerciseFinManyCoreGraphs}). If $G,G'$ are marked graphs of the same homeomorphism type, and if $f \from G \to G'$ is a homeomorphism, then $f$ induces an outer automorphism $\phi$ such that $\phi \Aut(G) \phi^\inv = \Aut(G')$.
\end{proof}

\begin{corollary}[Baumslag-Taylor \cite{BaumslagTaylor:Center}]
\label{CorollaryIAFiniteIndex}
$\Out(F_n)$ has a torsion free subgroup of finite index, namely
$$\text{IA}_n(\Z/3) = \text{kernel}\biggl(\Out(F_n)) \mapsto \Aut(H_1(F_n;\Z/3)) = GL_n(\Z/3) \biggr)
$$
\end{corollary}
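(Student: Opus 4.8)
The plan is to prove the two assertions separately: that $\text{IA}_n(\Z/3)$ has finite index in $\Out(F_n)$, which is immediate, and that it is torsion free, which is where Theorem~\ref{TheoremCZK} and Lemma~\ref{LemmaIvanov} are brought to bear. For finite index, observe that $H_1(F_n;\Z/3) \approx (\Z/3)^n$, so the target group $\Aut(H_1(F_n;\Z/3)) = \GL_n(\Z/3)$ is finite; hence the image of the action homomorphism $\Out(F_n) \to \GL_n(\Z/3)$ is finite, and therefore its kernel $\text{IA}_n(\Z/3)$ has finite index.

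For torsion freeness, let $\phi \in \text{IA}_n(\Z/3)$ be an element of finite order; the goal is to show $\phi$ is trivial. Then $H = \langle \phi \rangle$ is a finite subgroup of $\Out(F_n)$, so by Theorem~\ref{TheoremCZK} there is a marked graph $G$ with $H \subgroup \Aut(G)$, using the natural embedding $\Aut(G) \subgroup \Out(F_n)$ of Corollary~\ref{CorollaryAutGInjection}. Next I invoke the compatibility already recorded in the discussion preceding Corollary~\ref{CorollaryAutGInjection}: the marking on $G$ induces an isomorphism $H_1(F_n;\Z/3) \approx H_1(G;\Z/3)$ under which, for each $\psi \in \Aut(G) \subgroup \Out(F_n)$, the action of $\psi$ on $H_1(F_n;\Z/3)$ corresponds to the action of $\psi$ on $H_1(G;\Z/3)$ coming from functoriality of homology. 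In other words the composite
$$\Aut(G) \hookrightarrow \Out(F_n) \to \Aut(H_1(F_n;\Z/3)) \approx \Aut(H_1(G;\Z/3))$$
coincides with the induced homomorphism $\Aut(G) \to \Aut(H_1(G;\Z/3))$. Since $\phi \in \text{IA}_n(\Z/3)$ acts trivially on $H_1(F_n;\Z/3)$, it follows that $\phi$, regarded as an element of $\Aut(G)$, acts trivially on $H_1(G;\Z/3)$. But the composed homomorphism $\Aut(G) \to \HMCG(G) \to \Aut(H_1(G;\Z/3))$ is injective by Lemma~\ref{LemmaIvanov}, so $\phi$ is the trivial element of $\Aut(G)$, hence trivial in $\Out(F_n)$. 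Therefore $\text{IA}_n(\Z/3)$ contains no nontrivial element of finite order.

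With Theorem~\ref{TheoremCZK} and Lemma~\ref{LemmaIvanov} available, there is no real obstacle in this argument; the only point deserving care is the commutativity of the displayed triangle, i.e.\ that the marking-induced isomorphism $H_1(F_n;\Z/3) \approx H_1(G;\Z/3)$ genuinely intertwines the two homology actions. This is a routine diagram chase using naturality of $H_1(-;\Z/3)$ together with the identifications $\Out(F_n) \approx \HMCG(R_n)$ and $\HMCG(G) \approx \Out(\pi_1(G,p))$ from Section~\ref{SectionMarkedGraphsOutFn}, resting ultimately on the fact that every map in sight is induced by a homotopy class of graph maps and so respects homology functorially.
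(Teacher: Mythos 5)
Your proof is correct and follows essentially the same route as the paper: realize the finite cyclic group $\<\phi\>$ inside $\Aut(G)$ for some marked graph $G$ via Theorem~\ref{TheoremCZK}, then use the homology compatibility and Lemma~\ref{LemmaIvanov} (as in the discussion around Corollary~\ref{CorollaryAutGInjection}) to conclude that a finite-order element acting trivially on $H_1(F_n;\Z/3)$ must be trivial. The only difference is expository: you spell out the finite-index observation and the diagram chase that the paper leaves implicit.
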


\begin{proof}
If $\phi \in \Out(F_n)$ has finite order then by Theorem~\ref{TheoremCZK} the finite cyclic group $\<\phi\>$ is a subgroup of $\Aut(G)$ for some marked graph $G$. By the corollary to Ivanov's Lemma \ref{LemmaIvanov}, the homomorphism $\Aut(G) \subgroup \Out(F_n) \mapsto H_1(F_n;\Z/3) = GL_n(\Z/3)$ is injective, and so $\phi$ is not in its kernel.
\end{proof}

The proof of the following corollary is left to the reader in Exercise~\ref{ExerciseIASurjection}.

\begin{corollary}[Nielsen \cite{Nielsen:IsomorphismGroup}]
The homomorphism $\Out(F_n) \mapsto \GL_n(\Z)$.
\end{corollary}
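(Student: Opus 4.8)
We interpret the final statement as asserting that the natural homomorphism $\Out(F_n) \to \GL_n(\Z)$ --- the action of $\Out(F_n)$ on $H_1(F_n;\Z)$, identified with $\Z^n$ via the standard free basis $S_n$ as in Exercise~\ref{ExerciseFreeToAbelianBasis} --- is surjective. The plan is to bypass any generating set for $\Aut(F_n)$ (Nielsen's theorem that the elementary automorphisms generate $\Aut(F_n)$ is only proved later, in Section~\ref{SectionWhiteheadNielsenGenerators}, so we may not invoke it here) and instead observe that the elementary automorphisms of Section~\ref{SectionPositiveTests}, which we already have in hand, have images under abelianization that \emph{already} generate $\GL_n(\Z)$. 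Since inner automorphisms act trivially on the abelian group $H_1(F_n;\Z)$, the homomorphism $\Aut(F_n) \to \GL_n(\Z)$ descends to $\Out(F_n)$, so it suffices to produce preimages in $\Aut(F_n)$ of a generating set of $\GL_n(\Z)$.

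First I would record the abelianized images of the elementary automorphisms, using the exponent-sum computation $m(w)$ from the outline of Exercise~\ref{ExerciseFreeToAbelianBasis}. Writing $E_{ij}$ for the matrix with a single $1$ in position $(i,j)$ and $I$ for the identity: a transvection such as $s_i \mapsto s_i s_j$ (all other basis elements fixed) abelianizes to an elementary matrix $I \pm E_{ji}$, for the appropriate choice of sign and indices, since $m(s_i s_j) = m(s_i) + m(s_j)$ and so on; a transposition $s_i \leftrightarrow s_j$ abelianizes to the permutation matrix transposing the two coordinates; and a letter inversion $s_i \mapsto \bar s_i$ abelianizes to the diagonal matrix $D_i = \mathrm{diag}(1,\ldots,1,-1,1,\ldots,1)$ with the $-1$ in slot $i$. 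All of these matrices therefore lie in the image of $\Out(F_n) \to \GL_n(\Z)$.

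Next I would invoke the classical fact that $\SL_n(\Z)$ is generated by the elementary matrices $\{I \pm E_{ij} : i \ne j\}$. The argument is an integral version of Gaussian elimination: given $M \in \SL_n(\Z)$, run the Euclidean algorithm on the entries of the first column using row operations of the form ``add an integer multiple of one row to another'' (left multiplication by elementary matrices), reducing the first column to $(1,0,\ldots,0)$ --- possible because those entries have $\gcd = 1$, as $\det M = \pm 1$ --- then clear the remainder of the first row by column operations, and induct on the lower-right $(n-1)\times(n-1)$ block. Since $\GL_n(\Z) = \SL_n(\Z) \disjunion D_1 \cdot \SL_n(\Z)$ --- multiplying a determinant-$(-1)$ matrix by the letter-inversion image $D_1$ lands it in $\SL_n(\Z)$ --- adjoining the single matrix $D_1$ to the elementary matrices gives all of $\GL_n(\Z)$. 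Combined with the previous paragraph, every element of a generating set of $\GL_n(\Z)$ is hit, so the homomorphism is surjective. (The permutation matrices coming from transpositions are then redundant, though harmless.)

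The only genuine content --- and hence the ``main obstacle'', such as it is --- is the elementary-matrix generation of $\SL_n(\Z)$, which is where the Euclidean algorithm over $\Z$ does real work; I would either spell out the short elimination argument above or cite it. Everything else is routine: checking that abelianization sends the concrete automorphisms of Section~\ref{SectionPositiveTests} to the concrete matrices listed, and noting that $\Inn(F_n)$ is killed by abelianization so that the map is well defined on $\Out(F_n)$.
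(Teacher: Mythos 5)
Your proposal is correct and is essentially the proof the paper intends: the paper defers the statement to Exercise~\ref{ExerciseIASurjection}, whose hint is precisely to use the Nielsen transformations of Section~\ref{SectionPositiveTests}, i.e.\ to check that their abelianized images (elementary, permutation, and sign-change matrices) generate $\GL_n(\Z)$. Your added care in not invoking Nielsen's generation theorem for $\Aut(F_n)$, and in supplying the Euclidean-algorithm argument that elementary matrices generate $\SL_n(\Z)$, matches the intended route.
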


\begin{corollary} The homomorphism $\Out(F_n) \mapsto \GL_n(\Z/3)$ is surjective. The index of $IA_n(\Z/3)$ in $\Out(F_n)$ is therefore equal to the cardinality of $\GL_n(\Z/3)$, which is $\le 3^{n^2}$. \qed
\end{corollary}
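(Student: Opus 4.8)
The plan is to produce explicit elements of $\Out(F_n)$ whose images under the homomorphism $\Out(F_n)\to\GL_n(\Z/3)=\Aut(H_1(F_n;\Z/3))$ already generate the whole target group; the elementary automorphisms of Section~\ref{SectionPositiveTests} will suffice, and in particular no realization theorem or homological faithfulness is needed here. Recall that the homomorphism in question is the reduction modulo $3$ of the standard map $\Out(F_n)\to\Aut(\ab(F_n))=\GL_n(\Z)$ induced by abelianization, where $\ab(F_n)=\Z^n$ has basis $[s_1],\ldots,[s_n]$.

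First I would record the images of the elementary automorphisms on $\ab(F_n)$. The transvection $s_i\mapsto s_is_j$ (with $i\neq j$) sends $[s_i]\mapsto[s_i]+[s_j]$ and fixes the other basis vectors, so its image is an elementary matrix of the form $I\pm E_{ab}$ with $a\neq b$; letting $i,j$ range over all ordered distinct pairs and using also the transvections $s_i\mapsto s_i\bar s_j$, we realize every matrix $I+E_{ab}$ and $I-E_{ab}$, $a\neq b$. A transposition $s_i\leftrightarrow s_j$ realizes the permutation matrix of the transposition $(i\,j)$, and the letter inversion $s_1\mapsto\bar s_1$ realizes $D:=\mathrm{diag}(-1,1,\ldots,1)$. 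Each of these is a genuine automorphism (transvections occur in mutually inverse pairs, transpositions and inversions are involutions), hence descends to $\Out(F_n)$, so the \emph{reductions mod $3$} of all the matrices above lie in the image of $\Out(F_n)\to\GL_n(\Z/3)$.

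Next I would invoke the classical fact that, over the field $\Z/3$, the group $\SL_n(\Z/3)$ is generated by the elementary matrices $I+\lambda E_{ab}$ with $a\neq b$, $\lambda\in\Z/3$ (Gaussian elimination over a field). Since $(I+E_{ab})^k=I+kE_{ab}$, the reductions mod $3$ of the transvection images already give every $I+\lambda E_{ab}$, so for $n\ge 2$ they generate $\SL_n(\Z/3)$ (and $\SL_1(\Z/3)$ is trivial, so the case $n=1$ is handled separately and immediately by the letter inversion). Because $\det D=-1$ is the nontrivial element of $(\Z/3)^\times\cong\Z/2$ and $\SL_n(\Z/3)$ has index $2$ in $\GL_n(\Z/3)$, adjoining the reduction of $s_1\mapsto\bar s_1$ yields a generating set of all of $\GL_n(\Z/3)$; hence $\Out(F_n)\to\GL_n(\Z/3)$ is surjective. (Equivalently one could cite the preceding corollary that $\Out(F_n)\to\GL_n(\Z)$ is onto and then only check that reduction $\GL_n(\Z)\to\GL_n(\Z/3)$ is onto, which is the same computation.) The index assertion is then formal: $IA_n(\Z/3)$ is by definition the kernel of $\Out(F_n)\to\GL_n(\Z/3)$, so surjectivity and the first isomorphism theorem give $[\Out(F_n):IA_n(\Z/3)]=|\GL_n(\Z/3)|$, and $|\GL_n(\Z/3)|\le 3^{n^2}$ since $\GL_n(\Z/3)$ sits inside the set of all $n\times n$ matrices over $\Z/3$.

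There is no genuine obstacle in this statement; the only non-formal ingredient is the standard generation of $\SL_n$ over a field by elementary matrices, and the only place to be careful is the bookkeeping identifying the images of the elementary automorphisms of $F_n$ inside $\GL_n(\Z)$ (and then mod $3$).
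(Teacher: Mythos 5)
Your proof is correct and follows essentially the route the paper intends: the paper treats this as immediate from the preceding corollary that $\Out(F_n)\to\GL_n(\Z)$ is surjective (proved via the Nielsen transvections, transpositions, and inversions of Section~\ref{SectionPositiveTests}, per Exercise~\ref{ExerciseIASurjection}) composed with the standard surjectivity of reduction mod~$3$, which is exactly the elementary-matrix computation you carry out. The index and cardinality statements are formal, as you say.
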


The universe contains such monsters as infinite torsion groups, even finitely generated ones, as shown by Olshanskii \cite{Olshanskii:Monster}, but $\Out(F_n)$ is not one of them:

\begin{corollary}
Every torsion subgroup of $\Out(F_n)$ is finite. Every finite subgroup has cardinality $\le 3^{n^2}$.
\end{corollary}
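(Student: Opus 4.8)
The plan is to deduce both statements from the torsion-free finite-index subgroup $IA_n(\Z/3)$ constructed in Corollary~\ref{CorollaryIAFiniteIndex}, together with the elementary observation that a subgroup of a group which meets a given normal subgroup trivially must embed into the corresponding quotient group.

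First I would prove that every torsion subgroup is finite. Let $H \subgroup \Out(F_n)$ be a torsion subgroup, meaning that every element of $H$ has finite order. Since $IA_n(\Z/3)$ is normal in $\Out(F_n)$ and, by Corollary~\ref{CorollaryIAFiniteIndex}, torsion free, the intersection $H \intersect IA_n(\Z/3)$ is a torsion subgroup of a torsion-free group and hence is trivial. Consequently the restriction to $H$ of the quotient homomorphism $\Out(F_n) \to \Out(F_n)/IA_n(\Z/3)$ is injective, so $H$ embeds into $\Out(F_n)/IA_n(\Z/3)$. By the surjectivity of the homomorphism $\Out(F_n) \to \GL_n(\Z/3)$, whose kernel is $IA_n(\Z/3)$ by definition, this quotient is isomorphic to $\GL_n(\Z/3)$, a finite group of order $\prod_{i=0}^{n-1}(3^n - 3^i) \le 3^{n^2}$. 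Therefore $H$ is finite of cardinality $\le 3^{n^2}$.

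Since a finite subgroup is in particular a torsion subgroup, the same argument delivers the bound $\le 3^{n^2}$ in the second statement; alternatively, one could obtain it from Theorem~\ref{TheoremCZK} by realizing a given finite subgroup inside $\Aut(G)$ for a marked graph $G$ and then applying the injection $\Aut(G) \inject \Aut((\Z/3)^n) = \GL_n(\Z/3)$ of Lemma~\ref{LemmaIvanov}. There is no real obstacle here: all of the substantive work lies in the preceding results (chiefly Lemma~\ref{LemmaIvanov} and Theorem~\ref{TheoremCZK}); the only points requiring any care are the standard group-theoretic embedding argument above and the routine computation of $\abs{\GL_n(\Z/3)}$.
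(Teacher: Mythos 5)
Your proposal is correct and follows essentially the same route as the paper: both arguments come down to the fact that the kernel $IA_n(\Z/3)$ of $\Out(F_n) \to \GL_n(\Z/3)$ contains no nontrivial torsion (Corollary~\ref{CorollaryIAFiniteIndex}), so a torsion subgroup $H$ injects into $\GL_n(\Z/3)$, whose order is at most $3^{n^2}$. The paper phrases this as injectivity of the restricted homomorphism element by element, while you phrase it via triviality of $H \intersect IA_n(\Z/3)$ and an embedding into the quotient; these are the same argument.
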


\begin{proof}
If $H \subgroup \Out(F_n)$ is a torsion subgroup, then the restricted homomorphism $H \to GL_n(\Z/3)$ is injective because each element generates a finite cyclic subgroup. It follows that the cardinality of $H$ is less than the cardinality of $GL_n(\Z/3)$ which has order bounded above by $3^{n^2}$.
\end{proof}

\begin{proof}[Proof of Theorem \ref{TheoremCZK}] By definition of $\Aut(F_n)$ and $\Out(F_n)$, and the fact that $F_n$ is centerless, we have a short exact sequence
$$1 \to F_n = \Inn(F_n) \xrightarrow{i} \Aut(F_n) \xrightarrow{q} \Out(F_n) = 1
$$
Consider a finite subgroup $H \subgroup \Out(F_n)$, let $j \from H \inject \Out(F_n)$ denote the inclusion homomorphism, and extend the above diagram as follows:
$$\xymatrix{
 &  &  & H \ar[r] \ar[d]^{j} & 1 \\
1 \ar[r] & F_n \ar[r]^{i} & \Aut(F_n) \ar[r]^{q} & \Out(F_n) \ar[r] & 1
}$$
Define $\wh H = q^\inv(j(H))$, a subgroup of $\Aut(F_n)$ whose image in $\Out(F_n)$ equals $H$. Extend the diagram further using the projection homomorphisms $\wh H \to \Aut(F_n)$ and $\wh H \to H$ obtained by restricting the projections of $\Aut(F_n) \oplus H$ to its direct factors. We get the following commutative diagram of short exact sequences:
$$\xymatrix{
1 \ar[r] & F_n \ar[r] \ar@{=}[d] & \wh H \ar[r] \ar[d] & H \ar[r] \ar[d]^j & 1 \\
1 \ar[r] & F_n \ar[r] & \Aut(F_n) \ar[r]^q & \Out(F_n) \ar[r] & 1
}$$
We note that some aspects of this construction can be carried out for any group homomorphism $j \from H \mapsto \Out(F_n)$ whatsoever: the group $\wh H$ is defined to be the ``fiber product'' of the two homomorphisms $q$ and $j$, also known as the ``pullback''. For this reason the short exact sequence $1 \mapsto F_n \mapsto \Aut(F_n) \mapsto \Out(F_n) \mapsto 1$ is called the \emph{universal extension} of $F_n$.

Now we bring in Hopf's theory of ends of groups \cite{Hopf:Ends}, which has its roots in Freudenthal's theory of ends of topological spaces \cite{Freudenthal:Ends}. We outline this theory briefly; full details can be found, for instance, in \cite{ScottWall}. The reader may also want to review basic concepts of group actions which can be found in Section~\ref{SectionTopActions}.

Suppose $G$ is a finitely generated group and $\Gamma$ is a connected graph of uniformly bounded valence on which $G$ acts freely, properly, and cocompactly by simplicial isomorphisms. For example we can take $\Gamma$ to be the Cayley graph with respect to some finite generating set of~$G$, equipped with its natural left action by~$G$. Define the \emph{set of ends} of $\Gamma$ as follows. For any nested pair of compact sets $K_1 \subset K_2 \subset \Gamma$, each component of $\Gamma-K_2$ is contained in a unique component of $\Gamma-K_1$, inducing a well-defined function $\pi_0(\Gamma-K_2) \inject \pi_0(\Gamma-K_1)$. As $K$ varies, these inclusion induced functions on the sets $\pi_0(\Gamma-K)$ form an inverse system whose inverse limit is defined to be the set $\Ends(\Gamma)$. Formally this means that an end of $\Gamma$ is a function $\eta(K)$, defined for each compact $K \subset \Gamma$, such that $\eta(K) \in \pi_0(\Gamma-K)$, and such that if $K_1 \subset K_2$ then $\eta(K_2) \subset \eta(K_1)$.

If $\Gamma,\Gamma'$ are any two connected graphs with uniformly bounded valence, each equipped with a free, properly discontinuous, cocompact, simplicial actions by the group $G$, we wish to show that $\Ends(\Gamma)$ and $\Ends(\Gamma')$ have the same cardinality. The cardinality of $\Ends(\Gamma)$ is therefore well-defined independent of the choice of $\Gamma$ and its action by $G$, and we call this cardinality the \emph{number of ends} of the group~$G$. 

To prove that $\Ends(\Gamma)$, $\Ends(\Gamma')$ have the same cardinality, first we show, using that the actions are free, that there exists a $G$-equivariant proper, continuous function $f \from \Gamma \to \Gamma'$ which induces a function $f_E \from \Ends(\Gamma) \mapsto \Ends(\Gamma')$. For example $f_E$ is defined by choosing one vertex $v$ out of each vertex orbit of the action $F_n \act \Gamma$, then choosing the value $f(v) \in \Gamma'$, then extending equivariantly over the orbit of $v$; after doing this for each vertex orbit, one then chooses one edge $e$ out of each edge orbit, then one chooses the image path $f(e)$ whose endpoint values agree with the already defined values of $f$ on vertices, then one extends equivariantly over the orbit of $e$. Using that the actions of $F_n$ are equivariant, properly discontinuous, and cocompact, it follows that $f$ is a proper function. Therefore $f$ has an induced end function $f_E$ defined so that for each end $\eta \in \Ends(\Gamma)$ and each compact $K' \subset \Gamma'$, the value $f_E(\eta)(K)$ is defined to be the unique component of $\Gamma'-K$ that contains the connected subset $f(\eta(f^\inv(K')))$. 

Next, the same construction produces a $G$-equivariant proper, continuous function $f' \from \Gamma' \to \Gamma$ which induces a function $f'_E \from \Ends(\Gamma') \mapsto \Ends(\Gamma)$. The composed $G$-equivariant functions $f' \circ f \from \Gamma \to \Gamma$ and $f \circ f' \from \Gamma' \to \Gamma$ each have bounded distance from the identity, meaning that the quantities $d(f' \circ f(x),x)$ and $f(f \circ f(x'),x')$ have finite upper bounds independent of $x \in \Gamma$ and $x' \in \Gamma'$. Using these upper bounds, it follows that the compositions $f'_E \circ f^\vp_E \from \Ends(\Gamma) \to \Ends(\Gamma)$ and $f^\vp_E \circ f'_E \from \Gamma' \to \Gamma'$ are the respective identity functions on their domains. The maps $f^\vp_E$, $f'_E$ are therefore inverse bijections.

The theory of ends goes further, in fact the inverse limit process produces a natural ``end topology'' on the $\Ends(\Gamma)$, namely the inverse limit topology obtained using the discrete topologies on each of the sets $\pi_0(\Gamma-K)$. This ``end topology'' is also well-defined independent of the choice of $\Gamma$ in the sense that for any other $\Gamma$ the induced bijection $\Ends(\Gamma) \mapsto \Ends(\Gamma')$ is a homeomorphism. We may therefore speak of $\Ends(\Gamma)$ as the \emph{end space} of~$\Gamma$. For example, by using the Cayley tree of $F_n$ with respect to a free basis, we see that $F_n$ has uncountably many ends, and in fact using the inverse limit topology the end space is homeomorphic to a Cantor set.

The following theorem captures some of the essentials of the theory of ends of groups. 

\begin{theorem} 
\label{TheoremHopfStallingsDunwoody}
For every finitely generated group~$G$ the following hold:
\begin{enumerate}
\item \cite{Hopf:Ends} The space of ends of $G$ is either a set of cardinality $0$, $1$ or $2$, or it is homeomorphic to the Cantor set and hence has the cardinality of the real numbers. 
\item\label{ItemStallingsEnds} \cite{Stallings:ends,ScottWall} If the number of ends of $G$ is $\ge 2$ then there exists a simplicial tree $T$ and a cocompact simplicial action $G \act T$ so that the following property holds: for each edge $e \subset T$ its stabilizer subgroup $\Stab_G(e) \subgroup G$ is finite. 
\item \cite{Dunwoody:Accessible} If in item \pref{ItemStallingsEnds} the group $G$ is finitely presented, then we can choose the action $G \act T$ so that the following additional property holds: for each vertex $v \in T$ its stabilizer subgroup $\Stab_G(v)$ is finitely generated and has $\le 1$ end. 
\end{enumerate}
\end{theorem}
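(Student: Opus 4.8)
This statement compiles three classical theorems, and the plan is to organize the standard proof of each part around one combinatorial object. Fix a finite generating set for $G$ and the corresponding Cayley graph $\Gamma$, on which $G$ acts freely, properly, and cocompactly. Call a subset $A \subseteq G$ \emph{almost invariant} if its coboundary $\delta A = \{g \in A : gs \notin A$ for some generator $s\}$ is finite, equivalently if $A \triangle Ag$ is finite for all $g \in G$, and call it \emph{proper} if both $A$ and $A^\ast = G \setminus A$ are infinite. The first step is to record the dictionary from Hopf--Freudenthal theory: under $\Ends(\Gamma) = \varprojlim \pi_0(\Gamma - K)$, a proper almost invariant $A$ determines a partition of $\Ends(\Gamma)$ into two nonempty clopen pieces, and conversely; in particular $e(G) := \abs{\Ends(\Gamma)}$ equals $0$ iff $G$ is finite, equals $1$ iff no proper almost invariant set exists, and is $\ge 2$ iff one does. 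Everything below works with such sets and their $G$-translates rather than with ends directly.

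For part (1), I would first note the purely topological facts: as an inverse limit of finite discrete sets over a countable directed system, $\Ends(\Gamma)$ is compact, Hausdorff, totally disconnected, and second countable, so if it is infinite it suffices to prove it has no isolated point, whence it is a Cantor set. The substance is the combinatorial claim $e(G) \notin \{3,4,5,\dots\}$. Assuming $3 \le e(G) < \infty$, take a proper almost invariant $A$; finiteness of the end set forces enough of the Boolean combinations $A^{(\pm)} \cap gA^{(\pm)}$ to be finite that some translate $gA$ is ``nested'' with $A$ without being comparable to it, and then iterating the operations of $G$-translation and symmetric difference on $A$ produces strictly more and more pairwise inequivalent proper almost invariant sets, hence arbitrarily many ends --- a contradiction. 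This is the ``Boolean algebra of ends'' argument of Scott--Wall \cite{ScottWall}, and it also yields directly that an infinite end set has no isolated point.

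For part (2), assume $e(G) \ge 2$ and run Dunwoody's passage from almost invariant sets to a $G$-tree. Starting from a proper almost invariant $A$, one replaces the family of all translates $\{gA, gA^\ast : g \in G\}$ by a $G$-invariant \emph{nested} subfamily $\E$ --- one in which any two members are $\subseteq$-comparable, or disjoint, or cover $G$ --- that still separates the ends; arranging this nesting is the technical core of Stallings' theorem, for which I would cite \cite{Stallings:ends,ScottWall}. A $G$-invariant nested family of proper almost invariant sets is exactly the combinatorial data of a simplicial tree $T$ with cocompact $G$-action: vertices are coherent choices of a ``side'' for each member of $\E$, oriented edges are the members of $\E$, and $G$ acts through its action on $\E$. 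For an edge $e$ of $T$ with associated set $E \in \E$, the stabilizer $\Stab_G(e) = \Stab_G(E)$ permutes the finite nonempty set $\delta E \subseteq G$; the kernel of that permutation action fixes a point of $\delta E$ under the free action of $G$ on its own vertex set, hence is trivial, so $\Stab_G(e)$ is finite. This is exactly \pref{ItemStallingsEnds}.

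For part (3), now with $G$ finitely presented, fix a finite presentation $2$-complex $X$, pass to $\widetilde X$, and realize splittings of $G$ over finite subgroups by \emph{tracks} --- the $2$-complex analogue of essential simple closed curves, meeting the $1$-skeleton transversally in finitely many points and crossing each $2$-cell in a standard pattern, with finite stabilizer in $\widetilde X$. Starting from the splitting of part (2), I would refine repeatedly: each time a vertex group (finitely generated, since $T/G$ is finite) still has $\ge 2$ ends it splits over a finite subgroup by part (2), and the new splitting can be realized by a track disjoint from those already present. Dunwoody's accessibility theorem \cite{Dunwoody:Accessible} says this process terminates, the decisive input being his bound on the number of pairwise disjoint, pairwise non-parallel tracks in a fixed finite $2$-complex --- a higher-dimensional analogue of the bound $3g-3$ for disjoint non-isotopic curves on a surface. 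When the refinement stops, the resulting cocompact $G$-tree has finite edge stabilizers and vertex stabilizers with $\le 1$ end, and finite generation of the vertex groups is a routine Bass--Serre consequence of $G$ being finitely presented with $T/G$ finite. The two genuinely hard steps are the nesting lemma inside part (2) and the track-counting bound inside part (3); both are long self-contained classical arguments, so in the text I would state them precisely and cite \cite{Stallings:ends,ScottWall} and \cite{Dunwoody:Accessible}, writing out full details only for the end-space dictionary and the Boolean-algebra argument of part (1).
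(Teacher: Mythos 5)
The paper does not actually prove this theorem: it is imported wholesale from the literature, with citations to Hopf, Stallings, Scott--Wall, and Dunwoody, and is used as a black box inside the proof of the realization theorem (Theorem~\ref{TheoremCZK}); the surrounding text only develops the definition of ends, the well-definedness of the number of ends, and the end topology, and explicitly refers the reader to \cite{ScottWall} for details. So there is no in-paper argument to compare you against, and your outline follows the standard modern route: almost invariant sets as the combinatorial avatar of ends, a Boolean-algebra argument for the trichotomy $0,1,2$ versus Cantor set, a $G$-invariant nested family of almost invariant sets yielding a cocompact $G$-tree with finite edge stabilizers, and Dunwoody tracks plus his bound on disjoint non-parallel tracks for accessibility. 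The two places you defer to citations (\cite{Stallings:ends,ScottWall} for the nesting step, \cite{Dunwoody:Accessible} for the track bound) are exactly the hard cores, so your treatment is, if anything, more detailed than the paper's own. Two cautions if you were to write it out in full: the part-(1) step asserting that ``some translate $gA$ is nested with $A$ without being comparable'' and that iterating translation and symmetric difference manufactures arbitrarily many ends is stated too loosely to check as written --- the cleaner standard argument shows directly that when $e(G)\ge 3$ the end space has no isolated point (translate a compactum separating three ends far out toward one of them to split it further); and historically the nested-family-to-tree mechanism you describe is Dunwoody's reworking rather than Stallings' original bipolar-structure proof, which is fine mathematically but worth attributing correctly. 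Your finiteness argument for edge stabilizers --- the stabilizer of an almost invariant set permutes its finite nonempty coboundary, and the kernel of this action is trivial because $G$ acts freely on itself --- is correct and clean, as is the Bass--Serre deduction of finite generation of vertex groups from cocompactness and finite edge groups.
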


We return now to the normal subgroup $F_n \subgroup \wh H$ with finite quotient group $H = \wh H / F_n$, and so $F_n$ has finite index in $\wh H$. Since $F_n$ is finitely generated and finitely presented, so is $\wh H$. Let $\Gamma$ be a Cayley graph for $\wh H$ with respect to some finite generating set. The action of $\wh H$ on $\Gamma$, which is properly discontinuous and cocompact, restricts to an action of $F_n$ on $\Gamma$ which is also properly discontinuous and cocompact, since $F_n$ has finite index in $\wh H$. Knowing that $F_n$ has infinitely many ends, and it follows that $\Gamma$ has infinitely many ends, and therefore $\wh H$ has infinitely many ends (this is where we use well-definedness of the number of ends). 

Applying the Theorem~\ref{TheoremHopfStallingsDunwoody}, there exists a simplicial action on a tree $\wh H \act T$ such that $\Stab_H(e)$ is finite for each edge $e \subset T$ and $\Stab_H(v)$ is finitely generated and either finite or one-ended. We may also assume that this action is ``minimal'' which means that no smaller subtree of $T$ is invariant under $\wh H$; using cocompactness of the action it follows that no vertex of $T$ has valence~$1$. 

Consider the restricted action $F_n \act T$. Since $F_n$ is a free group, each of its nontrivial finitely generated subgroups is a free group and has $\ge 2$ ends. Since $\Stab_{F_n}(e)$ is finite for each edge $e$ of $T$, and since $\Stab_{F_n}(v)$ is finitely generated and has $\le 1$ end for each vertex $v$ of $T$, it follows that each $\Stab_{F_n}(e)$ and each $\Stab_{F_n}(v)$ is trivial. Thus the action $F_n \act T$ is a free action. We may assume that the action $\wh H \act T$ and the restricted action $F_n \act T$ are minimal, meaning that no proper subtree of $T$ is invariant under either of these actions: if this is not already true, we can replace $T$ by  the pre-image of the unique core of the quotient graph $G = T / F_n$, hence the action of $F_n$ on this subtree is minimal; and since $F_n$ is a normal subgroup of $\wh H$ it follows that this subtree is also invariant and minimal under the action of $\wh H$. One may choose an $F_n$-equivariant map $\wt R_n \mapsto T$, which descends to a homotopy equivalence $\rho \from R_n \to G$, making the core graph $G$ into a rank~$n$ marked graph. Any two choices of the map $\wt R_n \mapsto T$ are equivariantly homotopic, making the marking well-defined up to equivalence of markings. Since $F_n$ is normal in $\wh H$, the action $\wh H \act T$ descends to an action of the quotient group $\wh H / F_n = H \act G$. It remains to verify that the homomorphism $H \mapsto \Out(\pi_1 G) = \Out(F_n)$ determined by this action is identical to the original injection $H \xrightarrow{j} \Out(F_n)$, which is a diagram chase argument.
\end{proof}

\paragraph{Exercises for Section \ref{SectionRealizationTheorem}}

\begin{exercise}
Construct examples of finite connected graphs $G$ in all ranks such that $\Aut(G) \to \HMCG(G)$ is not an injection (Hint: Serre's Theorem gives a necessary condition on $G$ for this to happen).
\end{exercise}

\begin{exercise} 
\label{ExerciseMarkingChangeUnique}
Recall from Section~\ref{SectionMarkedGraphEquiv} that two marked graphs $(G,\rho)$, $(G',\rho')$ are equivalent if and only if there exists a homeomorphism $h \from G \to G'$ such that $h \circ \rho$ is homotopic to $\rho'$. Prove that the homeomorphism $h$ is unique up to isotopy; to put it another way, $h$ is unique in the sense that the maps that $h$ induces from vertices of $G$ to vertices of $G'$ and from oriented edges of $G$ to oriented edges of $G'$ are unique. (Hint: Use Serre's Lemma~\ref{LemmaIvanov}.)
\end{exercise}

\begin{exercise}
Prove that no finite subgroup of $\Out(F_n)$ is normal (thanks to Andres Meija for posing this question on math.stackexchange.com \cite{Mejia:OutFnQuestion}).
\end{exercise}

\begin{exercise}
\label{ExerciseIASurjection}
Prove that the natural homomorphism $\Out(F_n) \mapsto \Aut(H_1(F_n;\Z)) \approx \GL_n(\Z)$ is surjective (Hint: Consider the Nielsen transformations listed in Section~\ref{SectionPositiveTests}).
\end{exercise}

\subsection{Appendix: Group actions and their properties}
\label{SectionTopActions}

Here we collect some basic concepts of the theory of group actions on topological spaces. Two places where this material is used are: the theory of ends which is presented applied in Section~\ref{SectionRealizationTheorem}; and the study of the action of $\Out(F_n)$ on the outer space $\X_n$ starting in Section~\ref{SectionOuterSpaceAction}.

Suppose that $G$ is a group and $X$ is an object in some unspecified but concrete category, meaning a category equipped with a forgetful functor to the category of sets. An \emph{action} of $G$ on $X$ is a homomorphism from $G$ to the group of automorphisms of~$X$. There are two conventions for denoting automorphisms, and hence group actions: for a \emph{right action} of $G$ on $X$, the automorphism of $X$ associated to $g \in G$ is denoted in postfix notation as $x \mapsto x \cdot g$; and for a \emph{left action} it is denoted in prefix notation as $x \mapsto g \cdot x$. We shall formulate the definitions in this section using \emph{right actions} because of their natural use in the action of $\Out(F_n)$ on outer space (see Section~\ref{SectionOuterSpaceAction}). Nonetheless where appropriate in this work we will use left actions as well. 

Fix an action of $G$ on $X$. For each point $x \in X$, its \indexemph{stabilizer subgroup} is defined to be
$$\Stab(x) = \{g \in G \suchthat x \cdot g = x\}
$$
The action is \emph{free}\index{action!free} if the stabilizer subgroup of every point is trivial. More generally, the action is \emph{faithful}\index{action!faithful} if for every $g \in G$ there exists $x \in X$ such that $g \not\in \Stab(x)$; equivalently, $g$ acts as a nontrivial automorphism of~$X$.

For the rest of this section we fix $X$ to be an object in a Hausdorff topological category, meaning a category equipped with a forgetful functor to the category of Hausdorff spaces; for example, $X$ could be a simplicial complex. We also fix an action of a group $G$ on $X$. An open subset $U \subset X$ is called an \emph{open fundamental domain} if the set $U \cdot G = \{x \cdot g \suchthat x \in U, g \in G\}$ is equal to~$X$, and the set of group elements $\{g \in G \suchthat (U \cdot g) \intersect U \ne \emptyset\}$ is finite.  

%
%

A version of the following lemma is sometimes incorporated into the Milnor-Svarc Lemma of geometric group theory (see Lemma~\ref{LemmaMilnorSvarc}), although the idea may be somewhat older.


\begin{lemma}
\label{LemmaFDFinGen}
If $X$ is path connected, and if the action has an open fundamental domain $U \subset X$, then $G$ is finitely generated. To be precise, the finite set $S \in \{g \in G \suchthat (U \cdot g) \intersect U \ne \emptyset\}$ is a symmetric generating set for the group~$G$.
\end{lemma}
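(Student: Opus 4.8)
The plan is to mimic the standard Milnor–Švarc argument in its most elementary form. Let $S = \{g \in G \suchthat (U \cdot g) \intersect U \ne \emptyset\}$; by hypothesis this set is finite, and it is symmetric because $(U \cdot g) \intersect U \ne \emptyset$ if and only if $(U \cdot g^\inv) \intersect U \ne \emptyset$ (apply $g^\inv$). Note also that the identity lies in $S$. Let $H = \<S\>$ be the subgroup generated by $S$; I want to show $H = G$. The key point is to exhibit, for an arbitrary $g \in G$, a path-connectedness argument showing $g \in H$.

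First I would fix a basepoint $x_0 \in U$ and, given $g \in G$, choose a continuous path $\alpha \from [0,1] \to X$ from $x_0$ to $x_0 \cdot g$; this uses path connectedness of $X$ together with the fact that $U \cdot G = X$ guarantees $x_0 \cdot g$ is again a point of $X$ (trivially true). Since $\{U \cdot h \suchthat h \in G\}$ is an open cover of the compact set $\alpha([0,1])$ — here is where I use that $U$ is an \emph{open} fundamental domain together with compactness of $[0,1]$ — I can find a Lebesgue-number subdivision $0 = t_0 < t_1 < \cdots < t_m = 1$ and group elements $h_0, h_1, \ldots, h_{m-1} \in G$ such that $\alpha([t_{i},t_{i+1}]) \subset U \cdot h_i$ for each $i$. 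Since $x_0 \in U$ I may take $h_0$ to be the identity, and since $x_0 \cdot g \in U \cdot g$ I may take $h_{m-1} = g$ (adjusting the cover choice at the two ends, or just absorbing the discrepancy into $S$).

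Now the consecutive sets $U \cdot h_i$ and $U \cdot h_{i+1}$ both contain the point $\alpha(t_{i+1})$, so $(U \cdot h_i) \intersect (U \cdot h_{i+1}) \ne \emptyset$; translating by $h_i^\inv$ gives $(U) \intersect (U \cdot h_{i+1} h_i^\inv) \ne \emptyset$, i.e. $h_{i+1} h_i^\inv \in S$. Writing $g = h_{m-1} = (h_{m-1} h_{m-2}^\inv)(h_{m-2} h_{m-3}^\inv) \cdots (h_1 h_0^\inv) h_0$ and recalling $h_0$ is the identity, we have expressed $g$ as a product of elements of $S$, so $g \in H$. Since $g$ was arbitrary, $G = \<S\>$ with $S$ finite and symmetric, which is the claim. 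The only mildly delicate point — the step I expect to require the most care in a fully written-out proof — is the bookkeeping at the endpoints of $\alpha$: arranging that the first and last elements of the chain $h_0, \ldots, h_{m-1}$ can genuinely be taken to be $1$ and $g$. This is handled either by choosing the subdivision fine enough that $\alpha(t_0), \alpha(t_{m-1})$ still lie in $U$ itself (using openness of $U$ and continuity of $\alpha$), or, more cheaply, by observing that whatever $h_0$ and $h_{m-1}$ are, the elements $h_0$ and $g h_{m-1}^\inv$ each lie in $S$, so they can be prepended/appended to the product without leaving $\<S\>$.
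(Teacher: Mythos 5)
Your proposal is correct and follows essentially the same argument as the paper: pull back the cover $\{U\cdot h\}$ along a path from a basepoint $p\in U$ to $p\cdot g$, use a Lebesgue number to get a chain $h_1,\ldots,h_k$ with consecutive overlaps, conclude $h_{i+1}h_i^\inv\in S$, and handle the endpoints exactly as in your "cheaper" fix (the paper notes $h_1\in S$ and $gh_k^\inv\in S$ and telescopes). No gaps worth flagging.
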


\begin{proof} Fix a base point $p \in U \subset X$. Given $g \in G$, let $\gamma \from [0,1] \to X$ be a path from $p$ to $p \cdot g$. Pull back the open cover $U \cdot G$ via the continuous map $\gamma$ to get an open cover of $[0,1]$. Choose a Lebesgue number $\lambda>0$ for the pullback cover, choose a natural number $k > \frac{1}{\lambda}$, and let
$$0=x_0 < x_1 < \cdots < x_{k-1} < x = 1
$$
be the partition of $[0,1]$ into subintervals of length $1/k$, where $x_i = i/k$. Since each subinterval of the partition has length $< \lambda$, we may  choose a sequence of group elements $h_1,\ldots,h_k \in G$ such that 
$$\gamma\left[x_{i-1},x_i\right] \subset U \cdot h_i \quad(i=1,\ldots,k)
$$
Since $p = \gamma(0) \in U \intersect (U \cdot h_1)$, it follows that $h_1 \in S$. For each $i=1,\ldots,k-1$, since $\gamma(x_i) \in (U \cdot h_i) \intersect (U \cdot h_{i+1})$, it follows that $h_{i+1} h_i^\inv \in S$. Since $g \cdot p \in (U \cdot h_k) \intersect (U \cdot g)$, it follows that $g h_k^\inv \in S$. Therefore
$$g = (gh_k^\inv)(h_k^{\vphantom{\inv}} h_{k-1}^\inv)\cdots(h_2^{\vphantom{\inv}} h_1^\inv) h_1^{\vphantom{\inv}}
$$
is a product of elements of $S$.
\end{proof}

To get some good examples of actions with fundamental domains, we introduce two important properties of an action of a group $G$ on a topological space $X$:
\begin{description}
\item[The action is \emph{cocompact}]\index{action!cocompact} if there exists a compact $D \subset X$ such that the set $D \cdot G = \{x \cdot g \suchthat x \in D, g \in G\}$ is equal to~$X$. 
\item[The action is \emph{proper}]\index{action!proper} if the following two equivalent statements hold: 
\begin{itemize}
\item the function $X \times G \mapsto X \times X$ given by $(x,g) \mapsto (x,x \cdot g)$ is a proper function, meaning that the inverse image of every compact subset of $X \times X$ is compact in $X \times G$ (with respect to the discrete topology on $G$); 
\item for any two compact sets $A,B \subset X$, the set of group elements $\{g \in G \suchthat (A \cdot g) \intersect B \ne \emptyset\}$ is finite.
\end{itemize}
\end{description}
Recall that $X$ is \emph{locally compact} if every point $x \in X$ has an open neighborhood $U \subset X$ with compact closure $\overline U \subset X$.

\begin{lemma}
\label{LemmaFDExists}
If $X$ is locally compact, and if $G$ acts properly and cocompactly on $X$, then $X$ has an open fundamental domain.
\end{lemma}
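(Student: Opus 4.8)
The plan is to construct $U$ by thickening a compact fundamental set. First I would invoke cocompactness to obtain a compact $D \subset X$ with $D \cdot G = X$. Then, using local compactness, I would cover $D$ by finitely many open sets $V_1,\ldots,V_k$ each having compact closure $\closure V_i$ (such neighborhoods exist at each point of $D$ by hypothesis, and finitely many suffice by compactness of $D$), and set $U = V_1 \union \cdots \union V_k$. This $U$ is open, it contains $D$, and its closure $\closure U = \closure V_1 \union \cdots \union \closure V_k$ is compact, being a finite union of compact sets.

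Next I would verify the two defining properties of an open fundamental domain. Since $D \subseteq U$ we have $X = D \cdot G \subseteq U \cdot G \subseteq X$, so $U \cdot G = X$. For the finiteness property, note that applying $g$ to the inclusion $U \subseteq \closure U$ gives $U \cdot g \subseteq (\closure U) \cdot g$, while $U \subseteq \closure U$; hence if $(U \cdot g) \intersect U \neq \emptyset$ then also $((\closure U) \cdot g) \intersect (\closure U) \neq \emptyset$. Therefore
$$\{g \in G \suchthat (U \cdot g) \intersect U \neq \emptyset\} \subseteq \{g \in G \suchthat ((\closure U) \cdot g) \intersect (\closure U) \neq \emptyset\},$$
and the set on the right is finite by the properness hypothesis applied with the two compact sets $A = B = \closure U$. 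A subset of a finite set is finite, which completes the verification that $U$ is an open fundamental domain.

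I do not expect a serious obstacle here; the only point requiring any care — and the only place where properness, cocompactness, and local compactness all come into play — is that the thickened set $U$ must still have \emph{compact} closure, since that is exactly the input properness needs. Because the ambient running assumption in this section is that $X$ lives in a Hausdorff topological category, and local compactness is already phrased here as the existence of relatively compact open neighborhoods, no pathology around the meaning of local compactness arises, and the argument is essentially formal from that point on.
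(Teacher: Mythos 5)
Your proof is correct and follows essentially the same route as the paper: take a compact $D$ with $D \cdot G = X$, thicken it to a finite union $U$ of relatively compact open sets, and use properness on compact closures to get the finiteness property. The only cosmetic difference is that you apply properness once to $A=B=\closure{U}$ (compact as a finite union of compact closures), whereas the paper applies it to each pair of closures $\overline{U_i},\overline{U_j}$ and takes a finite union; the two bookkeepings are interchangeable.
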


\begin{proof}
Choose a compact $D \subset X$ such that $D \cdot G=X$. For each $x \in D$ choose an open $U_x \subset X$ with compact closure $C_x = \overline U_x$ such that $x \in U_x$. Let $U_1,\ldots,U_I$ with $U_i = U_{x_i}$ be a finite collection of these sets which covers $D$. Let $U = \bigcup_1^I U_i$, so $D \subset U$ and hence $U \cdot G = X$. 

Consider $g \in G$ such that $(U \cdot g) \intersect U \ne \emptyset$. It follows that there exist $i,j \in \{1,\ldots,I\}$ such that $(U_i \cdot g) \intersect U_j \ne \emptyset$, and hence $(C_i \cdot g) \intersect C_j \ne \emptyset$. By properness, there are only finitely many such $g$ for each $i,j \in \{1,\ldots,I\}$, and hence there only finitely many such $g$ altogether. The set $U$ is therefore an open fundamental domain for the action.
\end{proof}

Combining Lemmas~\ref{LemmaFDExists} and~\ref{LemmaFDFinGen} we obtain:

\begin{corollary} 
\label{CorollaryGeneralFiniteGeneration}
If $X$ is locally compact, and if $G$ acts properly and cocompactly on $X$, the $G$ is finitely generated. \qed
\end{corollary}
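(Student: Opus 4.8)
The plan is to deduce the statement directly from the two preceding lemmas, which is exactly the assembly announced in the sentence that introduces the corollary. First I would invoke Lemma~\ref{LemmaFDExists}: its hypotheses are precisely that $X$ is locally compact and that $G$ acts properly and cocompactly on $X$, so it produces an open fundamental domain $U \subset X$ for the action. Then I would feed this $U$ into Lemma~\ref{LemmaFDFinGen}, which asserts that (for $X$ path connected) the finite symmetric set $S = \{g \in G \suchthat (U \cdot g) \intersect U \ne \emptyset\}$ is a generating set for $G$. Chaining the two conclusions shows that $G$ is finitely generated --- in fact finitely generated by the explicit set $S$ attached to the fundamental domain produced in the first step.

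The only point requiring care is that Lemma~\ref{LemmaFDFinGen} carries a path-connectedness hypothesis that is not displayed in the corollary's statement. In the write-up I would either add ``$X$ path connected'' to the hypotheses of the corollary, matching Lemma~\ref{LemmaFDFinGen}, or else reduce to the path component of a chosen base point together with the subgroup of $G$ that preserves that component --- which has finite index when $X$ has finitely many components that $G$ permutes, so that finite generation of $G$ follows from finite generation of that subgroup. Either way there is no genuine obstacle: the corollary is a corollary in the strict sense, and once the hypotheses of Lemmas~\ref{LemmaFDExists} and~\ref{LemmaFDFinGen} are aligned its proof is a single line.

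(One could alternatively inline a self-contained argument: take a compact $D$ with $D \cdot G = X$, thicken it to an open set with compact closure, and run the Lebesgue-number covering argument of Lemma~\ref{LemmaFDFinGen} along a path from $p$ to $p \cdot g$; but this merely reproves Lemma~\ref{LemmaFDFinGen}, so citing the two lemmas is the cleaner route.)
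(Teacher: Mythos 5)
Your proposal is exactly the paper's argument: the paper proves this corollary with the single line ``Combining Lemmas~\ref{LemmaFDExists} and~\ref{LemmaFDFinGen} we obtain,'' i.e.\ produce an open fundamental domain from properness, cocompactness, and local compactness, then cite the finite-generation lemma. Your remark that the path-connectedness hypothesis of Lemma~\ref{LemmaFDFinGen} is silently missing from the corollary's statement is a correct and worthwhile observation about the paper, and either of your fixes (adding the hypothesis, which is the intended reading, or passing to a component stabilizer of finite index) is fine.
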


\paragraph{Exercise for Section \ref{SectionTopActions}.} Continuing as above with a Hausdorff topological space $X$ and an action of $G$ on $X$, we also assume $X$ to be locally compact. 

We have defined an open fundamental domain above, with a strong finiteness property. It is somewhat more traditional to consider fundamental domains which are closed or even compact subsets. We adopt the following definition: a subset $A \subset X$ is a \emph{fundamental domain} if $A \cdot G$ covers $X$ and if there exists an open fundamental domain $U$ such that $A \subset U$. (For group actions on smooth manifolds even more strict conditions are usually adopted, namely that the fundamental domain be some kind of polygonal object, and that any intersection with any of its translates is either empty or a common face.)

\begin{exercise}\label{ExerciseCocompactChar}
Prove that the action is cocompact if and only if it has a compact fundamental domain.
\end{exercise}

\begin{exercise}\label{ExerciseCocompactAndNot}
Prove that the action cannot have both a compact fundamental domain and a closed, noncompact fundamental domain.
\end{exercise}

In the following exercises, let $X$ be a locally finite, connected simplicial 1-complex, and let the group $G$ act on $X$ by simplicial isomorphisms. 

\begin{exercise}\label{ExerciseGraphHypEquiv} Prove that $G$ acts properly on $X$ if and only if the stabilizer of each vertex is finite (hence the stabilizer of each edge is finite). Prove that $G$ acts cocompactly on $X$ if and only if there are finitely many orbits of edges (hence finitely many orbits of vertices).
\end{exercise}

\begin{exercise} Prove that if there are finitely many edge orbits, and if each vertex stabilizer is a finitely generated group, then $G$ is finitely generated (n.b.\ if some vertex stabilizer is infinite then the action is not proper).
\end{exercise}

\section{The Nielsen/Whitehead problems: Conjugacy versions.} 
\label{SectionWhiteheadConjugacy}

\subsection{$\Out(F_n)$ and its action on conjugacy classes.} 
\label{SectionOutActionOnConjugacy}
Consider for a moment a general group $\Gamma$. We fix some notation regarding automorphisms and outer automorphisms of $\Gamma$. Given $h \in \Gamma$ let $i_h(g) = hgh^\inv$ denote the associated inner automorphism, the set of which forms the normal subgroup $\Inn(\Gamma) \subgroup \Aut(\Gamma)$ with quotient $\Out(\Gamma)=\Aut(\Gamma)/\Inn(\Gamma)$. We use capital Greek letters like $\Phi$ to denote an automorphism, and small Greek letters like $\phi$ to denote the outer automorphism class of $\Phi$, which means the left or right coset 
$$\phi = \Phi \cdot \Inn(\Gamma) = \{\Phi \, i_g \suchthat g \in \Gamma\} = \Inn(\Gamma) \cdot \Phi = \{i_g \, \Phi \suchthat g \in \Gamma\}
$$
To put it another way, $\phi$ denotes the image of $\Phi$ under the quotient homomorphism $\Aut(\Gamma) \mapsto \Out(\Gamma)$. 

The group $\Aut(\Gamma)$ acts on the set $\Gamma$, of course: $\Phi(g) \in \Gamma$ is well-defined for each $\Phi \in \Aut(\Gamma)$ and each $g \in \Gamma$. However $\Out(\Gamma)$ does not act on the set $\Gamma$: given $\phi \in \Out(\Gamma)$ and $g \in \Gamma$ there is no well-defined $\phi(g) \in \Gamma$. Nonetheless $\Out(\Gamma)$ acts on the set $\C(\Gamma)$ of conjugacy classes of the group $\Gamma$: for each $\phi \in \Out(\Gamma)$ and each $c \in \C(\Gamma)$, choose $\Phi \in \Aut(\Gamma)$ representing $\phi$, choose $g \in \Gamma$ representing $c$, and define $\phi(c) = [\Phi(g)]$. 

\begin{exercise} 
Prove that $\phi(c)$ is well-defined independent of the choice of the representatives $\Phi$ of $\phi$ and $g$ of~$c$. Prove that this defines a left action of $\Out(\Gamma)$ on $\C(\Gamma)$, meaning:
\begin{itemize}
\item $\phi (\psi (c)) = (\phi \psi)(c)$, for all $\phi,\psi \in \Out(\Gamma)$ and $c \in \C(\Gamma)$.
\item The identity outer automorphism fixes each conjugacy class.
\end{itemize}
\end{exercise}

\medskip

The following exercises explain how to use the action of $\Out(F_n)$ on conjugacy classes to obtain useful information about individual outer automorphisms.

\begin{exercise}
Find an example of an infinite order element $\phi \in \Out(F_n)$. (Hint: find $\phi$ and $c \in \C(F_n)$ so that $\phi^i(c) \ne c$ for any integer $i \ge 1$.
\end{exercise}

\begin{exercise} Find an example of a non-identity element $\phi \in \Out(F_n)$ that fixes the conjugacy classes of each of the basis elements $[s_1],\ldots,[s_n]$. 
\end{exercise}


\begin{exercise}
\label{ExerciseMovedEdge}
Let $G$ be a core graph and let $e, e' \subset G$ be natural edges, and let $f \from G \to G$ be a continuous map.
\begin{enumerate}
\item\label{ItemCrossesThisNotThat}
Prove that if $e \ne e'$ then there exists a circuit in $G$ whose image contains $e'$ but not $e$.
\item\label{ItemMovesEdgeNotIdentity}
Prove that if $e \ne e'$, if $f$ maps $e$ homeomorphically onto $e'$, and if $f(G \setminus  e) \subset G \setminus  e'$, then $f$ is not homotopic to the identity. (Hint: Apply part \pref{ItemCrossesThisNotThat} together with the results of Sections~\ref{SectionConjugacyClasses} 
and~\ref{SectionOutActionOnConjugacy}).
\item\label{ItemMovesEdgeArcsNotIdentity}
In Section~\ref{SectionFaceMaps} we will need this slightly stronger version which, unlike \pref{ItemMovesEdgeNotIdentity}, can also be applied when one of $e$ is a loop edge and the other edge isn't. Let $\check e \subset \interior(e)$ and $\check e' \subset \interior(e')$ be compact arcs. Prove that if $f(\check e)=\check e'$ and if $f(G \setminus \check e) \subset G \setminus \check e'$ then $f$ is not homotopic to the identity. 
\end{enumerate}
\end{exercise}

\begin{exercise}
Use Exercise~\ref{ExerciseMovedEdge} to obtain another proof of Corollary~\ref{CorollaryAutGInjection} (albeit one which gives no information regarding induced homology isomorphisms). 
\end{exercise}

The next exercise has a topological proof that we will give later, but it is already interesting to ponder, particularly the special case of rank~$2$.

\begin{exercise} 
\label{ExerciseOutCFaithful}
Prove that the action of $\Out(F_n)$ on $\C(F_n)$ is faithful, that is, if $\phi \in \Out(F_n)$ fixes every element of $\C(F_n)$ then $\phi$ is the identity outer automorphism.
\end{exercise}

\subsection{Statement of conjugacy versions of the Nielsen/Whitehead problems.} 

Consider a finite list of conjugacy classes $c_1,\ldots,c_K$ in $\C(F_n)$. We say that this list is \emph{represented by a (partial) free basis} if there are pairwise distinct representatives $g_1,\ldots,g_K \in F_n$ that form a (partial) free basis $\{g_1,\ldots,g_K\}$.
\begin{itemize}
\item Given $c_1,\ldots,c_K \in \C(F_n)$, how do you tell whether they are represented by a free basis? Or a partial free basis? Given a single conjugacy class $c \in \C(F_n)$, how do you tell whether $c$ is represented by a free basis element?
\end{itemize}
We will start with some simple negative and positive tests for the conjugacy versions of the Whitehead problems; but, just as in the original versions of these problems, there will be a large gap between these simple tests. 

To start, using the results of Section~\ref{SectionConjugacyClasses} one obtains a negative test:
 
\begin{exercise}\label{ExerciseJointPrimitiveTest}
Prove that the list $c_1,\ldots,c_K$ is represented by a partial free basis only if each is primitive, and for $i \ne j \in \{1,\ldots,K\}$ the conjugacy class $c_i$ is distinct from each of the conjugacy classes $c_j,c^\inv_j$; equivalently, for any marked graph $G$ with circuits $\gamma_1,\ldots,\gamma_K$ that represent $c_1,\ldots,c_K$ in $G$, each circuit $\gamma_i$ is primitive, and for $i \ne j \in \{1,\ldots,K\}$ the circuit $\gamma_i$ is distinct from each of the circuits $\gamma_j,\gamma^\inv_j$.
\end{exercise}

All of the negative tests described earlier for the original versions of Whitehead's problems work just as well for the conjugacy versions, because the function $F_n \mapsto \ab(F_n) \approx \Z^n$ is well-defined on conjugacy classes, and it takes each free basis of $F_n$ to a $\Z$-module basis for $\Z^n$. Thus, for example, in the group $F\<a,b\>$ neither $abab$ nor $a^2 b^2$ nor $a^5 b^{-4} a^{-2} b^{42} a b^{-36}$ is conjugate to a free basis element, because the image of each in $\Z^2$ is the vector $\<2,2\>$ which is not a basis element of $\Z^n$. Other simple negative tests also follow: given $c_1,\ldots,c_K$ as above, if $i \ne j$ and if $c_i$ is equal to either $c_j$ or~$c_j^\inv$ then no pairwise distinct representatives of $c_1,\ldots,c_K$ form a partial free basis of $F_n$.

The same idea behind the positive test for the original version of Whitehead's problems works here as well for the conjugacy versions:
\begin{exercise}
Given $c_1,\ldots,c_K$ as above, prove that this set has pairwise distinct representatives forming a free basis if and only if there exists $\phi \in \Out(F_n)$ such that $\phi[s_i] = c_i$ for each $i=1,\ldots,n$.
\end{exercise}

Here are some broader positive tests of a more topological nature.

Define a circuit $\sigma \from S^1 \mapsto G$ to be \emph{simple} if $\sigma$ is injective. More generally, $\sigma$ is \emph{edge simple} if it is injective over the interior of each edge of $G$, i.e.\ if $x \in G$ is in the interior of some edge of $G$ then $\sigma^\inv(x)$ is either empty or a single point. Equivalently, writing $\sigma = e_1 \cdots e_k$ as a cyclic concatenation of edges, $\sigma$ is edge simple if and only if for all $i \ne j \in \{1,\ldots,k\}$ we have $e_i \ne e_j^{\pm 1}$. 

\begin{exercise}\label{ExerciseEdgeSimpleCircuit}
Prove that if $G$ is a marked graph and $\sigma$ is an edge simple circuit then any element of $F_n$ whose conjugacy class is represented by $\sigma$ is a free basis element.
\end{exercise}

The next exercise is a ``partial free basis'' version of Exercise~\ref{ExerciseEdgeSimpleCircuit}. Given a compact oriented 1-manifold $C$ with components $C = C_1 \union\cdots\union C_K$, and a continuous map $\sigma \from C \to G$ with components $\sigma=\sigma_1 \union\cdots\union C_K$, $\sigma_k = \sigma \restrict C_k$, we say that $\sigma$ is a \emph{circuit family} in $G$ if $\sigma$ is an immersion and for all $i \ne j \in \{1,\ldots,K\}$ the components $\sigma_i$, $\sigma_j$ are not equivalent; it follows that the $\sigma_i$'s represent a pairwise distinct set of conjugacy classes $\{[\sigma_1],\ldots,[\sigma_K]\}$ of cardinality $K$. The individual $\sigma_i$'s are called the \emph{component circuits} of $\sigma$. We say that $\sigma$ is \emph{edge simple} if it is injective over the interior of each edge of $G$. Equivalently, writing each component circuit $\sigma_i = e_{i,1} \cdots e_{i,J_i}$ as a cyclic concatenation of oriented edges of $G$, the circuit family $\sigma$ is edge simple if and only if for all $1 \le i,i' \le K$, $1 \le j \le J_i$, $1 \le j' \le J_{i'}$, if $(i,j) \ne (i',j')$ then $e_{i,j} \ne e_{i',j'}^{\pm 1}$. 

\begin{exercise}\label{ExerciseEdgeSimpleCircuits}
Prove that if $G$ is a marked graph and if $\sigma = \sigma_1 \union\cdots\union \sigma_K$ is an edge simple circuit family in $G$ then there exists a partial free basis $g_1,\ldots,g_K$ of $F_n$ whose conjugacy classes are represented in $G$ by $\sigma$.
\end{exercise}

\subsection{Topological interpretation.} 

Following up on Exercise~\ref{ExerciseEdgeSimpleCircuits}, 
The following proposition translates the conjugacy version of Whitehead's problem into an equivalent topological statement expressed in the language of marked graphs. This is the version we will use in Section~\ref{SectionWhiteheadStartup} to start up the solution of Whitehead's problem.

\begin{proposition} 
\label{PropWhiteheadConjugacyTopological}
A finite list of conjugacy classes $c_1,\ldots,c_k \in \C(F_n)$ is represented by a partial free basis of $F_n$ if and only if there exists a marked graph $\rho \from R_n \mapsto G$ such that $c_1,\ldots,c_k$ are represented in $G$ by a pairwise disjoint circuit family $\sigma = \sigma_1 \union \ldots \union \sigma_k$. 
\end{proposition}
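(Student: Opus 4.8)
The plan is to prove both implications, with the forward direction being a straightforward geometric realization and the reverse direction being the substantive content. For the forward implication, suppose $g_1,\ldots,g_k$ are pairwise distinct representatives forming a partial free basis of $F_n$. Extend $\{g_1,\ldots,g_k\}$ to a full free basis $\{g_1,\ldots,g_n\}$ of $F_n$ (this is possible by definition of partial free basis). By Exercise~\ref{ExerciseFreeBasesAndAuts} there is an automorphism $\Phi \in \Aut(F_n)$ with $\Phi(s_i) = g_i$, and by Exercise~\ref{ExerciseOutIsomorphicHMCG} the associated outer automorphism $\phi$ is realized by a homotopy equivalence $f_\phi \from R_n \to R_n$ sending $e_i$ to the tight edge-path spelling $g_i$. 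Now take $G = R_n$ with marking $\rho = f_\phi$; the conjugacy classes $[g_i]$ are represented by the circuits obtained from the loop edges $e_i$ (which form a pairwise disjoint circuit family since the $e_i$ are distinct oriented loop edges based at distinct\ldots\ actually all based at $v$, but as subsets of $R_n$ they are the distinct $1$-cells). More cleanly: since the marking $\rho$ is a homotopy equivalence, the circuit in $G$ representing $[s_i]$ under $\rho$ is $\rho$-homotopic to a circuit representing $[g_i]$ in the standard rose; pulling back through the identity marking we realize $c_i = \phi^{-1}[s_i]$ suitably. The cleanest route is just: choose $G$ to be a rose, mark it so that $\rho_*$ carries $[s_i]$ to $c_i$, and then the $n$ loop edges give the disjoint circuit family.

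For the reverse implication, suppose $\rho \from R_n \to G$ is a marked graph and $\sigma = \sigma_1 \sqcup \cdots \sqcup \sigma_k$ is a pairwise disjoint circuit family representing $c_1,\ldots,c_k$. The key observation is that a pairwise \emph{disjoint} (not merely immersed) family of circuits in a rank-$n$ graph can be completed, within $G$, to a collection of $n$ circuits forming a free basis of $\pi_1(G)$; equivalently, the disjoint subcomplex $C = \sigma(S^1 \sqcup \cdots \sqcup S^1) \subset G$ has a complement structure allowing us to see the $[\sigma_i]$ as part of a visible free basis. Concretely: collapse a maximal forest of $G$ that is chosen to contain, for each component $\sigma_i$, all but one edge of $\sigma_i$; since the $\sigma_i$ are pairwise disjoint embedded circles, such a forest exists (each $\sigma_i$ is a circle, so deleting one edge from it makes it a tree, and disjointness lets us do this simultaneously and extend to a maximal forest/tree of $G$). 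Under this collapse $G$ becomes a rose $R_n$, the images of the $k$ distinguished edges become $k$ of the $n$ petals, and the visible free basis associated to this maximal tree (in the sense of Section~\ref{SectionBasedMarkedGraphs}) has $[\sigma_1],\ldots,[\sigma_k]$ among its first $k$ elements. Transporting through the marking $\rho_*$ (and using that $\rho$ is a homotopy equivalence, hence carries free bases of $\pi_1(R_n) = F_n$ to free bases of $\pi_1(G)$, by Exercises~\ref{ExerciseHEAutIsomorphismF_n} and~\ref{ExerciseFreeBasesAndAuts}), we obtain a free basis of $F_n$ whose first $k$ elements have conjugacy classes $c_1,\ldots,c_k$, which is exactly what "represented by a partial free basis" means.

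The main obstacle I expect is the bookkeeping around conjugacy classes versus based loops and around the direction of the marking isomorphism: one must be careful that "disjoint" is doing real work (an immersed but non-disjoint family need not extend to a basis), and that collapsing edges of the $\sigma_i$ genuinely produces a \emph{maximal} forest whose associated visible basis (Section~\ref{SectionBasedMarkedGraphs}) contains the $[\sigma_i]$ on the nose rather than up to some correction. A subtlety is that if some $\sigma_i$ is a single loop edge we cannot delete "all but one" of its edges in the naive sense, but then that edge simply is a petal already; and if $\sigma$ does not fill up a spanning subgraph we just extend the forest arbitrarily. Handling the case where the circuit family has fewer than $n$ components and $G$ is not already a rose — i.e.\ verifying the forest/tree extension always succeeds and the resulting rose has exactly $n$ petals with the right $k$ distinguished — is the part requiring the most care, but it is essentially the content of the "visible based markings" construction already set up in Section~\ref{SectionBasedMarkedGraphs} combined with Exercise~\ref{ExerciseTreeOfCoreGraphs}.
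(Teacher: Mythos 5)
Your ``if'' direction is sound and is essentially the paper's argument in slightly different clothing: the paper first collapses an arc in each circle and then a maximal tree meeting each collapsed circle in one point, while you directly extend the forest consisting of all-but-one-edge of each disjoint embedded circle to a spanning tree and read off the classes as visible basis elements. Those are the same idea, and your version of it works (the disjointness and embeddedness of the $\sigma_i$ are exactly what make the union of those arcs a forest, as you note).

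The genuine gap is in the ``only if'' direction. Proposition~\ref{PropWhiteheadConjugacyTopological} asks for a \emph{pairwise disjoint} circuit family, and in a rose the $k$ petals are not pairwise disjoint: every petal contains the vertex $v$, so for $k \ge 2$ the images of your circuits all meet at $v$. You noticed this (``actually all based at $v$'') and tried to wave it away by pointing to the distinct open $1$-cells, but a circuit's image is the closed petal, vertex included, so disjointness genuinely fails. This is not a cosmetic point: the weaker statement in which the family is merely edge simple is already Exercise~\ref{ExerciseEdgeSimpleCircuits}, and the disjointness is precisely the extra content of this proposition. What your argument proves is only that $c_1,\ldots,c_k$ are represented by the images of $k$ distinct petals under a homotopy equivalence of roses, which is the intermediate statement $(*)$ in the paper's proof. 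The missing step is to change the graph so that the $k$ circles become genuinely disjoint: the paper blows up the vertex of $R_n$ to the rank~$n$ bola graph $B_n$ ($n$ disjoint circles, each joined to a central vertex by an arc), takes the homotopy equivalence $g \from B_n \to R_n$ collapsing those arcs, and marks $B_n$ by a homotopy inverse of the composition $B_n \xrightarrow{g} R_n \xrightarrow{f} R_n$, where $f$ realizes the relevant automorphism; in that marked graph the classes are carried by $k$ of the pairwise disjoint circles. (Separately, and fixably: with the paper's convention that $c$ is represented in $(G,\rho)$ by the tightening of $\rho$ composed with the circuit for $c$, the rose marking you would want realizes $\Phi^{-1}$ rather than $\Phi$; you flagged this bookkeeping issue but left it unresolved.)
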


\begin{proof} We start from the evident fact that $c_1,\ldots,c_k$ are represented up to conjugacy by a partial free basis if and only if the following holds:
\begin{itemize}
\item[$(*)$] There exists a homotopy equivalence $f \from R_n \to R_n$ such that $c_1,\ldots,c_k$ are represented by the images under $f$ of $k$ distinct petals of the domain rose.
\end{itemize}
By blowing up the vertex of the rose $R_n$ we obtain a \emph{rank $n$ bola graph} $B_n$ depicted in Figure~\ref{FigureRankTwoCoreGraphs}, which can be marked by a homotopy equivalence $g \from B_n \mapsto R_n$ that collapses to a point the union of $n$ edges of $B_n$ that connect the valence $n$ vertex to the $n$ vertices of valence 3. The composed map $B_n \xrightarrow{g} R_n \xrightarrow{f} R_n$ is a homotopy equivalence. By marking $B_n$ with a homotopy inverse of this map, we get a marked graph in which $c_1,\ldots,c_k$ are represented by pairwise disjoint simple circuits, proving the ``only if'' direction of the proposition. 


For the ``if'' direction, suppose that $c_1,\ldots,c_k$ are represented in a marked graph $G$ by pairwise disjoint simple circuits $\sigma_1,\ldots,\sigma_k$. We first reduce to the case that each $\sigma_i$ has frontier in $G$ consisting of a single point. If not then for each $\sigma_i$ we choose a connected subgraph $\alpha_i \subset \sigma_i$ --- an arc or point --- that contains each point of the frontier of $\sigma_i$. The union $A = \union_i \alpha_i$ is a subforest of $G$, collapse of which produces a marked graph $G' = G/A$ in which $c_1,\ldots,c_k$ are represented by the pairwise disjoint simple circuits $\sigma'_i = \sigma_i / \alpha_i$ each of which has frontier consisting of a single point. By further collapsing a maximal tree in $G'$ which intersects each $\sigma'_i$ exactly at its one frontier point, we obtain a rose in which $c_1,\ldots,c_k$ are represented by $k$ distinct petals, and so $(*)$ holds.
\end{proof}

\section{Outer space and its spine}
\label{SectionOuterSpaceAndSpine}
\vspace{-.1in}
\hfill{\emph{\ldots the famousest of hobbits, and that's saying a lot.}}

\smallskip

\hfill  --- J.\ R.\ R.\ Tolkien, \emph{The Two Towers}

\bigskip

Our topological strategy for attacking the various problems of Nielsen and Whitehead will go like this. Consider Whitehead's problem which asks, given conjugacy classes $\{c_1,\ldots,c_K\}$ in $F_n$, whether they are represented by a partial free basis. We currently have rather weak necessary conditions (involving homology). We seek stronger necessary conditions. 

Consider for example the problem of whether 
$$w=abcba^{-1}bcb^{-1}acbac^{-1} \in F\<a,b,c\>=F_3
$$ 
is a free basis element. \emph{Assuming} that $w$ is indeed a free basis element, Proposition~\ref{PropWhiteheadConjugacyTopological} gives us a marked graph $\rho \from R_3 \to G$ such that the conjugacy class of $w$ is very nicely represented by a simple circuit in~$G$. Perhaps we can use the sheer existence of $G$ to extract more information about the word $w$ itself, particularly about the representation of $w$ as a non-simple circuit in the base rose $R_3$. In our attempt to do that, starting with a homotopy inverse $\bar \rho \from G \to R_3$ we shall move from $G$ to $R_n$ along a path of marked graphs, keeping track of as much information as we can.

But is there any actual meaning to ``moving from one marked graph to another''? Is this more than just a metaphor? Is there really some kind of mathematical path between two marked graphs? Is there some topological space in which marked graphs are represented as points or subsets or something, and in which a ``path of marked graphs'' is represented as a path in the ordinary topological sense? 

In their extraordinary paper \cite{CullerVogtmann:moduli}, Culler and Vogtmann proposed affirmative answers to these questions, by introducing what is now known as the \emph{outer space} of~$F_n$, denoted~$\X_n$. Their idea was to consider simple geometric structures on marked graphs, namely length structures in which each edge is assigned a positive length. Outer space is a topological space that is constructed so that its points represent length structures on marked graphs: one moves through outer space by letting those geometric structures vary in some continuous fashion \emph{and} by letting the underlying marked graph itself vary.

\subsection{Overview: Gluing ideal simplices to form outer space} 
\label{SectionOuterSpaceCells}

The Culler--Vogtmann outer space of $F_n$, denoted $\X_n$, is glued together out of cells, which we refer to as \emph{outer space cells}\index{cells!outer space}\index{outer space!cells}, one such cell for each equivalence class of marked graphs (see Section~\ref{SectionMarkedGraphEquiv} to review the definition of equivalence). The cell that corresponds to the equivalence class of a marked graph $G$ is a parameterization of a certain kind of geometry on $G$, namely a \emph{length structure} which assigns a real valued length to each natural edge of $G$.  It is convenient to require edge lengths to be normalized so that they sum up to~$1$, and in defining outer space we shall use only normalized length structures.
The tuple of edge lengths, one such tuple for each normalized length structure, serves as a parameter for the outer space cell associated to~$G$, which we shall denote $\Delta(G)$.

While the above discussion may suggest that $\Delta(G)$ is just a simplex, in which the tuple of barycentric coordinates of the simplex is identified with the edge length tuple, there is a question to consider:
\begin{itemize}
\item What is the geometric significance of an edge of length zero?
\end{itemize}
In order to move around in the outer space $\X_n$ in a useful manner, one must not only vary the edge lengths of a marked graph, one must also vary the topology of the marked graph itself --- that is, one must allow the equivalence class of the marked graph to vary. This is accomplished by allowing edge lengths to equal zero: assigning length~$0$ to a collection of edges of $G$ corresponds to changing the equivalence class of $G$ by collapsing each of those edges to a single point. But there is a danger to avoid: if one collapses too many edges at once, namely a union of edges that contains a circuit of $G$, collapsing those edges will reduce the rank of~$G$ and hence will change the homotopy type of $G$ (see Exercise~\ref{ExerciseCollapse}~below).

The outer space cell $\Delta(G)$ of a marked graph $G$ is therefore not a simplex in the ordinary sense: in order to prohibit collapsing circuits of $G$, one must strip away certain faces from the ordinary simplex, leaving what we call an \emph{ideal simplex}. Outer space itself is therefore not a simplicial complex, it is instead an  \emph{ideal simplicial complex}. 


\subsection{The ideal simplex of a marked graph}
\label{SectionIdealSimplexOfGraph}

We start with the abstract definition of an \emph{ideal simplex} and its faces. Then we define a natural way to associate to each marked graph $G$ an ideal simplex $\Delta(G)$ which parameterizes length structures on $G$.

\paragraph{Ideal simplices in the abstract.} Consider a finite set $E$. The \emph{orthant} on $E$ is the product space $[0,\infty)^E$ whose points are the functions $\ell \from E \to [0,\infty)$. The \emph{simplex on $E$} is defined to be the subspace
$$\Delta(E) = \{\ell \in [0,\infty)^E \,\suchthat\, \sum_{e \in E} \ell(e) = 1\}
$$
The individual projection functions $\pi_e \from \Delta(E) \mapsto [0,\infty)$, defined for each $e \in E$ by $\pi_e(\ell)=\ell(e)$, are called the \emph{barycentric coordinates} of $\Delta(E)$. In particular the \emph{barycenter} of $\Delta(E)$ is the point whose coordinates are constant, all equal to $\frac{1}{\abs{E}}$. 

Faces of a simplex $\Delta(E)$ are defined by allowing only certain barycentric coordinates to be nonzero. More precisely, associated to each nonempty subset $F \subset E$ there is a face 
$$\Delta(F \subset E) = \{\ell \in \Delta(E) \suchthat \ell(e)=0 \quad\text{for all $e \in E-F$}\},
$$
Note that $\Delta(F \subset E) \subset \Delta(E)$ is a proper face if and only if $F \subset E$ is a proper subset.  

A \emph{subcomplex} of $\Delta(E)$ is a union of faces of~$\Delta(E)$. 

An \emph{ideal simplex on $E$} is the complement of a proper subcomplex $\L \subsetneq \Delta(E)$, denoted
$$\Delta(E;\L) = \Delta(E) - \L
$$
Given an ideal simplex $\Delta(E;\L)$ on $E$ and a face $\Delta(F \subset E)$ of $\Delta(E)$, consider the following intersection:
$$\Delta(F \subset E;\L) = \Delta(F \subset E) \intersect \Delta(E;\L) = \Delta(F \subset E) - \L
$$
Note that $\Delta(F \subset E;\L) \ne \emptyset$ if and only if $\Delta(F \subset E) \not\subset \L$, and if this is so then we say that $\Delta(F \subset E;\L)$ is a \emph{face} of the ideal simplex $\Delta(E;\L)$. 

\paragraph{The ideal simplex, or outer space cell, of a marked graph.} Consider now a marked graph $G$ equipped with its natural cell structure, let $\EG$ denote its set of natural edges, and consider the simplex $\Delta(\EG)$. We think of each $\ell \in \Delta(\EG)$ as a \emph{length structure} on $G$, assigning a non-negative length to each edge so that the total length is normalized to equal~$1$.

For each natural subgraph $H \subset G$, denote \hbox{$\EH = \{e \in \EG \suchthat e \subset H\}$,} corresponding to which there is a face $\Delta(\EH \subset \EG)$ consisting of all $\ell \in \Delta(G)$ such that $\ell(e) = 0$ for each $e \in \EG - \EH$. If the complementary subgraph $K = G \setminus H$ contains no circuit, equivalently if $K$ is a forest, then we say that the natural subgraph $H$ and its corresponding face $\Delta\bigl(\EH \subset \EG\bigr)$ are \emph{concrete}\index{subgraph!concrete}\index{face!concrete}, otherwise they are \indexemph{nonconcrete}. The union of nonconcrete faces of $\Delta(G)$ forms a subcomplex $\LG \subset \Delta(G)$. The \emph{outer space cell}\index{cells!outer space}\index{outer space!cells} of $G$ is defined to be the ideal simplex
$$\Delta(G) = \Delta\bigl(\EG;\LG\bigr) = \Delta(\EG) - \LG
$$
The faces of the ideal simplex $\Delta(G)$ are indexed by the concrete natural subgraphs $H \subset G$, as follows:
$$\Delta(H \subset G) = \Delta(\EH \subset \EG;\LG) 
$$

For example, referring to Figure~\ref{FigureRankTwoCoreGraphs}, the ideal simplex of a rank~$n$ rose is obtained from an $n-1$ simplex by stripping away every face. The ideal simplex of a rank~$2$ theta graph or a rank~$3$ cat's eye graph is obtained from a simplex by stripping away all faces of codimension~$\ge 2$. 

\paragraph{Exercises for Section \ref{SectionIdealSimplexOfGraph}}

\begin{exercise}
\label{ExerciseIdealSimplexProps}
For each of the following properties of $\Delta(G)$, describe a topological or graph theoretic property which characterizes those marked graphs $G$ such that $\Delta(G)$ has the stated property:
\begin{enumerate}
\item\label{ExerciseIdealSimplexPropsNoncompact}
$\Delta(G)$ is compact.
\item $\Delta(G)$ is homeomorphic to an open ball, equivalently $\Delta(G)$ has no faces.
\item $\Delta(G)$ is obtained from $\Delta(\EG)$ by removing all faces of codimension~$\ge 2$.
\item $\Delta(G)$ has a unique face.
\end{enumerate}
\end{exercise}

\subsection{Face maps}
\label{SectionFaceMaps}

As in Section~\ref{SectionIdealSimplexOfGraph}, we again start with an abstract description of face maps, and then we apply that to describe the face maps that occur amongst the ideal simplices of marked graphs.

In what follows, an injection $f : X \to Z$ will often be written as $f : X \leftrightarrow Y \subset Z$ where $Y = \image(f)$, thus factoring $f$ into a bijection $X \leftrightarrow Y$ composed with an inclusion $Y \hookrightarrow Z$.

\paragraph{Face maps in the abstract.} For any injection of finite sets $s \from E' \leftrightarrow F \subset E$ there is an induced \emph{face map} from the simplex $\Delta(E')$ to the simplex $\Delta(E)$, an embedding denoted
$$(*) \qquad r \from \Delta(E') \leftrightarrow \Delta(F \subset E) \subset \Delta(E)
$$
that is defined for all $\ell \in \Delta(E')$ and $e \in E$ by the formula
$$r(\ell)(e) = \begin{cases}
\ell(s^\inv(e)) & \quad\text{if $e \in F$} \\
0                    &\quad\text{if $e \in E-F$}
\end{cases}
$$
In the special case that $s \from E' \to E$ is a bijection, the induced face map $r \from \Delta(E') \leftrightarrow \Delta(E)$ is a homeomorphism called a \emph{simplex isomorphism}. 

Given a face map denoted as above, for any proper subcomplex $\L \subset \Delta(E)$ that does not contain $\Delta(F \subset E)$ it follows that $\L' = r^\inv(\L) \subset \Delta(E')$ is a proper subcomplex, and by restriction of $r$ we obtain an induced face map of ideal simplices denoted 
$$r \from \Delta(E;\L') \leftrightarrow \Delta(F \subset E;\L) \subset \Delta(E;\L)
$$

\paragraph{Forest collapses between marked graphs.} To prepare for defining face maps between ideal simplices of marked graphs, we consider now the operation of a ``forest collapse''. Given two marked graphs $G,G'$, a map $q \from G \to G'$ is a \emph{forest collapse}\index{forest collapse} if the following conditions hold: 
\begin{enumerate}
\item $q$ is a homotopy equivalence;
\item $q$ preserves marking with respect to the (implicitly) given markings $\rho \from R_n \to G$ and $\rho' \from R_n \to G'$, meaning that $\rho'$ and $\rho \circ q \from R_n \to G'$ are homotopic;
\item there exists a concrete natural subgraph $H \subset G$ with complementary forest $K = G \setminus H$ such that $q$ is a quotient map which collapses to a point each component of the forest $K$, meaning that for each $x \in G'$ the pre-image $q^\inv(x)$ is either a single point of $G - K$ or a component of $K$. 
\end{enumerate}
We shall sometimes incorporate $H$ and/or $K$ into the notation for the collapse map $q \from G \to G'$ by writing $q_H \from G \to G'$, or $G \xrightarrow{[K]} G'$, or $q_H \from G \xrightarrow{[K]} G'$.

We define a relation amongst marked graphs, denoted $G \collapsesto G'$ and pronounced ``$G$~collapses to $G'$'', which is defined by the existence of a forest collapse $q \from G \to G'$. 

In order to know that various constructions are well-defined independent of the choice of a forest collapse --- most immediately, the face maps we will use to define outer space --- we will need the following result:


\begin{theorem}
\label{ThmCollapseUnique}
For any marked graphs $G,G'$, a forest collapse $q \from G \to G'$ is unique up to pre-composition by a homeomorphism of $G$ isotopic to the identity; also, the concrete natural subgraph $H \subset G$ and the forest $K=G \setminus H$ which witness that $q$ is a forest collapse are unique.
\end{theorem}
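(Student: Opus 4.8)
The plan is to establish uniqueness of the witnessing data $(H, K)$ first, and then deduce the uniqueness of $q$ up to pre-composition by an isotopically-trivial self-homeomorphism of $G$. To set up, suppose we are given two forest collapses $q_0 \from G \to G'$ and $q_1 \from G \to G'$, witnessed by concrete natural subgraphs $H_0, H_1 \subset G$ with complementary forests $K_0 = G \setminus H_0$ and $K_1 = G \setminus H_1$. I want to show $H_0 = H_1$ (equivalently $K_0 = K_1$), and then that $q_1^{-1} \circ q_0$ can be taken to be a homeomorphism of $G$ isotopic to the identity.

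First I would pin down the natural edges of $G$ that survive the collapse. A natural edge $e \subset G$ lies in $K_i$ if and only if $q_i$ crushes it to a point; otherwise $q_i$ maps (the interior of) $e$ homeomorphically onto (the interior of) a natural edge of $G'$ — this uses that $q_i$ is a quotient map collapsing components of $K_i$ and is a homotopy equivalence. The key point is to characterize membership in $K_i$ intrinsically, independent of $i$. Since $q_i$ preserves marking and is a homotopy equivalence, the induced map on conjugacy classes (equivalently on circuits, using Proposition~\ref{PropCircuitRep} and the material of Section~\ref{SectionConjugacyClasses}) is the identity under the marking identifications. Now invoke Exercise~\ref{ExerciseMovedEdge}, specifically part~\pref{ItemMovesEdgeArcsNotIdentity}: if $q_0$ collapsed a natural edge $e$ that $q_1$ did not collapse, then composing $q_0$ with a homotopy inverse $\bar q_1$ of $q_1$ would produce a self-homotopy-equivalence of $G$ which is homotopic to the identity (both preserve marking, and $\HMCG(G) \approx \Out(\pi_1 G)$ acts faithfully, Exercise~\ref{ExerciseHMCGIsomorphicOut}), yet which collapses an arc $\check e \subset \interior(e)$ into $G \setminus \check e'$ for a suitable arc $\check e'$ — contradicting Exercise~\ref{ExerciseMovedEdge}\pref{ItemMovesEdgeArcsNotIdentity}. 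Care is needed here because $q_0$ is not a self-map of $G$; the clean way is to argue directly that the set of natural edges collapsed by any forest collapse $G \collapsesto G'$ is determined by which circuits of $G$ descend to non-immersed loops in $G'$, hence is the same for $q_0$ and $q_1$. This forces $K_0$ and $K_1$ to have the same edge set, and since a forest (a concrete subgraph's complement) is determined by its edges together with the requirement of being a core-complement, we get $K_0 = K_1$ and $H_0 = H_1 =: H$, $K_0 = K_1 =: K$.

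With $H$ and $K$ now canonical, consider $\phi = q_1^{-1} \circ q_0$ in the appropriate sense: more carefully, since $q_0$ and $q_1$ are both quotient maps of $G$ collapsing exactly the components of $K$, there is a well-defined bijection $\psi \from G' \to G'$ with $\psi \circ q_0 = q_1$ on the level of points (both $q_0(x)$ and $q_1(x)$ depend only on the component of $K$ containing $x$, for $x \in K$, and on $x$ itself for $x \in G \setminus K$); one checks $\psi$ is a homeomorphism. Since $q_0, q_1$ both preserve marking, $\psi$ preserves marking, and by the uniqueness half of the marked-graph equivalence theory — precisely Exercise~\ref{ExerciseMarkingChangeUnique} — $\psi$ is isotopic to the identity of $G'$ (indeed the identity on vertices and oriented edges). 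Lifting the isotopy through the quotient $q_0$, or equivalently noting that a marking-preserving self-homeomorphism of $G$ in the fibered sense must be isotopically trivial by the same application of Serre's Lemma~\ref{LemmaIvanov}, yields that $q_1 = \psi \circ q_0$ differs from $q_0$ by pre-composition (after transporting through the collapse) with a self-homeomorphism of $G$ isotopic to the identity. Assembling the two halves gives the theorem.

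The main obstacle I anticipate is the first half — proving $K_0 = K_1$ — and specifically making rigorous the claim that ``the collapsed edges are intrinsically determined.'' The subtlety is that $q_0$ and $q_1$ are maps out of $G$ into the same target $G'$ but are not self-maps, so one cannot directly feed them into Exercise~\ref{ExerciseMovedEdge}; one must either (a) compose with a carefully chosen homotopy inverse and track that the composite collapses an honest sub-arc of an edge, being careful that homotopy inverses are only defined up to homotopy and a priori need not be edge-respecting, or (b) give a direct combinatorial argument: an edge $e$ of $G$ is collapsed by a forest collapse $G \collapsesto G'$ iff some circuit through $e$ and some circuit avoiding $e$ have the same image circuit in $G'$, and this condition refers only to $G$, $G'$, and the marking-induced correspondence of circuits. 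I would pursue route (b), as it sidesteps the homotopy-inverse bookkeeping entirely, and falls back on route (a) via Exercise~\ref{ExerciseMovedEdge}\pref{ItemMovesEdgeArcsNotIdentity} only if a clean direct statement proves elusive. Everything after $K_0 = K_1$ is a routine application of the already-established rigidity of marked graphs (Exercise~\ref{ExerciseMarkingChangeUnique}) and should present no difficulty.
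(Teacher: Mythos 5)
The overall architecture of your proposal (first pin down $H$ and $K$, then produce the homeomorphism) is compatible with the paper's proof, but the step you choose to carry the weight --- your preferred route (b) --- rests on a criterion that is false. You assert that a natural edge $e$ of $G$ is collapsed by a forest collapse $G \collapsesto G'$ if and only if some circuit through $e$ and some circuit avoiding $e$ have the same image circuit in $G'$. A forest collapse is a homotopy equivalence and takes circuits to circuits without creating cancellation (erasing the collapsed edges of a reduced cyclic edge path cannot produce backtracking, since components of the collapsed forest are trees with no loop edges), so the induced map on circuits is exactly the composition of the bijections $\{\text{circuits of }G\} \leftrightarrow \C(F_n) \leftrightarrow \{\text{circuits of }G'\}$ coming from Proposition~\ref{PropCircuitRep} and the markings. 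In particular distinct circuits of $G$ never share an image circuit, so your criterion holds for no edge whatsoever, collapsed or not. Concretely, in the barbell graph with loops $a$ at $u$, $b$ at $v$ and separating edge $k$, collapsing $k$ sends the circuit $a\,k\,b\,\bar k$ to $a'b'$, while the only circuits avoiding $k$ are powers of $a$ or of $b$, whose images are powers of $a'$ or $b'$; yet $k$ is collapsed. A characterization that does work is by crossing numbers: for each edge $a \subset H_j$ and each conjugacy class $c$, the number of times the circuit of $c$ in $G$ crosses $a$ equals the number of times the circuit of $c$ in $G'$ crosses $q_j(a)$, and by Exercise~\ref{ExerciseMovedEdge}\pref{ItemCrossesThisNotThat} two distinct natural edges of $G$ are distinguished by some circuit; this forces $H_0=H_1$ and $q_0(a)=q_1(a)$ for every surviving edge $a$ (orientations still require a separate argument, which is precisely the final paragraph of the paper's proof).

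Your fallback route (a) is in essence what the paper does, but you leave unexecuted exactly the point you flag as the obstacle, and your sketch of it misapplies the tool: Exercise~\ref{ExerciseMovedEdge}\pref{ItemMovesEdgeArcsNotIdentity} requires a map satisfying $f(\check e)=\check e'$ and $f(G\setminus\check e)\subset G\setminus \check e'$, so it cannot be fed a composite that crushes an arc to a point. The paper manufactures the right map: it takes a regular neighborhood of the natural vertex set of $G'$, pulls it back under each $q_j$ to a regular neighborhood of $K_j$, and uses the resulting homeomorphisms between the middle arcs $\check e_{i,j}$ of the surviving edges and the arcs $\check e_i$ in $G'$ to build explicit, edge-respecting homotopy inverses $\bar q_j$; the composite $\bar q_2\circ q_1$ is then homotopic to the identity, carries $\check e_{i,1}$ onto $\check e_{i,2}$ preserving orientation, and maps the complement into the complement, so Exercise~\ref{ExerciseMovedEdge}\pref{ItemMovesEdgeArcsNotIdentity} yields $e_{i,1}=e_{i,2}$ with matching orientations, giving uniqueness of $H$, $K$ and the desired homeomorphism in one stroke. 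Finally, note that your second half produces a homeomorphism $\psi$ of $G'$ with $\psi\circ q_0=q_1$ (post-composition), whereas the theorem asks for pre-composition by a homeomorphism of $G$ isotopic to the identity; converting $\psi$ into such an $h$ (define $h$ on edge interiors of $G\setminus K$ via the injectivity of $q_0$ there, then extend over each tree component of $K$) is routine once $K_0=K_1$ is known and $\psi$ is known to fix every oriented natural edge, but as written this lifting step is only gestured at and would need to be spelled out.
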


We put off the proof of this theorem while we continue our study of face maps.

\paragraph{The face map associated to a forest collapse.} Associated any two marked graphs $G$, $G'$ such that $G \collapsesto G'$ we shall define a \emph{face map of ideal simplices} from $\Delta(G') = \Delta(\EG';\LG')$ to $\Delta(G) = \Delta(\EG;\LG)$ denoted
$$r_{G',G} \from \Delta(G') \to \Delta(G)
$$
To define this map, first choose a forest collapse $q_H \from G \to G'$. Consider the induced bijection $\EH \leftrightarrow \EG'$, and note that this bijection is well-defined by Theorem~\ref{ThmCollapseUnique}, depending only on $G$ and $G'$, independent of the choice of $q_H$. The inverse bijection extends to an injection $s_{G',G} \from \EG' \leftrightarrow \EH \hookrightarrow \EG$ which induces a face map of simplices
$$r_{G',G} \from  \Delta(\EG') \leftrightarrow \Delta(\EH \subset \EG) \hookrightarrow \Delta(\EG)
$$
We claim that:
\begin{itemize}
\item $\LG' = r_{G',G}^\inv(\LG)$, equivalently, the nonconcrete faces of $\Delta(\EG')$ are precisely the pre-images under $r_{G',G}$ of the nonconcrete faces of $\Delta(\EG)$.
\end{itemize}
To verify this claim, consider the face $\Delta(\EH'_1 \subset \EG')$ of $\Delta(\EG')$ associated to a nontrivial natural subgraph $\EH'_1 \subset \EG'$.  First note that $H'_1$ is concrete if and only if $K'_1 = G' \setminus H'_1$ is a forest, if and only if $K_1 = q_H^\inv(K'_1 \union \Vertices G') \subset G$ is a forest, if and only if $H_1 = G \setminus K_1$ is concrete. Furthermore, the face map $r_{G',G}$ clearly takes $\Delta(\EH' \subset \EG')$ homeomorphically to $\Delta(\EH_1 \subset \EG)$. 

It follows immediately that if $\Delta(\EH'_1 \subset \EG')$ is a nonconcrete face then it is indeed the pre-image under $r_{G',G}$ of a nonconcrete face of $\Delta(\EG)$, namely $\Delta(\EH_1 \subset \EG)$. 

Conversely, suppose that $\Delta(\EH'_1 \subset \EG')$ is concrete, hence $\Delta(\EH_1 \subset \EG)$ is concrete, hence $K_1$ is a forest. Consider any face $\Delta(\EH_2 \subset \EG)$ whose $r_{G',G}$ pre-image equals $\Delta(\EH'_1 \subset \EG')$, and let $K_2 = G \setminus \EH_2$. Because $\Delta(\EH_1 \subset \EG)$ is the $r_{G',G}$ homeomorphic image of $\Delta(\EH' \subset \EG')$, it follows that $\Delta(\EH_1 \subset \EG) \subset \Delta(\EH_2 \subset \EG)$, hence $\EH_1 \subset \EH_2$, hence $K_2 \subset K_1$. The graph $K_2$ is therefore a forest, and so the face $\Delta(\EH_2 \subset \EG)$ is nonconcrete. This completes the proof of the claim.

It follows from the claim that $r_{G',G}$ restricts to a face map of ideal simplices
$$r_{G',G} \from \Delta(G') = \Delta(\EG';\LG') \leftrightarrow \Delta(\EH \subset \EG;\LG) \subset \Delta(\EG;\LG) = \Delta(G)
$$
Tracing back through the definition of this face map, we see that $r_{G',G} $ is well-defined, depending only on $G'$ and $G$, and independent of the choice of the forest collapse $q_H \from G \mapsto G'$, once we have completed the following proof:

%
%
%
%

\begin{proof}[Proof of Theorem \ref{ThmCollapseUnique}] After some preliminary setup, we will apply Exercise~\ref{ExerciseMovedEdge}~\pref{ItemMovesEdgeArcsNotIdentity} to complete the proof.

\newcommand\NV{N\!\Vertices}
\newcommand\VK{\Vertices\!K}
\newcommand\VG{\Vertices\!G}
\newcommand\NK{N\!K}
\newcommand\NVG{\NV\!G}

Suppose that for $i=1,2$ we have concrete, natural subgraphs $H_i \subset G$ with graph complements $K_i = G \setminus H_i$ and with collapse maps 
$$q_i = q_{H_i} \from G \xrightarrow{[K_i]} G'
$$
Since $q_1,q_2$ both preserve marking, it follows that $q_1,q_2$ are homotopic to each other. We shall produce a particular homotopy inverse $\bar q_i \from G' \to G$ of each $q_i$, thus obtaining by composition a map $\bar q_2 \circ q_1 \from G \to G$ which is homotopic to the identity; this is the map to which we will apply Exercise~\ref{ExerciseMovedEdge}~\pref{ItemMovesEdgeArcsNotIdentity}.

Let $e_1,\ldots,e_I$ be the edges of $G'$, and let $e_{1,j},\ldots,e_{I,j} \subset G$ be the edges of $H_j$ indexed so that $q_j$ maps $e_{i,j}$ to $e_i$. Choose an orientation of each $e_i$ and pull back by $q_j$ to get an orientation of $e_{i,j}$ ($i=1,\ldots,I$; $j=1,2$). To prove existence of a homeomorphism $h \from G \to G$ isotopic to the identity such that $q_2=q_1 \circ h$, it suffices to prove for each $i=1,\ldots,I$ that the edges $e_{i,1}$ and $e_{i,2}$ are equal and that their $q_1$ and $q_2$ pullback orientations are equal; this is the conclusion we will derive from applying Exercise~\ref{ExerciseMovedEdge}~\pref{ItemMovesEdgeArcsNotIdentity}.

Let $\NV \subset G'$ be a closed regular neighborhood of the natural vertex set $\VG'$, and hence $\NV$ is a forest. Let $\NK_j = q_j^\inv(\NV) \subset G$ which is a closed regular neighborhood of the forest $K_j$ $(j=1,2)$, and hence $\NK_j$ is a forest. Let $\check e_i = \closure(e_i - \NV)$, and let $\check e_{i,j} = \closure(e_{i,j} - \NK_j)$, all of which are arcs in the interiors of the corresponding edges. Note that $q_j$ restricts to a homeomorphism with inverse homeomorphism
$$(*) \qquad \check e_{i,j} \xrightarrow{q_{i,j}} \check e_i \xrightarrow{\bar q_{i,j}} \check e_{i,j}
$$ 

Let $W = \union_{i=1}^I \check e_i$ which equals the frontier of $\NV$, and let $W_j = \union_{i=1}^I \bdy \check e_{i,j} \subset G$ which equals to the frontier of $\NK_j$. Consider the restricted map of pairs $q_j \from (\NK_j,W_j) \to (\NV,W)$. This map restricts to a bijection $W_j \to W$, and the inverse bijection $W \mapsto W_j$ clearly extends to a continuous map of pairs $\bar q_j \from (\NV,W) \to (\NK_j,W_j)$. By applying Exercise~\ref{ExerciseMovedEdge}~\pref{ItemMovesEdgeArcsNotIdentity} to each component of $\NV$ it follows that these maps of pairs are homotopy inverses in the category of topological pairs:
$$(**) \qquad (\NK_j,W_j) \xrightarrow{q_j} (\NV,W) \xrightarrow{\bar q_j} (\NK_j,W_j)
$$
By extending $\bar q_j$ to each $\check e_i$ using the map $\bar q_{i,j}$, we obtain a homotopy inverse pair of homotopy equivalences 
$$G \xrightarrow{q_j} G' \xrightarrow{\bar q_j} G
$$
which have the restrictions $(*)$ and $(**)$ above. By composition we obtain a map $\bar q_2 \circ q_1 \from G \to G$ which is homotopic to the identity, which restricts to $\check e_{i,1} \mapsto \check e_{i,2}$ preserving orientations $(i=1,\ldots,I)$, and which restricts to a map $\NK_1 \mapsto \NK_2$. Applying Exercise~\ref{ExerciseMovedEdge}~\pref{ItemMovesEdgeArcsNotIdentity} it follows that $e_{i,1}=e_{i,2}$. Using that the restricted map of pairs $f \from (G,G \setminus e_{i,1}) \to (G,G \setminus e_{i,1})$ is homotopic to the identity: if $e_{i,1}$ separates then $f$ preserves each component; whereas if $e_{i,1}$ does not separate then $f$ preserves (up to homotopy) some oriented circle subgraph passing through $e_{i,1}$; in either case $f$ preserves orientation on $e_{i,1}$. \end{proof}

\paragraph{Exercises for Section \ref{SectionFaceMaps}}


\begin{exercise}
\label{ExerciseCollapse}
Consider any graph $G$ and subgraph $H \subset G$, and let $q_H \from G \to G'$ denote the quotient map obtained from $G$ obtained by collapsing each component of $K = G \setminus H$ to a point. Prove the following:
\begin{enumerate}
\item\label{ItemCollapseGraph}
 $G'$ is a graph with vertex set $\Vertices G' = q_H\bigl(\Vertices G \, \union \ (G \setminus H)\bigr)$, and $q_H$ induces a bijection between the edges of $H$ and the edges of $G'$. 
\item\label{ItemCollapseCoreGraph}
If $G$ is a core graph and $\Vertices G$ is its natural vertex set, and if $H$ is a nonempty natural subgraph of $G$, then $G'$ is a core graph and $\Vertices G'$ is its natural vertex set. Furthermore, $q_H \from G \to G'$ is a homotopy equivalence if and only if $H$ is a concrete subgraph of~$G$ (if and only if $K$ is a forest).
\end{enumerate}
\end{exercise}

\begin{exercise}
\label{ExerciseForestCollapseComp}
Prove that the relation $\collapsesto$ on marked graphs is transitive. More precisely, prove that for any marked graphs $G,G',G''$, if $q \from G \mapsto G'$ and $q' \from G' \mapsto G''$ are forest collapses then the composition $q' \circ q \from G \mapsto G''$ is also a forest collapse. 
\end{exercise}

\subsection{Outer space defined as an ideal simplicial complex} 
\label{SectionIdealSimplicial}

Ideal simplicial complexes can still be spotted in the mathematical countryside, although they are becoming hard to find. They occur naturally in the study of complete, noncompact hyperbolic manifolds $M$ of finite volume. In the case of dimension~2 one can always write $M = \overline M - P$ where $\overline M$ is a closed 2-manifold and $P \subset \overline M$ is a finite set of ``punctures'', and hence the set $P$ is in one-to-one correspondence with the cusps of~$M$. One can then choose a triangulation of $\overline M$ with vertex set $P$, and straighten each ``edge'' of this triangulation so that its interior is a locally geodesically embedded copy of the real line $\mathbb R$ in $M$. Lifting these ``edges'' to the universal covering space of $M$, which is identified with the hyperbolic plane $\hyp^2$, one obtains an ideal simplicial structure $\hyp^2$ which is invariant under the deck action by $\pi_1 M$. For a general discussion of the 2-dimensional case see \cite{BowditchEpstein:Triangulations}. In the case $n = 3$, similar constructions play a useful role in the theory of hyperbolic Dehn surgery of W.\ Thurston \cite{Thurston:GeomTop}. 

The famousest of the ideal simplicial complexes is the Farey complex, an ideal simplicial structure on the upper half plane model of 2-dimensional hyperbolic geometry $\hyp^2 = \{z=x+iy \suchthat y > 0\}$, which is invariant under the fractional linear action of the group $\SL(2,\Z)$. As we will see in Section~\ref{SectionRank2OuterSpace}, the Farey complex turns out to be (most of) the rank~2 outer space $\X_2$. The Farey complex has no ideal simplices of dimension~$0$, although the rational points $\mathbf Q \union \{\infty\}$ in the circle at infinity $S^1_\infty = \reals \union\{\infty\}$ are used as ``ideal vertices'' in defining the ideal 1-simplices and 2-simplices of the Farey complex. The ideal 1-simplices are those bi-infinite geodesics in $\hyp^2$ having a pair of ``ideal endpoints'' $\{\xi,\eta\} \subset \mathbf Q \union \{\infty\}$ of the form $\xi = \frac{a}{b}$, $\eta = \frac{c}{d}$ where $a,b,c,d \in \Z$ satisfy $ad-bc=\pm 1$. The ideal 2-simplices of the Farey complex are those ideal triangles in $\hyp^2$ having a triple of ``ideal vertices'' of the form $\{\xi,\eta,\zeta\} \in \mathbf Q \union \{\infty\}$ where $\{\xi = \frac{a}{b},\eta = \frac{c}{d}\}$ is an ideal endpoint pair of some ideal 1-simplex, and where the third ideal vertex $\zeta$ is obtained by ``Farey addition'', $\zeta = \frac{a+c}{b+d}$.

\paragraph{Ideal simplicial complexes in the abstract.} An \emph{ideal simplicial complex} is defined by gluing together a collection of ideal simplices, using ideal face maps as glue. More precisely, one is given a set $I$ with a partial order $i \le j$ called the \emph{face order}. For each $i \in I$ one is also given a finite set $E_i$ and a subcomplex $\L_i \subset \Delta(E_i)$ with corresponding ideal simplex $\Delta(E_i;\L_i)$. Also, for each $i < j \in I$ one is given a \emph{face ID function} consisting of a proper injection $s_{i,j} \from E_i \leftrightarrow E_{i,j} \hookrightarrow E_j$, associated to which is a face map $r_{i,j} \from \Delta(E_i) \leftrightarrow \Delta(E_{i,j} \subset E_j) \hookrightarrow \Delta(E_j)$. The following \emph{compatibility axioms} are required:
\begin{enumerate}
\item\label{ItemIdealFace}
For each $i \le j \in J$, \, $\Delta(E_{i,j} \subset E_j;\L_j)$ is a face of $\Delta(E_j;\L_j)$ (i.e.\ it is not empty), and $\L_i = (r_{i,j})^\inv(\L_j)$. By restriction we obtain an ideal face map also denoted
$$r_{i,j} \from \Delta(E_i;\L_i) \leftrightarrow \Delta(E_{i,j} \subset E_j;\L_j) \subset \Delta(E_j;\L_j)
$$
\item\label{ItemPosetIso}
For each $j \in I$, denoting $I_{<j} = \{i \in I \suchthat i < j\}$, the function $i \mapsto \Delta(E_{i,j} \subset E_j;\L_j)$ from the set $I_{<j}$ to the set of faces of $\Delta(E_j;\L_j)$ is a bijection. 
\item\label{ItemPosetCompo}
For each $i<j<k \in I$ we have a commutative diagram of face ID functions
$$\xymatrix{
E_i \ar[r]_{s_{i,j}} \ar@/^1pc/[rr]^{s_{i,k}} & E_j \ar[r]_{s_{j,k}} & E_k
}$$
and hence an induced commutative diagram of ideal face maps
$$\xymatrix{
\Delta(E_i;\L_i) \ar[r]_{r_{i,j}} \ar@/^1pc/[rr]^{r_{i,k}} & \Delta(E_j;\L_j) \ar[r]_{r_{j,k}} & \Delta(E_k;\L_k)
}$$
\end{enumerate}

Given gluing data satisfying the compatibility axioms as described above, the associated \emph{ideal simplicial complex} $\X$ is the quotient space of the disjoint union of ideal simplices $\coprod_{i \in I} \Delta(E_i;\L_i)$ using each face map $r_{i,j} \from \Delta(E_i;\L_i) \to \Delta(E_j;\L_j)$ $(i < j \in I)$ to identify each $x \in \Delta(E_i;\L_i)$ with $r_{i,j}(x) \in \Delta(E_j;\L_j)$; thus the equivalence relation on $\coprod_{i \in I} \Delta(E_i;\L_i)$ which defines $\X$ is generated by the relation $x \sim r_{i,j}(x)$ for all $i<j \in I$ and all $x \in \Delta(E_i;\L_i)$.

In the following proposition we list various properties that follow from the compatibility axioms and the quotient topology; details of verification will be left to the exercises. Within this proposition we also formulate terminology that will be used later in the context of outer space:


\begin{proposition} 
\label{PropIdealSimplComplProps}
\quad
\begin{enumerate}
\item\label{ItemSimplicesEmbedded}
The quotient map $\coprod_{i \in I} \Delta(E_i;\L_i) \mapsto \X$ restricts to an embedding of the ideal simplex $\Delta(E_i;\L_i) \hookrightarrow \X$ for each $i \in I$. We identify $\Delta(E_i;\L_i)$ with its image in $\X$ under this embedding, and we say that $\Delta(E_i;\L_i)$ is an \emph{ideal simplex} or just a \emph{cell} of~$\X$. Furthermore, $\Delta(E_i;\L_i)$ is a closed subset of~$\X$
\item\label{ItemFacesAreFaces}
For each $i \le j \in I$, the cell $\Delta(E_i;\L_i)$ is identified in $\X$ with the face $\Delta(E_{i,j} \subset E_i;\L_i)$ of $\Delta(E_j;\L_j)$. We therefore say that $\Delta(E_i;\L_i)$ is a \emph{face} of $\Delta(E_j;\L_j)$, and if $i < j$ then we say that it is a \emph{proper} face.
\item Every face of every cell of~$\X$ is a cell of $\X$.
\item\label{ItemCellIntersection}
The intersection of any two cells of $\X$, if not empty, is the largest common face of those two cells. As a special case, one cell is contained in another cell if and only if the first is a face of the second. \qed
\end{enumerate}
\end{proposition}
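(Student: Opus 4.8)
The plan is to reduce all four assertions to one structural lemma that describes the identification relation explicitly. First some combinatorics of a single simplex: for a finite set $E$ and $\ell\in\Delta(E)$ the \emph{support} $\mathrm{supp}(\ell)=\{e:\ell(e)>0\}$ is the unique $F\subseteq E$ with $\ell$ in the relative interior $\mathring\Delta(F\subset E)$, and since a subcomplex $\L$ is a down-closed union of faces, $\Delta(E;\L)$ is the disjoint union of those pieces $\mathring\Delta(F\subset E)$ with $\Delta(F\subset E)\not\subset\L$ and is convex inside $\Delta(E)$. Combining this with axiom~\pref{ItemPosetIso}, each $x\in\Delta(E_i;\L_i)$ lies in exactly one such piece, corresponding to a unique index $k\in I_{\le i}:=I_{<i}\cup\{i\}$, and one can write $x=r_{k,i}(\phi(x))$ with $\phi(x)$ in the top open cell $\mathring\Delta(E_k)\subset\Delta(E_k;\L_k)$. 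Call $(k,\phi(x))$ the \emph{canonical form} of $x$. Because face maps send relative interiors to relative interiors, axiom~\pref{ItemPosetCompo} shows that a generating face map $r_{i,j}$ carries a point of canonical form $(k,z)$ to a point of canonical form $(k,z)$.

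The \textbf{key lemma} is then: two points of $\coprod_{i\in I}\Delta(E_i;\L_i)$ are identified in $\X$ if and only if they have the same canonical form. The ``if'' direction is the one-step chain $x=r_{k,i}(z)\sim z\sim r_{k,j}(z)=y$; the ``only if'' direction holds because each generating relation $x\sim r_{i,j}(x)$, and hence its inverse, preserves the canonical form, so the whole generated equivalence relation does. From this, item~\pref{ItemSimplicesEmbedded} follows: $q$ is injective on $\Delta(E_i;\L_i)$ because two of its points with equal canonical form literally coincide; the saturation of $\Delta(E_i;\L_i)$ meets each $\Delta(E_j;\L_j)$ in the set of points whose canonical index lies in $I_{\le i}$, which is the finite union of the closed faces $\Delta(E_{k,j}\subset E_j;\L_j)$ with $k\in I_{\le i}$ — hence closed in $\Delta(E_j;\L_j)$ — so $\Delta(E_i;\L_i)$ is closed in $\X$; and writing the retraction $y\mapsto r_{k,i}(r_{k,j}^{-1}(y))$ piecewise over those closed faces and pasting shows $q$ sends closed subsets of $\Delta(E_i;\L_i)$ to closed subsets of $\X$, so $q|_{\Delta(E_i;\L_i)}$ is an embedding onto a closed set. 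Items~\pref{ItemFacesAreFaces} and~(3) are then formal: axiom~\pref{ItemIdealFace} identifies $\Delta(E_i;\L_i)$ (for $i<j$) with the face $\Delta(E_{i,j}\subset E_j;\L_j)$, axiom~\pref{ItemPosetIso} says every face of a cell arises this way, and the order-preservation of this correspondence (hence ``a face of a face is a face'') is read off from axiom~\pref{ItemPosetCompo}.

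For item~\pref{ItemCellIntersection}, the key lemma gives, for cells $C=\Delta(E_i;\L_i)$ and $C'=\Delta(E_j;\L_j)$, that $C\cap C'=\bigcup_{k\in\mathcal K}\Delta(E_k;\L_k)$ where $\mathcal K=I_{\le i}\cap I_{\le j}$, and $C\cap C'\ne\emptyset$ forces $\mathcal K\ne\emptyset$. The statement thus reduces to showing the finite poset $\mathcal K$ has a greatest element $m^\ast$; then $C\cap C'=\Delta(E_{m^\ast};\L_{m^\ast})$, a common face, and the ``special case'' corollary follows at once. Since the faces of an ideal simplex form an up-set in the subset lattice, closed under unions, $I_{\le i}$ and $I_{\le j}$ are each join-semilattices, so it suffices to show $\mathcal K$ is upward directed: given $k_1,k_2\in\mathcal K$ take $m=k_1\vee_i k_2\le i$ and $m'=k_1\vee_j k_2\le j$, note $\Delta(E_m;\L_m)\cap\Delta(E_{m'};\L_{m'})\supseteq\Delta(E_{k_1};\L_{k_1})\cup\Delta(E_{k_2};\L_{k_2})\ne\emptyset$, and induct on dimension to extract a common refinement inside $\mathcal K$. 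In the case actually needed — outer space — the delicate point that $m$ and $m'$ cannot ``spread apart'' is precisely the assertion that two forest collapses out of a fixed marked graph admit a common refinement, which is supplied by Theorem~\ref{ThmCollapseUnique} together with transitivity of $\collapsesto$ (Exercise~\ref{ExerciseForestCollapseComp}).

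I expect the canonical-form lemma to do most of the work, reducing items~\pref{ItemSimplicesEmbedded}–(3) to bookkeeping with the pasting lemma and the compatibility axioms. The main obstacle is item~\pref{ItemCellIntersection}: that two cells meet in a single largest common face is \emph{not} a soft consequence of the abstract gluing axioms — one can cook up abstract data satisfying \pref{ItemIdealFace}–\pref{ItemPosetCompo} in which two $2$-cells share two of their edges, so their intersection is not a simplex — and the honest proof must therefore feed in the extra semilattice structure coming from uniqueness of forest collapses. Accordingly I would organize the argument so that Theorem~\ref{ThmCollapseUnique} is invoked exactly at the step guaranteeing that the poset $\mathcal K$ of common faces is directed.
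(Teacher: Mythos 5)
Your canonical-form lemma is the same mechanism the paper intends: the second hint of Exercise~\ref{ExerciseIdealSimplicialFacts} is precisely the statement that two identified points factor as $r_{k,i}(z)$, $r_{k,j}(z)$ through a common lower cell, and your version (factor through the cell of minimal support, which exists independently of any greatest-lower-bound considerations) yields items \pref{ItemSimplicesEmbedded}--(3) correctly; your pasting of the retractions $r_{k,i}\circ r_{k,j}^{-1}$ over the closed faces is a sound way to carry out the hint ``mimic the proof for CW complexes'' for closedness. Up through item (3) you are on the paper's route and your details check out.

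Your remarks on item \pref{ItemCellIntersection} are the substantive part, and they cut both ways. You are right that \pref{ItemCellIntersection} is not a formal consequence of Compatibility Axioms \pref{ItemIdealFace}--\pref{ItemPosetCompo}: one can glue two $2$-simplices along two distinct edge-cells (necessarily sharing one vertex-cell); each $I_{<j}$ still bijects with the six proper faces of its triangle, the face ID functions can be chosen to commute, and yet the two closed cells meet in a union of two edges, which is not a face. In particular the first hint of Exercise~\ref{ExerciseIdealSimplicialFacts} --- that two indices with a common lower bound have a unique greatest lower bound --- cannot be deduced from the axioms alone, so some extra input (an additional axiom on the face order, or a verification specific to outer space) is genuinely required, exactly as you say.

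However, your repair does not close the gap you identified. ``Two forest collapses out of a fixed marked graph admit a common refinement'' is the easy, one-cell statement: if $K_1,K_2\subset A$ are the collapsed forests, the join inside $\Delta(A)$ is simply $A/(K_1\cap K_2)$, and neither Theorem~\ref{ThmCollapseUnique} nor the transitivity of Exercise~\ref{ExerciseForestCollapseComp} is what is at stake there. What item \pref{ItemCellIntersection} actually needs is a two-cell compatibility: if $H_1,H_2$ are common collapses of two marked graphs $A$ and $B$, via forests $K_1,K_2\subset A$ and $K_1',K_2'\subset B$, one must show that $A/(K_1\cap K_2)$ and $B/(K_1'\cap K_2')$ are equivalent marked graphs, so that the join of two common faces of $\Delta(A)$ and $\Delta(B)$ is again a common face. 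Theorem~\ref{ThmCollapseUnique} asserts uniqueness of the collapse map and collapsed forest for a \emph{given} pair of marked graphs; it does not compare the joins computed in $\Delta(A)$ with those computed in $\Delta(B)$, and your ``induct on dimension to extract a common refinement inside $\mathcal K$'' is exactly the assertion in question rather than a proof of it. So the marked-graph argument that you correctly identify as necessary is still missing, and item \pref{ItemCellIntersection} remains unproved in your write-up.
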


\paragraph{An indexed family of marked graphs and face maps.} To define outer space $\X_n$ as an ideal simplicial complex, we start by choosing an indexed collection of marked graphs $\{G_i \suchthat i \in I\}$ containing exactly one marked graph in every equivalence class. The ideal simplices of $\X_n$ are defined by $\Delta(G_i) = \Delta(\EG_i;\LG_i)$ for each $i \in I$ (see Section~\ref{SectionIdealSimplexOfGraph}), where $\LG_i$ is the subcomplex of nonconcrete faces of $\Delta(\EG_i)$. The face order on $I$ is defined so that $i < j$ if and only if there exists a forest collapse 
$$q_{i,j} = q_{H_{i,j}} \from G_j \xrightarrow{[K_{i,j}]} G_i
$$
Applying Theorem~\ref{ThmCollapseUnique}, the map $q_{i,j}$ is unique up to precomposition by isotopy of $G_j$, and the natural subgraphs $H_{i,j}$ and $K_{i,j}$ are unique. As shown in Section~\ref{SectionFaceMaps}, we thus obtain a well-defined, doubly indexed family of induced face ID functions and face maps 
$$s_{i,j} \from \EG_i \leftrightarrow \EH_{i,j} \subset \EG_j, \qquad r_{i,j}  \from \Delta(\EG_j) \to \Delta(\EG_i)
$$
It was also shown in Section~\ref{SectionFaceMaps} that $\L_i = r_{i,j}^\inv(\L_j)$ and we obtain a well-defined family of restricted ideal face maps
$$r_{i,j} \from \Delta(G_i) = \Delta(\EG_i;\L_i) \to \Delta(\EG_j;\L_j) = \Delta(G_j)
$$
which thus verifies Compatibility Axiom~\pref{ItemIdealFace}.
%

We must verify Compatibility Axiom~\pref{ItemPosetIso}, and so consider $j \in I$ and consider the map which associates to each $i \in I_{<j}$ a face of the ideal simplex $\Delta(G_j)$. Consider an arbitrary face of $\Delta(G_j)=\Delta(\EG_j;\L_j)$, having the form $\Delta(\EH \subset \EG_j;\L_j)$ for some concrete subgraph $H \subset G_j$ with complementary forest $K = G_j \setminus H$, and consider the quotient map $q \from G \xrightarrow{[K]} G'$ that collapses each component of $K$ to a point. 
Applying Exercise~\ref{ExerciseCollapse}, it follows that $G'$ is a core graph and $q$ is a homotopy equivalence. Applying Exercise~\ref{ExerciseInducedMarking}, $G'$ has a unique marking with respect to which $q$ preserves marking, which therefore gives $G'$ the structure of a marked graph and $q$ the structure of a forest collapse. There exists $i \in I$ such that $G'$ and $G_i$ are equivalent marked graphs, and so there exists a homeomorphism $h \from G' \to G_i$ that preserves marking. It follows that $h \circ q \from G_j \xrightarrow{[K]} G_i$ is a forest collapse, proving that $i \in I_{<j}$ and that $i$ is taken to the face $\Delta(\EH \subset \EG_j;\L_j)$ by the map defined in Compatibility Axiom~\pref{ItemPosetIso}. That map is hence surjective. To prove injectivity, given $j \in I$ and $i,i' \in I_{<j}$, if the faces $r_{i,j}(\Delta(G_i))=r_{i',j}(\Delta(G_{i'})$ of $\Delta(G_j)$ are equal to each other, each equal to $\Delta(H \subset G_j;\L_j)$ for some concrete subgraph $H \subset G_j$ with complementary forest $K = G_j \setminus H$, then there are collapse maps $q \from G_j \xrightarrow{[K]} G_i$ and $q' \from G_j \xrightarrow{[K]} G_{i'}$. Since these maps collapse the exact same subforest $K$, we obtain an induced homeomorphism $h \from G_i \to G_{i'}$ with $q' = h \circ q$. It follows that $h$ preserves marking, so $G_i$ and $G_{i'}$ are equivalent marked graphs, and so $i=i'$. 

Verification of Compatibility Axiom~\pref{ItemPosetCompo} is left to the reader in Exercise~\ref{ExerciseAxiomPosetCompo}.

\paragraph{The definition of outer space and its cells.} Outer space $\X_n$ may now be defined as the ideal simplicial complex obtained as the quotient of the disjoint union of ideal simplices $\Delta(G_i)$ for $i \in I$, using as glue the face maps $r_{i,j} \from \Delta(G_i) \to \Delta(G_j)$ for $i < j \in I$.

Earlier in Section~\ref{SectionIdealSimplexOfGraph} we defined outer space cells 
\index{cells!outer space}\index{outer space!cells} in the abstract, one such cell for each marked graph $G$, namely the ideal simple $\Delta(G)$. We can now embed each such cell naturally into $\X_n$, justifying the terminology ``outer space cell''. To do this, let $i \in I$ be the unique index such that $G$ is equivalent to $G_i$, and let $h \from G \to G_i$ be a homeomorphism that preserves marking. We may regard $h$ as a forest collapse with respect to the empty subforest of $G$, and hence we may apply Theorem~\ref{ThmCollapseUnique} to conclude that $h$ is unique up to isotopy. The entire discussion following Theorem~\ref{ThmCollapseUnique} thus applies, implying that $h$ induces a well-defined ideal face map $r \from \Delta(G) \to \Delta(G_i)$, depending only on~$G$, which in this situation is actually an ideal simplex isomorphism. By postcomposing $r$ with the natural embedding $\Delta(G_i) \hookrightarrow \X_n$ given by Proposition~\ref{PropIdealSimplComplProps}~\pref{ItemSimplicesEmbedded}, we therefore obtain a natural embedding $\Delta(G) \hookrightarrow \X_n$.

\paragraph{Exercises for Section \ref{SectionIdealSimplicial}}


\begin{exercise}\label{ExerciseIdealSimplicialFacts}
Prove Proposition~\ref{PropIdealSimplComplProps}, using the following hints. First show that if $i,j \in I$ have a lower bound under the face order then they have a unique greatest lower bound. Then show that if $x \in \Delta(E_i;\L_i)$, and if $y \in \Delta(E_j;\L_j)$, and if $x$ is identified with $y$ in $X$, then $i,j$ do indeed have a greatest lower bound $k \in I$ and there exists $z \in \Delta(E_k;\L_k)$ such that $r_{k,i}(z)=x$ and $r_{k,j}(z)=y$. For proving the last sentence of \pref{ItemSimplicesEmbedded}, mimic the proof for CW complexes.
\end{exercise}


%
%
%

\begin{exercise}
\label{ExerciseAxiomPosetCompo}
Use Theorem~\ref{ThmCollapseUnique} and Exercise~\ref{ExerciseForestCollapseComp} to verify Compatibility Axiom~\pref{ItemPosetCompo} in the construction of the outer space~$\X_n$.
\end{exercise}
%
%

\begin{exercise} In ranks $2$, $3$ and $4$, how many different homeomorphism types are there of marked graphs $G$ such that $\Delta(G)$ is maximal with respect to the face order?
\end{exercise}

\begin{exercise}\label{ExerciseMaximalFaces}
Given a marked graph $G$ with corresponding outer space cell $\Delta(G) \subset \X_n$, what topological or graph theoretic property of $G$ characterizes the property that $\Delta(G)$ is maximal with respect to face order?
\end{exercise}

\begin{exercise}
\label{ExerciseFinitelyManyExpansions}
Prove that for any outer space cell $\Delta(G)$ there are only finitely many outer space cells that contain $\Delta(G)$. More specifically, if $G$ has $K$ natural vertices $v_1,\ldots,v_K$ with respective valences $t_1 \le t_2 \le \cdots \le t_K$, then the number of outer space cells that contain $\Delta(G)$ (including $\Delta(G)$ itself) is equal to
$$\prod_{k=1}^K N(t_k)
$$
where $N(t)$ is the number of isomorphism classes of $t$-labelled trees (see Exercise~\ref{ExerciseTreeCount}). (Hint: To start, choose a pairwise disjoint set of regular neighborhoods $T(v_1),\ldots,T(v_K)$ and label the valence~$1$ vertices of the tree $T(v_k)$ with the integers $1,\ldots,t_k$.)
\end{exercise} 

\begin{exercise}
\label{ExerciseLocallyFinite}
Prove that the decomposition of $\X_n$ into its ideal simplices is locally finite: for each $x \in \X_n$ there exists a neighborhood $U \subset \X_n$ such that $U$ is disjoint from all but finitely many outer space cells. (Hint:  Exercise~\ref{ExerciseFinitelyManyExpansions}.)
\end{exercise}

\subsection{The action of $\Out(F_n)$ on outer space}
\label{SectionOuterSpaceAction}

In this section, we construct an \emph{ideal simplicial action} of $\Out(F_n)$ on the outer space~$\X_n$. We refer the reader to Section~\ref{SectionTopActions} for a quick review of the terminology of group actions.
%
%
We shall denote automorphisms of an ideal simplicial complex as acting from the \emph{right}, using postfix notation, which turns out to be the more natural direction from which $\Out(F_n)$ acts on outer space~$\X_n$. 

Consider an ideal simplicial complex $X$, with gluing data consisting of an indexed set of ideal simplices $\{\Delta(E_i;\L_i)\}_{i \in I}$, a face order on $I$, and for $i<j$ on $I$ an injection $E_i \leftrightarrow E_{i,j} \hookrightarrow E_j$ inducing an ideal face map $\Delta(E_i;\L_i) \leftrightarrow \Delta(E_{i,j} \subset E_j;\L_j) \hookrightarrow \Delta(E_j;\L_j)$,
such that the Compatibility Axioms~\pref{ItemIdealFace}--\pref{ItemPosetCompo} hold. 
%
An \emph{automorphism} of $X$ consists of a homeomorphism $f \from X \to X$ denoted $x \mapsto x \cdot f$ for which there exists a bijection $f \from I \to I$ denoted $i \mapsto i \cdot f$, and for each $i \in I$ there exist bijections $E_i \mapsto E_{i \cdot f}$ inducing a simplex isomorphism $f_i \from \Delta(E_i) \to \Delta(E_{i \cdot f})$ denoted $x \mapsto x \cdot f_i$, such that the following hold: $f \from I \to I$ preserves face order; for each $i \in I$ we have $\L_i \cdot f_i =\L_{i \cdot f}$; and the induced ideal simplex isomorphism $f_i \from \Delta(E_i;\L_i) \mapsto \Delta(E_{i \cdot f};\L_{i \cdot f})$ is equal to a restriction of the homeomorphism $f$. It is clear that the automorphisms of $X$ form a right group action under the operation of composition.

Using the gluing data for $\X_n$ described in Section~\ref{SectionIdealSimplicial}, we shall define for each $\phi \in \Out(F_n)$ an ideal simplicial automorphism of $\X_n$. The definition will proceed in steps. Step~1 defines the index bijection $i \mapsto i \cdot \phi$ on $I$, and for each $i \in I$ a bijection $\EG_i \mapsto \EG_{i \cdot \phi}$ inducing a simplex isomorphism $\phi_i \from \Delta(\EG_i) \mapsto \Delta(\EG_{i \cdot \phi})$. Later steps are concerned with verifying that the maps $\phi_i$ restrict to ideal simplicial isomorphisms $\Delta(G_i) \mapsto \Delta(G_{i\cdot\phi})$ that can be glued up into an ideal simplicial isomorphism of $\X_n$. Along the way we shall also prove the right action equation: 
$$(x \cdot \phi) \cdot \psi = x \cdot (\phi \psi) \quad\text{for all $x \in \X_n$ and all $\phi,\psi \in \Out(F_n)$}
$$

We will use the natural isomorphism $\Out(F_n) \approx \HMCG(R_n)$ from Exercise~\ref{ExerciseOutIsomorphicHMCG}, which associates to each $\phi \in \Out(F_n)$ the (well-defined) homotopy class of a certain homotopy equivalence $f_\phi \from R_n \to R_n$: one chooses $\Phi \in \Aut(F_n)$ representing $\phi$; and then one translates the formula for $\Phi$, expressed as reduced words in the standard free basis elements $s_1,\ldots,s_n \in F_n$ and their inverses, into a formula for $f_\phi$, expressed as concatenations of the standard oriented edges $e_1,\ldots,e_n \subset R_n$ and their inverses. 

\subparagraph{Step 1: Index bijection and simplex isomorphisms.} Consider the indexed set $\{G_i\}_{i \in I}$ of marked graphs used in Section~\ref{SectionIdealSimplicial} for constructing $\X_n$, one marked graph chosen from each equivalence class. For each $i \in I$ let $\rho_i \from R_n \to G_i$ denote the given marking of $G_i$, and so the ordered pair $(G_i,\rho_i)$ is a more formal notation for the marked graph we have been calling $G_i$. For each $\phi \in \Out(F_n)$ consider also the marked graph $(G_i, \rho_i \circ f_\phi)$, having the same underlying graph $G_i$, but with marking given by the composition
$$R_n \xrightarrow{f_\phi} R_n \xrightarrow{\rho_i} G_i
$$
There exists a unique element of $I$, which we define to be $i \cdot \phi \in I$, such that the marked graphs $(G_i,\rho_i \circ f_\phi)$ and $(G_{i \cdot \phi},\rho_{i \cdot \phi})$ are equivalent. Furthermore, knowing that $f_\phi$ is independent up to homotopy of the choice of $\Phi \in \Aut(F_n)$ representing $\phi$, it follows that $i \cdot \phi$ is also independent of this choice. 

Applying Exercise~\ref{ExerciseMarkingChangeUnique}, there exists a homeomorphism $$h_{i,\phi} \from G_i \to G_{i \cdot \phi}
$$
unique up to isotopy relative to the natural vertices, such that $h_{i,\phi}$ preserves the markings $\rho_i \circ f_\phi$ on $G_i$ and $\rho_{i \cdot \phi}$ on $G_{i \cdot \phi}$, meaning that the diagram of maps $D_{i,\phi}$ that is depicted in Figure~\ref{FigureD_phi_i} is homotopy commutative.

The map $h_{i,\phi}$ induces a bijection $\EG_i \mapsto \EG_{i \cdot \phi}$ which in turn induces a simplex isomorphism denoted $\phi_i \from \Delta(\EG_i) \mapsto \Delta(\EG_{i \cdot \phi})$. 
\begin{figure}
$$\xymatrix{
& R_n \ar[dl]^{\rho_i} 
      & R_n 
                 \ar[dr]_{\rho_{i \cdot \phi}} 
                 \ar[l]_{f_\phi} \\
G_i  \ar[rrr]_{h_{i,\phi}} 
      &&& G_{i \cdot \phi}
}$$
\caption{This diagram \, $D_{\phi,i}$ \, ($\phi \in \Out(F_n)$, $i \in I$), \, a homotopy commutative trapezoid of maps, shows that $h_{i,\phi}$ preserves marking, going from the marked graph $(G_i,\rho_i \circ f_\phi)$ to the marked graph $(G_{i \cdot \phi},\rho_{i \cdot \phi})$.}
\label{FigureD_phi_i}
\end{figure}
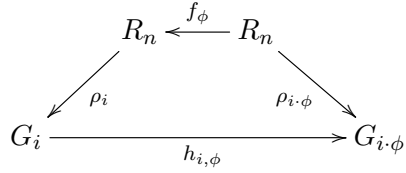

\subparagraph{Step 2: Preserving the concrete.} Using that $h_{i,\phi}$ is a homeomorphism, the simplex isomorphism $\phi_i$ takes concrete faces of  $\Delta(\EG_i)$ to concrete faces of $\Delta(\EG_{i \cdot \phi})$, because for each natural subgraph $H \subset G_{i}$ its graph complement $G_i \setminus H$ contains a circuit of~$G_i$ if and only if the graph complement $G_{i \cdot \phi} \setminus \phi_i(H)$ of its homeomorphic image $\phi_i(H) \subset G_{i \cdot \phi}$ contains a circuit of $G_{i \cdot \phi}$. It follows that $\L_i \cdot \phi_i = \L_{i \cdot \phi}$, and that the map $\phi_i$ restricts to an ideal simplex isomorphism:
$$\phi_i \from \Delta(G_i) = \Delta(\EG_i;\L_i) \to \Delta(\EG_{i \cdot \phi};\L_{i \cdot \phi}) = \Delta(G_{i \cdot \phi})
$$
By taking disjoint unions of the maps $\phi_i$, we therefore have an induced map
$$\phi^* \from \coprod_{i \in I} \Delta(G_i) \mapsto \coprod_{i \in I} \Delta(G_i)
$$
The right action equation $(i \cdot \phi) \cdot \psi = i \cdot (\phi\psi)$ is easily verified. Also, the right action equation $\psi_{i \cdot \phi} \circ \phi_i = (\psi\phi)_i$ depicted in the following commutative diagram of ideal simplex isomorphisms is easily verified:
$$\xymatrix{
\Delta(G_i) \ar[rr]_{\phi_i}\ar@/^2pc/[rrrrr]^{(\phi\psi)_i} && \Delta(G_{i \cdot \phi}) \ar[rr]_{\psi_{i \cdot \phi}} && \Delta(G_{(i \cdot \phi) \cdot \psi}) \ar@{=}[r] & \Delta(G_{i \cdot (\phi\psi)})
}
$$
The maps $\phi^*$ defined for each $\phi \in \Out(F_n)$ as above therefore define a right action of $\Out(F_n)$ on the disjoint union $\coprod_{i \in I} \Delta(G_i)$. 

\subparagraph{Step 3: Preserving the quotient.} We need to prove that for each $\phi \in \Out(F_n)$ the map $\phi^* \from \coprod_{i \in I} \Delta(G_i) \to \coprod_{i \in I} \Delta(G_i)$ is consistent with respect to the quotient map $\coprod_{i \in I} \Delta(G_i) \mapsto \X_n$, therefore inducing an isomorphism of the ideal simplicial complex~$\X_n$. 

To do this, for each $i < j \in I$ and each $\phi \in \Out(F_n)$ we must prove three things:
\begin{description}
\item[Face order is preserved:] $i \cdot \phi < j \cdot \phi$
\item[Face ID functions are preserved:] There is a commutative diagram of face ID functions as on the left of Figure~\ref{FigureFaceIDCommDiagram}.
\item[Ideal face maps are preserved:]
There is a commutative diagram of ideal face maps and ideal simplex isomorphisms as on the right of Figure~\ref{FigureFaceIDCommDiagram}.
\end{description}
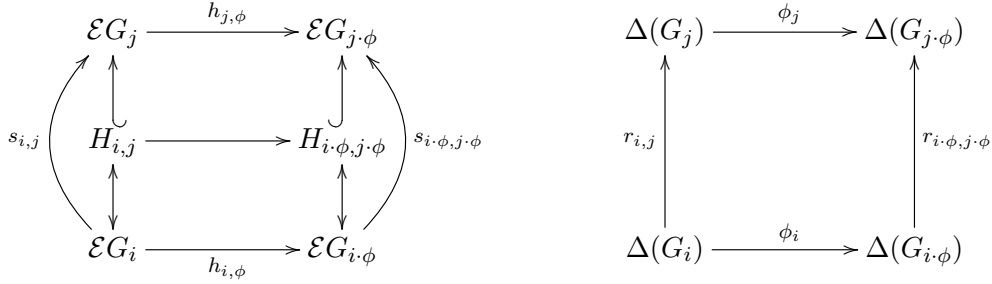
\begin{figure}
$$\xymatrix{
\EG_j  \ar[rr]^{h_{j,\phi}} 
	&& \EG_{j \cdot \phi} 
	&&& \Delta(G_j) \ar[rr]^{\phi_j} 
	&& \Delta(G_{j \cdot \phi}) 
	\\
 H_{i,j} \ar[rr] \ar@{_{(}->}[u] 
        && H_{i \cdot \phi, j \cdot \phi} \ar@{^{(}->}[u] \\
\EG_i \ar[rr]_{h_{i,\phi}}\ar@{<->}[u] \ar@/^2pc/[uu]^{s_{i,j}} 
       && \EG_{i \cdot \phi} \ar@{<->}[u] 
                                      \ar@/_2pc/[uu]_{s_{i \cdot \phi, j \cdot \phi}}
       &&& \Delta(G_i) \ar[rr]^{\phi_i} \ar[uu]^{r_{i,j}} 
       && \Delta(G_{i \cdot \phi}) \ar[uu]_{r_{i \cdot \phi,j \cdot \phi}}
}$$
\caption{Assuming that $i < j \in I$, $\phi \in \Out(F_n)$, and $i \cdot \phi < j \cdot \phi$, in Step 3 we produce two commutative diagrams: on the left, a diagram of bijections (horizontal arrows) and face ID functions (curved vertical arrows); on the right, a diagram of ideal simplex isomorphisms (horizontal arrows) and ideal face maps (vertical arrows).}
\label{FigureFaceIDCommDiagram}
\end{figure}

\begin{figure}
$$\xymatrix{
G_j \ar[dd]_{\hphantom{ = \, h_{i,\phi} \circ q_{i,j} \circ h_{j,\phi}^\inv} q_{i,j}}^{[K_{i,j}]} \ar[rrrrr]^{h_{j,\phi}} 
      &&&&& G_{j \cdot \phi} \ar[dd]_{[K_{i \cdot \phi,j \cdot \phi}]}^{q_{i \cdot \phi,j \cdot \phi} \, = \, h_{i,\phi} \circ q_{i,j} \circ h_{j,\phi}^\inv} \\
&& R_n \ar[ull]_{\rho_j} \ar[dll]^{\rho_i} 
      & R_n \ar[urr]^{\rho_{j \cdot \phi}} 
                 \ar[drr]_{\rho_{i \cdot \phi}} 
                 \ar[l]_{f_\phi} \\
G_i  \ar[rrrrr]_{h_{i,\phi}} 
      &&&&& G_{i \cdot \phi}
}$$
\caption{The implication $i < j \implies i \cdot \phi < j \cdot \phi$ for $i,j \in I$ and $\phi \in \Out(F_n)$, and the associated commutative diagram of collapse maps and ideal simplex isomorphisms.}
\label{Figure_i_less_j}
\end{figure}
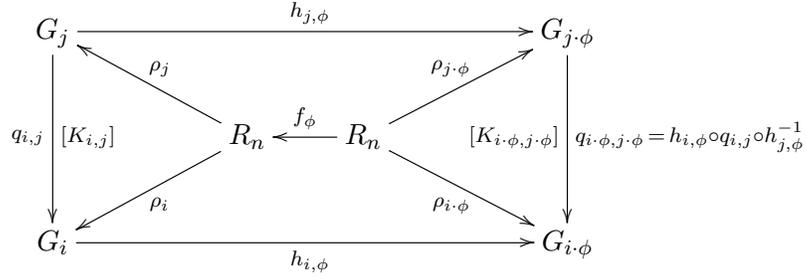

The proof that face order is preserved is summarized in a homotopy commutative diagram of collapse maps and graph homemorphisms depicted in Figure~\ref{Figure_i_less_j}. In that diagram, the lower trapezoid is the homotopy commutative diagram $D_{i,\phi}$ from Figure~\ref{FigureD_phi_i}, and the upper trapezoid is the homotopy commutative diagram $D_{j,\phi}$. Also, associated to the relation $i < j$ is the collapse map $q_{i,j} \from G_i \xrightarrow{[K_{i,j}]} G_j$ on the left side of the diagram, where the natural forest $K_{i,j} \subset G_j$ is the graph complement of the concrete subgraph $H_{i,j} \subset G_j$. Notice that the left triangle is the homotopy commutative diagram which witnesses that $q_{i,j}$ preserves marking, going from the marked graph $(G_j,\rho_j)$ to the marked graph $(G_i,\rho_i)$. The map on the right side of Figure~\ref{Figure_i_less_j} is defined by composing maps by going around the outside of the diagram: the inverse homeomorphism $h_{j,\phi}^\inv$ going backwards along the top arrow, followed by the collapse $q_{i,j}$ going down along the left arrow, followed by the homeomorphism $h_{i,\phi}$ going forward along the bottom arrow. It follows that the map on the right side collapses to a point each component of the forest $h_{j,\phi}(K_{i,j}) \subset G_{j \cdot \phi}$ which is the graph complement of the concrete subgraph $h_{j,\phi}(H_{i,j}) \subset G_{j \cdot \phi}$. Also, using homotopy commutativity of the two trapezoids and the left triangle, it follows that the right triangle is also homotopy commutative. Applying the definition of the face ordering on the set $I$, it follows that $i \cdot \phi < j \cdot \phi$. It also follows, by applying Theorem~\ref{ThmCollapseUnique}, that the map on the right side of Figure~\ref{Figure_i_less_j} is identified (up to isotopy rel natural vertices) with the collapse map $q_{i \cdot \phi, j \cdot \phi}$.

%
%
%
%

Recall from Section~\ref{SectionIdealSimplicial} that for each $i < j \in I$ the collapse map $q_{i,j}$ induces a face ID function $s_{i,j} \from \EG_i \leftrightarrow H_{i,j} \hookrightarrow \EG_j$, with corresponding ideal face map $r_{i,j} \from \Delta(G_i) \to \Delta(G_j)$. For each $\phi \in \Out(F_n)$ the homeomorphisms $h_{i,\phi} \from G_i \to G_{i \cdot \phi}$ and $h_{j,\phi} \from G_j \to G_{j \cdot \phi}$ induced bijections of natural edge sets $h_{i,\phi} \from \EG_i \to \EG_{i \cdot \phi}$ and $h_{j,\phi} \from \EG_j \to \EG_{j \cdot \phi}$. From commutativity of the outer square in Figure~\ref{Figure_i_less_j}, one obtains the commutative diagram of face ID functions and induced bijections depicted in the left half of Figure~\ref{FigureFaceIDCommDiagram}. From that we immediately deduce the desired commutative diagram of ideal face maps and ideal simplex isomorphisms depicted in the right half of  Figure~\ref{FigureFaceIDCommDiagram}.

\paragraph{The final step.} Finally, knowing the property ``Ideal Face Maps Are Preserved'', it follows that the right action of $\Out(F_n)$ on the disjoint union $\coprod_{i \in I} \Delta(G_i)$ descends, via the quotient map to $\X_n$, to a right action of $\Out(F_n)$ on $\X_n$ by ideal simplicial isomorphisms. 


\subsection{Properties of the action of $\Out(F_n)$ on $\X_n$.}

This section consists primarily of exercises which pull together various threads to establish properties of the action of $\Out(F_n)$ on $\X_n$; see also Section~\ref{SectionTopActions} for some basic definitions regarding actions.

\paragraph{An exercise on cells stabilizers.} 
\begin{exercise}\label{ExerciseFiniteCellStabilizers}
Prove that for any outer space cell $\Delta(G) \subset \X_n$, the subgroup
$$\Stab(\Delta(G)) = \{\phi \in \Out(F_n) \suchthat \Delta(G) \cdot \phi =\Delta(G)\}
$$
is isomorphic to the image of the natural injective homomorphism $\Aut(G) \inject \Out(F_n)$ described in Corollary~\ref{CorollaryAutGInjection}.
\end{exercise}

\paragraph{Exercises on cofiniteness and properness.}

\begin{exercise}\label{ExerciseCellsCofinite}
Prove that there are only finitely many orbits of outer space cells, under the action of $\Out(F_n)$ on $\X_n$. (Hint: Exercise~\ref{ExerciseFinManyCoreGraphs})
\end{exercise}

\begin{exercise}
\label{ExerciseProperActionOnOuterSpace}
Prove that the action of $\Out(F_n)$ on $\X_n$ is proper (Hint: Exercises~\ref{ExerciseLocallyFinite} and~\ref{ExerciseCellsCofinite})
\end{exercise}

\paragraph{Exercises on circuit length, systole, and non-cocompactness.}\index{circuit length}\index{systole}
This set of exercises builds up to Exercise~\ref{ExerciseNoncocompact} which uses circuit length functions and the systole function on $\X_n$ to prove that the action of $\Out(F_n)$ on $\X_n$ is not cocompact. 

Recall from Section~\ref{SectionConjugacyClasses} that the set of nontrivial conjugacy classes of the free group $F_n$ is denoted $\C(F_n)$, and that each $c \in \C(F_n)$ is represented by a circuit in $G$ which is itself represented by a cyclically reduced edge path in $G$, unique up to cyclic conjugacy. We use the notation $\gamma_c = e_1 \cdots e_K$, where $e_1,\ldots,e_K \in \EG$ is a sequence of oriented natural edges of $G$, to denote a choice of representative in this cyclic conjugacy class. 

Consider a point $\ell \in \X_n$. A choice of outer space cell $\Delta(G) \subset \X_n$ such that $\ell \in \Delta(G)$ determines a representation of $\ell$ as a normalized function $\ell \from \EG \to [0,\infty)$. Consider also $c \in \C(F_n)$ and $\gamma_c = e_1 \cdots e_K$. We define the \emph{length of $c$ with respect to $\ell$} to be the number
$$\Length(c;\ell) = \sum_{i=1}^K \ell(e_i) \in \reals
$$
Note that $\Length(c;\ell) > 0$, because the set of edges $\{e \in \EG \suchthat \ell(e)=0\}$ is a subforest of $G$ and hence contains no circuit. 

\begin{exercise}\label{ExerciseLengthFunctionWD} Prove that $\Length(c;\ell)$ is well-defined, depending only on $c \in \C(F_n)$ and $\ell \in \X_n$, independent of the choice of $\gamma_c$ and of outer space cell $\Delta(G)$ containing $\ell$.
\end{exercise}

Using Exercise~\ref{ExerciseLengthFunctionWD}, for each $c \in \C(F_n)$ we have a well-defined function 
$$\Length^c \from \X_n \to (0,\infty), \qquad \Length^c(\ell)=\Length(c;\ell)
$$

\begin{exercise}\label{ExerciseLengthContinuous}
Prove that $\Length^c \from \X_n \to (0,\infty)$ is continuous, for each $c \in \C(F_n)$.
\end{exercise}

Now fix $\ell \in \X_n$, and define the \emph{length spectrum} of $\ell$ to be the set
$$\Spec(\ell) = \{\Length(c;\ell) \suchthat c \in \C(F_n)\} \subset (0,\infty)
$$

\begin{exercise}\label{ExerciseSystole}
Prove that for each $\ell \in \X_n$, the set $\Spec(\ell)$ has a positive minimum. More specifically, prove that for any marked graph $G$ such that $\ell \in \Delta(G)$, the minimum is achieved by $\Length(c;\ell)$ for some $c \in \C(F_n)$ that is represented by an embedded circuit $\gamma_c \from S^1 \to G$.
\end{exercise}

The minimum value in Exercise~\ref{ExerciseSystole} is called the \emph{systole} of $\ell$, denoted $\sys(\ell)$.

\begin{exercise}
Prove that the function $\sys \from \X_n \to (0,\infty)$ is continuous. (Hint: Combine Exercises~\ref{ExerciseLengthContinuous} and Exercise~\ref{ExerciseSystole} to conclude that for each marked graph $G$, the restricted function $\sys \restrict \Delta(G)$ is the minimum of a finite set of continuous functions).
\end{exercise}

\begin{exercise}\label{ExerciseSysZeroLimit}
Prove that for any outer space cell $\Delta(G)$, the restricted function $\sys \from \Delta(G) \to (0,\infty)$ does not have a positive lower bound.
\end{exercise}

It follows from Exercise~\ref{ExerciseSysZeroLimit} that every outer space cell $\Delta(G)$ is noncompact, although this is already clear to those who have done Exercise~\ref{ExerciseIdealSimplexProps}.

\begin{exercise}\label{ExerciseNoncocompact}
Prove that the systole function $\sys \from \X_n \to (0,\infty)$ is invariant under the action of $\Out(F_n)$: for each $\ell \in \X_n$ and each $\phi \in \Out(F_n)$ we have $\sys(\ell) = \sys(\ell \cdot \phi)$. Conclude that the action of $\Out(F_n)$ on $\X_n$ is not cocompact.
\end{exercise}

\paragraph{Exercise: Another approach to non-cocompactness} 

\begin{exercise}
\label{ExerciseNoncocompactAgain}
Prove that the action of $\Out(F_n)$ on $\X_n$ has a noncompact fundamental domain (using Exercise~\ref{ExerciseCellsCofinite}), and conclude that the action is not cocompact (using Exercise~\ref{ExerciseCocompactAndNot}).
\end{exercise}

\paragraph{Exercise: A free action}

A group $\Gamma$ that possesses torsion elements, such as $\Out(F_n)$, cannot act freely and properly on a contractible CW-complex of finite dimension $k \in \N$, because that would imply that for every subgroup $H \subgroup \Gamma$, the cohomology groups $H^i(H;A)$ with arbitrary coefficients $A$ are trivial in dimensions $i > k$, which is false for any nontrivial finite cyclic group.

We nonetheless get an application to the finite index torsion free subgroup $\text{IA}_n(\Z/3) \subgroup \Out(F_n)$ given in Corollary~\ref{CorollaryIAFiniteIndex}:

\begin{exercise}
Prove that the restricted action of $\text{IA}_n(\Z/3)$ on $\X_n$ is free.
\end{exercise}

\subsection{The spine of outer space.} 
\label{SectionSpine}
Cocompactness fails for the action of $\Out(F_n)$ on $\X_n$, as seen in Exercises~\ref{ExerciseNoncocompact} and~\ref{ExerciseNoncocompactAgain} just above. In this section we prove the following theorem of Culler and Vogtmann which produces a spine of $\X_n$ on which $\Out(F_n)$ acts cocompactly.

\begin{theorem}[\cite{CullerVogtmann:moduli}] 
\label{TheoremOuterSpaceSpine}
The outer space $\X_n$ contains an \emph{equivariant spine}, meaning an $\Out(F_n)$ invariant simplicial complex $\K_n \subset \X_n$, intersecting each outer space cell in a finite subcomplex of $\K_n$, such that the restricted action of $\Out(F_n)$ on $\K_n$ is proper and cocompact, and there is an $\Out(F_n)$-invariant deformation retraction $\X_n \mapsto \K_n$. 
\end{theorem}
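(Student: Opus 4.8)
The plan is to construct $\K_n$ as the geometric realization of the poset of outer space cells. Recall from Section~\ref{SectionIdealSimplicial} that the cells $\Delta(G)$ of $\X_n$ are partially ordered by the face relation, which (by Theorem~\ref{ThmCollapseUnique} and the surrounding discussion) corresponds exactly to the forest-collapse relation $\collapsesto$ on equivalence classes of marked graphs: $\Delta(G')$ is a face of $\Delta(G)$ iff $G \collapsesto G'$. For each cell $\Delta(G)$ let $b(G) \in \Delta(G)$ be its barycenter, the point whose barycentric coordinates are all equal to $1/\abs{\EG}$; note $b(G)$ lies in the open cell, i.e.\ in no proper face. I would then define $\K_n \subset \X_n$ to be the union, over all finite chains $\Delta(G_0) \strictcoarser \Delta(G_1) \strictcoarser \cdots \strictcoarser \Delta(G_k)$ of cells, of the Euclidean simplex spanned by the barycenters $b(G_0),\ldots,b(G_k)$ inside the maximal cell $\Delta(G_k)$. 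Equivalently, $\K_n \intersect \Delta(G)$ is the classical barycentric subdivision of the simplex-with-faces-removed $\Delta(G)$: its vertices are the barycenters $b(H)$ for concrete natural subgraphs $H \subset G$ (including $H=G$), and its simplices correspond to chains of such subgraphs. Since, by Exercise~\ref{ExerciseFinitelyManyExpansions}, each $\Delta(G)$ is contained in only finitely many cells and has only finitely many faces, $\K_n \intersect \Delta(G)$ is a finite simplicial complex, and these pieces agree on overlaps because face maps carry barycenters to barycenters and chains to chains; this gives $\K_n$ the structure of a simplicial complex and establishes the "finite subcomplex" clause.

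Next I would verify $\Out(F_n)$-invariance and the cocompact/proper action. Invariance is immediate: by the construction in Section~\ref{SectionOuterSpaceAction} each $\phi \in \Out(F_n)$ acts by ideal simplicial isomorphisms $\phi_i \from \Delta(G_i) \to \Delta(G_{i\cdot\phi})$, which carry barycenters to barycenters and preserve the face order, hence permute the simplices of $\K_n$; so $\K_n \cdot \phi = \K_n$ and the restricted action is by simplicial automorphisms. Cocompactness: by Exercise~\ref{ExerciseCellsCofinite} there are finitely many $\Out(F_n)$-orbits of cells $\Delta(G)$, each meeting $\K_n$ in a finite (hence compact) subcomplex, so a finite union of translates of these compact subcomplexes covers $\K_n$. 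Properness of the action on $\K_n$ follows from properness on $\X_n$ (Exercise~\ref{ExerciseProperActionOnOuterSpace}) together with the fact that $\K_n$ is a closed, $\Out(F_n)$-invariant subspace: the inverse image of a compact $A \times B \subset \K_n \times \K_n$ of group elements with $(A\cdot g)\intersect B \ne\emptyset$ is contained in the corresponding finite set for the ambient proper action. (Alternatively one may invoke Exercise~\ref{ExerciseGraphHypEquiv} after noting vertex stabilizers of $\K_n$ are the finite groups $\Stab(\Delta(G)) \approx \Aut(G)$ of Exercise~\ref{ExerciseFiniteCellStabilizers} and edge orbits are finite.)

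The substantive step — and the one I expect to be the main obstacle — is constructing the $\Out(F_n)$-invariant deformation retraction $\X_n \mapsto \K_n$. The idea is to deformation retract each ideal simplex $\Delta(G)$ onto its barycentric spine $\K_n \intersect \Delta(G)$, and to do this compatibly with face maps so the pieces glue to a global retraction, then observe the construction is canonical in $G$ hence $\Out(F_n)$-equivariant. Concretely, on a closed simplex the radial deformation retraction onto the barycentric subdivision of its boundary-complement is standard; the subtlety is that $\Delta(G)$ is an \emph{ideal} simplex, with nonconcrete faces $\LG$ stripped away, so points near a stripped face must be pushed toward the barycenter along directions that stay in $\X_n$. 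I would handle this by using the combinatorics of the face poset: for $\ell \in \Delta(G)$ let $H(\ell) \subset G$ be the natural subgraph spanned by the edges $e$ with $\ell(e)>0$ together with enough of the rest to make it concrete — more carefully, push $\ell$ first toward the barycenter of the smallest concrete face containing it (its "carrier" in $\X_n$), a move which is well-defined because the set of positive-length edges, being a cocycle support, never contains a circuit so its complement is never a forest only when\ldots — and iterate. Making the carrier well-defined and the resulting straight-line homotopies continuous across cell boundaries, and checking the homotopy fixes $\K_n$ pointwise, is where the real work lies; I would phrase it as: the homotopy $h_t(\ell)$ moves $\ell$ linearly toward $b(H)$ where $H$ is determined by thresholding the coordinates of $\ell$, using Exercise~\ref{ExerciseInducedMarking} and Theorem~\ref{ThmCollapseUnique} to see the thresholding operation commutes with face maps and with the $\Out(F_n)$-action. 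Local finiteness of the cell structure (Exercise~\ref{ExerciseLocallyFinite}) guarantees continuity of the glued-up homotopy. I would close by remarking that this is precisely the retraction of Culler and Vogtmann \cite{CullerVogtmann:moduli}, reorganized in our ideal-simplicial language.
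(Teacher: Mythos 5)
Your spine itself is the paper's spine: the subcomplex of the barycentric subdivision $\Delta'(\EG)$ of each cell spanned by barycenters of concrete faces, i.e.\ the order complex of the face poset, and your verifications of invariance, cocompactness (Exercise~\ref{ExerciseCellsCofinite}), and properness (restriction of the proper action of Exercise~\ref{ExerciseProperActionOnOuterSpace} to an invariant subset) are exactly the arguments the paper uses when it deduces Theorem~\ref{TheoremOuterSpaceSpine} from the general spine theorem for ideal simplicial complexes (Theorem~\ref{TheoremGeneralSpine}). One small imprecision: $\K_n \intersect \Delta(G)$ is not the barycentric subdivision of $\Delta(G)$; it is the subcomplex of $\Delta'(\EG)$ consisting of those simplices disjoint from $\LG$, which is what your more careful sentence (chains of concrete natural subgraphs $H \subset G$) correctly describes.

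The genuine gap is the equivariant deformation retraction, which you yourself flag as the main obstacle and never actually construct --- your key sentence about well-definedness of the ``carrier'' is left unfinished, and the mechanism you sketch does not work as stated. Moving $\ell$ linearly toward $b(H)$, where $H$ is the support (smallest concrete face) of $\ell$ or some ``thresholded'' version of it, is not stationary on $\K_n$: a point of $\K_n$ that is not itself a barycenter gets moved, so this is at best a retraction onto the set of barycenters, not onto $\K_n$; and if instead you stop the motion when the track meets $\K_n$, you have specified no stopping rule and the continuity of the resulting map exactly where the carrier jumps (across walls of the subdivision and across cell boundaries) is the whole content of the problem. The paper avoids all of this with the join structure: for each simplex $\sigma$ of $\Delta'(\EG)$ not contained in $\LG$, the intersections $\sigma_L = \sigma \intersect \LG$ and $\sigma_K = \sigma \intersect \K_n$ are complementary faces of $\sigma$, so $\sigma$ is their join; writing $x = (1-t)x_L + t\,x_K$ with $t \in (0,1]$ and sliding $t$ to $1$ via $t \mapsto (1-u)t + u$ gives a strong deformation retraction of $\sigma - \sigma_L$ onto $\sigma_K$ that is stationary on $\sigma_K$, agrees with the corresponding homotopy on every face of $\sigma$, hence glues over all simplices and all cells, is natural under face maps, and is therefore $\Out(F_n)$-invariant with no case analysis near the stripped faces. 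Replacing your ``thresholding'' paragraph by this join-coordinate construction (and the routine verification of its naturality) closes the gap.
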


Most of the work of this theorem is contained in the following general version:

\begin{theorem} 
\label{TheoremGeneralSpine}
Every ideal simplicial complex $X$ contains an equivariant spine $K \subset X$, meaning an $\Aut(X)$ invariant simplicial complex, intersecting each ideal simplex of $X$ in a finite subcomplex of $K$, such that there is an $\Aut(X)$-invariant deformation retraction $X \mapsto K$.
\end{theorem}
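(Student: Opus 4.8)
\textbf{Proof proposal for Theorem~\ref{TheoremGeneralSpine}.}

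The plan is to build $K$ as the \emph{geometric realization of the face poset} of $X$, realized concretely inside $X$ using barycenters. For each ideal simplex $\Delta(E_i;\L_i)$ with face $\Delta(E_{i,j}\subset E_j;\L_j)$ corresponding to $i<j$, let $\hat b_i$ denote the barycenter of $\Delta(E_i)$, which lies in the open ideal simplex $\Delta(E_i;\L_i)$ (its coordinates are all nonzero, so it misses the subcomplex $\L_i$ entirely). Under the embedding $\Delta(E_i;\L_i)\hookrightarrow X$ from Proposition~\ref{PropIdealSimplComplProps}, these barycenters become a collection of points of $X$, one for each cell, and I claim the face relation $i_0<i_1<\cdots<i_k$ in $I$ gives rise to a genuine geometric simplex with vertices $\hat b_{i_0},\ldots,\hat b_{i_k}$ sitting inside the top cell $\Delta(E_{i_k};\L_{i_k})$: indeed all of $\hat b_{i_0},\ldots,\hat b_{i_{k-1}}$ lie in proper faces of that cell (by Compatibility Axioms~\pref{ItemIdealFace}--\pref{ItemPosetCompo} and Proposition~\ref{PropIdealSimplComplProps}~\pref{ItemFacesAreFaces}), and taking the Euclidean convex hull of these $k+1$ points in the simplex $\Delta(E_{i_k})$ — which is where $\Delta(E_{i_k};\L_{i_k})$ lives as $\Delta(E_{i_k})-\L_{i_k}$ — produces a $k$-simplex. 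The crucial point to check is that this hull \emph{misses} $\L_{i_k}$, so that it actually lies in the \emph{ideal} simplex and hence in $X$; this follows because a point on a face $\Delta(F\subset E_{i_k})$ with $F$ corresponding to a nonconcrete (non-face) situation would force all the barycentric coordinates indexed outside $F$ to vanish, but each $\hat b_{i_m}$ has strictly positive coordinates on the coordinate set of $E_{i_m}$, and nesting of the chain forces a positive coordinate outside any such $F$. Define $K\subset X$ to be the union of all these geometric simplices over all finite chains in $(I,\le)$. By construction $K$ is a simplicial complex, abstractly isomorphic to the order complex $|I|$ of the poset, and $K\cap\Delta(E_i;\L_i)$ is the order complex of $\{j\in I : j\le i\}$, which is finite because each ideal simplex has only finitely many faces.

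Next I would establish the two equivariance and retraction properties. For $\Aut(X)$-invariance: an automorphism $f$ of $X$ permutes the index set $I$ preserving the face order and carries each simplex isomorphism $f_i\from\Delta(E_i)\to\Delta(E_{i\cdot f})$ onto a simplex isomorphism, hence it carries the barycenter $\hat b_i$ to $\hat b_{i\cdot f}$ (barycenters are preserved by simplex isomorphisms since those are induced by bijections of the vertex sets), and therefore it carries the geometric simplex on a chain to the geometric simplex on the permuted chain; thus $K\cdot f=K$. For the deformation retraction: within each ideal simplex $\Delta(E_i;\L_i)$, I would construct the standard ``barycentric'' retraction of a simplex-minus-a-subcomplex onto its order-complex spine, namely the straight-line homotopy that pushes each point $\ell$ radially away from the faces contained in $\L_i$ toward $K\cap\Delta(E_i;\L_i)$. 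Concretely, for $\ell\in\Delta(E_i;\L_i)$ let $F(\ell)=\{e\in E_i : \ell(e)>0\}$; the face $\Delta(F(\ell)\subset E_i)$ is not contained in $\L_i$ (else $\ell\in\L_i$), so it is a genuine face, and one defines a canonical path from $\ell$ to a point of the spine by first running to the barycenter of $\Delta(F(\ell)\subset E_i)$ along the segment inside that open face, and interleaving with moves that ``clean up'' zero coordinates — this is exactly the classical strong deformation retraction of $\Delta^{m}\setminus(\text{deleted faces})$ onto the derived subdivision of the non-deleted part. I would check these per-cell homotopies agree on overlaps (they are compatible with the face ID maps because barycenters and the coordinate-support data are), so they glue to a global homotopy $X\times[0,1]\to X$, and it is $\Aut(X)$-equivariant because every ingredient (support sets, barycenters, straight-line segments in barycentric coordinates) is natural under simplex isomorphisms.

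The main obstacle I anticipate is \textbf{continuity of the global retraction across cell boundaries}, i.e.\ verifying that the per-cell homotopies — each a piecewise-linear deformation whose formula depends on the combinatorial type of the support face $F(\ell)$ — patch together continuously on $X$ with its quotient topology. This is the same subtlety that makes the analogous statement for ordinary simplicial complexes require care, compounded here by the fact that the cells are ideal (non-compact, with faces stripped away) and that $X$ is glued from infinitely many of them via the face maps $r_{i,j}$. The resolution will be to use local finiteness of the cell structure together with the compatibility axioms: near any point $x\in X$ only finitely many cells are relevant (for outer space this is Exercise~\ref{ExerciseLocallyFinite}, and in the abstract it can be imposed or extracted from the star of $x$), and on each the homotopy formula is continuous in both variables and matches on common faces because the face maps $r_{i,j}$ are affine embeddings carrying barycenters to barycenters and support-sets to support-sets. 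Once continuity is in hand, the remaining properties — that $K$ is a simplicial complex, that $K\cap\Delta(E_i;\L_i)$ is finite, that the retraction is a deformation retraction fixing $K$ pointwise, and that everything is $\Aut(X)$-equivariant — are all immediate from the construction. Finally, Theorem~\ref{TheoremOuterSpaceSpine} follows by applying this to $X=\X_n$ with $\Aut(\X_n)\supseteq\Out(F_n)$: the spine $\K_n=K$ is then $\Out(F_n)$-invariant, meets each outer space cell in a finite complex, carries an invariant deformation retraction from $\X_n$, and the action on it is proper (inherited from properness on $\X_n$, Exercise~\ref{ExerciseProperActionOnOuterSpace}) and cocompact because $\K_n$ has finitely many cell-orbits (Exercise~\ref{ExerciseCellsCofinite}) and is locally finite with finite cells.
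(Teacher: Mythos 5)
Your construction is essentially the paper's: the order complex of the face poset, realized by the barycenters of the cells, coincides with the paper's spine (the union of those simplices of each cell's barycentric subdivision that are disjoint from the removed subcomplex), and the per-cell retraction you defer to as ``the classical one'' is exactly the join-coordinate deformation retraction the paper writes out, glued by naturality under the face maps and made equivariant because simplex isomorphisms preserve barycenters. One small correction: continuity of the glued homotopy does not need local finiteness (which is neither assumed nor needed in the abstract statement) --- it follows from the naturality of the per-cell homotopies with respect to the face maps together with the quotient topology on $X \times [0,1]$, which is how the paper argues.
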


\begin{proof}[Proof of Theorem~\ref{TheoremOuterSpaceSpine} assuming Theorem~\ref{TheoremGeneralSpine}] The two additional details to check are properness and cocompactness of the action of $\Out(F_n)$ on $\K_n$. 

We saw in Exercise~\ref{ExerciseProperActionOnOuterSpace} that the action of $\Out(F_n)$ on outer space $\X_n$ is proper, and the restriction of any proper action to any invariant subset is also proper, hence the action on $\K_n$ is proper.

From the solution to Exercise~\ref{ExerciseNoncocompactAgain} it follows that there is a finite collection of outer space cells $\Delta(G_1),\ldots,\Delta(G_J) \subset \X_n$ whose orbits cover $\X_n$: 
$$\coprod_{\phi \in \Out(F_n)} (\Delta(G_1) \union\cdots\union\Delta(G_j)) \cdot \phi = \X_n
$$
It follows from Theorem~\ref{TheoremGeneralSpine} that $\K_n \intersect \Delta(G_j)$ is a finite subcomplex of $\K_n$ for each $j=1,\ldots,J$, and furthermore that 
$$\coprod_{\phi \in \Out(F_n)} \bigl(\K_n \intersect (\Delta(G_1) \union\cdots\union\Delta(G_J)) \bigr) \cdot \phi = \K_n
$$
Since $\K_n \intersect (\Delta(G_1) \union\cdots\union\Delta(G_J))$ is a finite subcomplex of $\K_n$, this proves cocompactness.
\end{proof}

\subparagraph{Example.} Before turning to the proof of Theorem~\ref{TheoremGeneralSpine}, we revisit a familiar example, describing the spine of the Farey complex, the $\SL(2,\Z)$-invariant ideal simplicial structure on the hyperbolic plane $\mathbb H^2$ discussed in Section~\ref{SectionIdealSimplicial}. Its spine has one vertex at the barycenter of each ideal $2$-simplex, one vertex at the barycenter of each ideal $1$-simplex, and one $1$-simplex connecting the barycenter of each ideal $2$-simplex with the barycenter of each incident ideal $1$-simplex. This spine is a tree, and under the $\SL(2,\Z)$-action it has two vertex orbits and one edge orbit. As shown in Serre's book \cite{Serre:trees}, by using the theory of graphs of groups one can apply this spine to show that $\SL(2,\Z)$ is an amalgamated free product of the groups $\Z/4\Z$ and $\Z/6Z$, by amalgamating their $\Z/2\Z$ subgroups. 

\begin{proof}[Proof of Theorem \ref{TheoremGeneralSpine}] The construction of the spine of an ideal simplicial complex is adapted from the theory of simplicial complexes: given a simplicial complex $Y$ and subcomplex $L \subset Y$, the spine $K$ of $Y-L$ is the union of those simplices of the first barycentric subdivision of $Y$ that are disjoint from $L$; barycentric coordinates are used to define the deformation retraction $(Y-L) \times [0,1] \to Y-L$ from $Y-L$ to $K$. In fact this construction can be applied to outer space, and indeed to any ideal simplicial complex $X$, using a preliminary construction of a ``simplicial completion'' $Y$ of $X$ and a subcomplex $L \subset Y$ such that $Y-L$ and $X$ are isomorphic as ideal simplicial complexes. In lieu of discussing simplicial completions, we construct the spine of an ideal simplicial complex by hand.

The spine of an ideal simplicial complex is constructed in steps: first we construct the spine of an ideal simplex; then we show naturality of the spine with respect to ideal face maps; then for any ideal simplicial complex $X$ we use naturality to show that the union of the spines of the ideal simplices of $X$ is a spine of~$X$.

\smallskip\emph{The spine of an ideal simplex.} Consider the ideal simplex $\Delta(E;L) = \Delta(E)-L$ associated to a finite set $E$ and a subcomplex $L \subset \Delta(E)$. Let $L' \subset \Delta'(E)$ denote the first barycentric subdivisions. The \emph{spine of $\Delta(E;L)$} is the subcomplex $K(E;L) \subset \Delta'(E)$ consisting of the union of all simplices of $\Delta'(E)$ that are disjoint from~$L'$.

We construct a strong deformation retraction from $\Delta(E;L)$ to $K(E;L)$, meaning a homotopy $H \from \Delta(E;L) \times [0,1] \to \Delta(E;L)$ such that the following hold: the map $H_0(x)=H(x,0)$ is the identity on $\Delta(E;L)$; the map $H_1(x)=H(x,1)$ takes $\Delta(E;L)$ to $K(E;L)$; and the homotopy is stationary on $K(E;L)$ meaning that for each $t$ the map $H_t(x)=H(t,x)$ restricts to the identity on $K(E;L)$. 

Consider a simplex $\sigma$ of $\Delta'(E)$ that is not contained in $L$. Let $\sigma_L = \sigma \intersect L$, and so $\sigma \intersect \Delta(E;L) = \sigma-L = \sigma - \sigma_L$ is not empty. Let $\sigma_K = \sigma \intersect K(E;L)$, and note that $\sigma_K \ne \emptyset$. Note also that $\sigma_L$ and $\sigma_K$ are complementary faces, meaning that every vertex of $\sigma$ is contained in exactly one of $\sigma_L$ or $\sigma_K$. We shall define a deformation restriction $H^\sigma = H \restrict (\sigma - \sigma_L) \times [0,1]$ from $\sigma-\sigma_L$ to $\sigma_K$ as follows. The case that $\sigma_L = \emptyset$ is equivalent to $\sigma = \sigma_K \subset K(E;L)$, and in that case $H^\sigma$ is stationary on $\sigma$. In the case that $\sigma_L \ne \emptyset$, it follows that $\sigma$ is the join of its complementary faces $\sigma_L$ and $\sigma_K$, meaning that the function $f \from \sigma_L \times \sigma_K \times [0,1] \to \sigma$ defined by the formula $f(x,y,t) = (1-t)x + t y$ is a quotient map with the following properties:
\begin{enumerate}
\item\label{ItemLProj}
 the restriction $f \from \sigma_L \times \sigma_K \times \{0\} \mapsto \sigma_L$ is projection onto the first factor;
\item\label{ItemKProj}
the restriction $f \from \sigma_L \times \sigma_K \times \{1\} \to \sigma_K$ is projection, onto the second factor; 
\item\label{ItemNotLKHomeo}
the restriction $f \from \sigma_L \times \sigma_K \times (0,1) \to \sigma - (\sigma_L \union \sigma_K)$ is a homeomorphism.
\end{enumerate}
By restricting $f$ we obtain a quotient map $f^\sigma \from \sigma_L \times \sigma_K \times (0,1] \to \sigma - \sigma_L$ satisfying properties \pref{ItemKProj} and \pref{ItemNotLKHomeo} above. Define a homotopy 
$$\wh H_\sigma \from \bigl(\sigma_L \times \sigma_K \times (0,1]\bigr) \times [0,1] \to \sigma_L \times \sigma_K \times (0,1]
$$
by the formula
$$\wh H^\sigma(x,y,t,u) = (x,y,(1-u)t+u)
$$
and so $\wh H^\sigma$ is a strong deformation retraction from $\sigma_L \times \sigma_K \times (0,1]$ to $\sigma_L \times \sigma_K \times \{1\}$. Using universality properties of quotient maps, we obtain a unique continuous map $H^\sigma$ making the following diagram commute.
$$\xymatrix{
(\sigma_L \times \sigma_K \times (0,1]) \times [0,1] \ar[rrr]^{\wh H^\sigma} \ar[d]_{f_\sigma \times \text{Id}}
&&&  \sigma_L \times \sigma_K \times (0,1] \ar[d]^{f_\sigma}
\\
(\sigma - \sigma_L) \times [0,1] \ar[rrr]^{H^\sigma} 
&&& \sigma - \sigma_L
}$$
One easily checks that $H^\sigma$ is a strong deformation retraction from $\sigma - \sigma_L$ to $\sigma_K$. 

Having defined $H^\sigma$ for any simplex $\sigma$ of $\Delta'(E)$ not contained in~$L$, it follows easily that for any nested pair of simplices $\sigma \subset \tau$ of $\Delta'(E)$ not contained in~$L$, the restriction of $H^\tau$ to $(\sigma - \sigma_L) \times [0,1]$ is equal to $H^\sigma$. Thus, as $\sigma$ varies of the simplices of $\Delta'(E)$ not contained in~$L$, the maps $H^\sigma$ glue together to produce the desired strong deformation retraction 
$$H \from \Delta(E;L) \times [0,1] \to K(E;L)
$$

\smallskip\emph{Naturality of the spine with respect to face maps.} Consider next two ideal simplices $\Delta(E';L')$ and $\Delta(E;L)$ and an injection of finite sets $s \from E' \leftrightarrow F \subset E$ inducing a face map 
$$r \from \Delta(E') \leftrightarrow \Delta(F \subset E) \subset \Delta(E)
$$ 
such that $L' = r^\inv(L)$, and which therefore restricts to an ideal face map
$$r \from \Delta(E';L') = \Delta(E')-L' \to \Delta(E)-L=\Delta(E;L)
$$
Let $H^E$, $H^{E'}$ be the strong deformation retractions constructed above, $H^E$ from $\Delta(E;L)$ to $K(E;L)$, and let $H^{E'}$ from $\Delta(E';L')$ to $K(E';L')$. The map $r$ induces a bijection between simplices $\sigma'$ of the first barycentric subdivision of  $\Delta(E')$ not contained in $L'$, and simplices $\sigma$ of the first barycentric subdivision of $\Delta(E)$ that are contained in $\Delta(F \subset E)$ but not in $L$, mapping $\sigma' - L'$ to $\sigma - L$ by a homeomorphism that respects barycentric coordinates. Using this face, and tracing through the definitions of $H^E$ and $H^{E'}$, it is straightforward to derive the naturality condition
$$r(H^{E'}(x,t)) = H^E(r(x),t) \quad\text{for each $x \in \Delta(E';L')$ and $t \in [0,1]$.}
$$
We note that this holds as well in the special case that $s$ is a bijection and $r$ is an ideal simplex isomorphism; we will need this below in verifying that the action of an automorphism of an ideal simplicial complex preserves the spine.

\smallskip\emph{The spine of an ideal simplicial complex.} Consider now an ideal simplicial complex $X$ as described in Section~\ref{SectionIdealSimplicial}, expressed as a union of embedded ideal simplices $\Delta(E_i;\L_i)$ ($i \in I$) with a face order $i<j$ on the index set $I$ such that each inclusion $r_{i,j} \from \Delta(E_i;\L_i) \to \Delta(E_j;\L_j)$ is a face map, and such that $X$ is the quotient of the disjoint union $\coprod_{i \in I} \Delta(E_i;\L_i)$ with respect to these face maps. Let $K \subset X$ be the union $K=K(E_i;L_i)$ of the spines of the individual simplices $\Delta(E_i;\L_i) \subset X$. By the naturality condition described just above applied to each face map $r_{i,j}$, it follows that the collection deformation retractions from $\Delta(E_i;\L_i)$ to $K(E_i;L_i)$ for each $i \in I$ fit together to form a deformation retraction $X \mapsto K$. Furthermore, for any automorphism $f \from X \to X$ with corresponding bijection $f \from I \to I$ and ideal simplex isomorphisms $f_i \from \Delta(E_i;L_i) \to \Delta(E_{i \cdot f},L_{i \cdot f})$, the same naturality condition shows that $f_i$ takes $K(E_i;L_i)$ to $K(E_{i \cdot f},L_{i \cdot f})$. It follows that $f(K)=K$, proving that the spine $K$ is invariant under automorphisms of $K$.

This completes the proof of Theorem~\ref{TheoremGeneralSpine}.
\end{proof}

\subsection{$\Out(F_n)$ acts geometrically on $\K_n$ (assuming path connectivity).}
\label{SectionOutActsGeometrically}

One of the bedrock principles of geometric group theory is that the large scale geometry of a group $\Gamma$ can be studied using actions of $\Gamma$ on geometric objects, if those actions satisfy some useful basic properties. One of the most important lists of basic properties is collected in the following definition.

\begin{definition}[Geometric actions of groups]
Consider an action of a group~$\Gamma$ on a metric space $X$ (meaning that $\Gamma$ acts by isometries of~$X$). We say that this is a \emph{geometric action} if each of the following properties holds:
\begin{description}
\item[$X$ is a \emph{proper} metric space,] meaning that closed balls are compact. It follows that $X$ is locally compact.
\item[$X$ is a \emph{geodesic} metric space,] meaning that for any $x,y \in X$ there exists an isometric injection $\gamma \from [0,d(x,y)] \to X$ such that $d(\gamma(s),\gamma(t)) = \abs{s,t}$ for all $s,t \in [0,d(x,y)]$.
\item[The action is \emph{proper},] as defined in Section~\ref{SectionTopActions}.
\item[The action is \emph{cocompact},] as defined Section~\ref{SectionTopActions}.
\end{description}
\end{definition}
The formal statement of the ``bedrock principle'' referred to above is the following (the first sentence of which is an immediate consequence of Lemmas~\ref{LemmaFDFinGen} and~\ref{LemmaFDExists}):

\begin{lemma}[Milnor-Svarc Lemma \cite{}]
\label{LemmaMilnorSvarc}
For any geometric group action $\Gamma \act X$, the group $\Gamma$ is finitely generated. Furthermore, for any base point $p \in X$ the map $\Gamma \mapsto X$ defined by $\gamma \mapsto \gamma \cdot p$ is a quasi-isometry, with respect to the word metric $d_\Gamma$ on $\Gamma$ and the given metric $d_X$ on $X$, meaning that there exist constants $K \ge 1$, $C \ge 0$ such that for any $\gamma,\delta \in G$ we have
$$\frac{1}{K} d_\Gamma(\gamma,\delta) - C \le d_X(\gamma \cdot p, \delta \cdot p) \le K d_\Gamma(\gamma,\delta) + C
$$
\end{lemma}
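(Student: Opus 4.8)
The plan is to proceed in two stages: first establish finite generation, then establish the quasi-isometry estimate, treating the hard part as the lower bound $\frac{1}{K} d_\Gamma(\gamma,\delta) - C \le d_X(\gamma \cdot p, \delta \cdot p)$. For finite generation, I would invoke Lemmas~\ref{LemmaFDExists} and~\ref{LemmaFDFinGen} exactly as the statement advertises: since $X$ is a proper metric space it is locally compact, and since the action is proper and cocompact, Lemma~\ref{LemmaFDExists} produces an open fundamental domain $U \subset X$; then Lemma~\ref{LemmaFDFinGen} shows that the finite symmetric set $S = \{g \in \Gamma \suchthat (U \cdot g) \intersect U \ne \emptyset\}$ generates $\Gamma$. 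To prepare for the quasi-isometry estimate I would enlarge this a little: fix the base point $p \in U$, and because $U$ has compact closure, choose $R \ge 0$ so large that $\overline U$ lies in the closed ball of radius $R$ about $p$; also fix $R_0 = \min\{ d_X(p, p \cdot g) \suchthat g \in S, \, p \cdot g \ne p \} > 0$ (if $S$ acts trivially at $p$ we pass to a slightly bigger generating set, or argue directly; this technicality can be suppressed in the final write-up by enlarging $S$ to a genuine symmetric generating set with $p \cdot g \ne p$).

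For the \emph{upper bound} $d_X(\gamma \cdot p, \delta \cdot p) \le K\, d_\Gamma(\gamma,\delta) + C$: by equivariance it suffices to bound $d_X(p, p \cdot \gamma)$ by a linear function of $|\gamma|_S$. Write $\gamma = g_1 \cdots g_m$ as a geodesic word in $S$, so $m = d_\Gamma(1,\gamma)$. Each single generator moves $p$ a bounded amount: set $\lambda = \max\{ d_X(p, p \cdot g) \suchthat g \in S \}$, which is finite since $S$ is finite. Then by the triangle inequality and equivariance,
$$d_X(p, p \cdot \gamma) \le \sum_{k=1}^{m} d_X\bigl(p \cdot (g_1 \cdots g_{k-1}),\, p \cdot (g_1 \cdots g_k)\bigr) = \sum_{k=1}^{m} d_X(p, p \cdot g_k) \le \lambda m,$$
giving the upper bound with $K = \lambda$ and $C = 0$ (and then by equivariance the general two-point version with the same constants).

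For the \emph{lower bound}, which I expect to be the main obstacle, the idea is the standard ``chop the geodesic into fundamental-domain-sized pieces'' argument, now run in the reverse direction from Lemma~\ref{LemmaFDFinGen}. Given $\gamma, \delta \in \Gamma$, let $d = d_X(\gamma \cdot p, \delta \cdot p)$ and choose a geodesic $c \from [0,d] \to X$ from $\gamma \cdot p$ to $\delta \cdot p$ (using that $X$ is a geodesic space). Pick an integer $N$ with $N \ge d$, partition $[0,d]$ into $N$ subintervals each of length $d/N \le 1$, and let $x_0 = \gamma\cdot p, x_1, \ldots, x_N = \delta \cdot p$ be the partition points. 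Since $U \cdot \Gamma = X$, choose $h_k \in \Gamma$ with $x_k \in U \cdot h_k$; we may take $h_0 = \gamma$ and $h_N = \delta$ after translating $U$ suitably (or simply absorb the resulting bounded error into $C$). Because consecutive partition points are within distance $1$ of each other, and each $U \cdot h_k$ has diameter $\le 2R$, the sets $U \cdot h_k$ and $U \cdot h_{k+1}$ come within a bounded distance; more precisely $d_X(p \cdot h_k, p \cdot h_{k+1})$ is bounded by a constant $D$ depending only on $R$ and the mesh, so $h_{k+1} h_k^\inv$ lies in the finite set $S' = \{ g \in \Gamma \suchthat d_X(p, p\cdot g) \le D \}$, which is finite by properness of the action. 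Enlarging $S$ to include $S'$, we get $d_\Gamma(h_k, h_{k+1}) \le 1$ for each $k$, hence $d_\Gamma(\gamma, \delta) = d_\Gamma(h_0, h_N) \le N$. Choosing $N = \lceil d \rceil \le d + 1$ yields $d_\Gamma(\gamma,\delta) \le d_X(\gamma\cdot p, \delta\cdot p) + 1$, which after combining with the upper bound gives the full two-sided estimate with a single pair of constants $K, C$. The delicate points to get right in the final write-up are: (i) that properness is genuinely needed to know $S'$ is finite; (ii) keeping track of the harmless bounded discrepancies between $h_0, h_N$ and the prescribed $\gamma, \delta$, which only inflate $C$; and (iii) that enlarging the generating set finitely many times changes the word metric only by a bi-Lipschitz factor, so it is legitimate to do so while proving a quasi-isometry statement.
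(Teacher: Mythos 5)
Your proposal is correct. For the only part of Lemma~\ref{LemmaMilnorSvarc} that the paper itself proves --- finite generation --- you follow the paper exactly, deducing it from Lemmas~\ref{LemmaFDExists} and~\ref{LemmaFDFinGen}; the quasi-isometry estimate is left to the literature in the paper (the citation is a placeholder), and the argument you supply for it is the standard one: a telescoping estimate over a geodesic word for the upper bound, and the ``chop a geodesic into pieces of bounded length and read off generators'' argument for the lower bound, which is precisely the metric analogue of the path-chopping proof of Lemma~\ref{LemmaFDFinGen}. The one thing to tidy in the final write-up is the left/right bookkeeping, since Section~\ref{SectionTopActions} uses right actions while the lemma is written with $\gamma \cdot p$: with right-action notation the cancellable isometry sits on the right, so the telescoping identity you want is $d_X\bigl(p\cdot(g_k\cdots g_m),\, p\cdot(g_{k+1}\cdots g_m)\bigr) = d_X(p\cdot g_k, p)$ --- telescope over suffixes rather than prefixes --- and in the lower bound the relation $h_{k+1}h_k^\inv \in S'$ pairs with the right-invariant word metric (equivalently, use $h_k^\inv h_{k+1} \in S'$ with the left-invariant one). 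These are conventions, not gaps; your points (i)--(iii) about properness giving finiteness of $S'$, absorbing the endpoint discrepancies into $C$, and the harmlessness of enlarging the finite generating set are all handled correctly.
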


As a consequence of this principle, one is free to choose a particular geometric action to study quasi-isometric properties of the word metric on $\Gamma$. Some geometric actions may be more useful than others, depending on the specifics of the group~$\Gamma$ and the space~$X$. For some purposes a Cayley graph of $\Gamma$ with respect to some finite generating set, or a Cayley 2-complex with respect to some finite presented, may be a perfectly adequate choice for~$X$. But it is often useful to choose $X$ in a manner which is more naturally suited to the group~$\Gamma$.

The outer space $\X_n$ and its spine $\K_n$ are very naturally suited to studying $\Out(F_n)$. For purposes of applying the Milnor-Svarc lemma and certain other tools of combinatorial and geometric group theory, we must reckon with noncocompactness of the $\X_n$ action. But the action on $\K_n$ is cocompact, and using that fact we can \emph{almost} that action is geometric, although at the moment there is a hole in our understanding which we will not be in a position to fill until Chapter 2:

\begin{theorem*}[see Theorem~\ref{TheoremPathConnected} in Section~\ref{SectionOuterSpaceConnectivity}]
$\X_n$ and $\K_n$ are path connected.
\end{theorem*}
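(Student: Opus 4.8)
The plan is to prove path connectivity of $\X_n$ first, and then deduce path connectivity of $\K_n$ as an immediate corollary using the $\Out(F_n)$-invariant deformation retraction $\X_n \to \K_n$ from Theorem~\ref{TheoremOuterSpaceSpine} (a deformation retraction is in particular surjective and continuous, so it preserves path connectivity, hence $\K_n$ is path connected once $\X_n$ is). So the whole content is: show $\X_n$ is path connected.

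Here is the approach for $\X_n$. Each outer space cell $\Delta(G)$ is itself path connected: it is $\Delta(\EG) - \LG$, the complement in a simplex of a union of faces of codimension $\ge 1$ --- indeed, no face stripped away is the whole simplex, and the barycenter always survives (since $\EG$ itself, i.e.\ all of $G$, is a concrete subgraph because its graph complement is empty, hence a forest), so $\Delta(G)$ is star-shaped from the barycenter and thus path connected. Therefore it suffices to show that the ``cell adjacency graph'' is connected: form a graph $\mathcal{G}$ whose vertices are the cells $\Delta(G_i)$ ($i \in I$), with an edge between $\Delta(G_i)$ and $\Delta(G_j)$ whenever one is a face of the other (equivalently $i < j$ or $j < i$ in the face order); if $\mathcal{G}$ is connected then, since adjacent cells share a nonempty face and each cell is path connected, $\X_n$ is path connected. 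To prove $\mathcal{G}$ is connected I would use the base rose $R_n$ as a reference vertex: it suffices to show that for every marked graph $G$ there is a path in $\mathcal{G}$ from $\Delta(G)$ to $\Delta(R_n)$.

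The key step, then, is: every marked graph $G$ can be connected to the base rose $R_n$ through a sequence of forest collapses and forest expansions (inverse collapses). First, I would collapse a maximal tree $T \subset G$. This requires knowing that $T$, viewed with respect to the natural cell structure, gives a \emph{concrete} subgraph $G \setminus T$ whose collapse lands us somewhere --- but actually the cleaner statement is that collapsing a maximal subtree of $G$ yields a rose $R$ of rank $n$, and by Exercise~\ref{ExerciseCollapse}~\pref{ItemCollapseCoreGraph} this collapse is a homotopy equivalence (the complementary ``subgraph'' here plays the role of the concrete part); by Exercise~\ref{ExerciseInducedMarking} the rose $R$ inherits a unique marking making the collapse marking-preserving, so we get a forest collapse $G \collapsesto R$, hence $\Delta(R)$ is a face of $\Delta(G)$ and they are adjacent in $\mathcal{G}$. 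This reduces the problem to connecting the cell of an arbitrary marked rose $R$ to the cell of the \emph{base} rose $R_n$. Now any marked rose $R$ is equivalent to one of the form $(R_n, \rho)$ for a marking $\rho$, and since the marking is determined up to equivalence by an element of $\Out(F_n)$ via the dictionary at the end of Section~\ref{SectionMarkedGraphsOutFn}, what remains is: for every $\phi \in \Out(F_n)$, the cells $\Delta(R_n)$ and $\Delta(R_n)\cdot\phi$ lie in the same connected component of $\mathcal{G}$. Here I would invoke a generating set for $\Out(F_n)$ consisting of elements each of which is realized by a single ``elementary expansion-then-collapse'' of $R_n$ --- concretely, Nielsen's elementary automorphisms (transvections, transpositions, letter inversions) of Section~\ref{SectionPositiveTests}. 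A transvection $\phi$ can be realized by first expanding $R_n$ to a graph $B$ with one extra edge (separating the two relevant petals off onto a valence-$3$ vertex), giving a forest collapse $B \collapsesto R_n$, and then collapsing a \emph{different} edge of $B$, giving $B \collapsesto (R_n,\rho_\phi)$; thus $\Delta(R_n)$ and $\Delta(R_n)\cdot\phi$ are both faces of the single cell $\Delta(B)$ and hence at distance $\le 2$ in $\mathcal{G}$. Transpositions and letter inversions are realized by graph automorphisms of $R_n$, which fix the cell $\Delta(R_n)$ altogether. Since these elementary automorphisms generate $\Out(F_n)$ (a fact we may assume is available, via the forward reference in Section~\ref{SectionPositiveTests}, or independently via the Nielsen--Schreier/Whitehead theory), iterating gives a path in $\mathcal{G}$ from $\Delta(R_n)$ to $\Delta(R_n)\cdot\phi$ for every $\phi$, completing the connectivity of $\mathcal{G}$ and hence the path connectivity of $\X_n$.

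The main obstacle I anticipate is the logical dependency on a generating set for $\Out(F_n)$: the clean argument above uses that $\Out(F_n)$ is generated by elementary automorphisms, but Section~\ref{SectionPositiveTests} only promises to prove this later (in Section~\ref{SectionWhiteheadNielsenGenerators}), and the forward reference to Theorem~\ref{TheoremPathConnected} is precisely from a context where path connectivity itself feeds into the Milnor--Svarc machinery --- so I must take care not to create a circularity where the generation theorem secretly relies on connectivity of $\X_n$ or $\K_n$. The resolution is that Nielsen's generation theorem can be (and historically was) proved purely combinatorially/algebraically from the theory of Stallings folds and Whitehead's algorithm without any reference to outer space, so it is legitimate to cite it here; alternatively, one can bypass generators entirely and argue directly that for an arbitrary $\phi$ the marked rose $(R_n,\rho_\phi)$ is joined to $R_n$ by a \emph{fold path} (Chapter~\ref{ChapterFoldPaths}), each fold of which is a forest-collapse move between adjacent cells --- but that again forward-references Chapter~2. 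I expect the honest thing in the text is to prove connectivity via elementary-automorphism generation, flagging that the generation result is established independently in Section~\ref{SectionWhiteheadNielsenGenerators}; the geometric realization of each elementary automorphism as an expand-collapse pair of $R_n$ is the routine part. The remaining details --- that each cell is star-shaped, that sharing a face implies being in the same path component, and the bookkeeping of markings under Exercise~\ref{ExerciseInducedMarking} --- are straightforward.
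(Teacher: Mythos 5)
Your argument is mathematically sound, but it takes a genuinely different route from the paper, and the difference matters for the paper's logical architecture. The paper's proof (Section~\ref{SectionProofOfConnectivity}) never mentions generators of $\Out(F_n)$: given two marked graphs $G,H$, it takes a tightened marking-preserving homotopy equivalence $G \to H$, factors it through a forest collapse (Proposition~\ref{PropFoldableMapConstruction}) followed by a Stallings fold sequence (Theorem~\ref{ThmStallingsFolds}), observes that no bigon folds occur since the map is a homotopy equivalence, and then uses Lemma~\ref{LemmaNonbigonFolds} --- each non-bigon fold $G' \to G''$ yields cell inclusions $\Delta(G') \subset \Delta(G''') \supset \Delta(G'')$ through a common cell --- to produce a chain of cells related by face inclusions joining $\Delta(G)$ to $\Delta(H)$, from which a path is assembled exactly as in your adjacency-graph reduction. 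Your route instead reduces to rose cells via a maximal-tree collapse and then invokes Nielsen's generation theorem, realizing each transvection by an expand--collapse through a theta-type blowup; that realization is correct, but within this text the generation theorem is deliberately \emph{deferred}: Section~\ref{SectionWhiteheadNielsenGenerators} is designed to derive Nielsen's and Whitehead's generators \emph{from} path connectivity, so your proof as written inverts the intended logical flow and is circular inside the paper unless Nielsen's theorem is imported with an independent (e.g.\ classical combinatorial) proof, as you yourself flag. What each approach buys: yours is short once generation is granted and makes the orbit structure of rose cells vivid; the paper's is self-contained relative to its own fold machinery and yields the generating sets as a downstream corollary rather than an input.

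One correction to your concluding remarks: the ``bypass generators entirely via fold paths'' alternative that you set aside as a forward reference to Chapter~\ref{ChapterFoldPaths} is in fact the paper's actual proof, and it involves no forward reference at all --- the statement you were given is only the announcement in Section~\ref{SectionOutActsGeometrically}, while Theorem~\ref{TheoremPathConnected} itself is proved in Section~\ref{SectionOuterSpaceConnectivity}, after Propositions~\ref{PropFoldableMapConstruction}, \ref{ThmStallingsFolds} and Lemma~\ref{LemmaNonbigonFolds} are available. Your preliminary reductions (star-shapedness of each ideal cell from its barycenter, deduction of path connectivity of $\K_n$ from that of $\X_n$ via the equivariant deformation retraction, and the passage to the cell adjacency graph) are all fine and are compatible with either route.
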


In fact more is true: Culler and Vogtmann, in their original paper introducting outer space, proved that $\X_n$ and $\K_n$ are contractible (see Theorem~\ref{TheoremContractible} in Section~\ref{SectionOuterSpaceContractibility}).

For now, assuming path connectivity of $\K_n$ (Theorem~\ref{TheoremPathConnected}), we put together the rest of the pieces together to prove:

\begin{theorem}
\label{TheoremOutActionGeometric}
The action of $\Out(F_n)$ on $\K_n$ is geometric.
\end{theorem}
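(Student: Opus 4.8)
The plan is to assemble the three ingredients already developed in this chapter and verify the three remaining clauses of the definition of a geometric action. First I would equip $\K_n$ with a metric. The natural choice is the path metric induced by declaring each simplex of $\K_n$ to be a standard Euclidean simplex of side length~$1$ (the spine $\K_n$ is an honest simplicial complex, unlike $\X_n$); since $\K_n$ has finitely many $\Out(F_n)$-orbits of simplices (immediate from Theorem~\ref{TheoremOuterSpaceSpine}, which says the action is cocompact and each outer space cell meets $\K_n$ in a finite subcomplex), the simplicial structure is $\Out(F_n)$-invariant, so $\Out(F_n)$ acts by simplicial isometries of this metric. Path connectivity of $\K_n$ is exactly the cited Theorem~\ref{TheoremPathConnected}, which we are permitted to assume; together with local finiteness of the simplicial structure (Exercise~\ref{ExerciseLocallyFinite}, transported to $\K_n$ via $\K_n \intersect \Delta(G)$ being finite for each cell), this makes $(\K_n,d)$ a connected, locally finite simplicial complex, hence a proper geodesic metric space by the standard fact that a connected, locally finite simplicial complex with the path metric is complete, locally compact, and geodesic (an application of the Hopf--Rinow theorem for length spaces, or a direct argument building geodesics simplex by simplex).

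Next I would dispatch properness and cocompactness of the action, which are the two clauses that are genuinely ``new'' at this point but which reduce instantly to facts already in hand. Properness: the action of $\Out(F_n)$ on $\X_n$ is proper by Exercise~\ref{ExerciseProperActionOnOuterSpace}, and the restriction of a proper action to an invariant subspace is proper; since $\K_n \subset \X_n$ is $\Out(F_n)$-invariant (part of Theorem~\ref{TheoremOuterSpaceSpine}), the action on $\K_n$ is proper. Cocompactness: this is precisely one of the conclusions of Theorem~\ref{TheoremOuterSpaceSpine}, so there is nothing more to prove. (Alternatively one can rederive cocompactness from Exercise~\ref{ExerciseCellsCofinite} — finitely many cell orbits — plus finiteness of $\K_n \intersect \Delta(G)$, exactly as in the proof of Theorem~\ref{TheoremOuterSpaceSpine} given in Section~\ref{SectionSpine}.) With the metric space properties and the two action properties in place, all four bullets of the definition of a geometric action are verified, which is the whole statement.

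The one point that genuinely requires care — and which I expect to be the main obstacle — is checking that the path metric $d$ on $\K_n$ is actually proper, i.e.\ that closed balls are compact, rather than merely that $\K_n$ is locally compact. The subtlety is that local finiteness of the simplicial structure does not by itself guarantee that balls of radius, say, $3$ meet only finitely many simplices: one needs to know that $\K_n$ is ``uniformly'' locally finite enough that one cannot escape to infinity within bounded distance. The clean way to handle this is to invoke cocompactness of the $\Out(F_n)$-action together with properness of that action: cocompactness gives a compact set $D$ with $D \cdot \Out(F_n) = \K_n$, and one covers a metric ball by finitely many translates of a neighbourhood of $D$, using properness to bound how many group elements can contribute; this is essentially the standard argument that a proper cocompact action on a locally compact length space forces the space to be proper. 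So the logical order is: metric $\Rightarrow$ local compactness and geodesicity $\Rightarrow$ (invoking properness and cocompactness of the action, both already established) properness of the metric space $\Rightarrow$ conclude. Everything else is bookkeeping, and the proof is short once Theorem~\ref{TheoremPathConnected} is granted.
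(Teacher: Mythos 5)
Your argument is correct, and its overall structure is the same as the paper's: put the simplicial metric on $\K_n$ (unit Euclidean simplices), get geodesicity and local compactness from local finiteness (Exercise~\ref{ExerciseLocallyFinite} plus the fact that $\K_n$ meets each outer space cell in a finite subcomplex), assume path connectivity (Theorem~\ref{TheoremPathConnected}), and quote properness and cocompactness of the action from Section~\ref{SectionSpine} (restriction of the proper action on $\X_n$, and Theorem~\ref{TheoremOuterSpaceSpine}). The only place you diverge is the step you flag as the main obstacle, properness of the metric: the paper simply invokes the standard fact that the simplicial metric on a connected, locally finite simplicial complex is proper, which is legitimate here because $\K_n$ is finite-dimensional, so there are only finitely many isometry types of simplices, the metric is complete and geodesic (Bridson--Haefliger style), and Hopf--Rinow gives properness directly from local compactness and completeness --- no appeal to the group action is needed. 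Your alternative, deriving completeness/properness from the proper cocompact action via Hopf--Rinow, is also valid and is the natural fallback in situations where uniform local finiteness is genuinely in doubt; but for $\K_n$ that doubt does not arise, so the paper's shorter route suffices. Both arguments are fine; yours just proves properness of the metric with heavier machinery than necessary, at the (harmless) cost of making the metric conclusion logically depend on the action.
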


\begin{proof} For any two Euclidean $k$-simplices $\Delta \subset \reals^m$, $\Delta' \subset \reals^n$, if each of $\Delta$ and $\Delta'$ has edge lengths equal to $1$ then any barycentric coordinate preserving homeomorphism $\Delta \mapsto \Delta'$ is an isometry of the Euclidean metrics. For any connected simplicial complex~$X$, it follows that each $k$-simplex $\sigma \subset X$ has a unique geodesic metric $d_\sigma$ such that any barycentric coordinate preserving map from $\sigma$ to a regular Euclidean simplex with side lengths~$1$ is an isometry. Letting $\sigma$ vary over all simplices in $X$, there is a unique geodesic metric $d_X$ on $X$ such that for each simplex $\sigma$ the inclusion map $\sigma \hookrightarrow X$ is a localy isometry from $d_\sigma$ to $d_X$: for each $x,y \in X$ define $d_X(x,y)$ to be infimum of the lengths of piecewise simplicial simplicial paths from $x$ to $y$. If $X$ is a locally finite simplicial complex then the simplicial metric on $X$ is proper. Also, any simplicial isomorphism between simplicial complexes induces an isometry of simplicial metrics. 

On the spine of outer space $\K_n$, using path connectivity of $\K_n$ (Theorem~\ref{TheoremPathConnected}), the simplicial metric on $\K_n$ is defined. Knowing that $\K_n$ intersects each ideal simplex $\Delta(G) \subset \X_n$ in a finite subcomplex of $\K_n$ (Theorem~\ref{TheoremGeneralSpine}), and knowing that the ideal simplicial decomposition of $\X_n$ is locally finite (Exercise~\ref{ExerciseLocallyFinite}), it follows that $\K_n$ is locally finite, and hence its simplicial metric is proper.

Properness and cocompactness of the action were proved in Theorem~\ref{TheoremGeneralSpine}.
\end{proof}

\chapter{Fold paths in outer space
}
\label{ChapterFoldPaths}

In Chapter 1 we introduced the problems of Nielsen and Whitehead regarding a free group $F_n = F\<s_1,\ldots,s_n\>$, for example the problem of determining when a reduced word in the generating set $S=\{s_1,\ldots,s_n\}$ represents a free basis element of $F_n$. Motivated by a preliminary attempt to understand those problems, we introduced marked graphs, and the Culler-Vogtmann outer space $\X_n$, as a deformation space of length structures on marked graphs. The idea was to consider a given (cyclically) reduced word as a circuit in the base rose $R_n$, and then to study how that circuit evolves when one moves away from $R_n$ along paths in $\X_n$. 

In the opening sections of Chapter 2 we shall dig deeper into the problems of Nielsen and Whitehead, in order to motivate a specific and very useful class of paths in outer space, known nowadays as \emph{Stallings fold paths}.


\section{Fold sequences: An example}
\label{SectionFoldSequenceExample}
Stallings introduced fold sequences in his landmark paper ``Topology of finite graphs'' \cite{Stallings:folding}. In this section we set up an example along the lines of the Nielsen/Whitehead questions: a particular map on the rank~$2$ rose which we analyze to determine whether that map is a homotopy equivalence. Fold sequences arise quite naturally in the course of this analysis, and are easily re-interpreted as fold paths in outer space.

\subsection{A map to fold.} 
\label{SectionAMapToFold}
On the rose $R_2$ with oriented edges $a,b$ and with fundamental group $F_2 = \<a,b\>$, consider the four homotopy equivalences defined by the following list of positive transvections involving the generators and their inverses. In these formulas, the implicit assumption is that any generator which is not mentioned on the left hand side is fixed. So for example in formula (1) $a \mapsto ab$ it is implicit that $b \mapsto b$.
$$(1) \,\, a \mapsto ab \qquad (2)\,\,  a \mapsto ba \qquad (3)\,\, b \mapsto ba \qquad (4)\,\, b \mapsto ab
$$
Each of these transvections could be obtained from the first one, by pre and/or post composition with the transposition $a \leftrightarrow b$, but it is convenient to work with the whole list.

First we compute the following composition:
$$R_2 \xrightarrow{(1)} R_2 \xrightarrow{(3)} R_2 \xrightarrow{(2)} R_2 \xrightarrow{(4)} R_2 \xrightarrow{(4)} R_2
$$
$$
\begin{matrix} a \\b \end{matrix} \,\xrightarrow{(1)} \,
\begin{matrix} ab \\ b \end{matrix} \, \xrightarrow{(3)} \,
\begin{matrix} aba \\ ba \end{matrix} \, \xrightarrow{(2)} \,
\begin{matrix} babba \\ bba \end{matrix} \, \xrightarrow{(4)} \,
\begin{matrix} abaababa \\ ababa \end{matrix} \, \xrightarrow{(4)} \,
\begin{matrix} aabaaabaaba \\ aabaaba \end{matrix}
$$
So far this map must, of course, be a homotopy equivalence, since it is a composition of homotopy equivalences. Now we tweak the map: in the word $aabaaabaaba$, change the middle $a$ of the $aaa$ sub word to a $b$, and we get the map
$$
R_2 = G_0 \xrightarrow{f^0} G' = R_2
$$
$$
f^0 \, : \, \begin{matrix} a \\ b \end{matrix} \, \mapsto \, \begin{matrix} aabababaaba \\ aabaaba \end{matrix}
$$
(We'll have other graphs $G_1,G_2\ldots$ and maps $f^1,f^2\ldots$ in a moment.) This last tweak does not follow any recipe for a homotopy equivalence, and it seems there's a good chance that the map $f^0$ will not be a homotopy equivalence. We shall investigate this by factoring $f^0$ into folds and seeing what happens.

We depict the map $f^0$ by subdividing the $a$ and $b$ edges of the domain rose $G_0$ into little edgelets, and labeling each edgelet by its image in the range rose; see Figure~\ref{FigureFoldPath01}. The $a$ edge of the domain is subdivided into $11$ edgelets labelled $aabababaaba$, and the $b$ edge into 7 edgelets labelled $aabaaba$.

There is a lot we do not know about the map $f^0 \from G_0 \to G'$, and about the words $aabababaaba$ and $aabaaba$ used to define the map:
\begin{itemize}
\item Is $f^0$ $\pi_1$-injection? In other words, is the homomorphism $F_2 \mapsto F_2$ defined by $a \mapsto aabababaaba$ and $b \mapsto aabaaba$ injective? (For the answer, see the discussion following the statement of Corollary~\ref{CorollaryLocallyInjectiveProperties}).
\item Is $f^0$ a $\pi_1$-surjection? If not, what is a free basis for the $\pi_1$-image? (For the answer, see Section~\ref{SectionPiOneSurjectivity}.
\item Is $aabababaaba$ a basis element of $F_2$?
\end{itemize}

\begin{figure}
\centerline{
\input{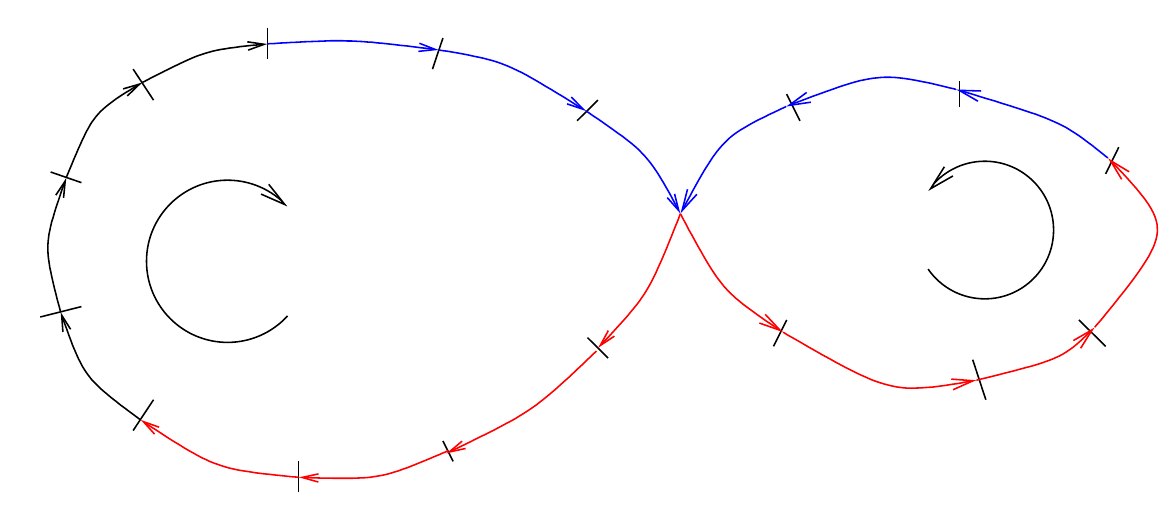_t} 
}
\caption{The graph $G_0$ is the domain of a map of roses $f^0 \from G_0 \to G'$ associated to the endomorphism of $F_2=F\<a,b\>$ defined by $a \mapsto aabababaaba$, $b \mapsto aabaaba$. The natural edge loops $a,b$ of $G_0$ are subdivided into edgelets to depict the map $f^0$. The two red segments of $G_0$, initial segments of the $a$ and $b$ loops respectively, have the same initial endpoint and the same image path in $G'$, namely $aaba$. Similarly, the two blue segments, terminal segments of the $a$ and $b$ loops, have the same terminal endpoint and the same image path $aba$.}
\label{FigureFoldPath01}
\end{figure}

\subsection{The first fold.}
\label{SectionFirstFold}
Using Figure~\ref{FigureFoldPath01}, or just using the formula from which it is derived, one can see that in the graph $G_0$, the initial subpaths of the $a$ and $b$ edges, consisting of the first four edgelets of each, are mapped to the identical path in the range graph $G'$, namely the path $aaba$. 

Pondering this fact, one might feel a subliminal urge to identify those two subpaths. Let us follow our urge, by folding together their initial $a$ edgelets, and then folding together the following $a$ edgelets, and then the following $b$ edgelets, and then the final $a$ edgelets. But before getting carried away, let us then stop and ponder what we have done.

The outcome is that we have factored the map $f^0 \from G_0 \to G'$ by folding these two initial subpaths together as follows:
$$\xymatrix{
G_0 \ar[r]_{g_1} \ar@/^2pc/[rr]^{f^0} & G_1 \ar[r]_{f^1} & G'
}$$
The graph $G_1$ is the quotient space that is obtained from $G_0$ by folding two segments into one edge, namely the two red segment depicted in Figure~\ref{FigureFoldPath01}: the initial $aaba$ edgelet segment of the $a$ edge of $G_0$; and the initial $aaba$ edgelet segment of the $b$ edge of $G_0$. Those two segments are folded together and identified to a single edge of $G_1$. The resulting quotient map $g_1 \from G_0 \to G_1$ is called a ``fold map''. The \emph{key fact} to observe is that $g_1$ is a homotopy equivalence, because the two subpaths being identified by this fold are embedded in $G_0$, they share their initial endpoints, and they are otherwise disjoint. 

The quotient graph $G_1$ is a theta graph, depicted in Figure~\ref{FigureFoldPath02}, and the formula for the quotient $g_1$ is given below. Since $g_1(x)=g_1(y)$ only if $f^0(x)=f^0(y)$, it follows that the map $f^0$ factors as $f^0 = f^1 \composed g_1$ for some map $f^1 \from G_1 \to G'$; this is the ``quotient factorization theorem'', see e.g.\ \cite{Munkres:topology} Theorem~11.1. We compute a formula for the map $f^1$, using an edgelet subdivision and labeling of the graph $G_1$ that is shown in Figure~\ref{FigureFoldPath02} and that it inherits from the graph $G_0$ via the fold map $g_1$.\footnote{Neither $g_1$ nor $f^1$ should be interpreted as any kind of ``self-map'' of any object, neither a geometric object such as a graph nor an algebraic object such as a group. The domains and ranges of $g_1$ are not even isomorphic graphs, and similarly for the map~$f^1$. Even if the domain or the range \emph{were} isomorphic, we do not necessarily \emph{want} to pick an isomorphism, nor to interpret the domain and range as being the ``same graph''.} 
The factor maps $g_1$ and $f^1$ are defined by the following formulas: 
$$g_1  \, : \, \begin{matrix} G_0 \\ a \\ b \end{matrix} \mapsto \begin{matrix} G_1 \\ ca' \\ cb'  \end{matrix} \qquad\qquad f^1 \, : \, \begin{matrix} G_1 \\ a' \\ b' \\ c  \end{matrix} \mapsto \begin{matrix} G' \\ babaaba \\ aba \\ aaba \end{matrix}
$$

\begin{figure}
\centerline{
\input{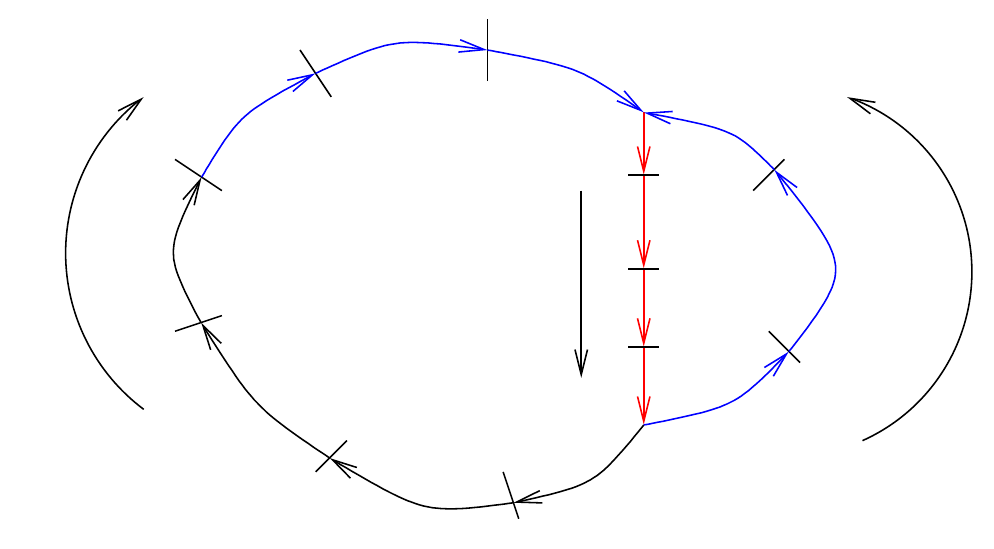_t} 
}
\caption{The graph $G_1$. The edgelet subdivisions of its natural edges $a',b',c$, and the labels and coloring, depict the maps $G_0 \xrightarrow{g_1} G_1 \xrightarrow{f^1} G'$. The two red $aaba$ paths of $G_0$ are both taken by $g_1$ to the edge $c$ in $G_1$. Both of the two blue edgelet paths of $G_1$ are taken by $f^1$ to the same path in $G'$.}
\label{FigureFoldPath02}
\end{figure}

Before continuing, note that we had a choice in folding the graph $G_0$. We chose to fold the two red segments, whereas we could have chosen to instead fold the two blue segments. Because of such choices, fold paths in outer space are not uniquely determined. This stands in stark contrast to, let's say, a geodesic segment in a hyperbolic space, or a Euclidean space, or any manifold equipped with a Riemannian metric, in which a geodesic segment of a given length is uniquely determined by its initial point and its initial tangent direction. In this regard fold paths are more like geodesics in the taxicab or $L^\infinity$ metric on $\reals^n$. This lack of uniqueness can be useful in some contexts, giving flexibility that can be exploited to prove things. But in other contexts the difficulty of wading through choices can be a burden: one might compare the technical difficulties of the original Culler-Vogtmann proof of contractibility of outer space \cite{CullerVogtmann:moduli} to the cleanly slick proof of Skora \cite{Skora:deformations} (to be presented later in this work) which is based on ``canonical'' fold paths.

\subsection{Subsequent folds.} 
\label{SectionSubsequentFolds}
%
%

We now repeat the folding process. In the graph $G_1$, the two terminal blue subpaths of the edges $a'$ and $b'$ (images of the two blue paths in $G_0$) both consist of three edgelets labeled $aba$, they share terminal endpoints, and both are mapped to the identical path in~$G'$; in the case of $b'$ this terminal segment is in fact all of $b'$. We may factor $f^1 \from G_1 \to G'$ by folding these two subpaths together, extending our earlier commutative diagram as follows:
$$\xymatrix{
G_0 \ar[r]_{g_1} \ar@/^2.5pc/[rrr]^<<<<<<<<<{f^0} & G_1 \ar@/^1pc/[rr]^<<<<<<{f^1} \ar[r]_{g_2} &  G_2 \ar[r]_{f^2} & G'
}$$
The graph $G_2$ is depicted in Figure~\ref{FigureFoldPath03} with labelled edges and edgelets depicting appropriate maps. The map $f^2 \from G_2 \to G'$ then factors as well as into a product of a fold $g_3 \from G_2 \to G_3$ and a map $f^3 \from G_3 \to G'$, as follows:
$$\xymatrix{
G_0 \ar[r]_{g_1} \ar@/^5pc/[rrrr]^<<<<<<<<<<<<<<<<<<<<{f^0} & G_1 \ar[r]_{g_2} \ar@/^3.5pc/[rrr]^<<<<<<<<<<<<<<{f^1} & G_2 \ar[r]_{g_3} \ar@/^2pc/[rr]^<<<<<<<<{f^2} & G_3 \ar[r]^{f^3} & G'
}$$
The graph $G_3$ is also depicted in Figure~\ref{FigureFoldPath03}, with labelled edges and edgelets depicting appropriate maps.

\begin{figure}
\centerline{
\input{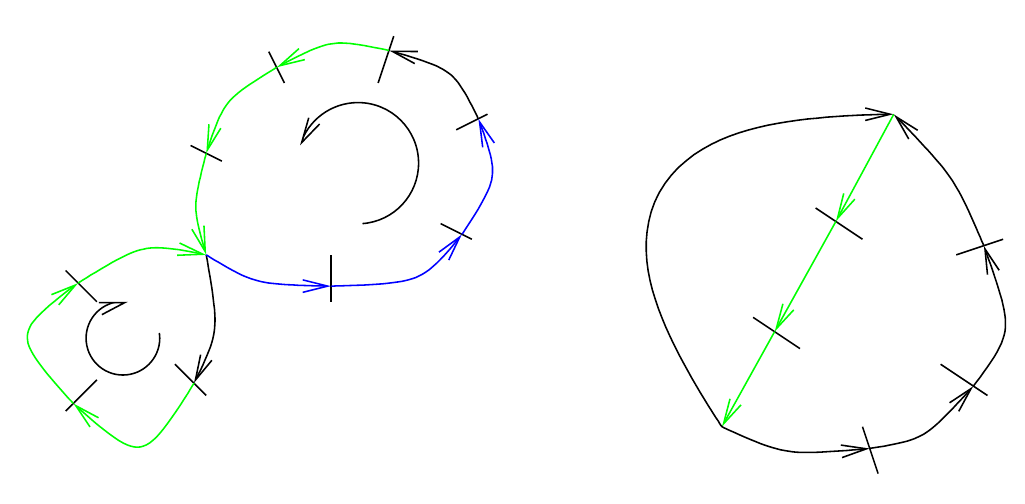_t} 
}
\caption{The graphs $G_2$ and $G_3$, labelled and colored to depict the maps $G_1 \xrightarrow{g_2} G_2 \xrightarrow{f^2} G'$ and the maps $G_2 \xrightarrow{g_3} G_3 \xrightarrow{f^3} G'$.}
\label{FigureFoldPath03}
\end{figure}

At this stage, having produced the map $f^3 \from G_3 \to G'$, we must stop the process, because $f^3$ is locally injective: there are no more folds to do because no two directions at any vertex of the domain graph $G_3$ are mapped to the same direction in the image graph~$G'$. This is in contrast to the maps $f^0, f^1, f^2$ for each of which there exists a domain vertex at which there are two directions with the same image. Failure/success of local injectivity, expressed as the existence/nonexistence (respectively) of directions at some domain vertex having the same image, is \emph{exactly} what determines existence/nonexistence of a factorization of each of the maps $f^i \from G_i \to G'$ ($i=0,1,2,3$) as a fold followed by another map.


The sequence of maps $G_0 \mapsto G_1 \mapsto G_2 \mapsto G_3$ is an example of a \emph{fold sequence} or \emph{fold~path}. 

\subsection{The outcome of folding.} Consider now the final map $f^3 \from G_3 \to G'$, where $G'$ is the standard rank~$2$ rose, and $G_3$ is depicted in Figure~\ref{FigureFoldPath03} with its edgelet subdivision depicting $f^3$. As noted above, $f^3$ is a local injection. But it is not a local homeomorphism because each of its two valence~3 vertices maps to the valence~$4$ vertex of $G'$. So $f^3$ is certainly not a covering map. 

It turns out, though, that we can extend $f^3$ to a covering map by a simple process:


\begin{proposition}
\label{PropEndOfStallings}
For any locally injective map $f \from G \to H$ of connected graphs there exists a graph $\wh G$, an embedding $G \inject \wh G$, and an extension $\hat f \from \wh G \to H$ of $f$, such that $\wh G$ deformation retracts to $G$ and $\hat f$ is a covering map. 
\end{proposition}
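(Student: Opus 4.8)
The plan is to analyze the failure of $f \from G \to H$ to be a covering map purely locally, vertex by vertex, and to patch each local defect by attaching a tree. A map of graphs fails to be a covering precisely at the vertices: since $f$ is locally injective, for each vertex $v \in G$ the induced map on directions $D_v f \from D_v G \to D_{f(v)} H$ is injective, but it need not be surjective. The ``missing directions'' at $v$ are the elements of $D_{f(v)} H \setminus \image(D_v f)$. First I would make precise, for each vertex $v$ of $G$ and each missing direction $d$ at $f(v)$, the data of the ``half-edge germ'' at $v$ that we wish to exist: we want a new edge emanating from $v$ which $\hat f$ sends over the edge of $H$ representing $d$. The idea is to build $\wh G$ by attaching, at each vertex, a tree that supplies exactly these missing germs and then propagates the covering condition outward along the new edges.

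The construction I propose is the standard ``completion'' construction. Start with $G_0 = G$ and $f_0 = f$, and inductively build an increasing sequence $G_0 \subset G_1 \subset G_2 \subset \cdots$ together with locally injective extensions $f_i \from G_i \to H$, as follows. Given $G_i$ and $f_i$, for each vertex $v \in G_i$ and each direction $d \in D_{f_i(v)} H$ not in the image of $D_v f_i$, attach a new edge $e_{v,d}$ at $v$ whose other endpoint is a new valence-one vertex, and extend $f_i$ over $e_{v,d}$ by sending it homeomorphically onto the edge of $H$ in the direction $d$ (orientation-preservingly from $v$). Call the result $G_{i+1}$, $f_{i+1}$. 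By construction $f_{i+1}$ is still locally injective at every vertex of $G_i$ (we only filled in previously-missing directions, so injectivity is preserved), and at each newly created valence-one vertex $f_{i+1}$ is trivially locally injective. Set $\wh G = \bigcup_i G_i$ and $\hat f = \bigcup_i f_i$. Then $\hat f$ is locally injective at every vertex, and moreover surjective on directions at every vertex of $\wh G$: any vertex $w$ either lies in some $G_i$ and had its missing directions filled at stage $i+1$, or is a valence-one vertex of some $G_i$ which acquires its remaining direction at stage $i+1$. Hence $D_w \hat f$ is a bijection for every $w$, so $\hat f$ is a local homeomorphism, and since $H$ is connected and $\wh G$ is built to be connected (each added edge meets the previous stage), a local homeomorphism of connected graphs onto a connected base is a covering map — here one should invoke the path-lifting characterization, or note directly that $\hat f$ restricted to the star of each vertex is a homeomorphism onto the star of its image.

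For the deformation retraction $\wh G \to G$: each edge $e_{v,d}$ added at stage $i+1$ has a free endpoint when it is born, and the sub-forest $F_i = \overline{G_{i+1} \setminus G_i}$ attached to $G_i$ is, at the moment of creation, a disjoint union of edges each with a free end; as later stages attach further edges, $\overline{\wh G \setminus G}$ grows into a forest $F$ hanging off the vertices of $G$, because every newly added edge is attached at one endpoint to something already present and carries a fresh vertex at the other end, so no cycle is ever created in the complement of $G$. More precisely, $\overline{\wh G \setminus G}$ meets $G$ only in vertices of $G$, and is itself a disjoint union of trees (each component is a tree rooted at a single vertex of $G$), so collapsing each such tree to its root gives a deformation retraction $\wh G \to G$; concretely one can use the radius-based retraction of a tree to its root constructed in Section~\ref{SectionTreesAreContractible}, applied fiberwise over the roots, to write down an explicit strong deformation retraction $\wh G \times [0,1] \to \wh G$ onto $G$ fixing $G$ pointwise.

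The main obstacle I anticipate is not any single hard step but rather bookkeeping: one must be careful that the inductive construction genuinely terminates in a \emph{connected} graph with the covering property, i.e.\ that the ``filling'' at valence-one vertices created at stage $i$ is itself performed at stage $i+1$ and does not run away, and that along the new trees the local picture really does become a bijection on directions everywhere in the limit. One must also check that $\wh G$ is \emph{locally finite} if $H$ is (so that "graph" in our sense is respected), which follows since each vertex of $\wh G$ acquires only finitely many new edges, its valence being bounded by the valence of its image vertex in $H$. And one should double-check the edge case where $G$ has an isolated vertex or $H$ is a single loop, but these are handled uniformly by the same construction. With these points attended to, the proposition follows; this is exactly the end of Stallings's folding argument, producing a covering $\hat f \from \wh G \to H$ with $\wh G \simeq G$, which is the form needed to read off $\pi_1$-injectivity and the image subgroup from the original locally injective $f$.
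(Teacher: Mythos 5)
Your construction is correct, and it is the same basic idea as the paper's: complete $f$ to a covering by hanging trees off the vertices where $D_v f$ fails to be surjective, then retract those trees back to their roots. The difference is in implementation. You build each attached tree inductively, one edge per missing direction per stage, and pass to the union $\wh G = \bigcup_i G_i$; this forces you to verify in the limit that every vertex ends up with a bijection on directions, that connectivity and local finiteness survive, and that the complement of $G$ is a forest. The paper short-circuits all of that by attaching, at each deficient vertex $v$, the subtree $\wt H(v) \subset \wt H$ consisting of the point $u \in q^\inv(f(v))$ together with the components of $\wt H - u$ in the missing directions, and extending $f$ by the restriction of the universal covering map $q \from \wt H \to H$; since $q$ is already a covering, the covering condition at every new vertex is inherited for free, no induction or limit argument is needed, and the attached pieces are visibly trees. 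Your route is more elementary (it never invokes the universal cover) at the cost of the bookkeeping you yourself flag. One caution: the sentence "a local homeomorphism of connected graphs onto a connected base is a covering map" is false as a general statement (an open inclusion of an arc into a circle is a local homeomorphism); what saves you is the alternative you give in the same breath — for a map taking vertices to vertices and edges homeomorphically onto edges, bijectivity on the direction set (equivalently on stars) at every vertex is exactly the covering criterion — so lean on that formulation rather than the general claim.
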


\begin{corollary} 
\label{CorollaryLocallyInjectiveProperties}
Under the same hypotheses as the previous proposition, for any $v \in G$ the induced homomorphism $f_* \from \pi_1(G,v) \to \pi_1(H,f(v))$ is injective. Furthermore, if we assume that $G,H$ are finite core graphs then one of the following holds: 
\begin{enumerate}
\item\label{ItemGraphNotCoveringMap}
$f$ is not a covering map, in which case $\image(f_*)$ has infinite index
\item\label{ItemGraphCoveringMap}
$f$ is a covering map, in which case $\image(f_*)$ has finite index equal to the degree of $f$.
\item\label{ItemGraphHomeoHE}
$f$ is a homeomorphism, which happens $\iff$ $f$ is a homotopy equivalence $\iff$ $f_*$ is an isomorphism. 
\end{enumerate}
Note that \pref{ItemGraphCoveringMap} and \pref{ItemGraphHomeoHE} are not mutually exclusive.
\end{corollary}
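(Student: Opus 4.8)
The plan is to route every claim through the Stallings completion $G \inject \wh G \xrightarrow{\hat f} H$ furnished by Proposition~\ref{PropEndOfStallings}, so that each assertion about the locally injective map $f$ becomes an elementary statement about the covering map $\hat f$. Writing $i \from G \inject \wh G$ for the inclusion, we have $f = \hat f \composed i$ and hence $f_* = \hat f_* \composed i_*$ on fundamental groups based at $v$. Because $\wh G$ deformation retracts to $G$, the inclusion $i$ is a homotopy equivalence and $i_*$ is an isomorphism; because $\hat f$ is a covering map, $\hat f_*$ is injective. Composing gives injectivity of $f_*$, the first assertion. The same factorization yields $\image(f_*) = \image(\hat f_*)$, so I may henceforth measure the index of $\image(f_*)$ in $\pi_1(H, f(v))$ as the number of sheets (the degree) of the covering $\hat f$.

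Now assume $G, H$ are finite core graphs. Part~\pref{ItemGraphCoveringMap} is immediate: if $f$ is itself a covering map I take $\wh G = G$ and $\hat f = f$, and since $G$ is finite the number of sheets is finite and equals $[\pi_1(H,f(v)) : \image(f_*)]$ by the standard index--degree correspondence for connected coverings.

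The heart of the argument, and the step I expect to be the main obstacle, is Part~\pref{ItemGraphNotCoveringMap}: that failure of $f$ to be a covering forces \emph{infinite} index. Suppose $f$ is not a covering map; then the completion is strict, $\wh G \supsetneq G$, and it suffices to show that $\wh G$ is an infinite graph, for then $\hat f$ has infinitely many sheets and $\image(f_*) = \image(\hat f_*)$ has infinite index. I would argue by contradiction: if $\wh G$ were finite, then $\hat f \from \wh G \to H$ would be a finite covering of a finite core graph, and since covering maps preserve vertex valences, $\wh G$ would itself be a finite core graph of the same (positive) rank as $G$. By Exercise~\ref{ExerciseCoreCharacter}, such a $\wh G$ has a \emph{unique} core subgraph that is a deformation retract of it, namely $\Core(\wh G) = \wh G$; but $G$ is by hypothesis a core subgraph which is a deformation retract of $\wh G$, so uniqueness forces $G = \wh G$, contradicting $\wh G \supsetneq G$. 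Hence $\wh G$ is infinite, completing Part~\pref{ItemGraphNotCoveringMap}.

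Finally, Part~\pref{ItemGraphHomeoHE} follows by assembling the two standard implications with the dichotomy just established. That $f$ a homeomorphism implies $f$ a homotopy equivalence is trivial, and a homotopy equivalence induces an isomorphism on $\pi_1$, so $f_*$ is an isomorphism. To close the cycle I would use that $f_*$ an isomorphism makes $\image(f_*)$ have index $1$, in particular finite; by Part~\pref{ItemGraphNotCoveringMap} this rules out $f$ being a non-covering, so $f$ is a covering map, and by Part~\pref{ItemGraphCoveringMap} its degree equals the index $1$; a degree-one covering of connected graphs is a homeomorphism. The overlap of \pref{ItemGraphCoveringMap} and \pref{ItemGraphHomeoHE} noted in the statement is then transparent: a homeomorphism is precisely a degree-one covering map.
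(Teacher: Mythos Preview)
Your proof is correct and follows the same overall strategy as the paper: route everything through the Stallings completion $G \hookrightarrow \wh G \xrightarrow{\hat f} H$ from Proposition~\ref{PropEndOfStallings}, reducing questions about $f$ to questions about the covering $\hat f$.

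The one place you diverge is in the argument for Part~\pref{ItemGraphNotCoveringMap}. The paper argues directly: pick the closure $S$ of a component of $\wh G - G$; this $S$ is a tree with at least one edge, and were it finite it would have a valence~$1$ vertex, which would then map under the covering $\hat f$ to a valence~$1$ vertex of $H$, contradicting that $H$ is a core graph; hence each such $S$ is infinite and so is $\wh G$. Your route is instead to observe that a finite $\wh G$ covering the core graph $H$ would itself be a core graph, and then invoke the uniqueness statement of Exercise~\ref{ExerciseCoreCharacter} to force $G = \wh G$. Both arguments hinge on the same underlying fact (covering maps preserve local valence, and $H$ has no valence~$1$ vertices), but yours packages this into a clean appeal to an earlier uniqueness result, while the paper's version is self-contained and slightly more concrete about what $\wh G \setminus G$ actually looks like. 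Either is fine; your cycle of implications for Part~\pref{ItemGraphHomeoHE} is also more explicit than the paper's, which simply declares it ``an immediate consequence.''
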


Before proving these results, we apply them to conclude the example of Section~\ref{SectionFoldSequenceExample}. The map $f^3 \from G_3 \to G'$ is a local injection which is not a covering map, and so $\image(f^3_*)$ has infinite index and $f^3$ is not a homotopy equivalence. Using the fold factorization $f^0 = f^3 \composed g_3 \composed g_2 \composed g_1 \from G_0 \to G'$ and the evident fact that each of the folds $g_1,g_2,g_3$ is a homotopy equivalence, it follows that $f^0$ is not a homotopy equivalence.

%
%

\begin{proof}[Proof of Proposition~\ref{PropEndOfStallings}] At each vertex $v \in G$, if $f$ is not locally surjective at $v$ then we shall attach to $v$ a piece of the universal covering tree $\wt H$ of $H$ and extend $f$ over that piece using the universal covering map $q \from \wt H \mapsto H$. To so this, pick a vertex $u \in \wt H$ so that $q(u)=f(v)$. We have maps
$$T_v G \xrightarrow{D_v f} T_{f(v)} H \xleftarrow{D_{u} q} T_{u} \wt H
$$
the first an injection and the second a bijection. By composition we get an injection $T_v G \inject  T_{u} \wt H$. Let $\wt H(v)$ be the subtree of $\wt H$ which is the union of $u$ with each component of $\wt H - u$ whose direction at $u$ is \emph{not} in the image of the injection $T_v G \inject T_{u} \wt H$. Attach a disjoint copy of $\wt H(v)$ to $G$ by identifying the copy of $u$ to $v$, and extend $f$ by using the copy of the map $q \restrict \wt H(v)$. Doing this for each vertex $v \in G$ we have proved the proposition.
\end{proof}

\begin{proof}[Proof of Corollary \ref{CorollaryLocallyInjectiveProperties}]
Applying Proposition~\ref{PropEndOfStallings} the map $f_*$ factors as $\pi_1(G,v) \approx \pi_1(\wh G,v) \xrightarrow{\hat f_*} \pi_1(H,f(v))$, where the first map is an isomorphism because it is induced by inclusion of a deformation retraction, and the second map is an injection by covering space theory.

Assume now that $f,g$ are finite core graphs. Item~\pref{ItemGraphCoveringMap} also follows by covering space theory, and \pref{ItemGraphHomeoHE} is an immediate consequence. 

To prove \pref{ItemGraphNotCoveringMap}, first note that a locally injective map of connected graphs, taking vertices to vertices and edges to edges, is a covering map if and only if it is locally surjective (this is extremely far from true for topological spaces in general). Assuming that $f$ is not a covering map, it follows that the inclusion $G \inject \wh G$ of Proposition~\ref{PropEndOfStallings} is not surjective. Let $S \subset \wh G$ be the closure of some component of $\wh G - G$, so $S$ is a tree with at least one edge. But $S$ cannot be a finite tree, because if it were then there would be a vertex $w \in S$ which has valence~$1$ in $\wh G$, and so the covering map $\wh G \mapsto H$ would take $w$ to a vertex of valence~$1$ in $H$, contradicting that $H$ is a core graph. Therefore $S$ is an infinite tree, the graph $\wh G$ is an infinite graph, and the covering map $\wh G \mapsto H$ has infinite degree, proving that $\image(f_*)$ has infinite index.
\end{proof}

\section{Fold sequences: general theory}

\hfill \emph{``Fold and live to fold again''}

\hfill --- Stu Ungar

\bigskip

The example of the previous section was carefully set up in order to be able to immediately start folding. In general one should ask how that setup be generalized, which we shall answer with the concept of a \emph{foldable map}.

Describing the theory of fold sequences will break into several tasks: constructing foldable maps (Section~\ref{SectionFoldableConstruction}); factoring foldable maps into fold sequences (Sections~\ref{SectionFirstFoldFactorization}--\ref{SectionFoldSeqConstr}); showing how and why fold sequences stop (Proposition~\ref{ThmStallingsFolds}); and drawing conclusions from the manner in which fold sequences stop (Proposition~\ref{PropEndOfStallings} and Corollary~\ref{CorollaryLocallyInjectiveProperties}, and applications).

\subsection{Directions, tight maps, and gates.} 
\label{SectionDirections}
Consider a graph $G$ and a point $p \in G$. A \emph{direction}\index{direction} of $G$ at $p$ is defined to be the germ of a locally injective path with initial endpoint $p$: two locally injective paths $\gamma_1 \from [0,m_1] \to G$, $\gamma_2 \from [0,m_2] \to G$ with $\gamma_1(0)=\gamma_2(0)=p$ have the same germ if there exist $\epsilon_1 \in (0,m_1]$, $\epsilon_2 \in (0,m_2]$ and a homeomorphism $h \from [0,\epsilon_1] \to [0,\epsilon_2]$ such that $\gamma_2 \composed h = \gamma_1 \restrict [0,\epsilon_1]$. We denote the \emph{direction set} of $G$ at $p$ as $T_p G$, and its cardinality is equal to the valence of~$p$. The set $T_p G$ is a kind of ``tangent space'' to $G$ at $p$. Assuming that $p$ is a vertex of $G$ --- which may be arranged by subdividing $G$ at $p$ --- there is a natural bijection between $T_p G$ and the set of oriented edges $E \subset G$ with initial vertex $p$, such that $E$ corresponds to the germ of any orientation preserving parameterization $\gamma \from [0,1] \to E$.

Let $TG = \union_p T_p G$, a kind of ``tangent bundle''. Elements of $TG$ are often denoted with the symbol~``$d$'' for ``direction''. 

Given a continuous map of graphs $f \from G \to H$, we say $f$ is \emph{tight}\index{tight map} if it takes vertices to vertices and its restriction to each edge is either a constant or a tight edge path.

\begin{proposition}\label{PropTightening}
Any continuous map of graphs $f \from G \to H$ can be tightened, that is, it can be homotoped to a tight map.
\end{proposition}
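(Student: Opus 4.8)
The statement is that any continuous map of graphs $f \from G \to H$ is homotopic to a tight map, i.e.\ one that sends vertices to vertices and whose restriction to each edge is either constant or a tight edge path. The plan is to proceed edge by edge on the domain, using the already-established tightening theory for individual paths that was developed back in Section~\ref{SectionGraphsAndPaths}.

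First I would arrange, by a preliminary homotopy, that $f$ sends every vertex of $G$ to a vertex of $H$. This is easy: each vertex $v \in \Vertices(G)$ has $f(v)$ lying in some closed edge $E$ of $H$ (or at a vertex); push $f(v)$ along $E$ to the nearer endpoint, extending this motion to a small neighborhood of $v$ in $G$ by a collar homotopy, and do this simultaneously and independently for all vertices using disjoint collars. After this step $f$ maps $\Vertices(G)$ into $\Vertices(H)$, and nothing we do below will disturb that.

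Next, treat each edge $e$ of $G$ separately. Parameterize $e$ by $\gamma_e \from [0,1] \to e$; then $f \composed \gamma_e \from [0,1] \to H$ is a continuous path with both endpoints at vertices of $H$. By the tightening property of graphs recalled in Section~\ref{SectionGraphsAndPaths} --- every continuous path with endpoints at vertices is path-homotopic, rel endpoints, to a unique tight path, which is either constant or a nondegenerate tight edge path --- there is a path homotopy $h_e \from [0,1] \times [0,1] \to H$ rel endpoints from $f \composed \gamma_e$ to a tight path $\tau_e$. Since each $h_e$ fixes the endpoints throughout, and the endpoints are exactly the images of the endpoints of $e$ (which are vertices), these homotopies agree on the vertex set of $G$ at all times, so they assemble into a single homotopy $H \from G \times [0,1] \to H$ of $f$: on each edge $e$ run $h_e$, and on vertices stay constant. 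Continuity of the assembled homotopy follows from the CW topology on $G$, exactly as in the construction of maps out of a CW complex (compatible maps on closed cells agreeing on overlaps glue to a continuous map). The endpoint of this homotopy is a map $g$ with $g \restrict e = \tau_e$ for each $e$, hence $g$ is tight: it sends vertices to vertices by the first step, and its restriction to each edge is either constant or a tight edge path.

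I do not expect any real obstacle here; this is a routine deployment of the unique-tightening property for paths plus the CW gluing lemma. The only points requiring a word of care are (a) ensuring the preliminary vertex-adjustment homotopy is performed on disjoint collar neighborhoods so it is globally continuous, and (b) noting that because each path homotopy $h_e$ is rel endpoints, the homotopies on distinct edges are automatically compatible on the shared $0$-skeleton and therefore glue; these are both standard and can be dispatched in a sentence or two.
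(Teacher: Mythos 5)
Your proposal is correct and follows essentially the same two-step route as the paper: first homotope $f$ so that vertices go to vertices (the paper invokes the homotopy extension lemma where you build the extension by hand with collars), then tighten $f$ on each edge rel endpoints and glue using the CW topology. No gaps; the extra care you flag about disjoint collars and rel-endpoint compatibility is exactly the right (and only) thing to check.
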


\begin{proof}
The restriction of $f$ to the vertex set of $G$ may easily be homotoped so that its image is in the vertex set of $H$. Applying the homotopy extension lemma, we may assume that $f$ itself takes vertices to vertices. Then the restriction of $f$ to each edge of $G$ may be homotoped relative to its endpoints to be either a constant path or a tight edge path. 
\end{proof}

Proposition~\ref{PropTightening} will be applied very often without comment.

\medskip

Assuming $f \from G \to H$ is tight, we say that $f$ is \emph{nondegenerate} if it is nonconstant on each edge. More generally, given a point $p \in G$ we say that $f$ is \emph{nondegenerate at~$p$} if $f$ is nonconstant on each edge that intersects~$p$, in which case the \emph{derivative} $D_p f \from T_p G \to T_{f(p)} H$ is defined by requiring $D_p f(d)$ to be the germ at $f(p)$ of $f \composed \gamma$ where $\gamma \from [0,1] \to G$ is chosen to have germ~$d$. It follows that if $f$ is nondegenerate then the derivative map $Df \from TG \to TH$ is defined everywhere.

Given a nondegenerate tight map $f \from G \to H$, for each $p \in G$ we define an equivalence relation on $T_p G$ where $d \sim d'$ if and only if $D_p f(d) = D_p f(d')$; the equivalence classes are called the \emph{gates of $f$ at $p$}.\index{gate}

\subsection{Foldable maps and their construction.} 
\label{SectionFoldableConstruction}
Consider a finite core graph $G$, a graph $H$, and a tight map $f \from G \to H$. We say that $f$ is a \emph{foldable map}\index{foldable map} if $f$ has at least two gates at every point $p \in G$. To be more precise, $f$ is foldable if and only if the following two properties hold:
\begin{enumerate}
\item $f$ is nondegenerate.
\item For each $p \in G$ the map $D_p f \from T_p G \to T_{f(p)} H$ is nonconstant. 
\end{enumerate}
For example all the maps occurring in Section~\ref{SectionFoldSequenceExample} are foldable.

We describe a procedure which converts a tight map $f \from G \to H$ into a foldable map, except in an extreme case. The goal is to factor $f$ up to homotopy as $G \xrightarrow{q} G_0 \xrightarrow{f^0} H$ where $q$ is a collapse map and $f^0$ is foldable. The severity of the collapse map $q \from G \mapsto G_0$ is not controllable a priori, for example the subgraph $K \subset G$ which is collapsed by $q$ need not be a subforest. The extreme case occurs when $K=G$, in which case $G_0$ is a point, which does not qualify as a core graph; although this case is rare in applications, it must be accounted for in several statements, for example in Proposition~\ref{PropFoldableMapConstruction}~\pref{ItemFoldableIfPiOneInj} below. 


\smallskip
\textbf{Step 0:} We start by passing to the \emph{edgelet subdivisions}\index{edgelet subdivision} of $G$ and $H$, meaning that we first subdivide $H$ at the points of $f(\Vertices(G))$, and we then subdivide $G$ at the points of $(f')^\inv(\Vertices(H))$. Once this is done, the $1$-cells of $G$ and of $H$ are called \emph{edgelets},\index{edgelet} and $f$ is a cellular map taking each vertex to a vertex and taking each edgelet either to a vertex or to an edgelet by a map which restricts to a homeomorphism of edgelet interiors. 

Let $K_0 \subset G$ be the union of  edgelets on which $f$ is constant. Collapse each component of $K_0$ to a point, obtaining a quotient graph $G'$ and an induced map to $H$, thereby factoring $f$ as 
$$\xymatrix{
G \ar[r]_{[K_0]} \ar@/^1pc/[rr]^{f} & G' \ar[r]_{f'} & H
}$$
Foldability has been partially established, in that $f' \from G' \to H$ is nonconstant on the interior of each edge. We must now deal with vertices at which $f'$ has one gate.

\textbf{Step 1:} If $f'$ has at least two gates at each vertex of $G'$ then $f'$ is foldable and we stop. Otherwise, consider a vertex $v \in G'$ at which $f'$ has one gate. Let $e_1,\ldots,e_I$ be the oriented edgelets of $G'$ with initial vertex $v$, all mapping to the same oriented edge $\eta = f'(e_i) \subset H$. Denote the terminal endpoint of $\eta$ as $u$. Let $K_1 = e_1\union\cdots\union e_I$. We alter the map $f' \from G' \to H$ in two steps. First, do a homotopy relative to the complementary subgraph $G' \setminus K_1$, homotoping $f'$ to a map $f'_1 \from G' \to H$ that is constant on $K_1$, taking $K_1$ to the point $u$. Next, factor the map $f'_1$ as $G' \xrightarrow{[K_1]} G'' \xrightarrow{f''} H$ where the first factor $G' \mapsto G''$ collapses $K_1$ to a point. The second map $f'' \from G'' \to H$ is tight, as is easily seen.

\textbf{Induction:} Now repeat Step~1 inductively on the map $f'' \from G'' \to H$. The induction must stop because $G''$ has strictly fewer edgelets than $G'$. When the induction stops, the resulting map is foldable.

\medskip

We may summarize the construction of foldable maps in the following proposition, much of which is already evident from the induction just described.


\begin{proposition}\label{PropFoldableMapConstruction}
For any rank~$n$ marked graph $G$, any graph $H$, and any tight map $f \from G \to H$, after passing to edgelet subdivisions, there exists an edgelet subgraph $K \subset G$ and a homotopy commutative diagram of maps 
$$\xymatrix{
G \ar[r]_{[K]}^{q\vphantom{f^0}} \ar@/^2pc/[rr]^{f} & G_0 \ar[r]^{f^0} & H
}$$
with the following properties:
\begin{enumerate}
\item \label{ItemFirstMapIsCollapse}
The map $q \from G \to G_0$ is a quotient that collapses to a point each component of $K$. 
\item \label{ItemFoldableIfPiOneInj} 
If $K$ is proper in $G$ then $G_0$ is a core graph, the map $f^0 \from G_0 \to H$ is foldable, and $f^0$ is a cellular map with respect to edgelet subdivisions.
\item \label{ItemCommutativeOffK}
The diagram is commutative when restricted to the subgraph $G \setminus K$:
$$f \restrict (G \setminus K) = f^0 \circ q \restrict (G \setminus K)
$$
(i.e.\ the homotopy that commutes the diagram is stationary on $G \setminus K$).
\item\label{ItemNotPiOneInjective}
If $f$ is $\pi_1$-injective then $K$ is a forest, and hence $q$ is a homotopy equivalence. It follows that if $f$ is a homotopy equivalence then $f^0$ is a homotopy equivalence.
\item\label{ItemSubgraphFoldable}
If $C \subset G$ is a core subgraph on which the restriction $f \restrict C$ is already foldable then $C \subset G \setminus K$. Hence, applying~\pref{ItemCommutativeOffK}, the diagram is commutative when restricted to~$C$:
$$f \restrict C = f^0 \circ q \restrict C
$$
%
\end{enumerate}
Furthermore, the diagram above is algorithmically constructible given $f \from G \to H$.
\end{proposition}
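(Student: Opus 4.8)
The plan is to verify the listed properties essentially by unwinding the construction in Steps~0 and~1 and its inductive repetition, which already does most of the work; the proposition is really a bookkeeping summary. So I would proceed by first fixing notation for the full run of the algorithm: let $K_0 \subset G$ be the edgelet union on which $f$ is constant, producing $G' = G/K_0$ and $f' \from G' \to H$; then let $K_1, K_2, \ldots, K_m \subset G_0^{(i)}$ be the successive one-gate edge bundles collapsed in the inductive iterations of Step~1, producing a tower $G' = G_0^{(0)} \to G_0^{(1)} \to \cdots \to G_0^{(m)} = G_0$, with $f^0 = f_0^{(m)}$ the final foldable map. Set $q \from G \to G_0$ to be the composition of all these quotient maps, and let $K \subset G$ be the total preimage in $G$ of all the collapsed material (equivalently, the edgelet subgraph that $q$ collapses). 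Item~\pref{ItemFirstMapIsCollapse} is then immediate: a composition of component-collapse quotients is again a component-collapse quotient, and by construction $q$ collapses exactly the components of $K$.

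Next I would address the remaining items in turn. For \pref{ItemFoldableIfPiOneInj}: when $K \ne G$ the final graph $G_0$ is nonempty, and since at each stage we only ever collapse a union of edgelets sharing an endpoint (in Step~1) or constant-image edgelet components (in Step~0), no valence-$1$ vertices are created in the quotient; combined with the termination condition of the induction — we stop precisely when every vertex has $\ge 2$ gates — this gives that $G_0$ is a core graph and $f^0$ is foldable, and cellularity with respect to the edgelet subdivisions is preserved at every stage since each map in the tower is a cellular quotient followed by a tightened cellular map. For \pref{ItemCommutativeOffK}: each homotopy used in Step~1 is explicitly taken relative to the complementary subgraph $G' \setminus K_i$ of the bundle being collapsed, and Step~0 involves no homotopy at all; pulling everything back to $G$, the composite homotopy commuting the big diagram is stationary off $K$, which is exactly the claimed equation $f \restrict (G \setminus K) = f^0 \circ q \restrict (G \setminus K)$.

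For \pref{ItemNotPiOneInjective}: each collapse $[K_i]$ is a homotopy equivalence unless $K_i$ contains a circuit; if $f$ is $\pi_1$-injective then no collapsed piece can contain a circuit, since collapsing a circuit kills an element of $\pi_1(G)$ on which $f_*$ is forced to be nontrivial (here I would invoke the relation between circuits, conjugacy classes, and $\pi_1$ from Section~\ref{SectionConjugacyClasses}, together with Exercise~\ref{ExerciseCollapse}), so $K$ is a forest and $q$ is a homotopy equivalence; the homotopy-equivalence conclusion for $f$ follows since $f \simeq f^0 \circ q$. For \pref{ItemSubgraphFoldable}: if $f \restrict C$ is already foldable then $f$ is nonconstant on each edgelet of $C$, so $C \cap K_0 = \emptyset$; and at no vertex of $C$ does $f$ (hence $f'$, $f''$, \ldots, restricted to the image of $C$) have a single gate, so no bundle $K_i$ ever meets the image of $C$ — inductively $C$ maps isomorphically into each $G_0^{(i)}$ and is disjoint from the collapsed bundle, giving $C \subset G \setminus K$; the commutativity equation on $C$ is then a special case of~\pref{ItemCommutativeOffK}. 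Finally, algorithmic constructibility is clear from the description: finding $K_0$ is a search over edgelets for constant image, finding a one-gate vertex and its bundle $K_i$ is a finite local check, tightening is effective, and the induction terminates because the edgelet count strictly decreases each time a nonempty bundle is collapsed. The one point needing genuine care — and the only place I expect more than routine verification — is \pref{ItemNotPiOneInjective}, specifically the claim that collapsing a circuit-containing subgraph must destroy $\pi_1$-injectivity; I would want to make precise that any nonseparating (circuit-carrying) collapse reduces $\operatorname{rank}(\pi_1)$ and that this rank drop is incompatible with $f_*$ being injective into a group of no larger rank, or alternatively argue directly that some circuit in $K$ would have to map to a nullhomotopic loop in $H$, contradicting injectivity.
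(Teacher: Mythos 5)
Your proposal is correct and follows essentially the same route as the paper: unwind the inductive construction of Section~\ref{SectionFoldableConstruction}, get \pref{ItemFirstMapIsCollapse}--\pref{ItemCommutativeOffK} from the fact that each homotopy is stationary off the collapsed bundle and collapses compose, get \pref{ItemNotPiOneInjective} from a circuit in $K$ being killed by $q_*$ hence by $f_*$, and get \pref{ItemSubgraphFoldable} by inductively showing the one-gate bundles never meet the image of $C$ because foldability of $f\restrict C$ forces at least two gates at its points. The only slips are inessential: intermediate collapses \emph{can} create valence-$1$ vertices (it is the termination condition, foldability of $f^0$, that forces $G_0$ to be a core graph, which you also invoke), the collapse maps may identify vertices of $C$ so ``maps isomorphically'' should be weakened to ``collapses no edgelet of $C$ and keeps the restriction foldable,'' and your rank-drop variant of \pref{ItemNotPiOneInjective} fails for arbitrary $H$, whereas your alternative direct argument (a circuit of $K$ maps to a nullhomotopic loop in $H$) is exactly the paper's reasoning.
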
 

\noindent
\textbf{Remarks.} Item~\pref{ItemSubgraphFoldable} will be applied in Proposition~\ref{PropWhiteheadConjugacyTopological}, the topological translation of Whitehead's problem on conjugacy classes, and it will play a role in the solution of that problem.

%

\begin{proof} There are a few observations to make in order to verify some of the finer points of this proposition. Consider the sequence of maps up through the $j^{\text{th}}$ step of the induction:
$$\xymatrix{
G \ar[r]_{[K]} \ar@/^5pc/[rrrrrr]^{f}& G' \ar[r]_{[K']} \ar@/^3.5pc/[rrrrr]^{f'} & G'' \ar[r]_{[K'']} \ar@/^2pc/[rrrr]^{f''} & \cdots \ar[r]_{[K^{(j-1)}]} & G^{(j)} \ar[rr]^{f^{(j)}} & \quad & H
}$$
By induction, $f^{(j)}$ is a tight map which does not collapse any edge.
Also, each horizontal arrow (except the last one $f^{(j)}$) is a collapse map, a quotient map obtained by collapsing to a point each component of the graph denoted in brackets $[\cdots]$. If the tight map $f^{(j)}$ has at least 2 gates at each vertex of $G^{(j)}$ then it is foldable and the induction stops. Otherwise, the next step of the induction can be summarized as follows. The map $f^{(j)}$ has 1 gate at some vertex $v^j$. One takes $K^{(j)} \subset G^{(j)}$ to be the subgraph consisting of the oriented edgelets whose common initial vertex is $v^j$, and one homotopes $f^{(j)}$ to a map $f^{(j)}_1$ by a homotopy which is stationary on $G^{(j)} \setminus K^{(j)}$, so that $f^{(j)}_1$ takes $K^{(j)}$ to a point. By collapsing each component of~$K^{(j)}$ to a point one obtains the core graph $G^{(j+1)}$. The map $f^{(j+1)} \from G^{(j+1)} \to H$, which is induced by $f^{(j)}$ and the collapse map $G^{(j)} \mapsto G^{(j+1)}$, is a tight map which does not collapse any edge.

Conclusion~\pref{ItemFirstMapIsCollapse} is evident from the observation that a composition of subgraph collapses is a subgraph collapse (see Exercise~\ref{ExerciseForestCollapseComp}). We have already observed that Conclusion~\pref{ItemFoldableIfPiOneInj} holds once the induction stops. Conclusion~\pref{ItemCommutativeOffK} follows by induction: for each map $f^{(j)} \from G^{(j)} \to H$, if an edge $e \subset G^{(j)}$ is not in the tree $K^{(j)}$ then the homotopy from $f^{(j)}$ to $f^{(j)}_1$ is stationary on $e$ and the subsequent collapse of $K^{(j)}$ leaves~$e$ unscathed, taking it to an edge in $G^{(j+1)}$. Conclusion~\pref{ItemNotPiOneInjective} follows from the fact that $q$ is not $\pi_1$-injective if $K$ is not a forest, and the fact that every forest collapse is a homotopy equivalence.

Conclusion~\pref{ItemSubgraphFoldable} also requires an inductive proof. The induction hypothesis is that $C$ contains no edgelet that is collapsed by the collapse map $G \mapsto G^{(j)}$, the image $C^{(j)} \subset G^{(j)}$ of $C$ under this collapse map is a core subgraph, and the restricted map $f^{(j)} \restrict C^{(j)}$ is foldable. For each vertex $v \in C^{(j)}$, the map $f^{(j)}$ has at least two gates at~$v$, because the restricted map $f^{(j)} \restrict C^{(j)}$ has at least two gates at $v$. Since each edgelet in $K^{(j+1)}$ has an endpoint with one gate, and since $C^{(j)}$ is a core graph, it follows that no edgelet in $C^{(j)}$ is contained in $K^{(j+1)}$. Thus no edgelet of $C^{(j)}$ is collapsed by the map $G^{(j)} \mapsto G^{(j+1)}$, and so no edgelet of $C$ is collapsed by the map $G \mapsto G^{(j+1)}$. The restriction of this collapse map to $C^{(j)}$ is a quotient map that does no more than to identify some vertices, hence the image $C^{(j+1)} \subset G^{(j+1)}$ is still a core graph. The map $f^{(j+1)} \restrict C^{(j+1)}$ that is induced by the foldable map $f^{(j)} \restrict C^{(j)}$ is clearly foldable too.
%
%
%
%
\end{proof}

\begin{exercise}
\label{ExerciseFoldableComposition}
Prove that if $G \mapsto G' \mapsto G''$ are maps of graphs, if $G,G'$ are both core graphs, and if both of these maps are foldable, then the composition $G \mapsto G''$ is foldable.
\end{exercise}

\begin{exercise}
\label{ExercisePiOneTrivial}
In the statement of Proposition~\ref{PropFoldableMapConstruction}, prove that $G=K_0$ if and only if the induced homomorphism $f_* \from \pi_1 G \to \pi_1 H$ is trivial. Using this, describe an algorithm to decide whether a tight map $f \from G \to H$ is $\pi_1$-trivial.
\end{exercise}


\subsection{The first fold factorization.}
\label{SectionFirstFoldFactorization}
Once we apply Proposition~\ref{PropFoldableMapConstruction} to a tight map, factoring it as a collapse map followed by a foldable map, we next want to analyze that foldable map by ``folding''~it. Given a foldable map $f \from G \to H$, the long term goal of folding is to simplify $f$ step-by-step, factoring it as the composition of a sequence of ``fold maps''. The first step of this factorization of $f$ is guided entirely by the local behavior of~$f$, which falls into two major cases:
\begin{description}
\item[$f$ is locally injective:] For each point $p \in G$, the following equivalent conditions hold: $p$ has a neighborhood on which $f$ is injective $\iff$ $D_p f$ is injective $\iff$ each gate of $f$ at $p$ is trivial, consisting of a single direction.
\item[$f$ is not locally injective:] There exists a point $p \in G$ at which local injectivity fails: $p$ has no neighborhood on which $f$ is injective $\iff$ $D_p f$ is not injective $\iff$ some gate of $f$ at $p$ is nontrivial, consisting of two or more directions. It follows from foldability of $f$ that $p$ is a vertex of valence~$\ge 3$.
\end{description}
If $f$ is locally injective then it is already as simple as possible, and may be analyzed using Proposition~\ref{PropEndOfStallings} and Corollary~\ref{CorollaryLocallyInjectiveProperties}. 

If $f$ is not locally injective, the first step of simplifying it is to factor into two factors, the first factor being a fold map, as we now describe. 

The first fold of a fold factorization of $f$ is determined by choosing a vertex $v \in G$ and two directions $d \ne d' \in T_v G$ which are in the same gate of $f$; this choice is not unique, and hence fold sequences are not unique, as discussed at the end of Section~\ref{SectionFirstFold}. Let $E,E' \subset G$ be the oriented edges representing $d,d'$. It follows that there exist initial segments $\eta \subset E$, $\eta' \subset E'$ that are \emph{folded by $f$}, meaning that there exists an orientation preserving homeomorphism $h \from \eta \to \eta'$ such that $(f \restrict \eta') \composed h = f \restrict \eta$. We note that if $\eta,\eta'$ are folded by $f$ then the interiors of $\eta$ and $\eta'$ are disjoint, for otherwise it would follow that $E' = \overline E$, and that $\eta,\eta'$ have initial subsegments $\eta^\vp_0,\eta'_0$ that are folded by $f$ and that have a common terminal point $q \in \interior(E)$; but then it would follow that $f$ is not locally injective at $q$, contradicting that $f$ is foldable. We say that $\eta,\eta'$ are the \emph{maximal} initial segments folded by $f$ if there do not exist strictly longer initial segments $\eta \subsetneq \eta_1 \subset E$, $\eta' \subsetneq \eta'_1 \subset E'$ such that $\eta_1,\eta'_1$ are folded by~$f$.


Choose $v,d,d'$ as above, and choose $\eta,\eta'$ to be initial segments representing $d,d'$ that are folded by $f$. Define $G'$ to be the quotient graph obtained from $G$ by first subdividing $e,e'$ at the terminal points of $\eta,\eta'$ if necessary, thereby making $\eta,\eta'$ into edges, and then identifying $\eta$ to $\eta'$ bijectively, using the homeomorphism $h \from \eta \to \eta'$ described above. Letting $g \from G \to G'$ be the quotient map, we have factored $f$ as follows: 
$$\xymatrix{
G \ar[r]_{g^\vp} \ar@/^1pc/[rr]^{f} & G' \ar[r]_{f'} & H
}$$
We shall call this the \emph{first fold factorization} of $f$ determined by $d,d',\eta,\eta'$. More specifically this is called the \emph{maximal first fold factorization} of $f$ determined by $d,d'$ if, in addition, $\eta,\eta'$ are the maximal initial segments representing $d,d'$. The two factors of a first fold factorization will be considered separately: the ``fold factor'' $g \from G \to G'$; and the ``foldable quotient'' $f' \from G' \to H$ (we have not yet verified that $f'$ is actually a foldable map; see Proposition~\ref{ThmStallingsFolds}).


\begin{exercise} 
\label{ExerciseMaximalFoldConsequences}
Consider the point $q \in G'$ which is equal to the common image of the terminal points of $\eta,\eta'$. What properties of $q$ and of the gates at $q$ of $f' \from G' \to H$ are always true assuming maximality of $\eta,\eta'$, but sometimes fail when maximality is dropped?
\end{exercise}

\subsection{Folds.} Abstracting the above discussion, consider a rank~$n$ core graph $G$, and another core graph $G'$ (in this generality we do not assume anything about the rank of $G'$). Consider a map $g \from G \to G'$ which is foldable and surjective (hence $g$ is a quotient map). We say that $g$ is a \emph{fold} if there exist oriented edges $e \ne e' \subset G$ with the same initial vertex~$v$, there exist initial segments $\eta \subset e$ and $\eta' \subset e'$ with disjoint interiors, and there exist orientation preserving parameterizations $h \from [0,1] \to \eta$ and $h' \from [0,1] \to \eta'$, such that if $X \subset G$ is a subset with at least two points and if $g$ is constant on $X$ then there exists $t \in (0,1]$ such that $X = \{h(t),h'(t)\}$. 

Fold maps $g \from G \to G'$ are classed into several types. The first type is somewhat exceptional:
\begin{description}
\item[$g$ is a bigon fold:] This means that $\eta$, $\eta'$ have the same terminal point $w$. It follows that $\eta=e$ and $\eta'=e'$ and that they have disjoint interiors. We say that $e,e'$ forms a \emph{bigon} which is folded by~$g$. 
\end{description}
The exceptional nature of a bigon fold is explained in Exercise~\ref{ExerciseBigonFoldNotPiOneInjective}: bigon folds are not $\pi_1$-injective. To prove the ``It follows\ldots'' statement in the definition of a bigon fold, were it not so then $e,e'$ would be opposite orientations on the same edge, their common terminal point $w$ would be an interior point of that edge, and $\eta,\eta'$ would be opposite halves of that edge oriented towards~$w$; but this implies that $g$ is not locally injective at $w$, contradicting that $g$ is foldable. 

The non-bigon folds are classified as follows:
\begin{description}
\item[$g$ is a partial fold,] meaning that both of $\eta \subset e$ and $\eta' \subset e'$ are proper initial segments. We say that \emph{$g$ partially folds $e$ and $e'$}.
\item[$g$ is a full fold,] meaning that at least one of the two inclusions $\eta \subset e$ or $\eta' \subset e'$ is not a proper initial segment, instead is the whole edge. There are two subcases, depending on whether one or both of $\eta,\eta'$ is the whole edge.
\begin{description}
\item[$g$ is an improper full fold:] This means that $\eta = e$ and $\eta' = e'$, and we say that $g$ \emph{improperly folds $e$ and $e'$}. 
\item[$g$ is a proper full fold:] This means that exactly one of the inclusions is the whole edge. By switching notation if necessary we may assume that $\eta = e$ and that $\eta' \subsetneq e'$, and we say that \emph{$g$ properly folds $e'$ over $e$}.
\end{description}
\end{description}
The reader who looks through Figures~\ref{FigureFoldPath01},~\ref{FigureFoldPath02} and~\ref{FigureFoldPath03} will see examples of some (but not all) classes of folds.

Next we summarize some properties of non-bigon folds, relating them to cells in outer space.

\begin{lemma}\label{LemmaNonbigonFolds}
Each non-bigon fold $g \from G \to G'$ is a homotopy equivalence; it follows that to each marking $\rho \from R_n \to G$ there corresponds a marking $G \xrightarrow{g \composed \rho} G'$, and this correspondence induces a bijection between homotopy classes of markings of $G$ and of~$G'$. Furthermore when related markings have been specified in this manner, thereby giving $G,G'$ the structure of marked graphs, then there exists a marked graph $G''$ for which there are cell inclusions $\Delta(G) \subset \Delta(G'') \supset \Delta(G')$.
\end{lemma}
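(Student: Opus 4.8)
The plan is to proceed in three stages, corresponding to the three assertions in the statement.

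First I would verify that a non-bigon fold $g \from G \to G'$ is a homotopy equivalence. Using the explicit description of folds, $g$ is the quotient map that subdivides $e,e'$ at the terminal points of the folded segments $\eta,\eta'$ (if necessary) and then identifies $\eta$ to $\eta'$ via an orientation-preserving homeomorphism. The key observation is that $\eta$ and $\eta'$ are embedded arcs in $G$ sharing only their common initial vertex $v$: they have disjoint interiors (part of the definition of a fold), and in the non-bigon case they do not share their terminal point either, so in fact $\eta \cap \eta' = \{v\}$. Thus the identification collapses each leaf of the homotopy to a single point and, up to subdividing, $g$ collapses to a point each arc of the form $\{h(t)\bar h'(t)\}$ where $h,h'$ parametrize $\eta,\eta'$; equivalently $g$ is homotopic to the map that first subdivides, then deformation retracts the ``bridge'' arcs, i.e.\ $G'$ is obtained from $G$ up to homotopy by collapsing a tree (the union $\eta \cup \eta'$, which is an arc, hence contractible). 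Since collapsing a contractible subcomplex is a homotopy equivalence, $g$ is a homotopy equivalence. (Alternatively: one checks directly that $g$ induces an isomorphism on $\pi_1$ by a Van Kampen / maximal-tree argument, noting that a maximal tree of $G$ containing $\eta$ maps onto a maximal tree of $G'$, and $g$ induces a bijection on the non-tree edges.)

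Next, granted that $g$ is a homotopy equivalence, the assertion about markings is essentially immediate from the material of Section~\ref{SectionMarkedGraphEquiv}: if $\rho \from R_n \to G$ is a marking then $g \composed \rho \from R_n \to G'$ is a composition of homotopy equivalences, hence a homotopy equivalence, hence a marking of $G'$. That the assignment $[\rho] \mapsto [g \composed \rho]$ is a bijection between homotopy classes of markings follows by post-composing with a homotopy inverse $\bar g \from G' \to G$ of $g$, which gives the inverse assignment; this is exactly the content of Exercise~\ref{ExerciseInducedMarking}. From now on I fix compatible markings on $G$ and $G'$, so $g$ is a forest collapse in the sense of Section~\ref{SectionFaceMaps} — indeed $g$ collapses the forest (a single arc, or after subdivision a union of edges forming an arc) that is the graph complement of a concrete natural subgraph.

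Finally, the main point is constructing the marked graph $G''$ with $\Delta(G) \subset \Delta(G'') \supset \Delta(G')$. The idea is that $G''$ should ``un-fold'' $G'$ just enough: $G''$ is the graph obtained from $G$ by subdividing $e$ and $e'$ at the terminal points of $\eta$ and $\eta'$ (so that $\eta,\eta'$ become genuine edges of $G''$) but \emph{without} performing the identification. Then there is an obvious collapse map $G'' \to G$ (collapsing nothing — it is just the subdivision, a homeomorphism, so really $G'' = G$ after subdivision and this inclusion is via an ideal simplex isomorphism onto a face; more precisely, $\Delta(G)$ sits inside $\Delta(G'')$ as the face where the two new subdivision edges of a given original edge are not separately variable — wait, that is not right). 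Let me instead take $G''$ to be $G$ with $e,e'$ subdivided: then $\Delta(G'') $ has one more pair of barycentric coordinates than $\Delta(G)$, and $\Delta(G)$ is the face of $\Delta(G'')$ obtained by... this does not collapse correctly either, so the honest construction is: $G''$ is the common ``expansion'' of $G$ and $G'$, namely $G$ with $\eta,\eta'$ made into edges and the two \emph{bridge} edges present, and there are forest collapses $G'' \xrightarrow{[\text{one bridge}]} G$ (which un-does a subdivision — so actually $G'' \to G$ collapses the arc $\eta$-minus-initial-vertex worth of nothing)... The cleanest route, which I would carry out in detail, is: apply Proposition~\ref{PropIdealSimplComplProps}\pref{ItemCellIntersection} and the face-order structure of $\X_n$ — since $g \from G \to G'$ is a forest collapse we have $G' \prec G$ (i.e.\ $G$ expands $G'$), so $\Delta(G') $ is a face of $\Delta(G)$; thus $G'' = G$ already works with $\Delta(G) \subset \Delta(G) \supset \Delta(G')$. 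Hmm — but the statement asks for a \emph{single} $G''$ containing both as faces, which is automatic once one shows $\Delta(G')$ is a face of $\Delta(G)$: just take $G'' = G$. So the real work is showing $G' \prec G$, i.e.\ that the concrete natural subgraph of $G$ whose complementary forest is $\eta \cup \eta'$ (after subdivision) collapses to $G'$ compatibly with markings — which is precisely what the forest-collapse reformulation in the previous paragraph established.

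The main obstacle I anticipate is bookkeeping around the subdivision: folds are defined on initial \emph{segments} $\eta \subsetneq e$, which are not a priori edges of the natural cell structure, so one must pass to a subdivided cell structure $\widetilde G$ of $G$ in which $\eta,\eta'$ are edges, check that $\eta \cup \eta'$ is a concrete natural subforest of $\widetilde G$ (it is an arc, hence a forest, and its complement is all of $\widetilde G$ minus an arc — still rank $n$, so concrete), verify that the subdivision does not change the outer space cell (subdividing an edge and re-normalizing gives a canonical ideal-simplex isomorphism $\Delta(\widetilde G) \cong \Delta(G)$, since the two sub-edge lengths are forced to be proportional — this is the one slightly delicate point, and it is where one uses that outer space cells track \emph{natural} edges only), and then identify $G''$ with $\widetilde G$, so that $\Delta(G) = \Delta(\widetilde G) = \Delta(G'')$ and $\Delta(G') = \Delta(\EH' \subset \EG'';\LG'')$ for the concrete subgraph $H' = \widetilde G \setminus (\eta \cup \eta')$. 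The inclusion $\Delta(G') \subset \Delta(G'')$ is then the ideal face map associated to the forest collapse $g$, which exists and is well-defined by Theorem~\ref{ThmCollapseUnique} and the discussion of Section~\ref{SectionFaceMaps}, and $\Delta(G) \subset \Delta(G'')$ is the identity (or canonical isomorphism). For bigon folds this all fails exactly because $\eta \cup \eta' = e \cup e'$ is then a circle, not a forest, so the complementary ``subgraph'' has lower rank — which is the structural reason bigon folds are excluded here.
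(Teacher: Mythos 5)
Your first assertion (non-bigon folds are homotopy equivalences) and your second (the induced bijection on homotopy classes of markings) are essentially fine, though the first should be phrased as: both $G \to G/(\eta\union\eta')$ and $G' \to G'/g(\eta)$ collapse contractible subgraphs and these two quotients agree, rather than saying $g$ itself ``collapses a tree''. The real problem is the third part, which is the heart of the lemma: your argument rests on the false identification of the fold $g$ with a forest collapse. A fold identifies the two arcs $\eta,\eta'$ with \emph{each other} homeomorphically; it does not collapse connected subsets to points. The preimage under $g$ of an interior point of $g(\eta)$ is a two-point set, so no subdivision of $G$ exhibits $g$ as the collapse of the subforest $\eta\union\eta'$: collapsing $\eta\union\eta'$ yields $G/(\eta\union\eta')$, which is $G'$ with the folded edge collapsed, not $G'$. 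Consequently your conclusion ``$G' \prec G$, so $\Delta(G')$ is a face of $\Delta(G)$ and one may take $G''=G$'' is wrong in general. Indeed for a partial fold the containment goes the \emph{other} way: $G'$ acquires a new natural edge $g(\eta)=g(\eta')$, collapsing it is a homotopy inverse of $g$, and so $\Delta(G)$ is a (possibly proper) face of $\Delta(G')$ --- compare the first fold of Section~\ref{SectionFirstFold}, where $G_0$ is a rose with a $1$-dimensional cell and $G_1$ is a theta graph with a $2$-dimensional cell, so $\Delta(G_1)$ cannot sit inside $\Delta(G_0)$. (Your side worry about subdivision is also misplaced: outer space cells are indexed by natural edges only, so subdividing at the endpoints of $\eta,\eta'$ does not change $\Delta(G)$ and there is no proportionality constraint to discuss; but that is secondary.)

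What is actually needed, and what the paper's proof does, is a case analysis on the type of fold. For a partial fold, $\Delta(G)\subset\Delta(G')$ and one may take $G''=G'$. For a full fold at a valence-$3$ vertex not involving a loop edge, $g$ is homotopic to an honest collapse of $e$ (proper case) or of the arc $e\union e'$ (improper case), giving $\Delta(G')\subset\Delta(G)$, so $G''=G$ works. In the remaining cases --- $v$ of valence $\ge 4$, or one of $e,e'$ a loop edge --- neither cell need be a face of the other, and the intermediate graph is produced by factoring $g$ as a partial fold $G\to G''$ followed by a full fold at a valence-$3$ vertex $G''\to G'$, yielding $\Delta(G)\subset\Delta(G'')\supset\Delta(G')$. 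Your proposal contains no mechanism for producing this intermediate $G''$, and without it the statement cannot be recovered from the (incorrect) claim that a fold is a collapse; this is exactly the gap to repair.
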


\begin{proof} During this proof, once it is evident that $g$ is a homotopy equivalence (perhaps by describing a homotopy inverse of $G$, or by describing $g$ as a product of homotopy equivalences), we shall assume that marked graph structures on $G,G'$ have been specified as in the statement of the lemma, and we will explicitly describe the relation between the cells $\Delta(G)$ and~$\Delta(G')$.

When $g$ is a partial fold it has a homotopy inverse $G' \mapsto G$ defined by collapsing the segment $g(\eta)=g(\eta') \subset G'$ to a point, and so $\Delta(G) \subset \Delta(G')$ (with codimension~$0$ or~$1$ depending on whether the valence of $v$ is $3$ or $\ge 4$). 

When $g$ is an improper full fold, the edges $e$ and $e'$ end at distinct vertices $w,w'$ (because $g$ is not a bigon fold), and we break the proof into various cases. Consider first the case that $v$ has valence~$3$, and that $w,w'$ are both distinct from $v$, hence $e \union e'$ is a tree, in fact an embedded arc; in this case $g$ is homotopic to a homotopy equivalence that collapses $e \union e'$ to a point, and so $\Delta(G) \supset \Delta(G')$ (with codimension $0$, $1$ or $2$ depending on how many of $w,w'$ have valence~$2$). In the remaining cases either $v$ has valence~$\ge 4$ or one $w,w'$ is equal to $v$ (hence one of $e,e'$ is a loop edge). In these cases the map $G \xrightarrow{g} G'$ factors as $G \mapsto G'' \mapsto G'$ where the first factor $G \mapsto G''$ is a partial fold and the second factor $G'' \mapsto G'$ is an improper fold fold at a vertex of valence~$3$ that is distinct from the endpoints of the folded edges, and hence $\Delta(G) \subset \Delta(G'') \supset \Delta(G')$. 

Consider finally the case that $g$ is a proper full fold of an edge $e'$ over an edge $e$. If $e$ is not a loop edge and $v$ has valence~$3$ then $g$ is homotopic to a map that collapses $e$ and so $\Delta(G) \supset \Delta(G')$ (with codimension~$0$ or~$1$ depending on whether the terminal vertex of~$e$ has valence~$2$). Otherwise, if $e$ is a loop edge, or if $v$ has valence~$\ge 4$, then $g$ factors as $G \mapsto G'' \mapsto G'$ where the first factor is a partial fold and the second factor is a proper full fold at a valence~$3$ vertex over a non-loop edge, and so $\Delta(G) \subset \Delta(G'') \supset \Delta(G)$.
\end{proof}

\begin{exercise}\label{ExerciseBigonFoldNotPiOneInjective}
Suppose that $g \from G \to G'$ is a bigon fold as described above. Prove that the induced homomorphism $g_* \from \pi_1(G,x) \to \pi_1(G',x')$ is surjective but not injective (with arbitrary $x \in G$, and $x'=g(x) \in G'$), and that $\rank(G') = \rank(G)-1$. Describe a specific circuit $c$ in $G$ such that the kernel of $g_*$ is normally generated by any based loop in $(G,x)$ that is freely homotopic to $c$.
\end{exercise}

\subsection{Stallings Fold Theorem}
\label{SectionFoldSeqConstr}
In this section we state and prove Stallings Fold Theorem in the context of a foldable map defined on a finite rank core graph, building on the construction of the first fold factorization that is carried out in Section~\ref{SectionFirstFoldFactorization}. The idea is to apply that construction inductively until no further fold is possible, in which case the final  map is locally injective. 

Here is the full statement of the theorem in the form that we shall need, which is obtained from the more general version in Stallings paper \cite{Stallings:folding} by specializing to the setting of finite core graphs.


\begin{theorem}[Stallings Fold Theorem]
\label{ThmStallingsFolds}
Given a finite rank core graph $G$, a graph~$H$, and a foldable map $f \from G \to H$, there exists a commutative diagram of foldable maps of core graphs
$$\xymatrix{
G =G_0 \ar[r]_{g_1}\ar@/^6pc/[rrrrrr]^<<<<<<<<<<<<<<<<<<<<<<<<<{f=f^0} 
& G_1 \ar[r]_{g_2}\ar@/^4.5pc/[rrrrr]^<<<<<<<<<<<<<<<<<<<<{f^1} 
& G_2 \ar[r]_{g_3}\ar@/^3pc/[rrrr]^<<<<<<<<<<<<<<<{f^2} & \cdots \ar[r]_{g_{K-1}} 
& G_{K-1} \ar[r]_{g_K}\ar@/^1pc/[rr]^<<<<<{f^{K-1}} & G_K \ar[r]_{f^K} & H
}$$
such that the following hold:
\begin{enumerate}
\item\label{ItemFoldableSequence}
Each of the compositions $f^i_j = f_j \composed \cdots \composed f_{i+1} \from G_i \to G_j$ is foldable.
\item\label{ItemFirstFold}
Each factorization 
$$f^i \from G_i \xrightarrow{g_{i+1}} G_{i+1} \xrightarrow{f^{i+1}} H
$$
is a maximal first fold factorization determined by some pair of directions in $G_i$ that are in the same gate of the map $f^i$.
\item\label{ItemContinuingFolds}
For each $i=1,\ldots,K$ exactly one of the following occurs:
\begin{enumerate}
\item $\rank(G_i) = \rank(G_{i-1})$, which occurs if and only if $g_i$ is a homotopy equivalence, equivalently $g_i$ is not a bigon fold.
\item $\rank(G_i) = \rank(G_{i-1})-1$, which occurs if and only if $g_i$ is not $\pi_1$-injective, equivalently $g_i$ is a bigon fold.
\end{enumerate}
\item \label{ItemStoppingStallingsFolds} For each $k=1,\ldots,K$, the map $G_k \xrightarrow{f^k} H$ is a local injection if and only if $k=K$.
\item \label{ItemStallingsFoldPath}
If $f$ is a homotopy equivalence then none of the folds $g_i$ is a bigon fold, and the map $f^K$ is an injective homotopy equivalence. If furthermore $H$ is a core graph then $f^K$ is a homeomorphism.
\end{enumerate}
Furthermore there is an algorithm which, given as input $G$, $H$, and $f \from G \to H$, produces as output a diagram of graphs and maps satisfying the conclusions as above.
\end{theorem}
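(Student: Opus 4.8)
The plan is to prove the theorem by iterating the maximal first fold factorization constructed in Section~\ref{SectionFirstFoldFactorization} and then checking by hand that the iteration has the advertised properties and terminates. Concretely: set $G_0 = G$ and $f^0 = f$; having built a foldable map $f^i \from G_i \to H$ on a core graph, if $f^i$ is locally injective we stop and set $K = i$; otherwise, since $f^i$ is foldable and not locally injective, there is a vertex $v \in G_i$ carrying two directions $d \ne d'$ in a common gate of $f^i$, and we let $f^i = f^{i+1} \composed g_{i+1}$ be the maximal first fold factorization determined by $d,d'$. This immediately produces the commutative diagram together with clause~\pref{ItemFirstFold}. The things that still need checking are: (a) that each $G_i$ is again a core graph and each $f^i$ is again foldable, so that the construction makes sense and clauses~\pref{ItemFoldableSequence} and~\pref{ItemContinuingFolds} hold; (b) that the iteration stops; and (c) the homotopy-equivalence conclusions of clause~\pref{ItemStallingsFoldPath}.

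For (a) the content is a local computation at the two vertices of $G_{i+1}$ where the fold changes the structure. At the image $v'$ of the fold vertex $v$, the fold merges exactly the two same-gate directions $d,d'$ into a single direction and leaves all other directions at $v$ untouched; hence the gates of $f^{i+1}$ at $v'$ are in natural bijection with the gates of $f^i$ at $v$, so there are still at least two. At the new vertex $q$ (the common image of the terminal points of the folded segments $\eta,\eta'$), the two ``forward'' directions into the unfolded remainders of the two edges have distinct $Df^{i+1}$-images, precisely by maximality of $\eta,\eta'$ (this is the point of Exercise~\ref{ExerciseMaximalFoldConsequences}), and neither of them can share its image with the ``backward'' direction along the folded edge because $f^i$, being tight, does not backtrack; so $f^{i+1}$ has at least two gates at $q$ as well. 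Together with nondegeneracy (no edgelet is collapsed by a fold), this shows $f^{i+1}$ is foldable, and since every vertex of $G_{i+1}$ still has valence $\ge 2$ it is a core graph. Foldability of the composites $f^i_j$ is then Exercise~\ref{ExerciseFoldableComposition}. For clause~\pref{ItemContinuingFolds}: a fold is either a bigon fold or not; a non-bigon fold is a homotopy equivalence by Lemma~\ref{LemmaNonbigonFolds}, hence preserves rank and is $\pi_1$-injective, while a bigon fold is not $\pi_1$-injective and drops rank by $1$ by Exercise~\ref{ExerciseBigonFoldNotPiOneInjective}.

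The main obstacle is (b), termination. Following Stallings, I would fix at the outset the edgelet subdivisions of $G$ and $H$ as in Step~0 of Section~\ref{SectionFoldableConstruction}, so that $f$ is cellular and carries each edgelet of $G$ homeomorphically onto an edgelet of $H$. The key point is that this edgelet structure is \emph{stable} under maximal folds: a maximal folded pair $\eta,\eta'$ is forced to consist of two edgelet-paths of equal positive edgelet-length, because if their common terminal point were interior to an edgelet of $G_i$ it would map into the interior of an edgelet of $H$, where there is only one forward direction, and tightness would then force the two ``forward'' directions to have equal image, contradicting maximality; so that terminal point maps to a vertex of $H$, or is a natural vertex of $G_i$, and in either case is an edgelet vertex. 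Folding $\eta$ onto $\eta'$ then merely identifies edgelets in pairs, so $G_{i+1}$ inherits a cellular structure with no refinement needed, $(f^{i+1})^{\inv}(\Vertices(H))$ is still contained in $\Vertices(G_{i+1})$, and the number of edgelets of $G_{i+1}$ is strictly smaller than that of $G_i$. This strictly decreasing nonnegative integer forces the process to stop. When it stops, at $k = K$, the map $f^K$ has only trivial gates at every point, i.e.\ $D_p f^K$ is injective for all $p$, i.e.\ $f^K$ is locally injective; for $k < K$ we chose to continue precisely because $f^k$ had a nontrivial gate, i.e.\ was not locally injective. This is clause~\pref{ItemStoppingStallingsFolds}.

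Finally, for clause~\pref{ItemStallingsFoldPath}, suppose $f = f^0$ is a homotopy equivalence, and let $g_i$ be the first bigon fold, if one occurs. Everything before $g_i$ is a composition of non-bigon folds, hence a homotopy equivalence inducing an isomorphism $\alpha$ on $\pi_1$, so $f_* = (\text{rest})_* \composed (g_i)_* \composed \alpha$, and since $(g_i)_*$ has nontrivial kernel so does $f_*$, contradicting that $f$ is a homotopy equivalence. Hence there are no bigon folds, every $g_i$ is a homotopy equivalence, and therefore $f^K = f \composed g_1^{-1} \composed \cdots \composed g_K^{-1}$ is a homotopy equivalence; being also locally injective, $f^K$ is ($\pi_1$-)injective by Corollary~\ref{CorollaryLocallyInjectiveProperties} (via the covering-space extension of Proposition~\ref{PropEndOfStallings}), and if $H$ is a core graph then $f^K$ is a homeomorphism by the same corollary. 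The algorithmic assertion is then immediate: for a finite graph and a tight map presented combinatorially, the gates, the existence of a nontrivial gate, and the maximal folded segments determined by a chosen pair of same-gate directions are all effectively computable, so the whole construction runs mechanically.
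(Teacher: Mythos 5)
Your proposal is correct and follows essentially the same route as the paper's proof: induct on maximal first fold factorizations, use the edgelet subdivisions together with maximality of the folded segments to make each fold a nondegenerate edgelet map whose edgelet count strictly decreases (termination and clause~\pref{ItemStoppingStallingsFolds}), and get clause~\pref{ItemStallingsFoldPath} from the non-$\pi_1$-injectivity of bigon folds plus Corollary~\ref{CorollaryLocallyInjectiveProperties}. The only divergence is in how preservation of foldability is checked: the paper packages it as the left-cancellation Lemma~\ref{LemmaLeftCancelFoldable}, proved by a star-neighborhood argument valid for any surjective foldable factor (and delivering the no-bigon-fold clause at the same time), whereas you do a direct gate count at the two affected vertices; your count at the merged terminal vertex is phrased for partial folds, and in the full-fold and bigon-fold cases you should add the easy remark that the merged vertex inherits at least two gates from the pre-existing vertex $w$ of $G_i$.
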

 
The sequence of maps $G=G_0 \mapsto\cdots\mapsto G_K = H$ produced by Proposition~\ref{ThmStallingsFolds} is called a \emph{maximal fold factorization} of the map $f \from G \to H$. 

\begin{proof} Conclusion~\pref{ItemStallingsFoldPath} will be deduced at the very end of the proof. The construction of the commutative diagram, and the deduction of conclusions~\pref{ItemFoldableSequence}--\pref{ItemStoppingStallingsFolds}, is carried out by induction, using the same ideas as in the examples of maximal first fold factorizations described in Section~\ref{SectionFirstFoldFactorization}. 

To prepare for induction, we first prove: 

\begin{lemma}[Left-cancellation for foldable maps.] 
\label{LemmaLeftCancelFoldable}
For any finite rank core graph $G$, any graphs $G'$, $H$, any foldable map $f \from G \to H$ and any factorization $G \xrightarrow{g} G' \xrightarrow{f'} H$ of~$f$, if $g$ is surjective and foldable then $G'$ is a finite rank core graph and $f'$ is foldable. It follows that if $g$ is a fold map then $f'$ is foldable. Furthermore, if  $f$ is a homotopy equivalence then $g$ is not a bigon fold and $f'$ is a homotopy equivalence.
\end{lemma}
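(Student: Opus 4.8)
The plan is to prove Lemma~\ref{LemmaLeftCancelFoldable} by unpacking the definition of foldability for $f'$ from foldability of $f$ and surjectivity of $g$, using that $g$ is itself a (nondegenerate, surjective) foldable map so that directions and gates behave well under $g$. First I would establish the easy structural points: since $g \from G \to G'$ is surjective, and $G$ is a finite core graph, the image $G'$ is a finite graph; moreover since $g$ is foldable it is nondegenerate, hence it does not collapse any edge, so $G'$ has rank at most that of $G$ and in particular $G'$ has no vertex of valence~$1$ (a valence-$1$ vertex of $G'$ would, via the edge-path structure, force a valence-$1$ vertex or a degenerate edge in $G$, contradicting that $G$ is a core graph and $g$ nondegenerate). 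So $G'$ is a finite rank core graph.

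Next comes the heart of the argument: showing $f'$ is foldable, i.e.\ that $f'$ is nondegenerate and that $D_{p'} f'$ is nonconstant at every $p' \in G'$. For nondegeneracy of $f'$: if $f'$ were constant on some edge $e'$ of $G'$, pick an edge $e$ of $G$ with $e \subset g^{-1}(\text{image of } e')$ mapping onto $e'$ (possible since $g$ is surjective and nondegenerate), and then $f = f' \circ g$ would be constant on $e$, contradicting nondegeneracy of $f$. For the gate condition: fix $p' \in G'$ and a direction $d' \in T_{p'} G'$. Because $g$ is surjective and a cellular/tight map, there is a point $p \in g^{-1}(p')$ and a direction $d \in T_p G$ with $D_p g(d) = d'$ — here I would use that a fold (more generally a surjective foldable map) induces a surjection $T_p G \twoheadrightarrow T_{p'} G'$ onto the directions at each image point, which follows from surjectivity together with the local form of $g$ near $p$. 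Now foldability of $f$ gives another direction $\hat d \in T_p G$ with $D_p f(\hat d) \ne D_p f(d)$. Pushing forward, $\hat d' := D_p g(\hat d) \in T_{p'} G'$; if $\hat d' = d'$ then since $g$ is itself foldable the two directions $d, \hat d$ lie in different gates of $g$ only if\ldots — more carefully, I would argue that $D_p f = D_{p'} f' \circ D_p g$, so $D_{p'}f'(d') = D_p f(d) \ne D_p f(\hat d) = D_{p'} f'(\hat d')$ forces $\hat d' \ne d'$; thus $d'$ and $\hat d'$ are two distinct directions at $p'$ with distinct $f'$-images, so $D_{p'} f'$ is nonconstant at $p'$. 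This establishes that $f'$ is foldable. The case where $g$ is a fold map is then immediate, since fold maps are surjective foldable maps by definition.

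Finally, for the homotopy-equivalence statement: suppose $f$ is a homotopy equivalence. Then $f$ is $\pi_1$-injective, and since $f = f' \circ g$, the map $g$ is $\pi_1$-injective. By Exercise~\ref{ExerciseBigonFoldNotPiOneInjective} a bigon fold is never $\pi_1$-injective, so $g$ is not a bigon fold; then by Lemma~\ref{LemmaNonbigonFolds} (or directly, since a non-bigon fold is a homotopy equivalence) $g$ is a homotopy equivalence, and therefore $f' = f \circ g^{-1}$ (up to homotopy inverse of $g$) is a homotopy equivalence as a composition of homotopy equivalences.

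I expect the main obstacle to be the precise verification that a surjective foldable map $g$ induces a surjection on direction sets $T_p G \to T_{p'} G'$ at image points, and the accompanying chain-rule identity $D_p f = D_{p'} f' \circ D_p g$ — this requires being careful about the cellular/edgelet structure and about the fact that $g$ might subdivide edges, so that a direction at $p'$ may be realized only after choosing the right preimage point $p$ and the right edge through it. Everything else is routine bookkeeping with the definitions of Section~\ref{SectionDirections} and Section~\ref{SectionFoldableConstruction}.
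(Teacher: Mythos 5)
Your overall route is essentially the paper's: analyze $f'$ germ-by-germ at each point $p' \in G'$, use surjectivity of $g$ together with the local star structure to realize each direction (and each edge segment) of $G'$ as a $g$-image from $G$, and then combine foldability of $f$ with the chain rule $D_p f = D_{p'}f' \circ D_p g$ to produce two directions at $p'$ with distinct $f'$-images; the ``Furthermore'' clause is handled by the identical $\pi_1$-injectivity argument, using Exercise~\ref{ExerciseBigonFoldNotPiOneInjective} to exclude a bigon fold and the fact that a non-bigon fold is a homotopy equivalence to conclude for $f'$. The step you single out as the main obstacle --- that every $d' \in T_{p'}G'$ is $D_p g(d)$ for some $p \in g^\inv(p')$ --- is exactly what the paper establishes with small star neighborhoods $N_{p'}$ and their preimage components, so that part of your plan is sound. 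Note also that your gate argument, applied at valence-$2$ points of $G'$, is what delivers local injectivity of $f'$ on edges (the tightness that foldability presupposes), which your ``nondegeneracy'' paragraph alone does not give; the paper gets this from the same two-directions fact at $K=2$.

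One step is misjustified as written: your reason that $G'$ has no valence-$1$ vertex, namely that such a vertex ``would force a valence-$1$ vertex or a degenerate edge in $G$,'' is not a valid implication. A surjective, nondegenerate graph map can create valence-$1$ vertices in its image with neither phenomenon in the domain --- e.g.\ fold the two halves of a loop edge onto an arc; what excludes this here is foldability, not nondegeneracy. The repair is cheap and is already implicit in your own argument: if $p'$ had valence $1$, then $T_{p'}G'$ is a single direction, while foldability of $f$ at any $q \in g^\inv(p')$ gives two directions at $q$ with distinct $f$-images, whose $g$-images are two distinct directions at $p'$ (distinct because their $f'$-images differ) --- a contradiction. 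This is precisely how the paper proves $G'$ is a core graph, by applying its ``two rays with distinct $f'$-images'' fact at a valence-$1$ point; alternatively, foldability of $g$ alone suffices, since at such a $q$ the map $D_q g$ would be constant and $g$ would have only one gate there. With that step repaired, your proof coincides with the paper's.
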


\begin{proof} Note first that $G'$ is the continuous image of the connected space $G$, hence $G'$ is connected. Also $G'$ cannot be a point, or else $g$ is constant hence $f$ is constant, violating foldability of $f$. 

Consider a point $p \in G'$ of valence $K \ge 1$.
Choose a regular neighborhood expressed as a star graph $N_p = \eta_1 \union\cdots\union \eta_K$ based at $p$, meaning a union of oriented segments $\eta_k$ called ``rays'' that intersect pairwise only at their common initial point $p$. If $N_p$ is chosen sufficiently small then for each $q \in g^\inv(p)$, the component $N_q$ of $g^\inv(N_p)$ that contains $q$ is a star graph based at $q$, each of whose rays maps homeomorphically to $\eta_k$ for some $k=1,\ldots,K$. For each ray $\eta_k$ of $N_p$, since $g$ is surjective there exists $q \in g^\inv(p)$ and a ray $\eta'$ of $N_q$ which $g$ maps homeomorphically to $\eta_k$; also, since $f$ is foldable it is locally injective on $\eta'$; it follows that $f'$ is locally injective on $\eta_k$. Another consequence of foldability of $f$ is the following fact: 
\begin{itemize}
\item For any $q \in g^\inv(p)$ there are two rays $\eta_{q,1},\eta_{q,2}$ in $N_q$ having distinct image under~$f$, hence $g(\eta_{q,i})$ for $i=1,2$ are two rays in $N_p$ having distinct image under $f'$. 
\end{itemize}
Applying this fact when $K=1$ leads to a contradiction, hence $K \ge 2$; since $G$ is connected and compact it follows that $G'$ is a finite rank core graph. Applying this fact again when $K=2$, it follows that $f'$ is a tight map that it is locally injective on each edge of~$G'$. Applying this fact once more for all $K \ge 2$, it follows that $f'$ is foldable.

To prove the ``Furthermore'' clause, suppose that $f$ is a homotopy equivalence. If $g$ were a bigon fold then the induced homomorphism $g_* \from \pi_1(G) \to \pi_1(G')$ would not be injective (by Exercise~\ref{ExerciseBigonFoldNotPiOneInjective}) hence $f_* = f'_* \circ g_* \from \pi_1(G,x) \to \pi_1(H,f(x))$ would not be injective, contradicting that $f$ is a homotopy equivalence. Since $f$ and $g$ are both homotopy equivalences, it follows that $f'$ is a homotopy equivalence.
\end{proof}

Proceeding now by induction, assuming it is known that $f_i \from G_i \to H$ is foldable and that a maximal first fold factorization $f_i \from G_i \xrightarrow{g_i} G_{i+1} \xrightarrow{f^{i+1}} H$ has been chosen, it follows by Lemma~\ref{LemmaLeftCancelFoldable} that $f^{i+1}$ is foldable and the induction continues. Furthermore, by Exercise~\ref{ExerciseFoldableComposition} each of the composed maps $f^i_j \from G_i \to G_j$ is foldable. This proves items~\pref{ItemFoldableSequence} and~\pref{ItemFirstFold}. 

Next we show that the induction stops; this is where we use maximality of the first fold factorizations. Consider the ``edgelet subdivisions'' of the $G_i$'s, defined by first subdividing $H$ along the set $f^0(\Vertices(G_0))$, then subdividing $G_0$ along $f_0^\inv(\Vertices(H))$, and then inductively subdividing each subsequent $G_{i}$ along $g_{i}(\Vertices(G_{i-1}))$. Each foldable map $f^i \from G_i \to H$ and each fold map $g_i \from G_{i-1} \to G_i$ is a nondegenerate edgelet map, meaning that it maps each vertex to a vertex and each edgelet to an edgelet. This is proved by induction. Suppose that $f^{i-1} \from G_{i-1} \to H$ is a nondegenerate edgelet map. Let $E,E' \subset G_{i-1}$ be the oriented edges with initial segments $\eta \subset E$, $\eta' \subset E'$ folded by $g_i$. Since $\eta,\eta'$ are the \emph{maximal} initial segments of $E,E'$ that are folded by $f^i$, the common point $f^{i-1}(q)=f^{i-1}(q') \in H$ to which the terminal points $q \in \eta$, $q \in \eta'$ are mapped by~$f^{i-1}$ is a vertex of $H$, indeed it is a vertex of valence~$\ge 3$ (see Exercise~\ref{ExerciseMaximalFoldConsequences}). It follows that $\eta,\eta'$ are subcomplexes of the edgelet subdivision of $G_{i-1}$, from which it follows further that $g_i$ and $f^{i+1}$ are nondegenerate edgelet maps.

Clearly the number of edgelets of $G_i$ is a positive integer. Each fold $g_i \from G_{i-1} \to G_i$ is a nondegenerate edgelet map, identifying the edgelets of $\eta$ to the edgelets of $\eta'$ in pairs. Since $g_i$ is a surjective, it follows that $G_{i}$ has strictly fewer edgelets than~$G_{i-1}$. The induction must therefore stop. If $f^k$ is not a local injection then some vertex of $G_k$ has a gate of cardinality $\ge 2$, hence we live to fold again at $G_k$ and the induction has not yet stopped, proving one direction of item~\pref{ItemStoppingStallingsFolds}. But if $f^k$ is a local injection, then no fold at $G_k$ is possible, proving the other direction of~\pref{ItemStoppingStallingsFolds}. Item~\pref{ItemContinuingFolds} has already been noted in the definition of~folds.

We turn to the proof of Conclusion~\pref{ItemStallingsFoldPath}. We are assuming that $f=f^0$ is a homotopy equivalence. Inductively applying the ``Furthermore'' clause of Lemma~\ref{LemmaLeftCancelFoldable}, it follows that each $g_i$ is not a bigon fold and that each $f^i$ is a homotopy equivalence. We know from~\pref{ItemStoppingStallingsFolds} that $f^K$ is a local injection. Knowing that $G_K$ and $H$ are core graphs and that $f^K$ is a locally injective homotopy equivalence, by applying Corollary~\ref{CorollaryLocallyInjectiveProperties} it follows that $f^K$ is a homeomorphism.
\end{proof}

\section{Determining $\pi_1$-injectivity and surjectivity.}

We now have enough tools to demonstrate the solvability of some of the Nielsen/Whitehead problems.

\subsection{$\pi_1$-injectivity.}

\begin{corollary} Given a tight map of finite graphs $f \from G \to H$, the following problems are algorithmically solvable:
\begin{enumerate}
\item\label{ItemInjectiveSolvable}
Determine whether $f$ is $\pi_1$-injective.
\item\label{ItemHESolvable}
Determine whether $f$ is a homotopy equivalence.
\end{enumerate}
The following problem is also algorithmically solvable:
\begin{enumeratecontinue}
\item\label{ItemFreeBasisSolvable}
Determine whether an $n$-tuple of elements of $F_n$ is a free basis.
\end{enumeratecontinue}
\end{corollary}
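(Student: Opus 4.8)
The plan is to reduce all three problems to deciding, for a tight map of finite graphs $f\from G\to H$, whether $f$ is $\pi_1$-injective (item~\pref{ItemInjectiveSolvable}) or a homotopy equivalence (item~\pref{ItemHESolvable}), and then to settle those via the two algorithmic engines of this section: the foldable-map construction, Proposition~\ref{PropFoldableMapConstruction}, and the Stallings Fold Theorem, Theorem~\ref{ThmStallingsFolds}. Item~\pref{ItemFreeBasisSolvable} then follows formally from item~\pref{ItemHESolvable}: by Exercise~\ref{ExerciseOrderedFreeBasis} an $n$-tuple $(w_1,\dots,w_n)$ in $F_n$ is a free basis if and only if the endomorphism $\Phi\from F_n\to F_n$ with $s_i\mapsto w_i$ is an automorphism, and by Exercise~\ref{ExerciseAutToHMCG} this holds if and only if the self-map $f_\Phi\from R_n\to R_n$ sending $e_i$ to the edge path spelled by $w_i$ is a homotopy equivalence; so one simply writes down $f_\Phi$ and runs the decision procedure for~\pref{ItemHESolvable}.

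For $\pi_1$-injectivity of $f\from G\to H$: first tighten $f$ via Proposition~\ref{PropTightening}, then replace $G$ by $\Core(G)$ and $H$ by $\Core(H)$ (after a preliminary homotopy pushing the image into $\Core(H)$); these replacements are homotopy equivalences so they affect neither $\pi_1$-injectivity nor being a homotopy equivalence, and the $\rank=0$ cases are trivial to dispose of directly. Now $G$ is a core graph; mark it arbitrarily and apply Proposition~\ref{PropFoldableMapConstruction} to obtain a factorization $f\simeq f^0\circ q$ with $q$ collapsing a subgraph $K\subseteq G$ and $f^0\from G_0\to H$ foldable. In the extreme case $K=G$ the map $f$ is $\pi_1$-trivial (Exercise~\ref{ExercisePiOneTrivial}), so it is $\pi_1$-injective if and only if $\rank(G)=0$. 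Otherwise $f$ is $\pi_1$-injective if and only if $K$ is a forest — checkable, and guaranteed by Proposition~\ref{PropFoldableMapConstruction}\pref{ItemNotPiOneInjective} when $f$ is $\pi_1$-injective — and $f^0$ is $\pi_1$-injective. Finally run the Stallings algorithm on $f^0$ to produce the fold factorization $f^0=f^K\circ g_K\circ\cdots\circ g_1$, with each $g_i$ flagged by the algorithm as a bigon fold or not. Since $f^K$ is locally injective it is $\pi_1$-injective by Corollary~\ref{CorollaryLocallyInjectiveProperties}, each non-bigon fold is a homotopy equivalence, and each bigon fold fails to be $\pi_1$-injective (Exercise~\ref{ExerciseBigonFoldNotPiOneInjective}) while every fold is $\pi_1$-surjective; a short diagram chase then shows $f^0$ is $\pi_1$-injective exactly when none of the $g_i$ is a bigon fold. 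This is a finite check.

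For the homotopy-equivalence problem: a homotopy equivalence is in particular $\pi_1$-injective and satisfies $\rank(G)=\rank(H)$, both now decidable, so assume these hold. Then in the reduction above $K$ is a forest and no $g_i$ is a bigon fold, so $q$ and every $g_i$ are homotopy equivalences, and $f$ is a homotopy equivalence if and only if the terminal locally injective map $f^K\from G_K\to H$ is. Here $G_K$ is a core graph, and after the initial passage to cores $H$ is too (a locally injective map carries circuits to circuits, so if one had not already replaced $H$ by $\Core(H)$ the image of $f^K$ would land there anyway). Thus Corollary~\ref{CorollaryLocallyInjectiveProperties} applies and tells us that $f^K$ is a homotopy equivalence if and only if it is a homeomorphism, which one verifies directly on vertices and edges. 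Note that being $\pi_1$-injective with $\rank(G)=\rank(H)$ does \emph{not} by itself force a homotopy equivalence — for instance $a\mapsto a^2,\ b\mapsto b$ on $F_2$ — so it is genuinely necessary to fold all the way to the locally injective terminus and then invoke Corollary~\ref{CorollaryLocallyInjectiveProperties}.

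I expect the main obstacle to be the bookkeeping that makes the bigon-fold criterion airtight: verifying that the classification of folds output by the Stallings algorithm detects \emph{exactly} the failure of $\pi_1$-injectivity, including that the non-injectivity introduced by a bigon fold is never repaired by the subsequent maps (this uses $\pi_1$-surjectivity of folds together with Exercise~\ref{ExerciseBigonFoldNotPiOneInjective}), and, for~\pref{ItemHESolvable}, pinning down that the locally injective terminal map — being between core graphs — lets Corollary~\ref{CorollaryLocallyInjectiveProperties} collapse "homotopy equivalence" to the manifestly decidable "homeomorphism". The reductions to cores and the extreme $K=G$ case are the minor points to handle carefully; everything else is an assembly of results already in hand.
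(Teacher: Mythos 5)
Your proof is correct and follows essentially the same route as the paper: factor $f$ via Proposition~\ref{PropFoldableMapConstruction} into a collapse followed by a foldable map, apply Theorem~\ref{ThmStallingsFolds}, detect failure of $\pi_1$-injectivity through a non-forest collapse or a bigon fold, decide homotopy equivalence from the terminal locally injective map via Corollary~\ref{CorollaryLocallyInjectiveProperties}, and reduce \pref{ItemFreeBasisSolvable} to \pref{ItemHESolvable} by the self-map of the rose. Your preliminary reduction to core graphs and explicit handling of the $K=G$ and rank-$0$ cases are careful touches the paper leaves implicit, but the argument is the same.
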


\begin{proof} To solve problems \pref{ItemInjectiveSolvable} and~\pref{ItemHESolvable}, first apply Proposition~\ref{PropFoldableMapConstruction} to construct a homotopy factorization of $f$ as a collapse map $G \xrightarrow{[K]} G_0$ followed by second map $G_0 \mapsto H$. If $K$ is not a forest, $f$ is not $\pi_1$-injective. Otherwise $K$ is a forest and the second map is foldable, and we apply Proposition~\ref{ThmStallingsFolds} to factor it as a fold a fold sequence. If the fold sequence contains a bigon fold, then $f$ is not $\pi_1$-injective. Otherwise all of the maps $G \mapsto G_0 \mapsto\cdots\mapsto G_K$ are $\pi_1$-isomorphisms, and the remaining map $f^K \from G_K \to H$ is locally injective and therefore $\pi_1$-injective by Corollary~\ref{CorollaryLocallyInjectiveProperties}, so it follows that $f^K$ is $\pi_1$-injective, and therefore $f$ is $\pi_1$-injective. Furthermore, by Corollary~\ref{CorollaryLocallyInjectiveProperties} $f^K$ is a $\pi_1$-isomorphism if and only if $f^K$ is an injection and its image is a deformation retraction of $H$, which holds if and only if $f$ is a $\pi_1$-isomorphism, equivalently $f$ is a homotopy equivalence.

Item~\pref{ItemFreeBasisSolvable} is settled once we note that a tuple $w_1,\ldots,w_n \in F_n$ is a free basis if and only if the self-map of the rose $R_n$ defined by mapping each edge $s_i$ to the edge path $w_i$ is a homotopy equivalence.
\end{proof}

\begin{exercise}
Go back and look at Exercise~\ref{ExerciseNonFreeBasis} again.
\end{exercise}

\subsection{$\pi_1$-surjectivity and Stallings graphs.}

\label{SectionPiOneSurjectivity}
One of the most well known applications of Stallings fold sequences is the solution of the following problems:
\begin{description}
\item[Algebraic version:] Given elements $w_1,\ldots,w_k$ of a free group $F_n$, determine whether $w_1,\ldots,w_k$ generate $F_n$, and more generally determine a free basis for the subgroup of $F_n$ generated by $w_1,\ldots,w_k$.
\end{description}
Representing $w_1,\ldots,w_k$ by reduced words, and using those words to construct a tight map of rose graphs $R_k \to R_n$, the above problem becomes a special case of the following
\begin{description}
\item[Topological intepretation:] Given a core graph $G$, a graph $H$, and a tight map of graphs $f \from G \to H$, determine whether $f$ is $\pi_1$-surjective, and determine a free basis for the image of $f_* \from \pi_1 G \to \pi_1 H$ (with respect to appropriate choices of base points).
\end{description}
To solve this problem, consider a tight map of graphs $f \from G \to H$. Apply Proposition~\ref{PropFoldableMapConstruction} to factor $f \from G \xrightarrow{[K]} G_0 \xrightarrow{f_0} H$ as a collapse map followed by a foldable map (in the extreme case where $G_0$ is a point, the map $f$ is $\pi_1$-trivial and we are done). Note that $\image(f_*) = \image((f_0)_*)$ with respect to any base point of $G$ and its images in $G_0$ and $H$. Thus we are reduced to the case that $f$ is foldable.

Assuming $f$ is foldable, apply Proposition~\ref{ThmStallingsFolds} to obtain a fold factorization 
$$(*)\qquad\qquad f \from G=G_0 \mapsto G_1 \mapsto \cdots \mapsto G_{K-1} \mapsto G_K \mapsto H
$$
Since every fold map is a $\pi_1$-surjection, all of the maps $G_{k-1} \mapsto G_k$, $k=1,\ldots,K$, are $\pi_1$-surjections, and so the final map $G_K \to H$ has the same $\pi_1$-image as the original map $f \from G \to H$. From Proposition~\ref{ThmStallingsFolds} it follows that the map $G_K \to H$ is locally injective, and so from Proposition~\ref{PropEndOfStallings} it is $\pi_1$-injective. Furthermore the map $G_K \mapsto H$ is $\pi_1$-surjective if and only if it is a homeomorphism, thereby determining $\pi_1$-surjectivity. More generally, suppose that we choose a base point $p_0 \in G_0$, and we let $p_i \in G_i$ and $q \in H$ be its images along the fold sequences. A free basis for the image of $\pi_1(G_K,p_K) \mapsto \pi_1(H,q)$ is determined by first writing out a free basis for $\pi_1(G_K,p_K)$ in the usual fashion: choose a maximal tree $T \subset G_K$, enumerate and orient the edges of $G_K \setminus T$ as $E_1,\ldots,E_M$, and for each $m$ let $\gamma_m$ be the loop in $G_K$ which goes from $p_K$ through $T$ to the initial endpoint of $E_m$, then across $E_m$, then from the terminal endpoint back through $T$ to~$p_K$. Mapping those loops $\gamma_1,\ldots,\gamma_M$ over to $H$ we obtain a free basis for the image subgroup in~$\pi_1(H,q)$.

The final immersion $G_K \mapsto H$ is often denoted by subdividing the edges of $G_K$ into edgelets that are oriented and labelled so as to indicate their images in~$H$, much as was done in our examples back in Section~\ref{SectionFoldSequenceExample}. When this is done, the graph $G_K$ is sometimes called the \emph{Stallings graph} of the subgroup. Reduced words representing generators of the image subgroup in $\pi_1(H,q)$ can then be read off from the edge labels around the lops $\gamma_1,\ldots,\gamma_M$ described above.
For example, Figure~\ref{FigureFoldPath03} depicts the Stallings graph of the subgroup of $F_2 = \<a,b\>$ generated by the words $aabababaaba$ and $aabaaba$.

\bigskip

The methods of this section can also be used to prove that the group $F_n$ is \emph{co-Hopfian}, meaning that every surjective homomorphism $F_n \mapsto F_n$ is injective:

\begin{exercise} Prove that $F_n$ is co-Hopfian.
\end{exercise}

%

\section{Exploring conjugacy classes using fold paths}
\label{SectionWhiteheadStartup}

\hfill{\emph{What do the simple folk do?}}

\smallskip

\hfill  --- from the musical \emph{Camelot}, by Lerner and Lowe

\bigskip

In this section we ponder Whitehead's problem on conjugacy classes. Consider a set of conjugacy classes $\{c_1,\ldots,c_M\} \subset \C(F_n)$. We wish to determine whether these represent a partial free basis of $F_n$. One necessary condition --- derived using that their homology classes in the free abelian group $H_1(F_n;\Z)$ form a partial basis over $\Z$ --- is that each $c_i$ is a root-free circuit, and if $i \ne j$ then $c_i$ is equal to neither $c_j$ nor $c_j^\inv$.

Our strategy for attacking Whitehead's problem is to explore the possibility of a stronger necessary condition. To start, we make the
\begin{description}
\item[Assumption:] The set of conjugacy classes $\{c_1,\ldots,c_M\}$ \emph{DOES} represent a partial free basis of $F_n$.
\end{description}
Guided by our topological investigations up to this point, we shall investigate a simple natural construction of a fold path, looking for simple patterns which might reveal general conditions that the set $\{c_1,\ldots,c_M\}$ satisfies under the above assumption. In the sections to follow we will turn this exploration into a theory, formalizing various concepts which arise naturally in our exploration.

Using the above assumption, we may apply the topological criterion of Proposition~\ref{PropWhiteheadConjugacyTopological} to obtain a marked graph $G$ with marking $\rho \from R_n \to G$ such that $c_1,\ldots,c_M$ are represented in $G$ by a circuit family $\sigma=\sigma_1 \union\cdots\union \sigma_M$ in $G$ where $\sigma_1,\ldots,\sigma_M$ are pairwise disjoint embedded circles. We may always take $G$ to be a bola graph, as shown in the proof of Proposition~\ref{PropWhiteheadConjugacyTopological}. Choose a homotopy inverse $f \from G \to R_n$ of the marking map~$\rho$. Marking $R_n$ by the identity map, the map $f$ preserves marking. We may homotope $f$ so that it is a tight map that restricts to a local injection on circle family $\Sigma = \sigma_1 \union \cdots \union \sigma_M$: first homotope the restriction $f \restrict \Sigma$ to be a local injection that takes vertices to vertices; then apply the homotopy extension theorem; then tighten $f$ on each edge of $G \setminus \Sigma$. 


Using Proposition~\ref{PropFoldableMapConstruction} and~\ref{ThmStallingsFolds} together, we factor $f$ as a collapse followed by a Stallings fold sequence, as shown in the following diagram:
$$\xymatrix{
G \ar[r]^{[U]}_{q}\ar@/^7.5pc/[rrrrrrr]^<<<<<<<<<<<<<<<<<<<<<<<<<<{f} 
& G_0 \ar[r]_{g_1}\ar@/^6pc/[rrrrrr]^<<<<<<<<<<<<<<<<<<<<<{f^0} 
& G_1 \ar[r]_{g_2}\ar@/^4.5pc/[rrrrr]^<<<<<<<<<<<<<<<<<<<{f^1} 
& G_2 \ar[r]_{g_3}\ar@/^3pc/[rrrr]^<<<<<<<<<<<<<<<{f^2} & \cdots \ar[r]_{g_{L-1}} 
& G_{L-1} \ar[r]_{g_L}\ar@/^1pc/[rr]^<<<<{f^{L-1}} 
& G_L \ar@{=}[r]_{f^L} & R_n
}$$
From the conclusions of Proposition~\ref{PropFoldableMapConstruction} and~\ref{ThmStallingsFolds}, we obtain various properties of this diagram. First, every map $g_i$ is a fold and every composition $g^j_i = g_i \composed \cdots g_{j+1} \from G_j \to G_i$ is foldable. Next,  since $f$ is a homotopy equivalence, it follows that the collapse graph $U \subset G \setminus \Sigma$ is a forest, no of the fold map $g_l \from G_{l-1} \to G_l$ is a bigon fold, and the final map $f^L \from G_L \to R_n$ is a homeomorphism. Also, the entire diagram is commutative except for the leftmost triangle, which \emph{is} commutative on $G \setminus U$ but only homotopy commutative on $U$ itself. And finally, since the map $f$ is locally injective on the disjoint union of circles $\Sigma \subset G$, by conclusion~\pref{ItemCommutativeOffK} of Proposition~\ref{PropFoldableMapConstruction} it follows that the homotopy by which $f$ is altered is stationary on the circle family $\Sigma$. 


Since $G$ is a marked graph and $f \from G \to R_n$ preserves marking, and since all the maps along the fold sequence are homotopy equivalences, we may push the marking forward all along that sequence, hence we may assume that each $G_l$ is a marked graph and each fold map $g_l \from G_{l-1} \to G_l$ preserves marking. 

We shall walk through the terms of this fold sequence to explore the representations of the conjugacy classes $c_1,\ldots,c_M$ by circuit families in each~$G_m$. These conjugacy classes are all represented by a circuit family $\gamma_l \from \Sigma \to G_l$, and this sequence of circuit families is preserved by the fold maps $g_l$, in the following sense. Since $f$ restricts to an immersion on $\Sigma$, and since the collapse map $q$ restricts to an immersion on $\Sigma$ it follows that the composition of the inclusion $i_\Sigma \from \Sigma \hookrightarrow G$ with the collapse $q \from G \to G_0$ is an immersion $\gamma_0 \from \Sigma \to G_0$; next, the composition of $\gamma_0$ with the fold map $g_1 \from G_0 \to G_1$ is the immersion $\gamma_1 \from \Sigma \to G_1$; and so on inductively. In general we have a sequence of immersions
$$\gamma_l = g_l \circ \cdots \circ g_1 \circ i_\Sigma \from \Sigma \to G_l, \quad 1 \le l \le L
$$
satisfying the property $g_l \circ \gamma_{l-1} = \gamma_l$.

Starting with the inclusion $\Sigma\hookrightarrow G$, we want to consider the inductive effect of the collapse map $q$ and the subsequent folds $g_1,\ldots,g_L$ on the circuit families $\gamma_l \from \Sigma \to G_l$. But we will not be concerned with global effects. Instead we shall be simple folk, and shall consider only local effects: instead of examining how each component of $\Sigma$ winds around as a whole in $G_l$, we focus solely on the infinitesmal behavior of $\gamma_l$ near each vertex of $G_l$, namely we focus on how $\gamma_l$ crosses through a regular neighborhood of each vertex.

Why should we expect that such a simple idea will work, that it will give us a good, strong necessary condition for $\{c_1,\ldots,c_M\}$ to represent a partial basis? Perhaps one might be motivated by the experience of differential topology and differential geometry, where strong global conditions often follow from simple infinitesmal assumptions. In fact we have already seen this approach work in our construction and application of fold maps: fold factorizations themselves are constructed by infinitesmal information, namely by observing where a foldable map fails to be locally injective. 

\textbf{Step 0: The collapse map $q \from G \xrightarrow{[U]} G_0$.} To describe the effect of $q$ on the embedding $\Sigma \hookrightarrow G$, we first note that no edge of $\Sigma$ can be collapsed by the map $q \from G \xrightarrow{[U]} G_0$, because $q$ restricts to an immersion on $\Sigma$. Nonetheless, an arc in $U$ connecting two points of $\Sigma$ might be collapsed by $q$, hence $q$ takes those two points to a single vertex of $G_0$. More precisely, fix a component $\tau$ of $U$. Since $\Sigma \subset G \setminus U$, the intersection $\tau \intersect \Sigma$ is a finite set of vertices of $G$; suppose there are $A$ such vertices (other than this finite-to-one behavior over the $g$-images of the components $\tau$ of $U$, the immersion $\gamma_0$ is one-to-one). The immersion $\gamma_0$ is therefore $A$-to-$1$ over the image vertex $u_0 = q(\tau) \in G_0$. Furthermore, at the vertex $u_0$ there is a collection of $A$ different ``turns'' each taken by different strands of the immersion $\gamma_0$; see Figure~\ref{FigureCollapsingSigma}. We shall shortly formalize the concept of a ``turn''; for now we write these turns as
$$\{E_1,E'_1\},\{E_2,E'_2\},\ldots,\{E_A,E'_A\}
$$
where $E_1,E'_1,E_2,E'_2,\ldots,E_A,E'_A$ is a set of $2A$ distinct elements of the direction set $T_{u_0} G_0$, represented by $2A$ distinct oriented edges with initial vertex~$u_0$.

\begin{figure}
\centerline{
\input{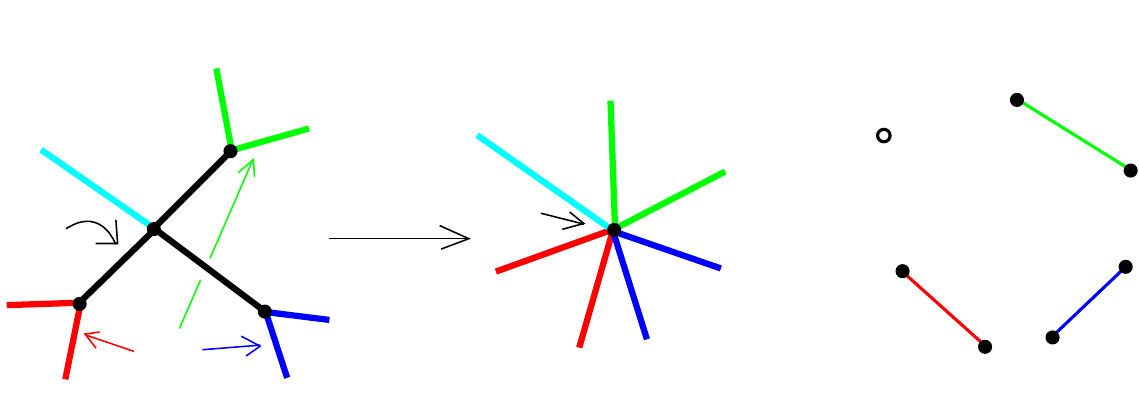_t} 
}
\caption{\textbf{The collapse map $q \from G \xrightarrow{[U]} G_0$.} The figures shows a regular neighborhood of a component $\tau$ of $U$. This regular neighborhood intersects the circle family $\Sigma \subset G$ in three strands --- red, dark blue, and green --- and $\Sigma$ intersects $\tau$ in three points. Those three points are identified to the single point $u_0=q(\tau)$ in $G_0$. The immersion $\gamma_0 \from \Sigma \to G_0$ is 3--to--1 over the point~$u_0$. The manner in which $\gamma_0(\Sigma)$ crosses $u_0$ is recorded in the Whitehead graph $W_{u_0}(\gamma_0)$, which is a disjoint union of three edges.  The hollow dot represents a vertex of $T_{u_0}(G_0)$ that is not a vertex of $W_{u_0}(\gamma_0)$, because no strand of $\Sigma$ crosses the cyan colored edge of $G$.}
\label{FigureCollapsingSigma}
\end{figure}

Figure~\ref{FigureCollapsingSigma} shows the case where $A=3$, together with an abstract representation of the collection of turns $\{E^\vp_1,E'_1\},\{E^\vp_2,E'_2\},\{E^\vp_3,E'_3\}$ as the edges of the \emph{Whitehead graph} of the immersion $\gamma_0$ at the vertex $u_0$, a finite graph that we denote $W_{u_0}(\gamma_0)$. One can think of the edge $\turn{E^\vp_a E'_a}$ corresponding to the turn $\{E^\vp_q,E'_a\}$ as an abstract representation of those 2-word subpaths of the immersion $\gamma_0$ having one of the two forms $\overline E^\vp_a E'_a$ or $\overline E'_a E^\vp_a$. Formally this Whitehead graph $W_v$ is a subgraph of a complete graph, namely the complete graph with vertex set equal to the direction set $T_{u_0} G_0$, a graph which we shall denote $\turngraph_{u_0} G_0$. For later emphasis we note that this particular Whitehead graph $W_{u_0}$ is a pairwise union of pairwise disjoint edges of the graph of turns $\Lambda_{u_0} G_0$.

\medskip
\textbf{Step 1: The first fold $g_1$.} Continuing now with the example depicted in Figure~\ref{FigureCollapsingSigma}, we shall examine the effects of the first fold map $g_1 \from G_0 \to G_1$ on the immersion \hbox{$\gamma_0 \from \Sigma \to G_0$,} specifically the manner in which $g_1$ alters the Whitehead graphs of $\gamma_0$ at the vertices of $G_0$ to produce the Whitehead graphs of $\gamma_1$ at the vertices of~$G_1$. 

Pick a vertex $u_0 \in G_0$ and denote its image $u_1 = g_1(u_0) \in G_1$. 

\textbf{Case 1:} The simplest case is that $g_1^\inv(u_1) = \{u_0\}$ and $u_0 \ne v$, in which case $W_{u_0}(\gamma_0)$ and $W_{u_1}(\gamma_1)$ are isomorphic: since $g_1$ maps a regular neighborhood of $u_0$ homeomorphically to a regular neighborhood of $u_1$, and since $\gamma_1 = g_1 \circ \gamma_0$, the derivative map $D_{u_0} g_1 \from T_{u_0} G_0 \to T_{u_1} G_1$ is a bijection that induces a graph isomorphism from $W_{u_0}(\gamma_0)$ to $W_{u_1}(\gamma_1)$.

\textbf{Case 2:} The next case to consider is that $g_1^\inv(u_1)=\{u_0\}$ and $u_0=v$. Denote the oriented edges with initial direction $v$ that  $g_1$ folds as $E,E'$, and consider their initial directions $d=d_vE$, $d'=d_vE'$; we say that these two directions form the \emph{illegal turn} of the fold map $g_1$.

\textbf{Subcase 2a:} If one or both of $d,d'$ are not vertices of the Whitehead graph $W_v(\gamma_0)$, the derivative map $D_v g_1 \from T_v G_0 \to T_w G_1$ induces a graph isomorphism $W_{u_0}(\gamma_0) =W_v(\gamma_0) \approx W_{u_1}(\gamma_1)$. This would happen in Figure~\ref{FigureCollapsingSigma}, for example, if one of $d$ or $d'$ was the hollow vertex of $T_v G_0$. 

\textbf{Subcase 2b:} The more interesting case to consider is when both of $d,d'$ are vertices of $W_v(\gamma_0)$. A key feature to notice is that $W_v(\gamma_0)$ does \emph{not} have an edge $\turn{d d'}$ --- equivalently, neither $\overline E E'$ nor $\overline E' E$ is a subpath of $q \restrict \Sigma$ --- because $\gamma_1 = g_1 \circ \gamma_0 \from \Sigma \to G_1$ is an immersion. For example, in Figure~\ref{FigureCollapsingSigma} the vertices $d,d'$ cannot be the endpoint pair of the green edge, nor of the blue edge, nor of the red edge. An example of this behavior is shown in Figure~\ref{FigureFirstFoldWGraph}, in which we continue the example of Figure~\ref{FigureCollapsingSigma} by a fold that identifies a green and a blue direction. The effect of this fold on Whitehead graphs is to alter $W_v(\gamma_0)$ by identifying an endpoint of the green edge to an endpoint of the blue edge, thus producing the Whitehead graph $W_{u_1}(\gamma_1)$. 

\begin{figure}
\centerline{
\input{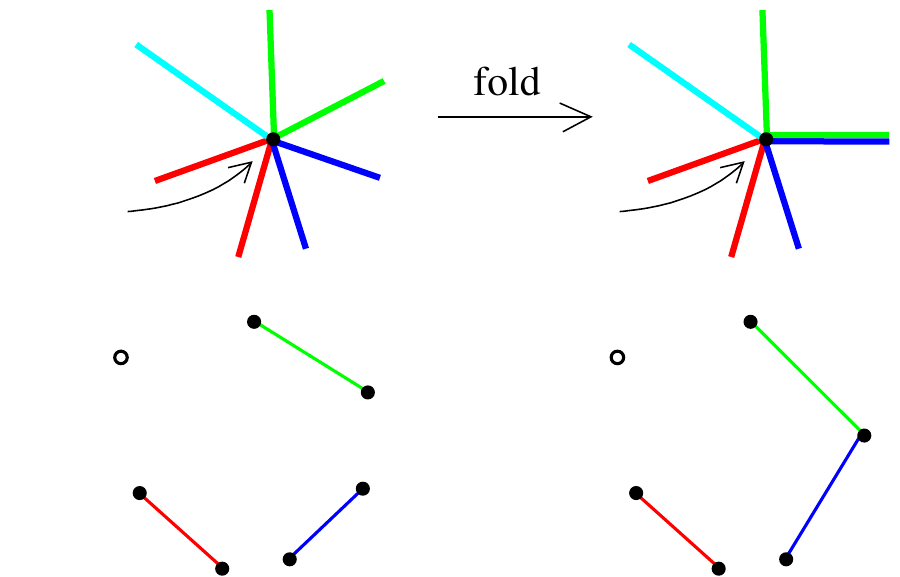_t} 
}
\caption{\textbf{The first fold $g_1$: Case 2, Subcase 2b.} The fold map $g_1 \from G_0 \to G_1$ is depicted in a neighborhood of the vertex $v$ (with $G_0$ and $v$ as in Figure~\ref{FigureCollapsingSigma}). In this case $g_1^\inv(u_1)=\{v\}=\{u_0\}$. The two directions $d,d'$ at $u_0$ that form the illegal turn of the fold map $g_1$ --- meaning the two directions that are identified by $D_{u_0} g_1$ --- are each vertices of the Whitehead graph $W_{u_0} \gamma_0$, forming a turn at $u_0$. But that turn is not taken by $\gamma_0$ hence $d,d'$ are not the endpoints of an edge of $W_{u_0}(\gamma_0)$. The fold induces a quotient map of Whitehead graphs $W_{u_0}(\gamma_0) \mapsto W_{u_1}(\gamma_1)$ under which the two illegal turn directions $d,d'$ at $u_0$ are identified to a single direction at $u_1$.}
\label{FigureFirstFoldWGraph}
\end{figure}

\textbf{Case 3.} Thus far we have considered all of the cases where $g_1^\inv(u^\vp_1)$ is a single point --- in words, $g_1$ is one-to-one over $u_1$. Since $g_1$ is a fold map, it only remains to consider the cases where $g_1$ is two-to-one over $u_1$. We may assume that $u_1$ has valence~$\ge 3$ in $G_1$, for if $u_1$ has valence~$2$ then the Whitehead graph $W_{u_1}(\gamma_1)$ is easy to determine: it is either empty (when $u_1 \not\in \gamma_1(\Sigma)$) or a graph with one edge connecting two vertices (when $u_1 \in \gamma_1(\Sigma)$).

We denote $g_1^\inv(u_1)=\{u^\vp_0,u'_0\}$. \emph{Both} of the Whitehead graphs $W_{u_0}(\gamma_0)$ and $W_{u'_0}(\gamma_0)$ are needed in order to describe the Whitehead graph $W_{u_1}(\gamma_1)$. Both of the derivative maps 
$$D_{u_0} \from T_{u_0} G_0 \to T_{u_1} G_1 \qquad D_{u'_0} \from T_{u'_0} G_0 \to T_{u_1} G_1
$$
induce maps 
$$W_{u_0}(\gamma_0) \to \Lambda_{u_1}(\gamma_1) \qquad W_{u'_0}(\gamma_0) \mapsto \Lambda_{u_1}(\gamma_1)
$$
The Whitehead graph $W_{u_1}(\gamma_1)$ is the union in $\Lambda_{u_1}(\gamma_1)$ of the images of these two induced maps. Those two images can be computed using the methods of Cases 1 and 2: for whichever of $u_0,u'_0$ is distinct from $v$, the corresponding induced map of Whitehead graphs is an embedding; for whichever is equal to $v$, the corresponding induced map is as described in Case 2 above, either an embedding or an identification of a pair of vertices.

In Figure~\ref{FigureFirstFoldCutPoint} we show a key special case, where $u_0=v$ is the valence~$6$ vertex of $G_0$ taken from the middle diagram of Figure~\ref{FigureCollapsingSigma}, and $u'_0$ is some point in the interior of an edge, hence the fold map is a proper full fold. The key feature of this example is that the Whitehead graph $W_{u_1}(\gamma_1)$ is formed as the quotient of the disjoint union of $W_{u_0}(\gamma_0)$ and $W_{u'_0}(\gamma_0)$ by identifying one vertex of $W_{u_0}(\gamma_0)$ with one vertex of $W_{u'_0}(\gamma_0)$. 

\begin{figure}
\centerline{
\input{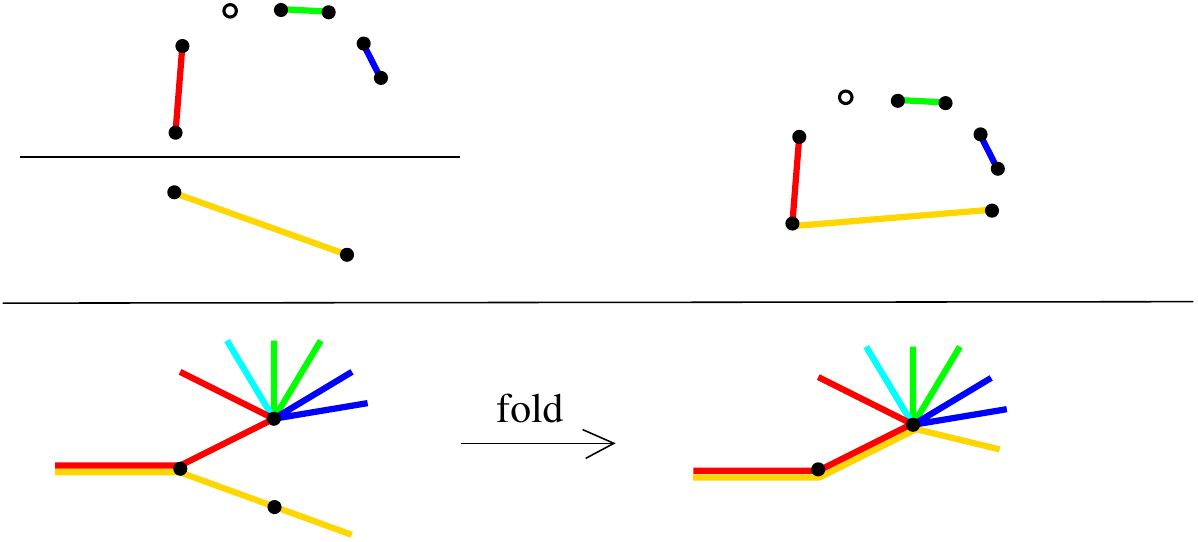_t} 
}
\caption{\textbf{The first fold $g_1$: A key subcase of Case 3.} Under the fold map $g_1 \from G_0 \to G_1$ we have $g_1^\inv(u_1)=\{u_0,u'_0\}$ where $u_0$ is the valence~6 vertex of $G_0$ taken from Figure~\ref{FigureCollapsingSigma} but redrawn, and $u'_0$ is a point in the interior of some edge. In this example, in addition to the red, blue, and green strands of $\gamma_0,\gamma_1$, there are also yellow strands; cyan colored edges are not crossed by $\gamma_0$ nor by $\gamma_1$. The Whitehead graph $W_{u_1}(\gamma_1)$ is formed from the disjoint union of the two Whitehead graphs $W_{u_0}(\gamma_0)$ and $W_{u'_0}(\gamma_0)$ by identifying a vertex in one with a vertex of the other.
}
\label{FigureFirstFoldCutPoint}
\end{figure}

\smallskip
At this stage we can begin to appreciate the first part of Whitehead's insight. First, when studying a circuit family, it is useful to focus attention on the ``infinitesmal behavior'' of that circuit family, namely a description of how the circuit family crosses a vertex, encoded into what we now call the \emph{Whitehead graph} of the circuit family. Second, given a fold map $G \xrightarrow{g} G'$ and a collection of immersed circuits in $G$ which stay immersed under the fold $g$, one can compute the Whitehead graphs in $G'$ solely from the description of the fold $g$ and the Whitehead graphs in $G$.  

Further insights come by examining the results of many such computations, or by simply intuiting key general properties of Whitehead graphs. Perhaps one can apply inductive computations to derive such properties. 

\begin{exercise}
\label{ExerciseComputeWhitehead}
Using the fold sequence $G_0 \mapsto G_1 \mapsto G_2 \mapsto G_3$ depicted in Sections~\ref{SectionFirstFold}--\ref{SectionSubsequentFolds}, letting $\Sigma$ be a circle, and letting $\gamma_0 \from \Sigma \to G_0$ be an embedding with image equal to the loop labelled $a$ in $G_0$, compute the Whitehead graphs of $\gamma_i \from \Sigma \to G_i$ at all vertices of $G_i$, for all $i=1,2,3$. 
\end{exercise} 

\begin{exercise}
\label{ExerciseMakeYourOwn}
Make your own example: start with a marked graph $G_0$ and an embedded collection of circuits in $G_0$; write down a long complicated fold sequence $G_0 \mapsto G_1 \mapsto \cdots \mapsto G_n$; try to choose the fold sequence so that all maps $G_0 \mapsto G_i$ restrict to an immersion on $\Sigma_0$, forming a system of circuits $\Sigma_i$ in $G_i$; examine the $\Sigma_i$'s for any extraordinary behavior.
\end{exercise}

\section{Whitehead's Algorithm} 
\label{SectionWhiteheadAlgorithm}

The reader of Section~\ref{SectionWhiteheadStartup} who has carried out Exercises~\ref{ExerciseComputeWhitehead} and~\ref{ExerciseMakeYourOwn} may have observed the following pattern: except for the simplest cases, given a marked graph $G$ and a circuit family $\gamma$ that represents a partial free basis, somewhere amongst all the Whitehead graphs of $\gamma$ at all the various point of $G$, there tends to be a component with a cut vertex. In Proposition~\ref{PropCutPointTest} of Section~\ref{SectionCutPointCondition} we formalize this observation as a necessary (but not sufficient) condition called the \emph{cut vertex test}. The cut vertex test is algorithmically decidable, and forms the first of two major subroutines for Whitehead's algorithm. Since the cut vertex test is not a sufficient condition, we are forced to further analyze the general situation where a circuit family passes the cut vertex test. In Section~\ref{SectionSplitOperation} we shall show that for any system of circuits that represents a partial free basis, if that system does pass the cut vertex test, and if that system is not yet ``visibly'' a partial free basis, then there is a ``split operation'' which simplifies the situation.

The precise statement of Whitehead's algorithm is found in Section~\ref{SectionWhiteheadAlgStated}; here is a brief outline. One inputs a marked graph and a set of primitive circuits, no two of which are equal to each other or to each others' inverse. Now one starts a loop: if those circuits visibly represent a partial free basis then the algorithm halts; otherwise one checks the cut vertex test; if that test fails then those circuits do not represent a partial free basis and the algorithm halts; otherwise one carries out the split operation and then repeats the loop. The algorithm must halt, because the split operation simplifies the circuits.

\subsection{Whitehead graphs of circuit families and other things.}
\label{SectionWhiteheadGraphs}
We begin by formalizing the notions of ``Whitehead graphs'' that have been informally introduced in examples.

Consider a graph $G$. We have earlier defined the direction set $T_v G$ of $G$ at any vertex $v \in G$, namely the set of germs of oriented tight edge paths with initial vertex~$v$. Each germ is represented by a unique oriented edge $E$ with initial vertex~$v$, and the germ of $E$ at $v$ is an element of $T_v G$ denoted $d_v E$ or just $dE$, called the \emph{initial direction} of $E$. Also, if $w$ is the terminal vertex of $E$ then the initial direction of the orientation reversed edge $\overline E$ is an element of $T_w G$ called the \emph{terminal direction of~$E$} and denoted $d_w \overline E$ or just $d \overline E$. We will often abuse notation by dropping the $d$, letting $E$ denote its own initial direction $d E$ and $\overline E$ its own terminal direction. 

For each vertex $v \in G$ define a \emph{nondegenerate turn} at $v$ to be a 2-element subset $\{d,d'\}$ of the direction set $T_v G$, also denoted $\turn{dd'}=\turn{d'd}$. In later chapters we will also be concerned with \emph{degenerate turns} at $v$, which are simply 1-element subsets; but for now we consider only nondegenerate turns, hence we shall abuse terminology by dropping the adjective ``nondegenerate''. The \emph{graph of turns} of $G$ at $v$, denoted $\Lambda_v G$, is formally equal to the complete graph with vertex set $T_v G$, having one unoriented edge connecting any two directions $d \ne d' \in T_v G$; we re-use the notation $\turn{dd'}=\turn{d'd}$ to represent that edge. To avoid confusion, we will try to stick with the terminology of ``directions'' and ``turns'' for vertices and edges of the graph $\Lambda_v G$ and its subgraphs.

Consider now a 1-manifold $M$, the components of which may be a mix of circles, arcs, rays, or lines, and there may be any number of components. A \emph{proper immersion} $\gamma \from M \to G$ is a continuous, locally injective map such that for each component of $M$, and for any orientation of that component, the restriction of $\gamma$ to that component is an edge path. If each component of $M$ is compact then it suffices to require that $\gamma$ is continuous and locally injective and we will refer to $\gamma$ as an \emph{immersion}. For the application started in Section~\ref{SectionWhiteheadStartup} and to continue in Section~\ref{SectionWhiteheadAlgorithm}, $M=\Sigma$ is a finite union of circles and the map $\gamma \from \Sigma \to G$ is called a \emph{circuit family}.

\paragraph{Definition of Whitehead graphs.} Given a proper immersion $\gamma \from M \to G$ and a vertex $v \in G$, the \emph{Whitehead graph} of $\gamma$ at $v$ is the subgraph of $\turngraph_v G$ denoted $W_v(\gamma)$ which encodes how $\gamma$ crosses~$v$, as follows:
\begin{itemize}
\item For any oriented edge $E$ with initial vertex $v$, the direction $d_v E$ is a vertex of $W_v(\gamma)$ if and only if there is an orientation of $M$ such that $E$ is a subpath of~$\gamma$ (which holds if and only if the image of $\gamma$ contains $E$).
\item For any two oriented edges $E \ne E'$ with initial vertex $v$ and initial directions $d=d_vE$, $d'=d_vE'$, the turn $\turn{dd'}$ is an edge of $W_v(\gamma)$ if and only if there is an orientation of $M$ such that $\overline E E'$ is a subpath of~$\gamma$; equivalently, there is an orientation such that $\overline E' E$ is a subpath of $\gamma$.
\end{itemize}
We will also say that the turns in $W_v(\gamma)$ are the turns that are \emph{taken} by~$\gamma$.

We note that any complete graph, such as $\turngraph_v G$, is a \emph{simplicial graph}\index{simplicial graph}, meaning that it is a simplicial 1-complex: there are no loop edges, and no two distinct edges have the same endpoint pair. It follows that any Whitehead graph $W_v(\gamma)$ is also a simplicial graph.

\medskip

One may visualize $W_v(\gamma)$ as follows. Fix a regular neighborhood $N_v \subset G$ of $v$ with frontier denoted $\bdy N_v$. We may identify the vertices of $\turngraph_v G$ with the set $\bdy N_v$, and we may identify the set of edges of $\turngraph_v$ with the set of arcs in $N_v$ having endpoints on $\bdy N_v$. The vertex set of $W_v(\gamma)$ is then identified with $\image(\gamma) \intersect \bdy N_v$. The edges of $W_v(\gamma)$, meaning the turns taken by $\gamma$, are identified with the images under $\gamma$ of the components of $\gamma^\inv(N_v)$. 

See Figures~\ref{FigureCollapsingSigma}, \ref{FigureFirstFoldWGraph} and~\ref{FigureFirstFoldCutPoint} for very rather simple examples of Whitehead graphs, and Figure~\ref{FigureTypicalWGraph} for a more complicated example.

\begin{figure}
\centerline{
\input{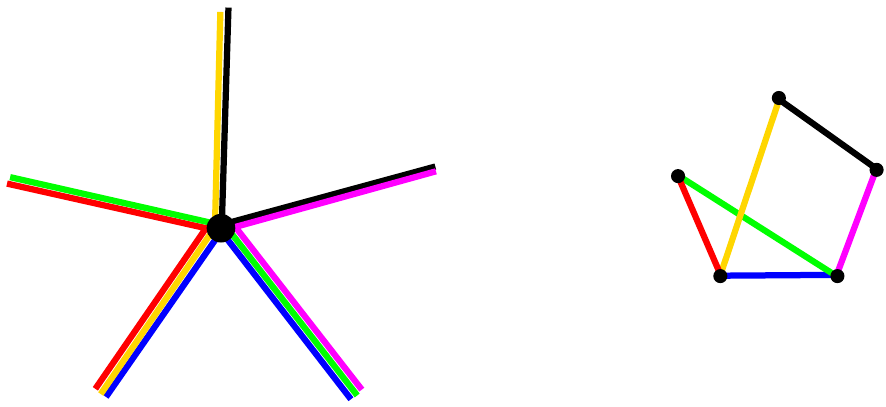_t} 
}
\caption{A somewhat typical Whitehead graph $W_v(\gamma)$, depicting six different turns taken by $\gamma$ at the vertex $v$.}
\label{FigureTypicalWGraph}
\end{figure}

\paragraph{Exercises on atypical Whitehead graphs.} Exercises \ref{ExerciseVisible}--\ref{ExerciseCollapseVisible} below explain consequences of some very simple and rather atypical behavior of Whitehead graphs. 

For these exercises we fix some notation: 
\begin{itemize}
\item $G$ is a marked graph; 
\item $\gamma \from C \to G$ is a circuit family with components $\gamma_i$, $i=1,\ldots,K$, representing conjugacy classes $[c_1],\ldots,[c_K]$ of $F_n$.
\end{itemize} 
We say that $\gamma$ is \emph{jointly primitive} if for each $i \ne j \in \{1,\ldots,K\}$ the circuit $c_i$ is primitive and is distinct from the circuits $c_j$ and $c_j^\inv$. We note that $\gamma$ is jointly primitive if and only if each conjugacy class $[c_i]$ is primitive and if $i \ne j$ then the conjugacy class $[c_i]$ is distinct from the conjugacy classes $[c_j]$ and $[c_j^\inv]$ (see Exercise~\ref{ExerciseJointPrimitiveTest} for a closely related discussion).

We say that $\gamma$ satisfies the \emph{visibility condition} if for each $v \in G$ the Whitehead graph $W_v\gamma$ is either empty or is a single edge of $\turngraph_v G$. More generally, $\gamma$ satisfies the \emph{near visibility condition} if for each $v \in G$ each component of $W_v\gamma$ consists of a single edge of~$\turngraph_v G$.

\begin{exercise}\label{ExerciseVisible}
Prove that if $\gamma$ satisfies the visibility condition then the following are equivalent:
\begin{enumerate}
\item $\gamma$ is jointly primitive.
\item $\gamma$ is injective.
\item $\{[c_1],\ldots,[c_K]\}$ is a partial free basis of $F_n$ of cardinality $K$.
\end{enumerate}
\end{exercise}

\begin{exercise}\label{ExerciseAlmostVisible}
Prove that if $\gamma$ satisfies the near visibility condition then the following are equivalent:
\begin{enumerate}
\item $\gamma$ is jointly primitive.
\item $\gamma$ is injective when restricted to the complement of some finite subset of $C$. 
\item $\{[c_1],\ldots,[c_K]\}$ is a partial free basis of $F_n$ of cardinality~$K$.
\end{enumerate}
\end{exercise}

\smallskip

The visibility and near visibility conditions are related as follows (see Step 0 in Section~\ref{SectionWhiteheadStartup}; and see Figure~\ref{FigureCollapsingSigma}):

\begin{exercise}\label{ExerciseCollapseVisible}
Prove that if $\gamma$ satisfies the visibility condition, if $L \subset G$ is a subforest which contains no edge in the image of $\gamma$, and if $q \from G \xrightarrow{[L]} H$ is the collapse map which collapses to a point each component of $L$, then $q \circ \gamma \from C \to H$ satisfies the near visibility condition.
\end{exercise}

\subsection{Induced maps of Whitehead graphs.} 
\label{SectionInducedWhitehead}
Consider now two graphs $G,G'$ and a nondegenerate tight map $g \from G \to G'$. For each $v \in G$ with image $v' \in G'$ the derivative $D_v g \from T_v G \to T_{v'} G'$ is defined. Given $v \in G$ and a turn $\turn{dd'}$ at $v$, if $D_v g(d) \ne D_v g(d')$ then we say that $\{d,d'\}$ is a \emph{legal turn for $g$} at~$v$, otherwise it is an \emph{illegal turn}. Since the graph of turns $\turngraph_v G$ is just the complete graph on the vertex set $T_v G$, and similarly for $\turngraph_{v'} G'$, the map $D_v g$ extends uniquely to a simplicial map $D_v g \from \turngraph_v G \to \turngraph_v G'$ which we shall call the \emph{induced turn map}. For each turn $\turn{dd'}$ at $v$, if $\turn{dd'}$ is legal then its image is the turn $D_v g(\turn{dd'}) = \{D_v g(d),D_v g(d')\}$, whereas if $\turn{dd'}$ is illegal then its image is the direction $D_v g(d)=D_v g(d')$. 

Let $\gamma \from M \to G$ be a proper immersion in $G$, and consider the map $\gamma' = g \composed \gamma \from M \to G'$. For each $p \in G$ with image $p' = g(p) \in G'$ we may restrict the simplicial map $D_v g \from \turngraph_p G \to \turngraph_{p'} G'$ to the Whitehead graph $W_p \gamma$.  In the following simple proposition, item~\pref{ItemWhiteheadImmersionCriterion} enumerates some ways to detect when $g \composed \gamma \from M \to G'$ is an immersion, and when it is item~\pref{ItemWhiteheadImage} gives useful information regarding Whitehead graphs. The proofs follow directly from the definitions, and we leave it to the reader to check the details.

\begin{proposition}
\label{PropWhiteheadMapUnion}
Given a nondegenerate tight map of graphs $g \from G \to G'$, and given a proper immersion of a 1-manifold $\gamma \from M \to G$, the following hold:
\begin{enumerate}
\item \label{ItemWhiteheadImmersionCriterion}
$g \composed \gamma \from M \to G'$ is a proper immersion if and only if each taken turn is a legal turn, if and only if for each $p \in G$ each turn in the Whitehead graph $W_p(\gamma)$ is a legal turn of the map~$g$.
\item \label{ItemWhiteheadImage}
If $\gamma' = g \composed \gamma \from M \to G'$ is a proper immersion then 
\begin{enumerate}
\item for each $p \in G$ with image $p'=f(g) \in G'$, the map $D_p g \from \Lambda_p G \to \Lambda_{p'} G'$ restricts to a nondegenerate simplicial map $D_p g \from W_p \gamma \to W_{p'} \gamma'$
\item for each $p' \in G'$ we have $\ds W_{p'} \gamma' = \bigcup_{p \in g^\inv(p')} D_p g(W_p \gamma)$
\end{enumerate}
\end{enumerate}
\qed\end{proposition}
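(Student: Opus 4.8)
\textbf{Proof proposal for Proposition~\ref{PropWhiteheadMapUnion}.}

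The plan is to unwind the definitions of ``proper immersion'', ``Whitehead graph'', ``derivative map'', and ``induced turn map'' from Sections~\ref{SectionDirections}, \ref{SectionWhiteheadGraphs}, and~\ref{SectionInducedWhitehead}, and to translate the local geometric behavior of $g \circ \gamma$ near a point into the combinatorics of turns. The central observation is that a continuous locally injective map is detected edge-by-edge: $g \circ \gamma$ fails to be locally injective at a point $x \in M$ precisely when $\gamma$ has, near $x$, a two-edge subpath $\overline E E'$ crossing some vertex $p = \gamma(x)$ such that $D_p g$ identifies the two directions $d_v E$ and $d_v E'$ --- in other words, when $\gamma$ takes an illegal turn of $g$. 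Since $g$ is tight and nondegenerate, its restriction to the interior of each edge is locally injective, so the only possible failures of local injectivity for $g \circ \gamma$ occur over vertices of $G$; this reduces everything to a vertex-local analysis.

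For item~\pref{ItemWhiteheadImmersionCriterion}, I would argue as follows. First, $g \circ \gamma$ is always tight in the sense that it sends each (oriented) edge path to an edge path, since $g$ is a nondegenerate tight map and $\gamma$ is an edge path on each component; the only question is local injectivity. Fix a point $x$ in the interior of some component of $M$ and let $p = \gamma(x)$. If $x$ maps into the interior of an edge of $G$ (so $\gamma$ is ``going straight'' near $x$), then $g \circ \gamma$ is locally injective at $x$ because $g$ is locally injective on edge interiors. If $p$ is a vertex of $G$, then near $x$ the map $\gamma$ reads as $\overline E E'$ for two oriented edges $E \ne E'$ with initial vertex $p$ (these are distinct because $\gamma$ itself is locally injective), and the turn $\turn{d_p E\, d_p E'}$ is by definition an edge of $W_p(\gamma)$. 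Now $g \circ \gamma$ is locally injective at $x$ if and only if $D_p g(d_p E) \ne D_p g(d_p E')$, i.e.\ if and only if this turn is a legal turn of $g$. Quantifying over all such $x$ gives exactly the stated equivalences: $g \circ \gamma$ is a proper immersion $\iff$ every turn taken by $\gamma$ is legal $\iff$ for every $p \in G$ every edge of $W_p(\gamma)$ is a legal turn of $g$.

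For item~\pref{ItemWhiteheadImage}, assume $\gamma' = g \circ \gamma$ is a proper immersion. Part~(a): the induced turn map $D_p g \from \turngraph_p G \to \turngraph_{p'} G'$ was defined in Section~\ref{SectionInducedWhitehead} as the simplicial extension of $D_p g$ on directions; I must check it carries $W_p(\gamma)$ into $W_{p'}(\gamma')$ nondegenerately. A vertex $d_p E$ of $W_p(\gamma)$ means $E$ appears in $\gamma$, so $D_p g(E) = g(E)$ appears in $\gamma' = g\circ\gamma$, hence $D_p g(d_p E)$ is a vertex of $W_{p'}(\gamma')$. An edge $\turn{d_p E\, d_p E'}$ of $W_p(\gamma)$ means $\overline E E'$ is a subpath of $\gamma$, so $\overline{g(E)}\, g(E')$ is a subpath of $\gamma'$; since $g \circ \gamma$ is an immersion this subpath is reduced, so $g(E) \ne g(E')$, i.e.\ the turn is legal and its image $\{D_p g(d_p E), D_p g(d_p E')\}$ is a genuine (nondegenerate) edge of $W_{p'}(\gamma')$. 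This gives a nondegenerate simplicial map $W_p(\gamma) \to W_{p'}(\gamma')$ as claimed. Part~(b): the inclusion $\bigcup_{p \in g^\inv(p')} D_p g(W_p \gamma) \subseteq W_{p'}(\gamma')$ is immediate from part~(a). For the reverse inclusion, take a vertex or edge of $W_{p'}(\gamma')$: a vertex $d_{p'} E'$ of $W_{p'}(\gamma')$ corresponds to an oriented edge $E'$ of $G'$ appearing in $\gamma' = g \circ \gamma$, so some oriented edge $E$ of $G$ with $g(E) = E'$ appears in $\gamma$, and its initial vertex $p$ lies in $g^\inv(p')$ with $D_p g(d_p E) = d_{p'} E'$; similarly an edge of $W_{p'}(\gamma')$ comes from a two-edge subpath of $\gamma'$, which lifts through $g$ to a two-edge subpath of $\gamma$ crossing some vertex $p \in g^\inv(p')$ (using that $g$ maps edges to edges bijectively on interiors). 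Hence equality holds.

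The bookkeeping here is entirely routine; the one point that deserves care --- and the only place where the ``if and only if'' in part~\pref{ItemWhiteheadImmersionCriterion} could conceivably slip --- is the reduction of local injectivity of $g \circ \gamma$ to the vertex-local turn condition, i.e.\ confirming that no failure of injectivity can occur over an edge interior and that every failure over a vertex is genuinely witnessed by an edge of some Whitehead graph. Once that reduction is in hand, both items follow by direct translation, which is why the proposition is stated with the proof left to the reader.
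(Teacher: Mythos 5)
Your overall strategy is exactly the intended one (the paper itself leaves this verification to the reader): reduce local injectivity of $g \composed \gamma$ to a vertex-local analysis of turns, then push subpaths forward for the inclusion in~\pref{ItemWhiteheadImage} and lift subpaths of $\gamma'$ back through $g$ for the reverse inclusion. Item~\pref{ItemWhiteheadImmersionCriterion} and part~(a) of item~\pref{ItemWhiteheadImage} are fine as you have written them.

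There is, however, a concrete gap in your proof of the reverse inclusion in part~(b): you argue as though $g$ were simplicial. The hypothesis is only that $g$ is a nondegenerate tight map, so $g$ carries an edge of $G$ to a tight edge \emph{path} in $G'$, not to a single edge; in particular there need not exist any edge $E \subset G$ with $g(E)=E'$, and a two-edge subpath of $\gamma'$ at $p'$ need not lift to a two-edge subpath of $\gamma$ ``crossing some vertex $p$'' --- the relevant point $p \in g^\inv(p')$ may lie in the \emph{interior} of an edge $e$ of $G$ whose $g$-image path passes through the vertex $p'$. Your parenthetical ``$g$ maps edges to edges bijectively on interiors'' is false in that situation, and the contributions you would miss are precisely the ones the proposition is later used for: in Case~3 of the proof of the cut vertex test (Figure~\ref{FigureFirstFoldCutPoint}) the union in~\pref{ItemWhiteheadImage}(b) is taken over a preimage point $u'_0$ interior to an edge, where $W_{u'_0}(\gamma_0)$ is a single ``straight-through'' edge joining the two directions of $T_{u'_0}G_0$. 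The repair is routine: either pass first to edgelet subdivisions of $G$ and $G'$ (subdividing $G'$ at $g(\Vertices(G))$ and $G$ at $g^\inv(\Vertices(G'))$), noting that directions, taken turns, and hence all Whitehead graphs at the points in question are unchanged, so that $g$ does become an edge-to-edge map and your lifting argument applies verbatim; or argue germ-wise at an arbitrary point $p \in g^\inv(p')$, observing that when $p$ is interior to an edge in the image of $\gamma$ the graph $W_p\gamma$ is the single straight turn and $D_p g$ carries it onto the corresponding turn of $\gamma'$ at $p'$. With that adjustment your proof is complete.
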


\subsection{Whitehead's cut vertex test}
\label{SectionCutPointCondition}

Let $W$ be a finite, connected graph, and assume that $W$ is simplicial, which holds for example if $W$ is a Whitehead graph. A vertex $w \in W$ is a \emph{cut vertex}\index{cut vertex} if any of the following equivalent statements is true:
\begin{itemize}
\item the topological space $W - \{w\}$ is disconnected;
\item there exist vertices $u,v \in W-\{w\}$ such that every edge path in $W$ from $u$ to $v$ contains~$w$;
\item there exist subgraphs $W_1,W_2 \subset W$ such that $W = W_1 \union W_2$, $W_1 \intersect W_2 = \{w\}$, and each $W_i$ has an edge incident to $w$.
\end{itemize}
The following proposition was first proved by Whitehead \cite[]{} in the special case of a rose graph:

\begin{proposition}[The cut vertex test]
\label{PropCutPointTest}
If $H$ is a marked graph, and if $\gamma$ is a collection of circuits in $H$ representing a partial free basis $C$ of $F_n$, then $\gamma$ satisfies one of two conditions (the first of which we repeat from just before Exercise~\ref{ExerciseVisible}):
\begin{description}
\item[Near Visibility Condition:] For each $v \in H$, each component of $W_v \gamma$ is a single edge.
\item[Cut Vertex Condition:] For some $v \in H$, some component of $W_v \gamma$ has a cut~vertex.
\end{description}
\end{proposition}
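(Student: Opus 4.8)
The strategy is to assume that $\gamma$ fails the Cut Vertex Condition, i.e.\ at every vertex $v \in H$ no component of $W_v\gamma$ has a cut vertex, and to deduce that $\gamma$ satisfies the Near Visibility Condition. The key observation is that if $\gamma$ represents a partial free basis, then by Proposition~\ref{PropWhiteheadConjugacyTopological} there exists a marked graph $G$ (in fact a bola graph) and a marking in which $c_1,\ldots,c_k$ are represented by \emph{pairwise disjoint simple circuits}, equivalently by a circuit family $\sigma$ whose Whitehead graphs at every vertex are pairwise disjoint unions of single edges --- that is, $\sigma$ satisfies the Near Visibility Condition. So the real content is: if I start from such a graph $G$ and fold my way to an arbitrary marked graph $H$ on which $\gamma$ is also a circuit family (using that all of $G,H$ give marked graphs in the same marking orbit), then the Near-Visibility property is either preserved or, if it is destroyed, a cut vertex appears.

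First I would set up the fold-path machinery exactly as in Section~\ref{SectionWhiteheadStartup}: take $G$ as above with the disjoint simple circuit family $\Sigma$, choose a tight foldable map $f \from G \to H$ that restricts to an immersion on $\Sigma$ (homotope the marking-preserving map appropriately, using Proposition~\ref{PropFoldableMapConstruction}~\pref{ItemSubgraphFoldable}), and factor $f = f^L \circ g_L \circ \cdots \circ g_1 \circ q$ via Proposition~\ref{PropFoldableMapConstruction} and Theorem~\ref{ThmStallingsFolds}, with $q$ a collapse of a forest $U \subset G \setminus \Sigma$ and each $g_l$ a non-bigon fold; $f^L$ is a homeomorphism since $f$ is a homotopy equivalence. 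This produces the sequence of circuit families $\gamma_l \from \Sigma \to G_l$ with $\gamma_0 = q \circ i_\Sigma$ satisfying the Near Visibility Condition (by Exercise~\ref{ExerciseCollapseVisible}), and $\gamma_L$ equal to $\gamma$ up to the homeomorphism $f^L$. The whole proof then reduces to a \emph{single inductive step}: if $\gamma_{l-1}$ at every vertex has every component either a single edge or a graph containing a cut vertex, then the same holds for $\gamma_l$, where $\gamma_l = g_l \circ \gamma_{l-1}$ and $g_l$ is a single fold.

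To carry out that step I use Proposition~\ref{PropWhiteheadMapUnion}: at a vertex $v' \in G_l$ the Whitehead graph $W_{v'}\gamma_l$ is the union over $v \in g_l^{-1}(v')$ of the images $D_v g_l(W_v \gamma_{l-1})$, and each $D_v g_l$ is an embedding on $W_v\gamma_{l-1}$ except at the illegal turn of $g_l$ (when $v$ is the fold vertex), where it identifies two distinct directions $d,d'$ to one. Since $g_l \circ \gamma_{l-1} = \gamma_l$ is an immersion, the turn $\turn{dd'}$ is \emph{not} an edge of $W_v\gamma_{l-1}$ (Proposition~\ref{PropWhiteheadMapUnion}~\pref{ItemWhiteheadImmersionCriterion} / Subcase 2b of Section~\ref{SectionWhiteheadStartup}). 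I now run a case analysis matching the cases in Section~\ref{SectionWhiteheadStartup}. In Case~1 and Subcase~2a, $W_{v'}\gamma_l \cong W_v\gamma_{l-1}$, so the property is transported unchanged. In Subcase~2b, $W_{v'}\gamma_l$ is obtained from $W_v\gamma_{l-1}$ by identifying two non-adjacent vertices $d,d'$ of it: if $d,d'$ lie in distinct components $A,B$ of $W_v\gamma_{l-1}$, the merged vertex is a cut vertex of $A \cup B$ (every path from the rest of $A$ to the rest of $B$ passes through it), so a cut vertex appears; if $d,d'$ lie in the same component $A$, then either $A$ was a single edge --- impossible since $d,d'$ are distinct and non-adjacent --- or $A$ already had a cut vertex and that property persists under a quotient that doesn't disconnect. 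In Case~3, $g_l$ is two-to-one over $v'$ and $W_{v'}\gamma_l$ is the union of (an embedded copy of one $W_{v}\gamma_{l-1}$) with (a copy of the other, possibly with a Subcase-2b identification) sharing at most one vertex; wedging two graphs at a point, or taking the union when they share a single vertex, again produces a cut vertex at that shared vertex unless one of the two pieces is trivial, in which case nothing changes. The valence-$2$ subcase is handled trivially since such Whitehead graphs are empty or a single edge.

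\textbf{The main obstacle.} The delicate point is \emph{not} the combinatorics of merging graphs --- that is routine --- but making airtight the claim that in the two-to-one Case~3 the images $D_v g_l(W_v\gamma_{l-1})$ and $D_{v''}g_l(W_{v''}\gamma_{l-1})$ (where $g_l^{-1}(v') = \{v, v''\}$) meet in \emph{at most one vertex} of $\turngraph_{v'} G_l$, and in no edge. This needs the fact that $g_l$ is a fold identifying initial segments $\eta \subset e$, $\eta' \subset e'$: at the identified endpoint $v'$, the only direction that can be a common image is the direction of the folded edge-class, and an edge of $W_{v'}\gamma_l$ coming from both sides would force a backtracking subpath of $\gamma_l$, contradicting that $\gamma_l$ is an immersion. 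I would prove this carefully as a small lemma about folds, drawing on the disjoint-interiors property of $\eta,\eta'$ established in Section~\ref{SectionFirstFoldFactorization}, and then the rest of the argument is a bookkeeping induction terminating at $\gamma_L = \gamma$ (up to the homeomorphism $f^L$, under which Whitehead graphs correspond isomorphically), giving the Near Visibility Condition for $\gamma$ and hence the dichotomy in the statement.
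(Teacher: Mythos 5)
Your overall setup (represent $C$ by disjoint circles, collapse, factor into non-bigon folds, induct fold by fold using Proposition~\ref{PropWhiteheadMapUnion}) matches the paper, but your inductive invariant is wrong, and the step where you defend it contains a false claim. You induct on the per-component statement ``every component of every Whitehead graph is a single edge or contains a cut vertex,'' and in the same-component branch of your Subcase~2b you assert that a cut vertex ``persists under a quotient that doesn't disconnect.'' It does not: identifying two non-adjacent vertices of a graph with a cut vertex can produce a $2$-connected graph. Concretely, if the component of $W_v\gamma_{l-1}$ containing the two illegal-turn directions $d,d'$ is a path $d-x-y-d'$ of length three (so $\{d,d'\}$ is not a taken turn, consistent with $\gamma_l$ being an immersion), then after the fold it becomes a triangle: not a single edge and with no cut vertex. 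Nothing in your setup excludes this configuration at an intermediate stage, so your inductive step fails and the induction does not close.

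The paper's proof avoids this by using the weaker, purely disjunctive invariant (Near Visibility \emph{or} Cut Vertex, as global conditions) and by keying the case analysis not on the fold vertex $v$ but on the common image $w'$ of the terminal endpoints $w_0,w_1$ of the folded segments $\eta_0,\eta_1$. The crucial linkage, which your write-up never articulates, is that the problematic identification at $v$ occurs exactly when $\gamma_{l-1}$ crosses both $\eta_0$ and $\eta_1$, i.e.\ exactly when both terminal directions $\bar\eta_i$ lie in $W_{w_i}\gamma_{l-1}$; in that case the two components containing $\bar\eta_0,\bar\eta_1$ are glued at the single vertex $\bar\eta'$ in $W_{w'}\gamma_l$, which is therefore a cut vertex, and the Cut Vertex Condition holds for $\gamma_l$ outright --- no matter what happens at $v'$ (e.g.\ the triangle above), so one simply stops tracking components there. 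In the complementary case some $\bar\eta_i$ is absent from $W_{w_i}\gamma_{l-1}$, hence $\gamma_{l-1}$ does not cross $\eta_i$, hence the corresponding illegal direction is absent from $W_v\gamma_{l-1}$, and \emph{all} Whitehead graphs (including the one at $v$) transfer isomorphically, so whichever global condition held before still holds. If you want to salvage your argument, replace your per-component invariant by this disjunction and organize the dichotomy around whether $\bar\eta_0,\bar\eta_1$ appear in $W_{w_0},W_{w_1}$; you should also either factor away full folds over loop edges (as the paper does) or handle separately the case $v\in\{w_0,w_1\}$, which your Case~3 only gestures at. Your ``main obstacle'' (images meeting in at most the one vertex $\bar\eta'$ and in no edge) is real but is the routine part; the genuine content is the linkage just described.
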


\paragraph{Remarks.} In each of Figures~\ref{FigureFirstFoldWGraph} and~\ref{FigureFirstFoldCutPoint}, after folding one can observe the appearance of a new cut vertex in a component of the Whitehead graph denoted $W_{u_1}(\gamma_1)$. For example in Figure~\ref{FigureFirstFoldWGraph} the green and blue edges of $W_{u_1}(\gamma_1)$ intersect in a new cut vertex; however this is not particularly significant for the proof of Proposition~\ref{PropCutPointTest}, because in other examples of the subcase depicted in Figure~\ref{FigureFirstFoldWGraph} the green and blue edges of $W_{u_0}\gamma_0$ might already have been in the same component of $W_{u_0}\gamma_0$, and no new cut vertex would have appeared in $W_{u_1}\gamma_1$. On the other hand, the new cut vertex that appears in Figure~\ref{FigureFirstFoldCutPoint} is highly significant for the proof: under the subcase depicted in Figure~\ref{FigureFirstFoldCutPoint} that new cut vertex will always appear.

\begin{proof}
Choose a marked graph $G$ in which the partial free basis $C$ is represented by a subgraph $\Delta \subset G$ consisting of pairwise disjoint circles. Letting $\delta \from \Delta \inject G$ denote the inclusion map, it follows that $\delta$ satisfies the visibility condition (Exercise~\ref{ExerciseVisible}). 

Choose a tight homotopy equivalence $f \from G \to H$ which preserves marking and which restricts to a local embedding on $\Delta$. Since $f$ preserves marking, since $C$ is represented in $G$ by the inclusion $\delta \from \Delta \inject G$, and since $C$ is represented in $H$ by the given collection of circuits $\gamma$, it follows that the composition $f \circ \delta \from \Delta \xrightarrow{\delta} G \xrightarrow{f} H$ is a collection of circuits in $H$ that is equivalent to $\gamma$.  Apply Proposition~\ref{PropFoldableMapConstruction} to factor $f$ up to homotopy rel $\delta$ as a forest collapse map followed by a foldable map
$$G \xrightarrow{q} G_0 \xrightarrow{f^0} H
$$
Since $q$ only collapses edges of $G \setminus \delta$ (Proposition~\ref{PropFoldableMapConstruction}~\pref{ItemSubgraphFoldable}), it follows that the composition $\delta_0 = q \composed \delta \from \Delta \to G_0$ defines a collection of circuits in $G_0$ that represent $C$ in $G_0$ and that satisfy the near visibility condition (Exercise~\ref{ExerciseCollapseVisible}).

Next apply Proposition~\ref{ThmStallingsFolds} to obtain a maximal fold factorization of $f^0$:
$$G_0 \xrightarrow{g_1} G_1 \xrightarrow{g_2} \cdots \xrightarrow{g_K} G_K \approx H
$$
If one of the folds $g_k \from G_{k-1} \to G_k$ happens to be a full fold over a loop edge --- meaning an improper full fold where one of the two folded edges is a loop edge, or a proper full fold of some edge over a loop edge --- then we can factor $g_k$ into a partial fold followed by a full fold. By doing this wherever needed, we may assume that $g_k$ is never a full fold over a loop edge (this will simplify our later analysis by removing that case from consideration). Also, no $g_k$ is a bigon fold. By induction let $\delta_k = g_k \circ \delta_{k-1} \from \Delta \to G_k$. Since $f^0$ is a homotopy equivalence taking $\delta_0$ to $\delta_K$ without cancellation, it follows that so $\delta_k$ is a collection of circuits in $G_k$ representing $C$; also, $\delta_K = \gamma$.  

We shall prove by induction on $k$ that $\delta_k$ satisfies either the near visibility condition or the cut vertex condition, starting with the base case $k=0$ where we have already noted that $\delta_0$ satisfies the near visibility condition. 

For the induction step, assuming that $\delta = \delta_{k-1}$ satisfies either the near visibility condition or the cut vertex condition, we must prove that $\delta' = \delta_k$ also satisfies one of those conditions. To summarize what we know so far: the fold map $g_k \from G_{k-1} \to G_k$ is not a bigon fold and it is not a full fold over a loop edge; and the map $g_k$ takes the circuits $\delta$ to the circuits $\delta'$ without cancellation, hence each turn of $G_{k-1}$ taken by the circuits $\delta$ is a legal turn with respect to the fold map $g_k$.

Let $e_0,e_1$ be the oriented edges of $G_{k-1}$ that are folded by $g$, and let $\eta_0 \subset e_0$, $\eta_1 \subset e_1$ be the maximal initial segments that are identified by $g$. In $G_{k-1}$ we know that the unique illegal turn $\turn{e_0e_1}=\turn{\eta_0 \eta_1}$ is not taken by the collection of circuits $\delta$. Let $v$ be the common initial vertex of $\eta_0,\eta_1$, and let $w_0$, $w_1$ be their respective terminal points. Since $g_k$ is not a bigon fold (Proposition~\ref{ThmStallingsFolds}~\pref{ItemContinuingFolds}) we have $w_0 \ne w_1$. Since $g_k$ is not a full fold over a loop edge, we have $v \ne w_0,w_1$. In $G_k$ let $v' = g_k(v)$, $\eta' = g_k(\eta_0)=g_k(\eta_1)$, and let $w' = g_k(w_0)=g_k(w_1)$. 

First we focus on the two points $w_0,w_1 \in G_{k-1}$, each mapping to $w' \in G_k$, and the two induced turn maps
$$D_{w_0} g \from \turngraph_{w_0} G_{k-1} \to \turngraph_{w'} G_k, \qquad\qquad D_{w_1} g \from \turngraph_{w_1} G_{k-1} \to \turngraph_{w'} G_k
$$
Letting $\bar\eta_i \in \turngraph_{w_i} G_{k-1}$ denote the terminal direction of the path $\eta_i$, and similarly letting $\bar \eta' \in \turngraph_{w'} G_k$ denote the terminal direction of $\eta'$, we have $D_{w_0} g(\bar\eta_0)=D_{w_1} g(\bar\eta_1) = \bar\eta'$. Furthermore, the two turn maps $D_{w_0} g$, $D_{w_1} g$ are embeddings (since $w_0 \ne v$ and $w_1 \ne v$), and the intersection of their images equals $\bar\eta'$. By applying Proposition~\ref{PropWhiteheadMapUnion}, each of $D_{w_i} g$ restricts to an isomorphism from the Whitehead graph $W_{w_i}\delta$ onto its image in the turn graph $\turngraph_{w'}G_k$,
$$W_{w_i} \delta \xrightarrow{D_{w_i} g} D_{w_i}g(W_{w_i}\delta) \subset \turngraph_{w'} G_k
$$
and furthermore 
\begin{align*}
(\#) \qquad\qquad D_{w_0}g (W_{w_0}\delta) \union D_{w_1}g (W_{w_1}(\delta)) &= W_{w'}(\delta')  \\
D_{w_0}g (W_{w_0}\delta) \intersect D_{w_1}g (W_{w_1}(\delta)) &= \{\bar\eta'\} \,\,\text{or}\,\, \emptyset
\end{align*}
Note that the intersection in the second line is empty if and only if $\bar\eta_0 \not\in W_{w_0}\delta$ and $\bar\eta_1 \not\in W_{w_1}(\delta)$. 

\medskip

\noindent\textbf{Case 1:} Suppose that for each $i \in \{0,1\}$ we have $\bar\eta_i \in W_{w_i}\delta$, in particular $W_{w_i}\delta$ is not empty. Let $X_i$ be the component of $W_{w_i}\delta$ containing $\bar\eta_i$. It follows from $(\#)$ that $D_{w_0}(X_0) \union D_{w_1}(X_1)$ is a component of $W_{w'} \delta'$, and that $\bar\eta'$ is a cut vertex of that component, hence the Cut Vertex Condition is proved for $\delta'$.

\medskip

\noindent\textbf{Case 2:} Suppose there exists $i \in \{0,1\}$ such that $\bar\eta_i \not\in W_{w_i} \delta$. It follows from $(\#)$ that:
\begin{description}
\item[$(*)$] The two maps $D_{w_0} g$ and $D_{w_1} g$ take the disjoint union of the two Whitehead graphs $W_{w_0}\delta$, $W_{w_1}\delta$ isomorphically onto the Whitehead graph $W_{w'}\delta'$. 
\end{description}
Next we shall show:
\begin{description}
\item[$(**)$] For any vertex $u' \in G_k$ such that $u' \ne w'$, letting $u \in G_{k-1}$ be its unique pre-image, the map $D_{u} g$ takes $W_u\delta$ isomorphically onto $W_{u'}\delta'$
\end{description}
Once this has been shown then, by combining $(*)$ and $(**)$, it follows that as $u'$ varies over all vertices of $G_k$ and as $u$ varies over the points of $g^\inv(u)$ in $G_{k-1}$ the map $Dg$ takes the disjoint union $\coprod W_u\delta$ to the disjoint union $\coprod W_{u'}\delta'$ by a graph isomorphism. Since the Near Visibility Condition and the Cut Vertex Condition are both invariants of graph isomorphism, whichever of those conditions is satisfied by $\delta$ it is also satisfied by~$\delta'$. Thus it remains to prove $(**)$, after which we will be done. Since $u' \ne w'$ it follows that $u \not\in \{w_0,w_1\}$, and we break into two cases depending on whether $u=v$.

To prove $(**)$ when $u \ne v$, we have $u \not\in \{v,w_0,w_1\}$, and so $(**)$ follows from Proposition~\ref{PropWhiteheadMapUnion} after noting that the map $D_u g \from \turngraph_u G_{k-1} \to \turngraph_{u'} G_k$ is a graph isomorphism. 

To prove $(**)$ when $u=v$, first note that $u'=v'$. From the Case 2 assumption, by symmetry of notation we may assume that $\bar\eta_1 \not\in W_{w_1} \delta$. It follows that $\delta$ does not cross the edge $\eta_1$. Moving from the terminal endpoint $w_1$ of $\eta_1$ to its initial endpoint $v$, it follows that its initial direction is not contained in $W_v\delta$. The entire Whitehead graph $W_v\delta$ is therefore contained in the subgraph of $\turngraph_v G_{k-1}$ obtained by removing the initial direction of $\eta_1$ and all its incident turns. The restriction of $D_v g$ to that subgraph is an injection, and so by applying Proposition~\ref{PropWhiteheadMapUnion}, it follows that $D_v g$ restricts to an isomorphism between $W_v \delta$ and $W_{v'}\delta'$.

\end{proof}

%

\subsection{Splitting a marked graph}
\label{SectionSplitOperation}

The intuition of splitting is that a cut vertex of a Whitehead graph $W_v \gamma$ of a circuit family $\gamma$ in a marked graph $G$ suggests a way to split $G$ so as to simplify $\gamma$, as the example in Figure~\ref{FigureWordToSplit} suggests. In this section we give the formal definitions needed to make this intuition rigorous, culminating in a description of the second main subroutine of Whitehead's algorithm, the \emph{split operation}. Our main result is Proposition~\ref{PropSplit} which says that if a circuit family in a marked graph satisfies the Cut Vertex Condition of Proposition~\ref{PropCutPointTest}, then one can carry out a split operation which simplifies the circuit family.

\begin{figure}
\centerline{
\input{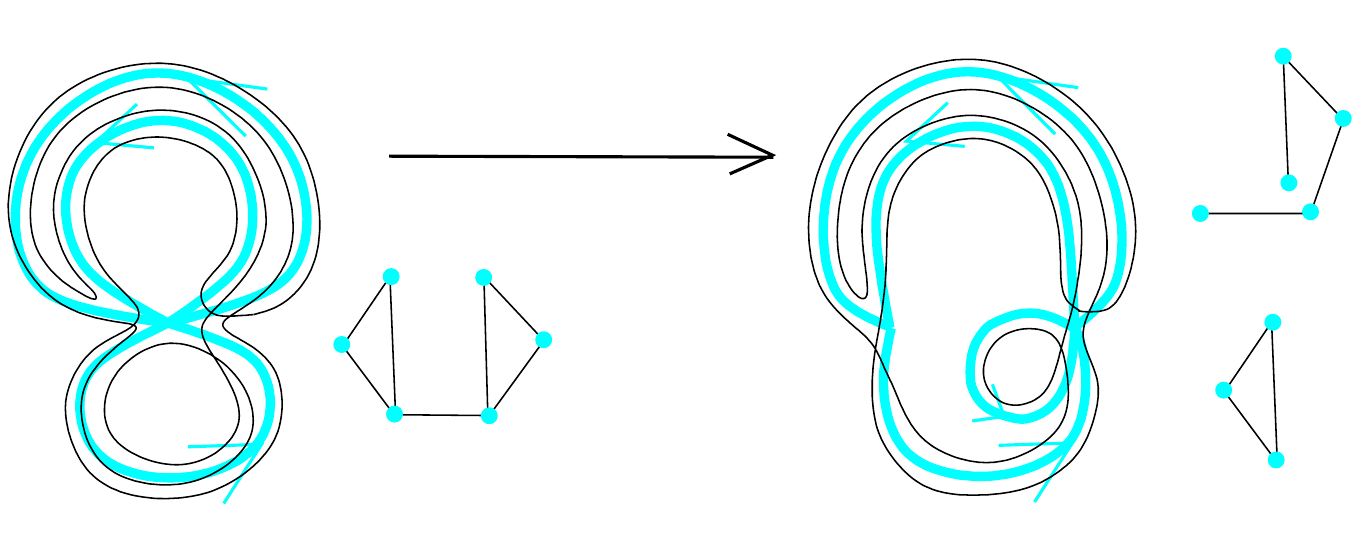_t} 
}
\caption{The circuit $\gamma = a b^2 c \bar a \bar  b \bar  c$ in the marked graph $G$ has a Whitehead graph $W_v\gamma$ with two cut vertices $b$ and $b^\inv$. The graph $G$ can be split along the direction $b$, pulling the directions $\{\bar a, \bar c\}$ to one side and pulling the directions $\{a,c,\bar b\}$ to the other side, resulting in a marked graph $H$ with two vertices, in which the conjugacy class represented by $\gamma$ has a simpler representative $\delta = a b_1 b_2 c \bar a \bar b_1 \bar c$: while $\gamma$ crosses the $b$ edge of $G$ three times, the circuit $\delta$ crosses no edge of $H$ more than two times. Of the two vertices of $H$ one of them, namely $v_2$, has a Whitehead graph $W_{v_2}$ with cut vertices, allowing further splitting.}
\label{FigureWordToSplit}
\end{figure}

The example depicted in Figure~\ref{FigureWordToSplit}, which shows a marked graph $G$, a circuit $\gamma = a b^2 c \bar a \bar b \bar c$, and a splitting of $G$ along its $b$ direction, resulting in a marked graph $H$ and a circuit $\delta$ representing the same conjugacy class as $\gamma$. Because the circuit $\gamma$ crosses the $b$ edge of $G$ three times, whereas the circuit $\delta$ does not cross any edge of $H$ more than two times, clearly $\delta$ is clearly simpler than $\gamma$. In order to formalize what it means to ``simplify'' a circuit family, we will use a certain ordinal valued \emph{weight sequence} defined on circuit families in marked graphs. And in order to formalize ``splitting'', we must first specify exactly what is being split apart from what, which we do using the concept of a \emph{cut} of a Whitehead graph.

Consider a marked graph $G$, a circuit family $\gamma$ in $G$, and a vertex $v \in G$ with corresponding Whitehead graph $W_v\gamma \subset \turngraph_v G$. The graph $W_v\gamma$ need not contain every element of the direction set $T_v G$, so first we augment $W_v\gamma$ by throwing in each of those directions:
$$\wh W_v\gamma = W_v\gamma \union T_vG
$$
A \emph{cut} of the augmented Whitehead graph $\wh W_v\gamma$ is a pair of subgraphs $\wh W_1, \wh W_2 \subset \wh W_v\gamma$ having the following properties:
\begin{enumerate}
\item $\wh W_v\gamma = \wh W_1 \union \wh W_2$
\item $\wh W_1 \intersect \wh W_2 = \{d\}$ for some direction $d \in W_v$.
\item\label{ItemCutHasCutVertex} Each of $\wh W_1,\wh W_2$ contains at least one edge incident to~$d$.
\end{enumerate}
We also say that $\{\wh W_1,\wh W_2\}$ is a cut \emph{along the direction $d$}. Notice that item~\pref{ItemCutHasCutVertex} implies that $v$ has valence~$\ge 3$, so cuts of $\wh W_v\gamma$ only exist when $v$ is a natural vertex. 

Using the various equivalent definitions of cut vertices given in Section~\ref{SectionCutPointCondition}, one may see that a cut of $\wh W_v\gamma$ along the direction $d$ exists if and only if $d$ is a cut vertex of some component of $W_v\gamma$. Furthermore, one might have observed that in Case 1 of the proof of the Cut Vertex Test (Proposition~\ref{PropCutPointTest}), the manner in which the Cut Vertex Condition was verified was by using a fold map to exhibit a particular cut. By reading this observation in reverse one arrives at the concept of a split, which informally can be thought of as the inverse of a fold. We first define splits in a more general context where circuits and turns are ignored; afterwards we specialize splits to a Whitehead graph context. 

A split of a marked graph $G$ is determined once the following data has been specified: a natural vertex $v \in G$; a direction $d \in T_v G$; and a \emph{direction cut of $T_v G$ along~$d$} which by definition means a pair of subsets $T_1,T_2 \subset T_v G$ such that $T_v G = T_1 \union T_2$, and $T_1 \intersect T_2 = \{d\}$, and $T_1,T_2$ each contain at least one direction other than~$d$. The split of $G$ using a direction cut $\{T_1,T_2\}$ along $d \in T_v G$ is a marked graph $H$ equipped with a fold map $f \from H \to G$. Informally, grab the set $T_1 - \{d\}$ with one hand, and grab the set $T_2 - \{d\}$ with the other hand, and pulllllllll them apart, splitting in two the edge that contains $d$. To formalize this construction, let $E$ be the oriented edge of $G$ with terminal direction $d\overline E = d$ and terminal vertex $v$, and let $u$ be the initial vertex of $E$. The definition proceeds in two cases, depicted in Figures~\ref{FigureSplittingOne} and~\ref{FigureSplittingTwo}. 

\textbf{Case 1: $u \ne v$.}  (See Figure~\ref{FigureSplittingOne}). In this case $H$ is defined by altering $G$ as follows. First, detach from the vertex~$v$ all of the directions of the set $T_v G$. Next, remove the interior of $E$ and its terminal vertex $v$, but keep the initial vertex $u$. Next, add two new vertices $v_1,v_2$ in place of $v$. Next, attach two new oriented edges $E_1,E_2$ in place of $E$, having respective terminal vertices $v_1,v_2$, and having common initial vertex $u$. Finally, reattach the directions of the set $T_1-\{d\}$ to the vertex $v_1$, and reattach the directions of $T_2-\{d\}$ to $v_2$.

\begin{figure}
\centerline{
\input{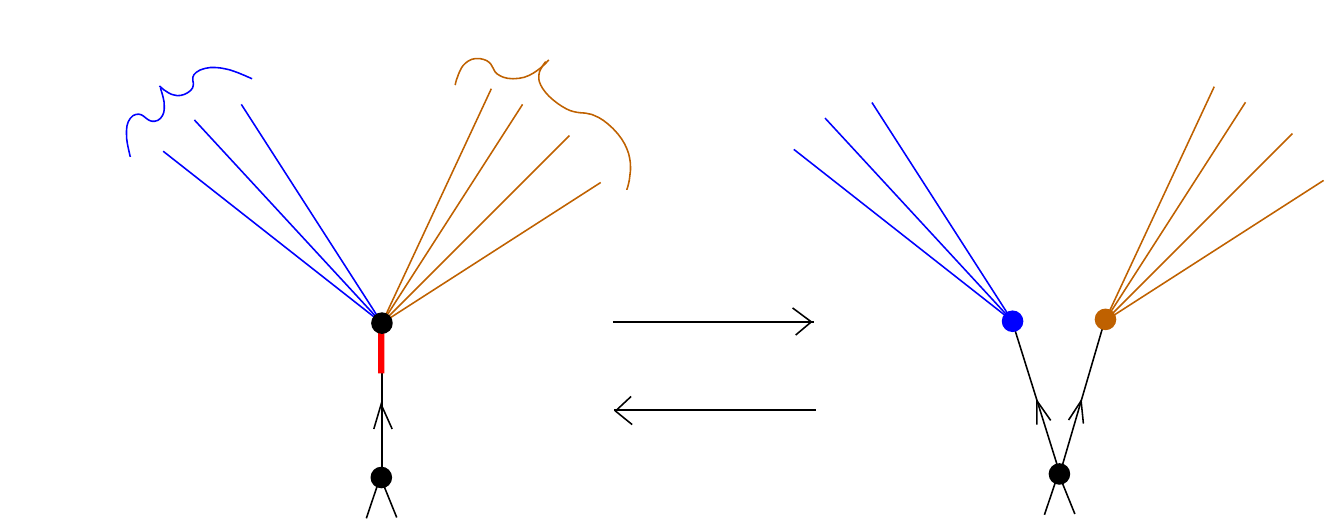_t} 
}
\caption{Case 1: Splitting a marked graph $G$ along $d=d\overline E \in T_v G$, using a direction cut $T_v G = T_1 \union T_2$. The initial vertex of $E$ is distinct from $v$ in this case.}
\label{FigureSplittingOne}
\end{figure}

\textbf{Case 2: $u=v$.} (See Figure~\ref{FigureSplittingTwo}, and Figure~\ref{FigureWordToSplit} for an explicit example). We first note that initial direction $dE$ and the terminal direction $d=d\overline E$ of the edge $E$ are both based at the vertex $v$, and those directions are distinct elements of $T_v G$, hence $dE \in T_v G - \{d\}$. It follows that $dE$ is contained in one of the two sets $T_1 - \{d\}$ or $T_2 - \{d\}$; by transposing indices if necessary we may assume that $dE \in T_1 - \{d\}$. We now define $H$ by altering $G$ as follows. First detach from $v$ all directions of $T_v G$. Next, remove the interior of $E$ and the vertex $v$. Next, add two new vertices $v_1,v_2$ in place of $v$. Next, attach two new oriented edges $E_1,E_2$ in place of $E$, with respective terminal vertices $v_1,v_2$ and with common initial vertex $v_1$. Finally, reattach the directions of $T_1-\{d,dE\}$ to $v_1$ and reattach the directions of $T_2-\{d\}$ to~$v_2$.

\begin{figure}
\centerline{
\input{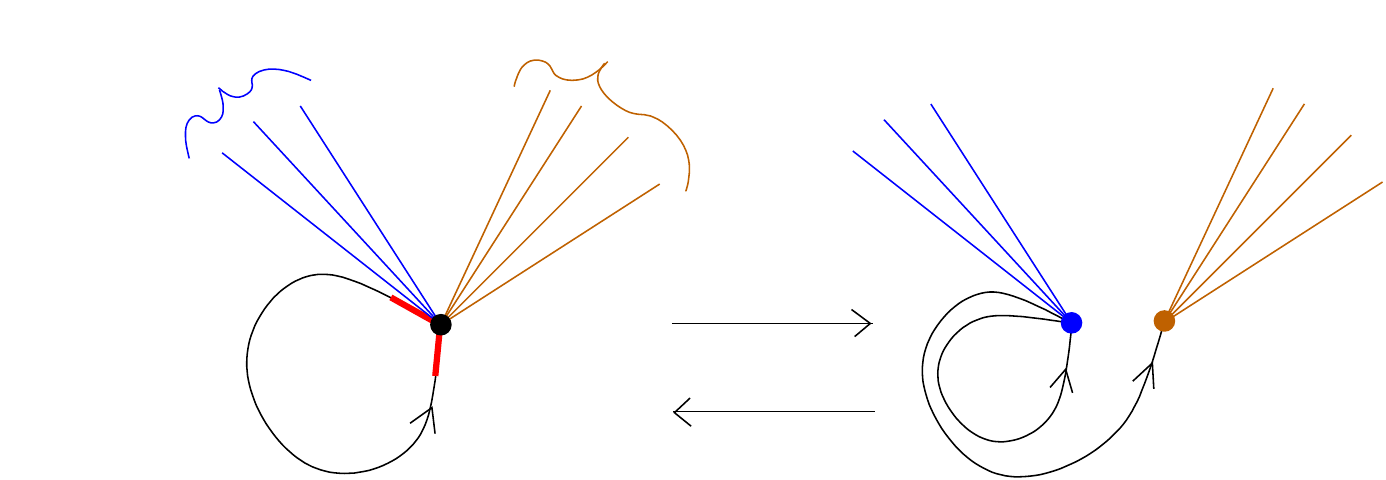_t} 
}
\caption{Case 2: Splitting a marked graph $G$ along $d=d\overline E \in T_v G$, using a direction cut $T_v G = T_1 \union T_2$. The initial vertex of $E$ is equal to $v$ in this case, and the indexing on $T_1,T_2$ is chosen so that the initial direction $dE$ is in $T_1$.}
\label{FigureSplittingTwo}
\end{figure}

In both Cases 1 and 2 the graph $H$ is a core graph, and the construction produces a natural simplicial quotient map $f \from H \to G$ such that $f$ makes the identifications $f(v_1)=f(v_2)=v$ and $f(E_1)=f(E_2)=E$, and such that $f$ makes no other identifications of vertices nor of edges. It follows that the cell structure on $H$ is the unique \emph{pullback cell structure} on $H$, the vertex set of which is equal to the inverse image under $f$ of the vertex set of $G$. The bijection induced by $f$ between the edges of $G \setminus E$ and the pullback edges of $H \setminus (E_1 \union E_2)$ will be written as
$$H \setminus (E_1 \union E_2) = \bigcup_{j \in J} E_{H,j}, \quad G \setminus E = \bigcup_{j \in J} E_{G,j}, \quad f(E_{H,j}) = E_{G,j}
$$
From these descriptions it is clear that the map $f \from H \to G$ is a nonbigon fold map and hence is a homotopy equivalence, using which we may mark $H$ by composing the marking $R_n \to G$ with a homotopy inverse of $f$. We note that when the construction is carried out using the natural cell structure on $G$, every vertex of $H$ except $v_1$ and $v_2$ is natural. Regarding the two vertices $v_1,v_2$, it may happen that none, one, or both of $v_1,v_2$ is natural, depending on the cardinalities of the cut sets $T_1,T_2$ and on whether the splitting follows Case 1 or Case 2.

We record the following immediate consequence of the construction of $f \from H \to G$:

\begin{lemma}\label{LemmaDirectionSplitProps}
For each $i=1,2$ we have $\ds D_{v_i} f(T_{v_i} H) = T_i$. Furthermore the direction $d\overline E_i \in T_{v_i} H$ is the unique element of $T_{v_i} H$ whose image under $D_{v_i}f$ equals $d$.
\qed
\end{lemma}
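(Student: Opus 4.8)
\textbf{Proof proposal for Lemma~\ref{LemmaDirectionSplitProps}.}

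The plan is to read both claims directly off the explicit construction of the fold map $f \from H \to G$ given above in Cases~1 and~2, so essentially no new argument is needed beyond careful bookkeeping of which directions get reattached to which vertex.

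First I would handle the equality $D_{v_i} f(T_{v_i} H) = T_i$. Recall that $T_{v_i} H$ is in natural bijection with the set of oriented edges of $H$ having initial vertex $v_i$; these are: the terminal direction $d\overline E_i$ of the new edge $E_i$; together with all the directions that were reattached to $v_i$ during the construction, namely $T_i - \{d\}$ in Case~1, and either $T_1 - \{d,dE\}$ (for $i=1$) or $T_2 - \{d\}$ (for $i=2$) in Case~2. Since $f$ collapses $E_1$ and $E_2$ both onto $E$, it sends the terminal direction $d\overline E_i$ to the terminal direction $d\overline E = d$. Since $f$ makes no identifications of vertices or edges other than $f(v_1)=f(v_2)=v$ and $f(E_1)=f(E_2)=E$, and since $f$ restricts to a homeomorphism on each pullback edge $E_{H,j}$ mapping to $E_{G,j}$, the derivative $D_{v_i} f$ carries each reattached direction back to the very direction of $T_v G$ from which it came. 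Thus in Case~1 the image is $\{d\} \union (T_i - \{d\}) = T_i$, exactly as claimed. In Case~2, for $i=2$ the image is likewise $\{d\} \union (T_2 - \{d\}) = T_2$; for $i=1$ I would note that the construction additionally attaches $E_2$ with initial vertex $v_1$, so $T_{v_1} H$ also contains the initial direction $dE_2$, whose image under $D_{v_1} f$ is $dE$ (the initial direction of $E$), and by the indexing convention $dE \in T_1$; combining, the image is $\{d\} \union \{dE\} \union (T_1 - \{d,dE\}) = T_1$. So $D_{v_i} f(T_{v_i} H) = T_i$ in all cases.

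For the ``Furthermore'' clause, I need that $d\overline E_i$ is the \emph{unique} element of $T_{v_i} H$ mapping to $d$ under $D_{v_i} f$. This follows from the same analysis: every element of $T_{v_i} H$ other than $d\overline E_i$ is either a reattached direction from $T_i - \{d\}$ (mapped bijectively back to its original direction, which by definition of the direction cut is an element of $T_i$ distinct from $d$), or — in Case~2, $i=1$ — the initial direction $dE_2$, mapped to $dE \ne d$. Since none of these equals $d$, and we have already seen $D_{v_i} f(d\overline E_i) = d$, uniqueness is immediate.

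The argument involves no real obstacle; the only thing to be careful about is the asymmetry in Case~2, where $v_1$ inherits an extra incident direction (the initial direction of $E_2$) that is not present at $v_2$, and one must check this extra direction is accounted for correctly by the indexing convention $dE \in T_1$. Once that is tracked, both statements are a direct consequence of the defining properties of $f$.
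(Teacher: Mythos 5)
Your approach is the right one and is essentially the paper's: the lemma is recorded there as an immediate consequence of the construction of $f \from H \to G$ (no written proof is given), so a careful enumeration of the directions at $v_1,v_2$ and of their images under $Df$ is all that is required. Your bookkeeping in Case 1, and in Case 2 for $i=2$, is complete and correct.

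One entry is missing from your enumeration in Case 2 for $i=1$: since $E_1$ and $E_2$ have \emph{common initial vertex} $v_1$ and $E_1$ has terminal vertex $v_1$, the edge $E_1$ is a loop at $v_1$, so $T_{v_1}H$ contains not only $d\overline E_1$, $dE_2$, and the reattached directions $T_1 - \{d,dE\}$, but also the initial direction $dE_1$. This omission does not damage either conclusion — $D_{v_1}f(dE_1)=dE$, which already lies in $T_1$ and is distinct from $d$, so the image is still exactly $T_1$ and $d\overline E_1$ remains the unique preimage of $d$ — but in a proof that consists entirely of bookkeeping the enumeration should be complete, so this direction should be added to your list.
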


For certain purposes it is convenient to break the Case 2 splitting into two successive Case 1 splittings, as follows. As a prelimary step, subdivide the edge $E$ by inserting a valence~2 vertex $w$ in its interior, thus decomposing $E$ as a concatenation of two edges $E = E' E''$. Now split $G$ along $d\overline E'' = d \overline E$ using the direction cut $\{T_1,T_2\}$, obtaining a marked graph $H'$ with two edges $E''_1,E''_2$ and a fold map $H' \to G$ that identifies $E''_1,E''_2$ to~$E''$. Next split $H'$ along $d \overline E'$ using the direction cut $\{T'_1,T'_2\}$ where $T'_1 = \{dE''_1, d\overline E'\}$, $T'_2 = \{dE''_2,d\overline E'\}$; the result is a marked graph $H$ with edges $E'_1,E'_2$ and a fold map $H \mapsto H'$ that identifies $E'_1,E'_2$ to~$E'$. The composed fold map $H \mapsto H' \mapsto G$ identifies $E_1 = E'_1E''_1$ and $E_2 = E'_2E''_2$ to $E$, and after deleting from the vertex set the two pre-images of $w$ under the fold map $H \mapsto G$ the result is identical the the description given in Case~2.

\medskip

The definition of splitting in the context of Whitehead graphs is as follows. Given a marked graph $G$, a circuit family $\gamma$ in $G$, a vertex $v \in G$, and a cut $\{\wh W_1, \wh W_2\}$ of $\wh W_v\gamma$, the splitting of $G$ using that cut is defined to be the splitting of $G$ using the direction cut $\{T_1,T_2\}$ where $T_i = \wh W_i \intersect T_v G$. 

Our last task before stating Proposition~\ref{PropSplit} is to define the weight sequence of a circuit family $\gamma \from C \to G$ in a marked graph $G$. Let $\E$ denote the edge set of $G$. For each $e \in \E$ let $k(e,\gamma)$ denote the number of times that $\gamma$ crosses~$e$, which equals the cardinality 
$$k(e,\gamma) = \abs{\gamma^\inv(x)} \quad\text{where $x \in \interior(e)$ is arbitrary.}
$$ 
The \indexemph{weight sequence} of $\gamma$ is the natural number sequence $w(\gamma)=(w_0(\gamma),w_1(\gamma),w_2(\gamma),\ldots)$ where $w_k(\gamma)$ is the number of edges of $G$ that are crossed $k$ times by $\gamma$,
$$w_k(\gamma) = \abs{\{e \in \E \suchthat k=k(e,\gamma)\}}
$$ 
Note that the sum $\sum_k w_k(\gamma)$ is equal to the cardinality of $\E$, and in particular $w_k(\gamma)$ is nonzero for only a finite set of $k$. Note also that $w(\gamma)$ is \emph{not} independent of how $G$ is subdivided into vertices and edges, although as a special case we will focus on the \emph{natural weight sequence} which is defined by taking $\E$ to be the natural edge set of $G$. The set of sequences of natural numbers with all but finitely many terms equal to zero is well-ordered by the dictionary ordering, where $(w_0,w_1,w_2,\ldots) < (w'_0,w'_1,w'_2,\ldots)$ if there exists $k \in \{0,1,2,\ldots\}$ such that $w_i=w'_i$ for $i > k$ and $w_k < w'_k$. As a special case, if $w(\gamma)$ is the natural weight sequence, and if $w'(\gamma)$ is the weight sequence defined using an arbitrary cell structure on $G$, then $w(\gamma) \le w'(\gamma)$: this inequality follows using that any cell structure on $G$ is a subdivision of the natural cell structure, together with the fact that for any natural edge $E$ the number of times that $\gamma$ crosses $E$ is equal to the number of times that $\gamma$ crosses any of the edges in the subdivision of $E$. 

\begin{proposition}[The Split Operation]\label{PropSplit}
Consider a marked graph $G$ equipped with its natural cell structure. Consider also a circuit family $\gamma \from C \to G$, a vertex $v \in G$, and a cut $\{\wh W_1,\wh W_2\}$ of $\wh W_v\gamma$ along some direction $d \in W_v\gamma$. Let $H$ be the marked graph obtained by splitting $G$ along $d$ using the cut $\{\wh W_1,\wh W_2\}$. Let $f \from H \to G$ be the corresponding fold map. Under these conditions there exists a unique circuit family $\delta \from C \to H$ with the following properties:
\begin{enumerate}
\item\label{ItemNoIllegalTurn} $\delta$ does not take the illegal turn of $f$ 
\item\label{ItemDeltaGammaEquiv} The circuit family $f \circ \delta \from C \to G$ is equivalent to $\gamma$. 
\end{enumerate}
Furthermore we have strict inequality of natural weight sequences:
\begin{enumeratecontinue}
\item\label{ItemWeightInequality}
 $w(\delta) < w(\gamma)$.
\end{enumeratecontinue}
The constructions of $H$, of $f \from H \to G$, and of $\delta$ are algorithmic given the input $G$, $\gamma$, and $\{\wh W_1,\wh W_2\}$.
\end{proposition}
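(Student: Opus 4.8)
The plan is to construct the circuit family $\delta$ directly from $\gamma$ by ``lifting'' it through the fold map $f \from H \to G$, using the cut data to resolve the ambiguity at the split vertex, and then to verify the three properties in turn. First I would observe that $f$ restricts to a homeomorphism over $G \setminus E$ (where $E$ is the edge containing the direction $d$), identifying each pullback edge $E_{H,j}$ with $E_{G,j}$, and that over $E$ there are exactly the two edges $E_1, E_2$. So given the cyclically reduced edge-path representative $\gamma = e_1 \cdots e_m$ of each component of the circuit family, I will produce a candidate lift $\tilde\gamma$ by replacing each non-$E$ edge $e_i$ by its unique $f$-preimage, and replacing each occurrence of $E$ (with either orientation) by $E_1$ or $E_2$, the choice being dictated by the turns of $\gamma$ at the endpoints of that occurrence: by Lemma~\ref{LemmaDirectionSplitProps}, $D_{v_i} f(T_{v_i} H) = T_i$, so whenever $\gamma$ enters or leaves $E$ at $v$ along a direction lying in $T_1$ (respectively $T_2$), that traversal of $E$ must be lifted to $E_1$ (respectively $E_2$). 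The key point making this well-defined is that $\gamma$ never takes a turn $\turn{d' d''}$ with $d' \in T_1 - \{d\}$ and $d'' \in T_2 - \{d\}$ \emph{on the side of $v$}: such a turn would have to cross through the cut vertex $d$ of $\wh W_v \gamma$, but the definition of a cut along $d$ forbids an edge of $W_v\gamma$ joining the two sides directly (this is exactly the analysis of Case~1 of the proof of Proposition~\ref{PropCutPointTest}, read in reverse). In Case~2 of the splitting (where $u = v$, so $E$ is a loop), the extra care is that the initial direction $dE$ has been placed in $T_1$, and I would check directly using the two-step subdivision description that the lift is still consistent; I expect this bookkeeping to be the fussiest part.

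Having defined $\tilde\gamma$, I would verify Property~\pref{ItemNoIllegalTurn} and Property~\pref{ItemDeltaGammaEquiv} together: by construction $f \circ \tilde\gamma$ recovers $\gamma$ edge-for-edge up to cyclic permutation, giving~\pref{ItemDeltaGammaEquiv}; and $\tilde\gamma$ is locally injective — hence an honest circuit family $\delta$ — precisely because it never takes the illegal turn of $f$, which is the turn $\turn{d\overline E_1 \, d\overline E_2}$, and we never wrote $E_1$ immediately followed by $\overline E_2$ or vice versa (a traversal through $v$ between $E_1$ and $E_2$ directions would again be a forbidden cross-cut turn). Conversely, any circuit family satisfying~\pref{ItemNoIllegalTurn} and~\pref{ItemDeltaGammaEquiv} must agree with this lift, since projecting by $f$ determines the non-$E$ edges uniquely and the ``no illegal turn'' condition plus the cut data forces the choice between $E_1$ and $E_2$ at each occurrence; so $\delta$ is unique. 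The algorithmic claim is then immediate, since every step — constructing $H$ and $f$ from $G$, $v$, $d$, and the cut; reading off the edge-path for $\gamma$; performing the replacements — is a finite combinatorial procedure.

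The main obstacle, and the step I would spend the most care on, is the weight inequality~\pref{ItemWeightInequality}, $w(\delta) < w(\gamma)$ in the dictionary order on natural weight sequences. Here the relevant comparison is between the natural edges of $G$ and of $H$. Outside $E$ the edges of $H$ are in bijection with the edges of $G$ and are crossed the same number of times by $\delta$ as by $\gamma$ (using~\pref{ItemDeltaGammaEquiv}); the only change is that the single edge $E$, crossed some number $k = k(E,\gamma) \ge 2$ of times (it is $\ge 2$ precisely because $d$ is a cut \emph{vertex}, so both $T_1$ and $T_2$ contain edges of $W_v\gamma$ incident to $d$, meaning $\gamma$ passes through $d$ from each side at least once, so crosses $E$ at least twice), is replaced by the two edges $E_1, E_2$ with $k(E_1,\delta) + k(E_2,\delta) = k$ and both $k(E_i,\delta) \ge 1$, hence both $< k$. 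Thus passing from $\gamma$ to $\delta$ deletes one edge crossed $k$ times and adds (at most) two edges each crossed $< k$ times: $w_k$ strictly decreases while no $w_j$ with $j > k$ changes, so $w(\delta) < w(\gamma)$ in the dictionary order. One subtlety I would address is that the split vertices $v_1, v_2$ may fail to be natural (valence $2$), so the ``natural cell structure'' on $H$ may differ from the pullback cell structure by suppressing $v_1$ and/or $v_2$; I would note that suppressing a valence-$2$ vertex only merges two consecutive pullback edges into one natural edge crossed the same number of times, which (by the remark in the text that the natural weight sequence is $\le$ any subdivided weight sequence, applied carefully) can only preserve or further decrease the weight sequence, so the strict inequality is unharmed. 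With that observed, the proof is complete.
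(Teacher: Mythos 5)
Your proposal is correct and is essentially the paper's own argument: the lift $\delta$ is defined edge-by-edge, with each traversal of $E$ sent to $E_1$ or $E_2$ according to whether the adjacent direction at $v$ lies in $T_1$ or $T_2$, well-definedness at crossings of $v$ coming from the fact that every turn of $W_v\gamma$ lies entirely in $\wh W_1$ or in $\wh W_2$; and conclusion~\pref{ItemWeightInequality} is proved exactly as in the paper, via the pullback cell structure on $H$, the identities $k'(E_{H,j},\delta)=k(E_{G,j},\gamma)$ and $k'(E_1,\delta)+k'(E_2,\delta)=k(E,\gamma)$ with both summands positive (forced by item~\pref{ItemCutHasCutVertex} in the definition of a cut), together with the remark that the natural weight sequence is $\le$ any subdivided one. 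One mild difference: for uniqueness the paper argues more abstractly, using only property~\pref{ItemDeltaGammaEquiv} --- since $f$ is a marking-preserving homotopy equivalence, any such $\delta$ represents the same conjugacy classes as $\gamma$, and by Proposition~\ref{PropCircuitRep} the circuit family in $H$ representing a given set of conjugacy classes is unique --- whereas your edge-by-edge forcing argument also works.

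The one point you should repair is the justification of property~\pref{ItemNoIllegalTurn}. The illegal turn of $f$ is $\{dE_1,dE_2\}$, based at the common initial vertex of $E_1,E_2$ (namely $u$ in Case 1, $v_1$ in Case 2); it is not a ``cross-cut turn'' at the split vertex, and the cut plays no role in excluding it. The correct reason $\delta$ never takes it is simply that its image under $f$ would be the backtrack $\overline E\, E$, contradicting that $\gamma$ is an immersion --- this is exactly the fact the paper invokes for its checks (ii) and (iii). With that one-line fix your argument is complete.
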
 
In this proposition, the implicit assertion in conclusion~\pref{ItemDeltaGammaEquiv} that $f \circ \delta \from C \to G$ is indeed a circuit family (i.e.\ that the function $f \circ \delta$ is an immersion) is a consequence of conclusion~\pref{ItemNoIllegalTurn} combined with Proposition~\ref{PropWhiteheadMapUnion}~\pref{ItemWhiteheadImmersionCriterion}.

\begin{proof} We adopt all the notation in the definition of splitting, where $E$ is the oriented natural edge of $G$ with terminal vertex $v$, terminal direction $d\overline E = d$, and initial vertex~$u$, where $H$ is endowed with the pullback cell structure of the natural cell structure on~$G$, and where $E_1,E_2 \subset H$ are the two oriented edges which are folded together by $f$ to produce $E \subset G$.

We start by assuming existence of $\delta \from C \to H$ satisfying conclusions~\pref{ItemNoIllegalTurn} and~\pref{ItemDeltaGammaEquiv}, using which we shall prove uniqueness of $\delta$ and conclusion~\pref{ItemWeightInequality}. Afterwards we shall take up the proof of existence.

To prove uniqueness of $\delta$, since $f \circ \delta$ is equivalent to $\gamma$ (by conclusion~\pref{ItemDeltaGammaEquiv}) it follows that $f \circ \delta$ and $\gamma$ represent the same set of conjugacy classes. But $f \circ \delta$ and $\delta$ also represent the same set of conjugacy classes, because $f$ is a homotopy equivalence that preserves marking. Uniqueness of $\delta$ follows, because the circuit family in the marked graph $H$ representing a given set of conjugacy classes (in this case, the set represented by $\gamma$) is unique.

Next we prove \pref{ItemWeightInequality}, the inequality of natural weight sequences, $w(\delta) < w(\gamma)$. Let $w'(\delta)$ be the weight sequence of $\delta$ with respect to the pullback cell structure on $H$. Since $w(\delta)$ is the natural weight sequence, we have $w(\delta) \le w'(\delta)$ as noted earlier in the definition of weight sequences. It therefore suffices to prove that $w'(\delta) < w(\gamma)$. Since $f \circ \delta$ has no cancellations and is equivalent to $\gamma$, and since $E_{H,j}$ is the unique edge of $H$ mapped by $f$ to $E_{G,j}$, it is clear that $k'(E_{H,j},\delta) = k(E_{G,j},\gamma)$ for each $j \in J$. It is similarly clear that $k'(E_1,\delta) + k'(E_2,\delta) = k(E,\gamma)$. It therefore suffices to check that each of the two terms $k'(E_1,\delta)$ and $k'(E_2,\delta)$ is positive for then each of $k'(E_1,\delta)$ and $k'(E_2,\delta)$ is strictly smaller than $k(E,\gamma)$, for it then follows that $w'(\delta)$ is obtained from $w(\gamma)$ by decrementing the term $w_{k(E,\gamma)}$ and incrementing two strictly lower terms, proving that $w'(\delta) < w(\gamma)$. 

Fix $i \in \{1,2\}$. Positivity of $k'(E_i,\delta)$ is equivalent to saying that $\delta$ crosses $E_i$ at least once, which we now prove. By Proposition~\ref{PropWhiteheadMapUnion}~\pref{ItemWhiteheadImage}, the map 
$$Df_{v_i} \from W_{v_i} \delta \to W_{v} \gamma
$$
is a nondegenerate simplicial map, and we have
$$(*) \qquad W_1 \union W_2 = W_v \gamma = Df_{v_1} (W_{v_1}\delta) \union Df_{v_2}(W_{v_2}\delta)
$$ 
We know that the direction set of $W_{v_i}\delta$ is contained in $T_{v_i} H$, and we also know from Lemma~\ref{LemmaDirectionSplitProps} that $Df_{v_i}(T_{v_i} H) = T_i$ which contains the direction set of $W_i$; hence we have $Df_{v_i}(W_{v_i} \delta) \subset W_i$ for $i=1,2$. Furthermore we know that $Df_{v_1}(T_{v_1}) \intersect D_{v_2}(T_{v_2}) = T_1 \intersect T_2 = W_1 \intersect W_2 = \{d\}$, and so combined with $(*)$ it follows that $W_i = Df_{v_i}(W_{v_i} \delta)$ for $i=1,2$. From the definition of a cut, the graph $W_i$ contains a turn incident to the direction $d$ having the form $\turn{d d_i}$ for some direction $d_i \in T_i - \{d\}$, and so the graph $W_{v_i}\delta$ contains a turn whose image under $Df_{v_i}$ equals $\turn{d d_i}$. By Lemma~\ref{LemmaDirectionSplitProps}, the unique pre-image of the direction $d \in W_v\gamma$ under the map $D_{v_i} f$ is $d \overline E_i \in W_{v_i}\delta$, and so $W_{v_i}\delta$ has a turn of the form $\turn{d \overline E_i \, d'_i}$ whose image under $Df_{v_i}$ equals $\turn{d d_i}$. This proves that $\delta$ crosses the edge $E_i$ at least once.

\medskip

We turn to the construction of $\delta \from C \to H$. 

We may choose default orientations on each edge of $H$ and $G$ so that $f \from H \to G$ preserves orientation, requiring that the chosen orientations on $E_1,E_2 \subset H$ and on $E \subset G$ satisfy $d_i = d \overline E_i$ and $d=d\overline E$. Recall also the notations $H \setminus (E_1 \union E_j) = \union_{j\in J} E_{H,j}$ and $G \setminus E = \union_{j\in J} E_{G,j}$ with $f(E_{H,j}) = E_{G,j}$.

Subdivide $C$ as a graph so that $\gamma$ takes vertices of $C$ to natural vertices of $G$ and edges of $C$ to natural edges of $G$. Given an edge $\eta \subset C$ its image $\delta(\eta)$ will be an edge of $H$ as we now define. Using the orientation on $\eta$ which is mapped by $\gamma$ to the default orientation on $\gamma(\eta)$, and letting $\eta'$ denote the oriented edge of $C$ just after $\eta$ so that $\eta\eta'$ is a subpath of $C$, we define
$$\delta(\eta) = \begin{cases}
E_{H,j} &\quad\text{if $\gamma(\eta) = E_{G,j} \subset G \setminus E$} \\
E_i &\quad\text{if $\gamma(\eta) = E$ and $d\gamma(\eta') \in T_i$} 
\end{cases}
$$
Consider a vertex $v \in C$, which is the initial point of two distinct oriented edges $\eta,\eta'$ in~$C$, we must check three things:
\begin{description}
\item[(i) Well-definedness:] The oriented edges $\delta(\eta)$, $\delta(\eta') \subset H$ have the same initial vertex in $H$, which we may take to be $\delta(v)$; 
\item[(ii) Immersion:] The two directions $d\delta(\eta)$, $d\delta(\eta') \in T_{\delta(v)} H$ are distinct;
\item[(iii) No illegal turn:] The direction pair $\{d\delta(\eta),d\delta(\eta')\}$ is not the illegal turn of $f$.
\end{description}
Once (i), (ii) and (iii) are verified for all $v$ we will be done, because: from items~(i) and (ii) it follows that $\delta \from C \to H$ is a well-defined continuous immersion; conclusion~\pref{ItemNoIllegalTurn} follows from item~(iii); and from the definition of $\delta$ it follows that $f\circ\delta(\eta)=g(\eta)$ for each edge $\eta \subset C$, hence conclusion~\pref{ItemDeltaGammaEquiv} is satisfied.

Both (ii) and (iii) follow from the fact that $\gamma$ is an immersion, for it then follows that $d\gamma(\eta)$, $d\gamma(\eta')$ are distinct directions in $G$, hence $d\delta(\eta)$, $d\delta(\eta')$ are distinct directions in $H$ and are not both in the set $\{dE_1,dE_2\}$ whose image under $g$ is $\{dE\}$.

To prove (i), letting $p$, $p' \in H$ be the initial vertices of the two edges $\delta(\eta)$, $\delta(\eta')$ respectively, we must prove $p=p'$. Using continuity of $\gamma$ and the fact that $f \circ \delta(\eta)=\gamma(\eta)$ and $f \circ \delta(\eta')=\gamma(\eta')$, it follows that $f(p)=f(p')$ which we denote $w$. If $w \ne v$ then $f$ is one-to-one over $w$ hence $p=p'$. We may therefore assume $w=v$. The direction pair $\{d\gamma(\eta), d\gamma(\eta')\}$ forms a turn in the Whitehead graph $W_v\gamma$, and using the cut $W_v\gamma = W_1 \union W_2$ it follows that this turn is contained in $W_j$ for a unique $j \in \{1,2\}$; by tracing through the definitions of $H$ and of the map $\delta$ it follows that $p=p'=v_j$. 
\end{proof}

\subsection{Statement of Whitehead's Algorithm}
\label{SectionWhiteheadAlgStated}

Informally, the algorithm starts with a marked graph $G$ and a circuit family $\gamma$ in $G$, and then repeats the following loop: as long as there is a vertex in $G$ at which the Whitehead graph of $\gamma$ has a cut point, choose a cut at that point, split $G$ guided by that cut, replace $G$ with the result of that split, and repeat. The algorithm must stop. When it does stop, $G$ has no vertex at which the Whitehead graph has a cut. By inspection, one can now see whether $\gamma$ represents a partial free basis.

Here is the formal statement of the Whitehead's Algorithm, modernized to use the language of marked graphs, folds, and splits.
\begin{description}
\item[Step 1:] Check whether $\gamma$ is jointly primitive. If not, stop, $\gamma$ does not represent a partial free basis.
\item[Step 2:] Loop through the following sequence of computations:
\begin{description}
\item[Step 2a:] Compute the Whitehead graph $W_v\gamma$ at each vertex $v \in G$.
\item[Step 2b:] Check whether the near visibility condition holds, i.e.\ whether for all $v$ the graph $W_v\gamma$ is a disjoint union of edges. If so, stop, $\gamma$ represents a partial free basis.
\item[Step 2c:] Check whether the cut vertex condition holds, i.e.\ whether there exists $v$ such that some component of $W_v\gamma$ has a cut vertex. If not, step, $\gamma$ does not represent a partial free basis.
\item[Step 2d:] Having reached this step, there does exist a vertex $v \in V$ and a component of $W_v\gamma$ having a cut vertex. Choose a cut $\wh W_v\gamma = \wh W_1 \union \wh W_2$. Following the algorithm in Proposition~\ref{PropSplit}, construct the marked graph $H$, the fold $f \from H \to G$, and the circuit family $\delta$ in $H$. Replace $G$ and $\gamma$ by $H$ and $\delta$, and go back to the beginning of Step 2.
\end{description}
\end{description}
By Proposition~\ref{PropSplit}, the weight sequence of $\gamma$ strictly decreases under each iteration of~Step 2, hence the algorithm must stop.

\section{Connectivity of outer space and applications (draft)}
\label{SectionOuterSpaceConnectivity}


\begin{description}
\item[Disclaimer and Critique from the Author:] In the current version of the preceding sections, we have focussed on Stallings fold \emph{sequences}, which are sequences of marked graphs and maps between them. A top priority in the next revision is to related these sequences more clearly to \emph{paths} in outer space. For example, the proofs in this section, and the statement of Lemma~\ref{LemmaNonbigonFolds}, should be rewritten to express them in terms of paths in the spine of outer space rather than sequences of outer space cells. This will be helpful for the proof of \emph{connectivity} given in this section, although not strictly necessary. But this is of absolute necessity before Skora's ``fold path'' proof of \emph{contractibility} can be presented in Section~\ref{SectionOuterSpaceContractibility}.
\end{description}

\bigskip

The main theorem of this section, proved in Section~\ref{SectionProofOfConnectivity}, is:
\begin{theorem}\label{TheoremPathConnected}
Outer space $\X_n$ and its spine $\K_n$ are path connected.
\end{theorem}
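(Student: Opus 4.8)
The plan is to prove path connectivity of $\X_n$ first, and then deduce it for $\K_n$ using the $\Out(F_n)$-invariant deformation retraction $\X_n \to \K_n$ constructed in Theorem~\ref{TheoremGeneralSpine} (a deformation retract of a path connected space is path connected). So the real content is showing any two points of $\X_n$ can be joined by a path.

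The key idea is to use fold paths together with the action of $\Out(F_n)$. First I would reduce to a convenient special case: every outer space cell $\Delta(G)$ is itself path connected (it is a convex-like subset of a simplex — more precisely, $\Delta(\EG;\LG)$ is the intersection of the open simplex interior with the full simplex minus a subcomplex, but one checks directly that it is path connected since the barycenter lies in every cell's closure and straight-line segments from the barycenter of a concrete face stay concrete). Consequently it suffices to connect, for an arbitrary marked graph $G$, some interior point $\ell_G \in \Delta(G)$ to some fixed basepoint, for which I would take a point $\ell_\ast$ in the cell $\Delta(R_n)$ of the base rose with the identity marking. By the face-map structure of $\X_n$ (Proposition~\ref{PropIdealSimplComplProps}), if $G \collapsesto G'$ then $\Delta(G')$ is a face of $\Delta(G)$ inside $\X_n$, so collapsing a forest moves us within a single path component; hence I may freely collapse and, dually, freely expand along forest collapses while staying in one path component.

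The heart of the argument: given a marked graph $G$ with marking $\rho \from R_n \to G$, choose a homotopy inverse $\bar\rho \from G \to R_n$, which preserves marking in the reverse direction. Tighten $\bar\rho$ and apply Proposition~\ref{PropFoldableMapConstruction} to factor it (up to homotopy) as a forest collapse $G \xrightarrow{[K]} G_0$ followed by a foldable map $G_0 \to R_n$; since $\bar\rho$ is a homotopy equivalence, $K$ is a forest, so $\Delta(G)$ and $\Delta(G_0)$ lie in the same path component of $\X_n$. Now apply the Stallings Fold Theorem~\ref{ThmStallingsFolds}: the foldable map $G_0 \to R_n$ factors as a sequence of folds $G_0 = G^{(0)} \to G^{(1)} \to \cdots \to G^{(M)} \xrightarrow{\approx} R_n$, where (because the composed map is a homotopy equivalence) no fold is a bigon fold, the last map is a homeomorphism, and every fold is a non-bigon fold. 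By Lemma~\ref{LemmaNonbigonFolds}, for each consecutive pair $G^{(i)}, G^{(i+1)}$ there is a marked graph $G^{(i)}_\sharp$ with cell inclusions $\Delta(G^{(i)}) \subset \Delta(G^{(i)}_\sharp) \supset \Delta(G^{(i+1)})$ inside $\X_n$; thus $\Delta(G^{(i)})$ and $\Delta(G^{(i+1)})$ share a point of $\X_n$, so they lie in the same path component. Chaining these, $\Delta(G)$ is path-connected to $\Delta(G^{(M)})$, and since $G^{(M)}$ is a marked rose equivalent to the base rose with the identity marking (the composite marking is preserved all along the fold sequence), $\Delta(G^{(M)})$ equals $\Delta(R_n)$ inside $\X_n$. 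Therefore every cell is in the path component of $\Delta(R_n)$, and since the cells cover $\X_n$, the space $\X_n$ is path connected.

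The main obstacle I anticipate is bookkeeping the markings carefully through the fold sequence — ensuring that "push the marking forward along each fold" is legitimate (it is, by Exercise~\ref{ExerciseInducedMarking} and the fact that non-bigon folds are homotopy equivalences), so that the terminal rose is genuinely the \emph{marked} base rose and not some other marked rose in a different cell. A secondary point requiring care is verifying that each individual ideal simplex $\Delta(G)$ is path connected despite the stripped-away faces; this uses that $\LG$ is a subcomplex of \emph{positive codimension} faces, so its complement in $\Delta(\EG)$ stays connected — any two points can be joined through the interior, or through a path that avoids $\LG$ by perturbing into the top-dimensional stratum. Neither of these is deep, but both must be stated cleanly for the proof to be rigorous.
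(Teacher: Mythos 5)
Your proposal is correct and takes essentially the same route as the paper: factor a marking-preserving homotopy equivalence (via Proposition~\ref{PropFoldableMapConstruction} and Theorem~\ref{ThmStallingsFolds}) into a forest collapse followed by a sequence of non-bigon folds ending in a homeomorphism, push markings along by Exercise~\ref{ExerciseInducedMarking}/\ref{ExerciseDiagramMarkingOne}, and use Lemma~\ref{LemmaNonbigonFolds} to place consecutive cells inside a common cell, then chain. The only cosmetic differences are that you connect every cell to the fixed base-rose cell rather than connecting two arbitrary cells directly, and that you make explicit two points the paper leaves tacit (star-shapedness of each ideal simplex about its barycenter, and deducing connectivity of $\K_n$ from the equivariant deformation retraction of Theorem~\ref{TheoremGeneralSpine}).
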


Recall from Section~\ref{SectionOutActsGeometrically} that path connectivity of $\K_n$ was the last missing piece in the proof of Theorem~\ref{TheoremOutActionGeometric}, which says that the action of $\Out(F_n)$ on $\K_n$ is geometric. We may therefore apply the first sentence of the Milnor-Svarc Lemma (Lemma~\ref{LemmaMilnorSvarc}) to conclude:

\begin{corollary}
$\Out(F_n)$ is finitely generated.\qed
\end{corollary}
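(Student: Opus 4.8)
The statement to prove is the Corollary that $\Out(F_n)$ is finitely generated, deduced from Theorem~\ref{TheoremOutActionGeometric} (the action of $\Out(F_n)$ on $\K_n$ is geometric) together with the Milnor--Svarc Lemma. Since the proof is essentially a one-line invocation, the "proof proposal" below lays out how the pieces assemble, what must be checked, and where the genuine content lives.

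\medskip

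The plan is to invoke Theorem~\ref{TheoremOutActionGeometric}, which asserts that the action of $\Out(F_n)$ on the spine $\K_n$ is geometric. Unpacking the definition of a geometric action, this means in particular that $\Out(F_n)$ acts properly and cocompactly by isometries on the simplicial metric space $\K_n$, and that $\K_n$ is a proper (hence locally compact) geodesic metric space. The only missing ingredient for Theorem~\ref{TheoremOutActionGeometric} was path connectivity of $\K_n$, which is now supplied by Theorem~\ref{TheoremPathConnected}; so at this point Theorem~\ref{TheoremOutActionGeometric} is unconditionally available. First I would simply record that $\K_n$ is locally compact (from local finiteness, via Exercise~\ref{ExerciseLocallyFinite} and Theorem~\ref{TheoremGeneralSpine}) and that $\Out(F_n)$ acts on it properly and cocompactly, as established in the proof of Theorem~\ref{TheoremOutActionGeometric}.

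Next I would apply the first sentence of the Milnor--Svarc Lemma (Lemma~\ref{LemmaMilnorSvarc}), whose hypothesis is exactly a geometric group action $\Gamma \act X$ and whose conclusion is that $\Gamma$ is finitely generated. Taking $\Gamma = \Out(F_n)$ and $X = \K_n$, this immediately yields that $\Out(F_n)$ is finitely generated. Alternatively, one could bypass the full Milnor--Svarc Lemma and argue directly: by Lemma~\ref{LemmaFDExists}, a locally compact space with a proper cocompact action has an open fundamental domain, and then by Lemma~\ref{LemmaFDFinGen} (which requires path connectivity of $\K_n$, now available) the finite set $\{\phi \in \Out(F_n) \suchthat (U \cdot \phi) \intersect U \ne \emptyset\}$ is a generating set. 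This is essentially Corollary~\ref{CorollaryGeneralFiniteGeneration} applied to $\K_n$.

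There is no real obstacle here: the hard work has already been done in establishing path connectivity (Theorem~\ref{TheoremPathConnected}) and in constructing the spine with its cocompact action (Theorems~\ref{TheoremOuterSpaceSpine} and~\ref{TheoremGeneralSpine}). The one point deserving a sentence of care is making sure the chain of dependencies is not circular --- specifically, that Theorem~\ref{TheoremPathConnected} is proved independently of finite generation of $\Out(F_n)$, which it is, since its proof (in Section~\ref{SectionProofOfConnectivity}) proceeds by fold-path arguments connecting arbitrary marked graphs to the base rose. With that noted, the corollary follows immediately.
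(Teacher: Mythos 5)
Your proposal matches the paper's argument exactly: path connectivity (Theorem~\ref{TheoremPathConnected}) completes Theorem~\ref{TheoremOutActionGeometric}, and the first sentence of the Milnor--Svarc Lemma then gives finite generation. The alternative route you sketch via Lemmas~\ref{LemmaFDExists} and~\ref{LemmaFDFinGen} is just the paper's own unpacking of that sentence (Corollary~\ref{CorollaryGeneralFiniteGeneration}), so there is nothing to add.
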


In Section~\ref{SectionRank2OuterSpace} we shall apply path connectivity in rank~$2$ to derive the complete geometric structure of $\X_2$ and $\K_2$, and to describe the complete algebraic structure of $\Out(F_2)$.

In Section~\ref{SectionWhiteheadNielsenGenerators} we shall apply path connectivithy to derive specific generating sets for $\Out(F_n)$, namely Nielsen's generators (see Section~\ref{SectionPositiveTests}) as well as Whitehead's generators. As it turns out, while Whitehead constructed his generating set some years after Nielsen, a good logical progression is to first derive Whitehead's generators, and then use those to derive Nielsen's generators, and this is what we do in Section~\ref{SectionWhiteheadNielsenGenerators}.

\subsection{Proof of path connectivity (draft)}
\label{SectionProofOfConnectivity}

Given points $x,y$ contained in two outer space cells $\Delta(G)$, $\Delta(H)$ represented by marked graphs $G,H$, we shall construct a sequence of marked graphs
$$G=G_0,G_1,\ldots,G_K=H
$$
such that for any $k=1,\ldots,K$, either $\Delta(G_{k-1})$ is a face of $\Delta(G_k)$, or $\Delta(G_k)$ is a face of $\Delta(G_{k-1})$; possibly the inclusion is not proper in which case $\Delta(G_{k-1})=\Delta(G_k)$. A path from $x$ to $y$ is then easily produced: choose a sequence of points $x=x_0,x_1,\ldots,x_K=y$ so that $x_i \in \Delta(G_k)$; and for each $k=1,\ldots,K$ choose a path $\sigma_k$ connecting $x_{k-1}$ to $x_k$ in whichever of the two cells $\Delta(G_{k-1})$ or $\Delta(G_k)$ contains the other. The concatenation $\sigma_1 * \ldots * \sigma_K$ thus connects $x$ to~$y$.

Let $\rho_G$, $\rho_H$ denote the given markings on $G$ and $H$. Choose a homotopy equivalence from $G$ to $H$ that preserves marking, and apply Proposition~\ref{PropTightening} to homotope it to a tight homotopy equivalence map $f \from G=G_0 \to H$ that preserves marking. Apply Proposition~\ref{PropFoldableMapConstruction} to factor $f$ up to homotopy as
$$\xymatrix{
G = G'_0 \ar[r]_{f_1} \ar@/^1pc/[rr]^{f} & G'_1 \ar[r]_{h^1} & H
}
$$
so that $f_1$ is a collapse map and $h^1$ is a foldable map. Applying conclusion~\pref{ItemNotPiOneInjective} of Proposition~\ref{PropFoldableMapConstruction}, both of $f_1$ and $h^1$ are homotopy equivalences, in fact the subgraph of $G'_0$ that is collaped by $q$ is a subforest, and so $G'_1$ is a rank~$n$ core graph. 

Next apply Stallings Fold Theorem~\ref{ThmStallingsFolds} to factor~$h^1$, thus obtaining a further factorization of $f$ up to homotopy:
$$\xymatrix{
G = G'_0 \ar[r] _{f_1} \ar@/^4pc/[rrrrr]^{f} 
& G'_1 \ar[r]_{f_2}  \ar@/^2pc/[rrrr]^{h^1}
& G'_2 \ar[r]_{f_3} & \cdots \ar[r]_{f_{J}} & G'_J \ar[r]_{h^J} & H
}$$
From the conclusions of Theorem~\ref{ThmStallingsFolds}, each of $f_2,\ldots,f_J$ is a fold map, each of $G'_2,\ldots,G'_J$ is a core graph, and $f^J$ is locally injective. Furthermore, by applying conclusions~\pref{ItemContinuingFolds} and~\pref{ItemStallingsFoldPath} of Theorem~\ref{ThmStallingsFolds}, and using that $H$ is a core graph and the map $h^1$ is a homotopy equivalence, it follows that each of $f_2,\ldots,f_J$ is not a bigon fold hence is a homotopy equivalence, each of $G'_2,\ldots,G'_J$ is a rank~$n$ core graph, and $h^J$ is a homeomorphism.

Applying Exercise~\ref{ExerciseDiagramMarkingOne}, there exist unique markings on $G'_1,\ldots,G'_J,H$ such that each of the maps $f_1,\ldots,f_J,h^J$ preserves marking; let $\rho'_H$ denote that marking on $H$. Since $f$ is homotopic to $h^J \circ f_J \circ\ldots\circ f_1$, the two markings $\rho_H,\rho'_H$ agree up to homotopy (this paragraph can be slightly shortened by applying a good solution to Exercise~\ref{ExerciseDiagramMarkingTwo} instead of Exercise~\ref{ExerciseDiagramMarkingOne}).

Using all the markings just obtained, consider now the sequence of outer space cells 
$$\Delta(G)=\Delta(G'_0), \, \Delta(G'_1), \, \ldots  \, , \, \Delta(G'_J)=\Delta(H)
$$
We already know that $f_1 \from G'_0 \to G'_1$ is a map that collapses a subforest of $G'_0$, hence $\Delta(G'_1)$ is a face of $\Delta(G'_0)$. Applying Lemma~\ref{LemmaNonbigonFolds}, there exist marked graphs $G''_2,\ldots,G''_J$ such that for $j=2,\ldots,J$ the cells $\Delta(G'_{j-1}),\Delta(G'_j)$ are faces of $\Delta(G''_j)$, and the proof is complete.

\subsection{Outer space in rank~$2$  (stub)}
\label{SectionRank2OuterSpace}

\subsection{Whitehead's generators and Nielsen's generators (stub)}
\label{SectionWhiteheadNielsenGenerators}
%
%
%
%
%

\section{Contractibility of outer space and applications (stub)}
\label{SectionOuterSpaceContractibility}

\begin{theorem}\label{TheoremContractible}
Outer space $\X_n$ and its spine $\K_n$ are contractible.
\end{theorem}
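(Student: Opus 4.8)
The plan is to deduce contractibility of $\X_n$ (and of $\K_n$) by upgrading the path-connectivity argument of Section~\ref{SectionProofOfConnectivity} into a statement about \emph{canonical} fold paths, following Skora's method. The essential point is that the lack of uniqueness of fold factorizations noted back in Section~\ref{SectionFirstFold} is the only obstruction, and Skora's insight is that one can fold ``all illegal turns at once'' (folding at unit speed, measured by edge lengths) to obtain a fold path that depends continuously on the endpoints. So the first step is to fix a basepoint graph, say the rose $R_n$ with its barycentric length structure, as a point $\ast \in \X_n$, and to construct, for every $x \in \X_n$, a canonical path $\Gamma_x \from [0,1] \to \X_n$ from $x$ to $\ast$. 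Concretely: represent $x$ by a marked metric graph $G$, choose the marking-preserving tight map $f \from G \to R_n$, make it foldable via Proposition~\ref{PropFoldableMapConstruction} (the collapsed subgraph is a forest since $f$ is a homotopy equivalence), and then run a continuous version of Stallings' Fold Theorem~\ref{ThmStallingsFolds}: instead of folding one maximal pair of initial segments at a time, simultaneously fold every illegal turn of $f^t$ at a uniform rate. Since $f$ is a homotopy equivalence, Theorem~\ref{ThmStallingsFolds}\pref{ItemStallingsFoldPath} guarantees no bigon folds occur, so the rank is preserved throughout and every intermediate graph is a genuine point of $\X_n$; the terminal map $f^1$ is a homeomorphism $G_1 \to R_n$, i.e.\ the path ends at~$\ast$.

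The second step is to verify that $\Gamma_x(t)$ is jointly continuous in $(x,t)$. This is the technical heart and the main obstacle. The continuous folding map at time $t$ is determined by the metric data of $G$ and the combinatorics of $f$, and the subtlety is that the combinatorial type of the intermediate graph $G_t$ changes at finitely many times $t$ (when a fold completes, or when two gates merge). One must check that at each such combinatorial transition the path moves from one outer space cell to a face of it (or vice versa), exactly as in the discussion of face maps in Section~\ref{SectionFaceMaps}, and that across the transition the straight-line motion in barycentric coordinates (Section~\ref{SectionIdealSimplexOfGraph}) is continuous. As $x$ varies within a fixed cell $\Delta(G)$ the sequence of combinatorial types along $\Gamma_x$ is eventually constant on the interior and can only degenerate on the boundary faces, so continuity there follows from continuity of face maps; the cross-cell continuity uses Theorem~\ref{ThmCollapseUnique} (uniqueness of collapses) and the uniqueness of the induced circuit families to glue the local descriptions together. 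I would package this as: the map $(x,t) \mapsto \Gamma_x(t)$ is continuous because on each cell it is a finite concatenation of straight-line motions and ideal face maps whose parameters vary continuously with the metric on~$G$.

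The third step is to extract the homotopy. Having a continuous family $\{\Gamma_x\}$ of paths with $\Gamma_x(1)=\ast$ for all $x$ and $\Gamma_\ast$ constant, the map $H \from \X_n \times [0,1] \to \X_n$, $H(x,t)=\Gamma_x(t)$, is a contraction of $\X_n$ onto~$\ast$ (one should normalize the parametrizations so that $\Gamma_x(0)=x$ for all $x$; this is where I would insist that the folding clock runs on a common interval). For the spine $\K_n$, recall from Theorem~\ref{TheoremGeneralSpine} that there is an $\Aut$-invariant deformation retraction $\X_n \to \K_n$; composing that retraction with $H$, and checking that the canonical fold path of a point of $\K_n$ can be chosen to stay in $\K_n$ (or simply precomposing and postcomposing with the retraction), yields contractibility of $\K_n$. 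Since the canonical fold paths are built only from the metric and combinatorial data, they are equivariant with respect to $\Out(F_n)$ up to moving the basepoint, which is all that is needed; strict equivariance of the homotopy is not required for contractibility but would follow from the same naturality. The one genuine hurdle to flag is the continuity verification in step two: it is routine in spirit but requires careful bookkeeping of the ``edgelet'' subdivisions and of the transition times, and it is precisely the point at which Skora's canonical construction must replace the choice-laden construction of Section~\ref{SectionWhiteheadStartup}.
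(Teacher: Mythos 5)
The paper does not yet contain its own proof of Theorem~\ref{TheoremContractible}: Section~\ref{SectionOuterSpaceContractibility} is a stub, and the author's disclaimer in Section~\ref{SectionOuterSpaceConnectivity} signals that the intended argument is Skora's canonical fold-path proof. Your outline follows exactly that intended route, so your strategy is the right one; the problem is that the step you yourself flag as the ``technical heart'' is not merely unfinished bookkeeping --- the justification you sketch for it would not work as stated.

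The gap is the joint continuity of $(x,t)\mapsto\Gamma_x(t)$, and your sketch rests on two claims that are false or unsupported. First, you assert that for $x$ in the interior of a cell $\Delta(G)$ the sequence of combinatorial types along $\Gamma_x$ is locally constant and can only degenerate at boundary faces. This is not so: when all illegal turns are folded at unit speed, the times at which the various folds complete are functions of the edge lengths of $x$, and the \emph{order} in which they complete --- hence the sequence of cells the path visits --- changes along walls inside the open cell where two completion times coincide. Continuity across such walls is true but must be proved; it does not follow from eventual constancy on the interior. Second, and more seriously, the whole construction takes as input the tight marking-preserving map $f_x \from G_x \to R_n$ made linear on edges (a morphism in the metric sense), and you need this map itself, not just its homotopy class, to vary continuously as $x$ degenerates into a face of $\Delta(G)$ (edge lengths tending to $0$). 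Theorem~\ref{ThmCollapseUnique} and uniqueness of circuit families say nothing about this; it is exactly the point where Skora's argument passes to $F_n$-trees, morphisms, and length functions and does genuine work. Until the canonical morphism is constructed and the continuity of the resulting family of paths is actually established, your step two is an assertion, not a proof. Two smaller corrections: the fold path terminates at a rose whose (normalized) metric is induced by the folding, not at the chosen basepoint $\ast$, so you must append a straight segment inside the rose's open simplex and include it in the continuity check; and for $\K_n$ no fold-path argument is needed at all --- once $\X_n$ is contractible, $\K_n$ is contractible simply because Theorem~\ref{TheoremGeneralSpine} exhibits it as a deformation retract of $\X_n$.
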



\part{Conjugacy growth and relative train track maps (stub)}

\chapter{Conjugacy growth in $\Out(F_n)$: Concepts and examples (stub)}

\section{Conjugacy growth of outer automorphisms: general concepts (stub)}

\section{Example: Conjugacy growth in $\GL_n(\Z)$ (stub)}

\section{Growth in $\Out(F_n)$: Examples, questions, theorems. (stub)}

\chapter{Relative train track maps (stub)}

\part{Periodic points of topological representatives and attracting laminations (stub)}

\chapter{Laminations (stub)}

\bibliographystyle{amsalpha} 
\bibliography{mosher} 

\printindex

\end{document}